\documentclass[10pt,reqno]{amsart}
\usepackage{eucal,fullpage,times,amsmath,amsthm,amssymb,mathrsfs,stmaryrd,color,enumerate,accents}
\usepackage[all]{xy}
\usepackage{url}

\newcommand{\calN}{{\mathcal{N}}}

\newcommand{\calO}{{\mathcal{O}}}
\newcommand{\calL}{{\mathcal{L}}}

\newcommand{\calA}{{\mathcal{A}}}
\newcommand{\calB}{{\mathcal{B}}}

\newcommand{\calQ}{\mathcal{Q}}

\newcommand{\calG}{\mathcal{G}}

\newcommand{\calC}{\mathcal{C}}
\newcommand{\calP}{\mathcal{P}}

\newcommand{\A}{\mathbf{A}}
\newcommand{\G}{\mathbf{G}}
\newcommand{\Z}{\mathbf{Z}}
\newcommand{\N}{\mathbf{N}}
\newcommand{\C}{\mathbf{C}}
\newcommand{\F}{\mathbf{F}}
\newcommand{\Q}{\mathbf{Q}}
\renewcommand{\P}{\mathbf{P}}

\newcommand{\Spec}{{\mathrm{Spec}}}
\newcommand{\Gal}{\mathrm{Gal}}
\newcommand{\Hom}{\mathrm{Hom}}
\newcommand{\Fun}{\mathrm{Fun}}
\newcommand{\Map}{\mathrm{Map}}

\newcommand{\Sing}{\mathrm{Sing}}
\newcommand{\Shv}{\mathrm{Shv}}
\newcommand{\GL}{\mathrm{GL}}

\newcommand{\Tor}{\mathrm{Tor}}

\newcommand{\Cartier}{\mathrm{Cartier}}
\newcommand{\Sm}{\mathrm{Sm}}

\newcommand{\Sym}{\mathrm{Sym}}
\newcommand{\Frob}{\mathrm{Frob}}
\newcommand{\Mod}{\mathrm{Mod}}
\newcommand{\Alg}{\mathrm{Alg}}
\newcommand{\Mon}{\mathrm{Mon}}
\newcommand{\Set}{\mathrm{Set}}
\newcommand{\Ab}{\mathrm{Ab}}
\newcommand{\Ann}{\mathrm{Ann}}
\newcommand{\ob}{\mathrm{ob}}

\renewcommand{\ss}{\mathrm{ss}}

\newcommand{\E}{\mathrm{E}}
\newcommand{\R}{\mathrm{R}}
\renewcommand{\L}{\mathrm{L}}

\newcommand{\Ch}{\mathrm{Ch}}
\newcommand{\Pic}{\mathrm{Pic}}
\newcommand{\et}{\mathrm{\acute{e}t}}

\newcommand{\id}{\mathrm{id}}
\newcommand{\ev}{\mathrm{ev}}

\newcommand{\Fil}{\mathrm{Fil}}

\newcommand{\Frac}{\mathrm{Frac}}
\newcommand{\gr}{\mathrm{gr}}
\newcommand{\dR}{\mathrm{dR}}
\newcommand{\ddR}{\mathrm{ddR}}
\newcommand{\red}{\mathrm{red}}

\newcommand{\perf}{\mathrm{perf}}
\renewcommand{\inf}{\mathrm{inf}}
\newcommand{\crys}{\mathrm{crys}}
\newcommand{\st}{\mathrm{st}}
\newcommand{\cl}{\mathrm{cl}}
\newcommand{\can}{\mathrm{can}}

\newcommand{\comp}{\mathcal{C}\mathrm{omp}}
\newcommand{\pre}{\mathrm{pre}}
\newcommand{\Var}{\mathcal{V}\mathrm{ar}}
\newcommand{\conj}{\mathrm{conj}}
\newcommand{\opp}{\mathrm{opp}}
\newcommand{\Free}{\mathrm{Free}}
\newcommand{\Forget}{\mathrm{Forget}}
\newcommand{\Log}{\mathrm{Log}}
\newcommand{\Nil}{\mathrm{Nil}}
\newcommand{\grp}{\mathrm{grp}}
\renewcommand{\ker}{\mathrm{ker}}
\newcommand{\cok}{\mathrm{cok}}
\newcommand{\Cone}{\mathrm{Cone}}
\newcommand{\Tot}{\mathrm{Tot}}
\newcommand{\Der}{\mathrm{Der}}
\newcommand{\Sect}{\mathrm{Sect}}
\newcommand{\val}{\mathrm{val}}

\newcommand{\fraa}{\mathfrak{a}}

\newcommand{\comment}[1]{}

\newcommand{\cosimp}[3]{\xymatrix@1{#1 \ar@<.4ex>[r] \ar@<-.4ex>[r] & {\ }#2 \ar@<0.8ex>[r] \ar[r] \ar@<-.8ex>[r] & {\ } #3 \ar@<1.2ex>[r] \ar@<.4ex>[r] \ar@<-.4ex>[r] \ar@<-1.2ex>[r] & \cdots }}
\newcommand{\colim}{\mathop{\mathrm{colim}}}

\def\rmk#1{\textcolor{red}{ #1 }}

\begin{document}
\bibliographystyle{alpha}
\newtheorem{theorem}{Theorem}[section]
\newtheorem*{theorem*}{Theorem}
\newtheorem*{condition*}{Condition}
\newtheorem*{definition*}{Definition}
\newtheorem{proposition}[theorem]{Proposition}
\newtheorem{lemma}[theorem]{Lemma}
\newtheorem{corollary}[theorem]{Corollary}
\newtheorem{claim}[theorem]{Claim}

\theoremstyle{definition}
\newtheorem{definition}[theorem]{Definition}
\newtheorem{question}[theorem]{Question}
\newtheorem{remark}[theorem]{Remark}
\newtheorem{guess}[theorem]{Guess}
\newtheorem{example}[theorem]{Example}
\newtheorem{condition}[theorem]{Condition}
\newtheorem{warning}[theorem]{Warning}
\newtheorem{notation}[theorem]{Notation}
\newtheorem{construction}[theorem]{Construction}

\title{$p$-adic derived de Rham cohomology}
\begin{abstract}
This paper studies the derived de Rham cohomology of $\F_p$ and $p$-adic schemes, and is inspired by Beilinson's work \cite{Beilinsonpadic}. Generalising work of Illusie, we construct a natural isomorphism between derived de Rham cohomology and crystalline cohomology for lci {\em maps} of such schemes, as well logarithmic variants.  These comparisons give derived de Rham descriptions of the usual period rings and related maps in $p$-adic Hodge theory. Placing these ideas in the skeleton of \cite{Beilinsonpadic} leads to a new proof of Fontaine's crystalline conjecture $C_\crys$  and Fontaine-Jannsen's semistable conjecture $C_\st$.
\end{abstract}
\author{Bhargav Bhatt}
\maketitle
\section{Introduction}

This paper grew from an attempt at lifting Beilinson's proof \cite{Beilinsonpadic} of Fontaine's $C_\dR$ conjecture in $p$-adic Hodge theory to the more refined crystalline and semistable settings. We briefly recall the surrounding picture in \S \ref{sec:bg}, and then discuss how this fits into the present paper in \S \ref{ss:results}. An actual description of the contents is available in \S \ref{sec:outline}.

\subsection{Background}
\label{sec:bg}

Let $X$ be a smooth projective variety over a characteristic $0$ field $K$. There are two Weil cohomology theories naturally associated to $X$: the de Rham cohomology  $H^*_\dR(X)$, which is a $K$-vector space equipped with the Hodge filtration, and the $p$-adic \'etale cohomology $H^*_\et(X) := H^*_\et(X_{\overline{K}},\Z_p)$, which is a $\Z_p$-module equipped with a continuous action of $\Gal(\overline{K}/K)$, for a fixed prime $p$.  These theories are often closely related:

If $K = \C$, then the classical de Rham comparison theorem identifies de Rham cohomology with Betti cohomology, and lies at the heart of Hodge theory and the theory of periods. Composition with Artin's comparison between Betti and \'etale cohomology (tensored up along some embedding $\Z_p \hookrightarrow \C$) then yields an isomorphism between de Rham and \'etale cohomologies. The key to de Rham's theorem is the following observation: the space $X(\C)$ admits sufficiently many small opens $U \subset X(\C)$ whose de Rham cohomology is trivial. This observation gives a map from $H^*_\dR(X)$ to the constant sheaf $\C$ on $X(\C)$, and thus a map of (derived) global sections
\[ \comp_\cl:H^*_\dR(X) \to H^*(X(\C),\C) \simeq H^*_\et(X) \otimes_{\Z_p} \C.\]
Having defined the map, it is easy to show that $\comp_\cl$ is an isomorphism: one can either check this locally on $X$, or simply argue that a map of Weil cohomology theories with good formal properties is automatically an isomorphism. 

Now assume that $K$ a $p$-adic local field. The analogue of the preceding complex analytic story is Fontaine's de Rham comparison conjecture $C_\dR$. Specifically,  Fontaine constructed a filtered $\Gal(\overline{K}/K)$-equivariant $K$-algebra $B_\dR$ that is complete for the filtration, and conjectured the existence of a functorial isomorphism
\[ \comp^\dR_\et:H^*_\dR(X) \otimes_K B_\dR \simeq H^*_\et(X) \otimes_{\Z_p} B_\dR \]
compatible with the tensor product filtrations and Galois actions. This statement occupies a central position in $p$-adic Hodge theory and arithmetic geometry, and has numerous applications.\footnote{The arithmetic applications are too many to list, but there are geometric applications too. For instance, the $C_\dR$ conjecture (together with some basic structure theory of $B_\dR$) implies that the Galois representation $H^*_\et(X)$ ``knows'' the Hodge numbers of $X$; this can be used to prove the birational invariance of Hodge numbers for smooth minimal models over $\C$ \cite{Itobirinv}.}  There are multiple proofs of $C_\dR$ by now (see \S \ref{ss:history}), and we briefly discuss the recent one \cite{Beilinsonpadic} as it is conceptually simple and closest to this paper. Beilinson observed that the complex analytic proof sketched above also works in the $p$-adic context provided one measures ``small opens'' using Voevodsky's $h$-topology. More precisely, he showed that (completed) de Rham cohomology sheafifies to a constant sheaf on the $h$-topology of a $p$-adic scheme; one then constructs $\comp^\dR_\et$ and shows that it is an isomorphism, just as for $\comp_\cl$ above. The two main ingredients of his proof are: de Jong's alterations theorems for constructing the desired ``small opens'' (via the $p$-divisility results of \cite{Bhattmixedcharpdiv}), and the Hodge-completed version of Illusie's derived de Rham cohomology theory for working with the de Rham cohomology of some non-smooth maps.

The Fontaine-Jannsen semistable conjecture $C_\st$ is a refinement of the $C_\dR$ conjecture takes into account the geometry of $X$ and the arithmetic of $K$ better. Let $K$ and $X$ be as above, let $K_0$ denote the maximal unramified subfield of $K$, and assume that $X$ admits a semistable model over $\calO_K$, the ring of integers of $K$. Then Kato's theory of log crystalline cohomology endows $H^*_\dR(X)$ with the following additional structures: a $K_0$-structure $H^*_\dR(X)_0$, a monodromy operator, and a Frobenius action. The semistable conjecture predicts a comparison isomorphism
\[ \comp^\st_\et:H^*_\dR(X)_0 \otimes_{K_0} B_\st \simeq H^*_\et(X) \otimes_{\Z_p} B_\st,\]
preserving all natural structures; here $B_\st$ is a filtered $\Gal(\overline{K}/K)$-equivariant $K_0$-subalgebra of $B_\dR$ that has a Frobenius action and a monodromy operator\footnote{If $X$ extends to a proper smooth $\calO_K$-scheme, then the monodromy operator on $H^*_\dR(X)_0$ is trivial, and one expects the comparison isomorphism to be defined over a smaller Galois and Frobenius equivariant filtered subalgebra $B_\crys \subset B_\st$; this is the crystalline conjecture $C_\crys$.}. This conjecture is a $p$-adic analog of Steenbrink's work \cite{SteenbrinkLMHS} on limiting mixed Hodge structures. It is also stronger than the $C_\dR$ conjecture: (a) the left hand side of $\comp^\st_\et$ (with its natural structures) recovers the $\Gal(\overline{K}/K)$-module $H^*_\et(X)[1/p]$, while the same is not true for the left hand side of $\comp^\dR_\et$, and (b) one can deduce $C_\dR$ from $C_\st$ using de Jong's theorem \cite{dJAlt}. Roughly speaking, the difference between $C_\dR$ and $C_\st$ is one of completions: the ring $B_\st$ is not complete for the Hodge filtration, so it detects more than its completed counterpart.  One major goal of this paper is give a simple conceptual proof of the $C_\st$ conjecture.

\subsection{Results}
\label{ss:results}

Our proof of $C_\st$ follows the skeleton of \cite{Beilinsonpadic} sketched above, except that we must prove non-completed analogs of all results in derived de Rham cohomology whose completed version was used in \cite{Beilinsonpadic}. In fact, this latter task takes up the bulk (see \S \ref{sec:ddr-mod-p} and \S \ref{sec:ddr-mod-p-log}) of the paper: until now (to the best of our knowledge), there were essentially no known techniques for working with the non-completed derived de Rham cohomology, e.g., one did not know a spectral sequence with computable $E_1$ terms that converged to derived de Rham cohomology. The basic observation in this paper is that Cartier theory works extremely well in the derived world in complete generality:

\begin{theorem*}[see Proposition \ref{prop:conjss}]
Let $f:X \to S$ be a morphism of $\F_p$-schemes, and let $\dR_{X/S}$ denote Illusie's derived de Rham complex. Then there exists a natural increasing bounded below separated exhaustive filtration $\Fil^\conj_\bullet$, called {\em the conjugate filtration},  of $\dR_{X/S}$ that is functorial in $f$, and has graded pieces computed by
\[ \mathrm{Cartier}_i: \gr_i^\conj(\dR_{X/S}) \simeq \wedge^i L_{X^{(1)}/S}[-i].\] 
\end{theorem*}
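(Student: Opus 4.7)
The plan is to construct the filtration by applying the classical Cartier isomorphism levelwise to a simplicial polynomial resolution of $X$, and then identify the graded pieces with derived exterior powers of the cotangent complex via the definition of non-abelian derived functors. First I would choose the canonical free simplicial resolution $P_\bullet \to X$ by polynomial $S$-algebras, so that by definition $\dR_{X/S} \simeq |\Omega^\bullet_{P_\bullet/S}|$, the realization of the bicomplex obtained by applying the de Rham complex levelwise.

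At each simplicial level, $P_n$ is a (possibly infinite-rank) polynomial, hence smooth, $S$-algebra. On the honest complex $\Omega^\bullet_{P_n/S}$ I would equip the canonical/Postnikov filtration $\Fil^\conj_i := \tau^{\leq i}$ in the derived category of $S$-modules, and invoke the classical Cartier isomorphism of Katz to obtain
\[ \gr^\conj_i \Omega^\bullet_{P_n/S} \simeq \mathcal{H}^i(\Omega^\bullet_{P_n/S})[-i] \simeq \Omega^i_{P_n^{(1)}/S}[-i]. \]
Both the truncation filtration and the Cartier identification are natural in maps of smooth $S$-algebras, so they assemble into a simplicial object of filtered complexes; set $\Fil^\conj_\bullet \dR_{X/S}$ to be its realization. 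Since $P_\bullet^{(1)} = P_\bullet \otimes_{S, F_S} S$ is a simplicial polynomial resolution (polynomial algebras are flat, so Frobenius base change is underived) of the derived Frobenius twist $X^{(1)}$, the definition of $\wedge^i L_{-/S}$ as a left-derived functor produces
\[ \gr^\conj_i \dR_{X/S} \simeq |\Omega^i_{P_\bullet^{(1)}/S}|[-i] \simeq \wedge^i L_{X^{(1)}/S}[-i], \]
which is the desired $\mathrm{Cartier}_i$. Boundedness below ($\tau^{\leq -1} = 0$), exhaustiveness (each $\Omega^\bullet_{P_n/S}$ is in non-negative cohomological degrees, and realization commutes with filtered colimits), and separatedness (automatic from bounded below) all propagate from the levelwise statements. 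Functoriality in $f$ comes from functoriality of the canonical resolution.

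The main obstacle is the coherent simplicial naturality of the Cartier isomorphism at the cochain level: we need more than a levelwise identification, we need a genuine simplicial map of filtered complexes. One way forward is to bypass the cochain-level construction entirely and define $\mathrm{Cartier}_i$ directly on the realized object using the derived Frobenius on animated $\F_p$-algebras and the derived $p$-th-power map, exploiting the fact that $\dR_{-/S}$ is a left Kan extension from polynomial $S$-algebras (on which the classical Cartier is available and canonical). A subordinate point is that $X^{(1)}$ must be interpreted as the derived Frobenius twist when $X \to S$ is not flat; since $P_\bullet^{(1)}$ computes precisely this derived twist, no further work is required.
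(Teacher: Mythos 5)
Your proposal is correct and takes essentially the same route as the paper: Propositions \ref{prop:conjfiltgeneral} and \ref{prop:conjss} define $\Fil^\conj_\bullet$ by the levelwise canonical (cohomological truncation) filtration on $\Omega^\bullet_{P_\bullet/S}$ and then invoke the canonicity of the classical Cartier isomorphism for free algebras (Theorem \ref{thm:classicalcartier}) to identify the graded pieces with $|\Omega^i_{P_\bullet^{(1)}/S}|[-i] \simeq \wedge^i L_{X^{(1)}/S}[-i]$. Your worry about simplicial coherence is resolved exactly as you suspect: the classical Cartier isomorphism is a natural transformation on the category of free (or smooth) $S$-algebras at the level of cohomology sheaves, which is all the conjugate graded pieces see, so the levelwise identifications assemble automatically and no Kan-extension detour is needed.
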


In particular, for any morphism $f$ as above, there is a {\em conjugate} spectral sequence that converges to derived de Rham cohomology of $f$, and has $E_1$ terms computing cohomology of the wedge powers of the (Frobenius-twisted) cotangent complex. Using this theorem, we prove several new results on derived de Rham cohomology for $p$-adic schemes. For example, we show the following non-completed version of a comparison isomorphism of Illusie:

\begin{theorem*}[see Theorem \ref{thm:ddrcryscomp}]
Let $f:X \to S$ be an lci morphism of flat $\Z/p^n$-schemes. Then there is a natural isomorphism
\[ \R f_* \dR_{X/S} \simeq \R f_* \calO_{X/S,\crys}.\]
\end{theorem*}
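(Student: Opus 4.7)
The strategy is to construct a natural comparison map and to prove it is an equivalence by devissage to characteristic $p$, where the conjugate filtration from the preceding theorem applies.

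To build the comparison, choose a simplicial polynomial $\calO_S$-algebra resolution $P_\bullet \to \calO_X$. Since each $\Spec P_i$ is smooth over $S$, Grothendieck's theorem identifies $\Omega^\bullet_{P_i/\calO_S}$ with the crystalline cohomology of $\Spec P_i$ over $S$, and totalization yields the required map $\R f_* \dR_{X/S} \to \R f_* \calO_{X/S,\crys}$. Equivalently, locally where $f$ factors as $X \stackrel{i}{\hookrightarrow} Y \to S$ with $Y/S$ smooth and $i$ a regular closed immersion, one can express both sides in terms of the PD envelope $D_X(Y)$ and $\Omega^\bullet_{Y/S}$: the crystalline side by the classical formula $\R f_* \calO_{X/S,\crys} \simeq D_X(Y) \otimes_{\calO_Y} \Omega^\bullet_{Y/S}$, and the derived de Rham side by a transitivity argument together with the key identification $\dR_{X/Y} \simeq D_X(Y)$ for a regular closed immersion.

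To check the map is an equivalence, I would reduce to the $\F_p$-case by induction on $n$ using the short exact sequence $0 \to \F_p \to \Z/p^n \to \Z/p^{n-1} \to 0$ of $\Z/p^n$-modules. By flatness of $X$ and $S$ over $\Z/p^n$, both $\R f_* \dR_{X/S}$ and $\R f_* \calO_{X/S,\crys}$ behave well under $-\otimes_{\Z/p^n}\Z/p^{n-1}$ and $-\otimes_{\Z/p^n}\F_p$, producing compatible cofiber sequences on the two sides of the comparison. Assuming the result for $\Z/p^{n-1}$ by induction, this reduces the problem to the base case $n=1$.

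The $\F_p$-case is the heart of the argument. By the preceding theorem, $\dR_{X/S}$ carries a conjugate filtration with graded pieces $\wedge^i L_{X^{(1)}/S}[-i]$. The crystalline side $\R f_* \calO_{X/S,\crys}$ carries an analogous conjugate filtration (locally, the PD filtration on $D_X(Y)$, suitably Frobenius-twisted via the Cartier isomorphism for $Y/S$) whose graded pieces are again $\wedge^i L_{X^{(1)}/S}[-i]$ for $f$ lci, using the identification $L_{X/Y} \simeq (I/I^2)[1]$ for a regular closed immersion and the coincidence of derived exterior powers of a shifted vector bundle with divided powers $\Gamma^i(I/I^2)$. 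The comparison map is filtered, and on graded pieces it is the Cartier isomorphism, so it is an equivalence. \textbf{The main obstacle} is this last step: constructing the conjugate filtration on $\R f_* \calO_{X/S,\crys}$ for lci (rather than merely smooth) $f$ and verifying that the natural comparison map intertwines the two conjugate filtrations with the Cartier isomorphism on graded pieces; the Frobenius twist on both sides must be tracked carefully.
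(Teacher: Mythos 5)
Your high-level strategy closely tracks the paper's: build the comparison map by resolving freely and applying Berthelot's theorem termwise (this is exactly Proposition~\ref{prop:ddrcryscompmap}), devissage to $\F_p$, and then match conjugate filtrations. You have also correctly flagged where the difficulty lies. The problem is that you do not actually resolve the difficulty you flagged, and that is precisely where the real work of the proof lives.

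The claim that ``on graded pieces the comparison map is the Cartier isomorphism'' is not something one gets for free; it is the whole theorem. The conjugate filtration on $\dR_{X/S}$ is built from the canonical truncation on the simplicial bicomplex and identified with $\wedge^i L_{X^{(1)}/S}[-i]$ via a {\em choice} (the Cartier map). The conjugate filtration on the crystalline side (which, incidentally, is not the PD filtration but the coarser filtration of Lemma~\ref{lem:conjfiltpdenv} whose $n$th step is the $B^{(1)}$-span of $\gamma_{kp}$ for $k\le n$) is an independent construction. That $\comp_{B/A}$ sends one filtration into the other and is the ``right'' map on graded pieces requires a genuine calculation. In the paper this is done in Lemma~\ref{lem:ddrcryscompbase}: after reducing to the universal case $\F_p[x]\to\F_p$ by Kunneth and base change, one chooses an explicit bar resolution with an explicit Frobenius lift, uses the liftable Cartier isomorphism to realize the generator $y \in H^{-1}(L_{B^{(1)}/A})$ as $t^{p-1}dt$, and then chases through the definition of $\comp$ to find that $y\mapsto -\gamma_p(x)$. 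The degree-$k$ pieces are then matched via the elementary but crucial arithmetic fact that $n_k := (kp)!/(k!\,p^k)$ is an integer with $n_k/n_{k+1}$ a unit mod $p$, so $\gamma_k(-\gamma_p(x))$ equals $\gamma_{kp}(x)$ up to a unit. Your write-up skips all of this.

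There is also a difference of organization worth noting. Rather than directly comparing conjugate filtrations on the full lci map, the paper first proves the theorem for a quotient by a regular sequence (Corollary~\ref{cor:ddrcryscompquot}) by base-changing to the universal case, and only then assembles the general lci case by choosing a factorization $R\to A\to B$ with $R\to A$ free and $A\to B$ a regular quotient, combining the transitivity filtration of Proposition~\ref{prop:conjfiltddrcomposite} on the derived de Rham side with the Frobenius-descent computation of Lemma~\ref{lem:conjfiltpdenvdr} on the crystalline side. Your sketch collapses these steps into ``the two filtrations match'', but without the explicit base case there is no anchor for that claim. So the proposal is structurally right but has a real gap at the one step that cannot be left to formal nonsense.
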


Here the $\calO_S$-complex on the right hand side is the relative crystalline cohomology of $f$\footnote{In \cite{BhattTorsCrys}, we use the preceding comparison theorem and the conjugate filtration on derived de Rham cohomology to show that the crystalline cohomology groups of even very mildly singular projective varieties (such as stable singular curves) are infinitely generated. In fact, we fail to find a single example of a singular projective variety with finitely generated crystalline cohomology!}.  A satisfying consequence is that divided powers, instead of being introduced {\em by fiat} as in the crystalline story, appear very naturally in derived de Rham theory: they come from the divided power operations on the homology algebra of the Eilenberg-Maclane (infinite loop) space $K(\Z,2) \simeq \C\P^\infty$.  We use this result in \S \ref{sec:ddr-period-rings} to give derived de Rham descriptions of various period rings that occur in $p$-adic Hodge theory, such as Fontaine's ring $A_\crys$:

\begin{theorem*}[see Proposition \ref{prop:acrysdefn}]
There is a natural isomorphism $A_\crys \simeq \widehat{\dR_{\overline{\Z_p}/\Z_p}}$.
\end{theorem*}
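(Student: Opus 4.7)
My strategy is to reduce to the derived de Rham--crystalline comparison (Theorem~\ref{thm:ddrcryscomp}), applied termwise to the filtered presentation $\overline{\Z_p} = \colim_L \calO_L$ over finite subextensions $L/\Q_p$, and then identify the resulting inverse limit of divided power envelopes with Fontaine's ring. Since derived de Rham is defined as the left Kan extension of the classical de Rham complex from polynomial algebras, it commutes with filtered colimits in its source. Writing $\overline{\Z_p} = \colim_L \calO_L$ thus gives
\[ \dR_{\overline{\Z_p}/\Z_p} \simeq \colim_L \dR_{\calO_L/\Z_p}, \]
and after (derived) $p$-adic completion the problem reduces to computing each mod-$p^n$ reduction $\dR_{\calO_L/\Z_p} \otimes^{\L}_{\Z_p} \Z/p^n$ and then passing to limits in $n$ and colimits in $L$.

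Each $\Z_p \to \calO_L$ is a finite flat morphism between one-dimensional regular local rings, hence lci; reducing mod $p^n$ preserves this. Theorem~\ref{thm:ddrcryscomp} therefore identifies
\[ \dR_{\calO_L/\Z_p} \otimes^{\L}_{\Z_p} \Z/p^n \simeq \R\Gamma_\crys\bigl( (\calO_L/p^n)/(\Z/p^n) \bigr). \]
Choosing an Eisenstein presentation $\calO_L = \Z_p[x]/(e_L(x))$, the right-hand side is concentrated in degree zero and equals the mod-$p^n$ reduction of the $p$-adically completed divided power envelope $D_{\Z_p[x]}(e_L)^{\wedge}$ of $(e_L)$ in $\Z_p[x]$.

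Assembling, one finds
\[ \widehat{\dR_{\overline{\Z_p}/\Z_p}} \simeq \R\lim_n \colim_L \bigl( D_{\Z_p[x]}(e_L)^{\wedge}/p^n \bigr). \]
The inner colimit produces the $p$-adically completed divided power envelope of the natural surjection from a large polynomial $\Z_p$-algebra (on uniformisers $\pi_L$ of the various $\calO_L$) onto $\overline{\Z_p}$, i.e., the universal $p$-adic PD thickening of $\overline{\Z_p}/p$ over $\Z_p$. Using the equality $\overline{\Z_p}/p = \calO_{\C_p}/p$, this matches Fontaine's construction of $A_\crys$ as the $p$-adic completion of the PD envelope of $\ker\theta$ inside $\W(\calO_{\C_p}^\flat)$. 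The main obstacle is precisely this final identification: the two presentations of the PD envelope---one via ordinary polynomial generators indexed by the $\pi_L$, the other via Witt vectors of the tilt $\calO_{\C_p}^\flat$---must be shown to coincide. Concretely, one matches the algebraic generators $\pi_L$ with the Teichm\"uller-style elements $[\underline{\pi}]$ arising from compatible systems of $p$-power roots in the tilt, and verifies that the universal properties (representing crystalline cohomology of $\overline{\Z_p}/p$ over $\Z_p$) agree after $p$-adic completion.
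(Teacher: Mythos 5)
Your proof has a genuine gap at the final step, and the route you chose makes that step much harder than it needs to be. You correctly reduce, via the commutation of $\dR$ with filtered colimits and via Theorem~\ref{thm:ddrcryscomp} applied termwise, to computing
\[ \R\lim_n \colim_L \bigl( D_{\Z_p[x]}(e_L)^{\wedge}/p^n \bigr), \]
but the passage from this limit of classical PD envelopes to Fontaine's $A_\crys = \widehat{D_{A_\inf}(\ker\theta)}$ is asserted rather than proven. Two concrete problems: (i) the colimit over $L$ is not a colimit along a tower of inclusions of PD envelopes inside a fixed ambient ring --- the transition maps involve changing the polynomial generator from one uniformiser $\pi_L$ to another $\pi_{L'}$ via a nontrivial algebraic relation, so identifying $\colim_L D_{\Z_p[x]}(e_L)/p^n$ with the PD envelope of a single big surjection onto $\overline{\Z_p}/p^n$ requires an argument you do not give; (ii) even granting that, comparing with $\widehat{D_{A_\inf}(\ker\theta)}$ is not a universal-property triviality, because $A_\inf$ lives on the tilted side and a single $\pi_L$ carries none of the data of a compatible $p$-power-root system that defines $[\underline{\pi}] \in A_\inf$. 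Your final paragraph explicitly names this identification as ``the main obstacle'' and then sketches matching $\pi_L$ to $[\underline{\pi}]$, which is precisely the unfinished work.

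The paper avoids all of this by a different factorisation. Rather than slicing the tower of finite extensions, it first replaces $\overline{\calO_K}$ by its $p$-adic completion (Lemma~\ref{lem:padicddrformalprop}) and then factors $W \to \widehat{\overline{\calO_K}}$ as $W \to A_\inf \to \widehat{\overline{\calO_K}}$. The step $W \to A_\inf$ is relatively perfect modulo~$p$ (both $W/p$ and $A_\inf/p$ are perfect $\F_p$-algebras), so by Lemma~\ref{lem:padicddrformalprop}(5) it contributes nothing to completed derived de Rham cohomology and the base can be changed from $W$ to $A_\inf$. The remaining map $A_\inf \to \widehat{\overline{\calO_K}}$ is a quotient by a single regular element (Proposition~\ref{prop:ainfprop}(4)), so Corollary~\ref{cor:ddrcryscompquot} gives $\widehat{\dR_{\widehat{\overline{\calO_K}}/A_\inf}} \simeq \widehat{D_{A_\inf}(\ker\theta)}$, which \emph{is} the definition of $A_\crys$ --- no comparison of presentations needed. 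The key input you are missing is the vanishing $\widehat{L_{A_\inf/W}} \simeq 0$ coming from Frobenius on the cotangent complex; that is what collapses the problem to a single, tautological PD envelope. Without it, you are forced into the colimit bookkeeping that your proposal leaves unfinished.
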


The previous isomorphism can be used to ``see'' certain natural structures on $A_\crys$. For example, Fontaine's map $\beta:\Z_p(1) \to A_\crys$ (a $p$-adic version of $2 \pi i$) is recovered as a Chern class map, see Construction \ref{cons:acryschern}; the corresponding completed picture describes $B_\dR$ as in \cite[\S 1.5]{Beilinsonpadic}. With this theory in place, in \S \ref{sec:cst}, we can show:

\begin{theorem*}[see Theorem \ref{thm:compcst}]
The $C_\crys$ and $C_\st$ conjectures are true.
\end{theorem*}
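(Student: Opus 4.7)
The plan is to follow the skeleton of Beilinson's proof of $C_\dR$ from \cite{Beilinsonpadic}, systematically replacing each Hodge-completed derived de Rham input by its non-completed counterpart developed in the body of this paper. The guiding principle is that since $A_\crys$ is itself a non-Hodge-completed derived de Rham object (Proposition \ref{prop:acrysdefn}), the non-completed derived de Rham cohomology is the natural vehicle for $B_\crys$- and $B_\st$-coefficient comparisons, in the same way that Hodge-completed derived de Rham carried Beilinson's $B_\dR$-comparison.

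First I would construct $\comp^\crys_\et$. For a proper smooth model $\mathfrak{X}/\calO_K$ of $X$, the derived de Rham--crystalline comparison (Theorem \ref{thm:ddrcryscomp}) reinterprets crystalline cohomology as derived de Rham cohomology. Functoriality of $\dR$ in the base, combined with the identification $A_\crys \simeq \widehat{\dR_{\overline{\Z_p}/\Z_p}}$, then induces a natural map
\[ \R\Gamma_\crys(\mathfrak{X}/\calO_K) \otimes_{\calO_K} A_\crys \longrightarrow \R\Gamma\bigl(\mathfrak{X}_{\overline{\Z_p}},\, \widehat{\dR_{\mathfrak{X}_{\overline{\Z_p}}/\overline{\Z_p}}}\bigr). \]
The Chern-class description of Fontaine's map $\beta$ (Construction \ref{cons:acryschern}) should ensure that this map respects Frobenius, Galois action, and filtration, just as the analogous observation in \cite[\S 1.5]{Beilinsonpadic} does in the completed case.

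The main technical step, playing the role of Beilinson's $p$-adic Poincar\'e lemma, is to show that the $h$-sheafification of non-Hodge-completed derived de Rham cohomology on the generic fiber $X_{\overline{K}}$ is the constant sheaf with value $A_\crys$, so that the right-hand side above becomes $\R\Gamma_\et(X_{\overline{K}}, \Z_p) \otimes_{\Z_p} A_\crys$. This is precisely where the conjugate filtration (Proposition \ref{prop:conjss}) becomes indispensable: modulo $p$, its graded pieces are the wedge powers $\wedge^i L_{X^{(1)}/S}[-i]$ of the Frobenius-twisted cotangent complex, whose $h$-descent is tractable in the spirit of \cite{Bhattmixedcharpdiv}, so a spectral-sequence argument reduces the non-completed statement to a combination of the completed Poincar\'e lemma of Beilinson and the cotangent-complex inputs. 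I expect this to be the principal obstacle, since it requires careful bookkeeping of the conjugate filtration under $h$-sheafification and $p$-adic completion, together with convergence of the associated spectral sequence.

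Finally, to show that $\comp^\crys_\et$ is an isomorphism I would apply de Jong's alterations \cite{dJAlt} to reduce to a computable case, and then invoke the standard heuristic that a structure-preserving map of Weil cohomology theories is an isomorphism once verified on a geometrically simple input. The $C_\st$ case follows by the identical strategy after installing the logarithmic analogues of all the above inputs from \S \ref{sec:ddr-mod-p-log}: the log conjugate filtration, the log derived de Rham--crystalline comparison, and a logarithmic derived de Rham presentation of $B_\st$ in which the monodromy operator arises naturally from the log structure on the special fiber of a semistable model of $X$.
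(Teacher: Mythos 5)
Your proposal matches the paper's strategy in its broad strokes: follow Beilinson's skeleton, replace the Hodge-completed derived de Rham theory with the non-completed one, prove an $A_\crys$-valued Poincar\'e lemma via the conjugate filtration and the $p$-divisibility results of \cite{Bhattmixedcharpdiv}, and then deduce isomorphy from formal Chern-class compatibility. Three points, however, need repair before this outline could be executed.

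First, the claim that a spectral-sequence argument ``reduces the non-completed statement to a combination of the completed Poincar\'e lemma of Beilinson and the cotangent-complex inputs'' will not work as stated. The Hodge-completed and non-completed theories see entirely different filtrations (Hodge vs.\ conjugate), and there is no natural map from the completed theory back to the non-completed one; indeed the natural map goes the other way and is very far from an equivalence. The paper's Theorem~\ref{thm:padicpoincare} is proved \emph{directly}: one filters the presheaf $\fraa_\crys \otimes \Z/p$ via the transitivity Proposition~\ref{prop:conjfiltddrcomposite}, whose graded pieces are coherent cohomology groups $\R\Gamma(\overline{U}, \Omega^i)$ twisted by $\dR_{(\overline{\calO_K}/p,\can)/k}$, and then kills the higher graded pieces and identifies $\gr_0$ with the constant presheaf by the $p$-divisibility Theorem~\ref{thm:mixedcharpdiv}. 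Beilinson's completed lemma is never invoked as a black box.

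Second, you omit the site-of-pairs formalism, and this omission is fatal as written: non-completed derived de Rham cohomology over any $\Q$-algebra (in particular over $\overline{K}$) is degenerate by Corollary~\ref{cor:ddrchar0}, so the ``$h$-sheafification of derived de Rham cohomology on the generic fiber $X_{\overline{K}}$'' is not a meaningful object. One must $h$-sheafify a presheaf on the category $\calP_{\overline{K}}$ of pairs $(U,\overline{U})$, evaluating derived de Rham cohomology of the flat proper integral model $\overline{U}$ over $W$ (or $(W[x],x)$), with the log structure along the boundary. The alterations of de Jong enter precisely here, to show that semistable pairs are $h$-locally cofinal (Remark~\ref{rmk:sspair}), not merely ``to reduce to a computable case'' at the isomorphy stage.

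Third, in the semistable case the statement ``the monodromy operator arises naturally from the log structure'' elides the genuine subtlety that forces the base change from $A_\crys$ to $\widehat{A_\st}$: the natural map $\widehat{\dR_{(\calO_K,\can)/W}} \to A_\crys$ factors through the projection to $R_{\calO_K}$ only $G_{K_\infty}$-equivariantly, and the obstruction to doing so $G_K$-equivariantly is the class $\cl(\st_\pi)$ of Construction~\ref{cons:logchernclasszpbar}. Proposition~\ref{prop:stpiast} shows this class trivializes in $H^1(G_K, \widehat{A_\st})$, which is exactly what permits the Galois-equivariant $\widehat{A_\st}$-linear comparison map. Without this step the map you construct would only be $G_{K_\infty}$-equivariant.
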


As mentioned before,  this result is not new, but our method of proof is. The difficulty, as always, lies in constructing a functorial comparison map $\comp^\st_\et$. We do so by simply repeating Beilinson's construction of $\comp^\dR_\et$ using non-completed derived de Rham cohomology instead of its completed cousin; this is a viable approach thanks to the results above. The underlying principle here may be summarised as follows: for any algebraic variety $X$ over $K$, the $A_\crys$-valued \'etale cohomology of $X_{\overline{K}}$ {\em is} the $h$-sheafification of the $p$-adic derived de Rham cohomology of any $p$-adic compactification of $X_{\overline{K}}$ (see Theorem \ref{thm:padicpoincare}). A slight difference in implementation from \cite{Beilinsonpadic} is that we must use the conjugate spectral sequence, instead of the Hodge spectral sequence, to access \cite{Bhattmixedcharpdiv}.  Once $\comp^\st_\et$ is constructed, showing isomorphy is a formal argument in chasing Chern classes (analogous to the elementary fact that a distance preserving endomorphism of a normed finite dimensional real vector space is an isomorphism).

A technical detail elided above is that the $p$-adic applications (as well as the method of proof) necessitate a theory of derived de Rham cohomology in the {\em logarithmic} context. Rudiments of this can be found in \cite{OlssonLogCot}, but, again, no non-completed results were known. Hence, in \S \ref{sec:ddr-log} and \S \ref{sec:ddr-mod-p-log}, we set up elements of ``derived logarithmic geometry'' using simplicial commutative rings and monoids (we stick to the language of model categories instead of $\infty$-categories for simplicity of exposition). In particular, Gabber's logarithmic cotangent complex from \cite[\S 8]{OlssonLogCot} appears naturally in this theory (see Remark \ref{rmk:logcotunivprop}), and one has logarithmic versions of the results mentioned above, e.g., a conjugate spectral sequence for computing log derived de Rham cohomology is constructed in Proposition \ref{prop:conjsslddr}, and a comparison isomorphism with log crystalline cohomology in the lci case (almost) is shown in Theorem \ref{thm:lddrcryscomp}.

\subsection{A brief history of the comparison theorems}
\label{ss:history}
The comparison conjectures of Fontaine and Fontaine-Jannsen are a series of increasingly stronger statements comparing the $p$-adic \'etale cohomology of varieties over $p$-adic local fields  with their de Rham cohomology (see \cite{FontaineBigRings,FontaineCst, IllusieCdrBourbaki,IllusieCrysCoh}).  These conjectures were made almost three decades ago, and have proven to be extremely influential in modern arithmetic geometry. All these conjectures have been proven now: by Faltings \cite{FaltingspHT, FaltingsCrysGal, FaltingsAEE} using almost ring theory, by Niziol \cite{NiziolCrys, NiziolST} via higher algebraic K-theory, and by Tsuji \cite{TsujiST} (building on work on Bloch-Kato \cite{BlochKatopadic}, Fontaine-Messing \cite{FontaineMessing}, Hyodo-Kato \cite{HyodoKato}, and Kato \cite{KatoST}) using the syntomic topology.  More recently, Scholze has reproven these conjectures (and more) using his language of perfectoid spaces, which can be viewed as a conceptualisation of Faltings' work. However, these proofs are technically challenging (for example, Gabber and Ramero's presentation of the almost purity theorem in \cite{FaltingsAEE} takes two books \cite{GabberRameroART,GabberRameropHT}), and it was hoped that a simpler proof could be found. Such a proof was arguably found by Beilinson in \cite{Beilinsonpadic} for the de Rham comparison conjecture $C_\dR$; the present paper extends these ideas to prove the crystalline conjecture $C_\crys$ and the semistable conjecture $C_\st$. While this paper was being prepared, Beilinson has also independently found an extension \cite{Beilinsoncrystalline} of \cite{Beilinsonpadic} to prove $C_\crys$ and $C_\st$; his new proof bypasses derived de Rham cohomology in favor of the more classical log crystalline cohomology of Kato \cite[\S 5-\S 6]{KatoLogFI}. However, both the present paper and \cite{Beilinsoncrystalline} share an essential idea: using the conjugate filtration to prove a Poincare lemma for non-completed cohomology (compare the proof of Theorem \ref{thm:padicpoincare} with \cite[\S 2.2]{Beilinsoncrystalline}).

\subsection{Outline}
\label{sec:outline}
Notation and homological conventions (especially surrounding filtration convergence issues) are discussed in \S \ref{sec:notation}.  In \S \ref{sec:ddr-generalities}, we review the definition of derived de Rham cohomology from \cite[\S VIII.2]{IllusieCC2}, and make general observations; the important points are the conjugate filtration and the transitivity properties. Specialising modulo $p^n$ in \S \ref{sec:ddr-mod-p}, we construct a map from derived de Rham cohomology to crystalline cohomology in general, and show that it is an isomorphism in the case of an lci {\em morphism} (see Theorem \ref{thm:ddrcryscomp}). The main tool here is a derived Cartier theory (Proposition \ref{prop:conjss}), together with some explicit simplicial resolutions borrowed from \cite{IyengarAQ}.

Next, logarithmic analogues of the preceding results are recorded in \S \ref{sec:ddr-log} and \S \ref{sec:ddr-mod-p-log} based on Gabber's approach to the logarithmic cotangent complex from \cite[\S 8]{OlssonLogCot}; see Theorem \ref{thm:lddrcryscomp} for the best logarithmic comparison result we show.  Along the way, rudiments of ``derived logarithmic geometry'' (with simplicial commutative monoids and rings) are set up in \S \ref{sec:ddr-scr-log} and parts of \S \ref{sec:ddr-log} as indicated in \S \ref{ss:results}\footnote{We have tried to avoid using higher categorical language in the non-logarithmic story. However, model categories (or, better, $\infty$-categories) seem necessary to cleanly present the logarithmic story, at least if one wishes to not get constantly bogged down in arguments that require showing that certain constructions are independent of choices of projective resolutions (= cofibrant replacements).}. The  $p$-adic limits of all these results are catalogued in \S \ref{sec:ddr-p-adic}.

Moving from algebraic geometry towards arithmetic, we specialise the preceding results to give derived de Rham descriptions of the $p$-adic period rings in \S \ref{sec:ddr-period-rings} (as indicated in \S \ref{ss:results}). In fact, the picture extends almost completely to {\em any} integral perfectoid algebra in the sense of Scholze \cite{ScholzePerfectoid1}, as is briefly discussed in Remark \ref{rmk:acrysperfectoid}. Using these descriptions, we prove the Fontaine-Jannsen $C_\st$-conjecture in \S \ref{sec:cst} as discussed in \S \ref{ss:results}.  The key result here is crystalline $p$-adic Poincare lemma (Theorem \ref{thm:padicpoincare}). We also briefly discuss relations with other proofs of the $p$-adic comparison theorems in Remark \ref{rmk:padiccompapproaches}.

\subsection{Notation and conventions}
\label{sec:notation}
For a ring $A$, the ring $A[x] \langle x \rangle$ (or sometimes simply $A\langle x\rangle$) is the free pd-polynomial ring in one variable $x$ over $A$; in general, we use $\langle \rangle$ to denote divided power adjunctions. Any tensor product appearing is always derived unless otherwise specified. For a $\Z_p$-algebra $A$, we let $\widehat{A} := \lim_n A/p^n$ be the $p$-adic completion of $A$ unless explicitly specified otherwise. For a complex $K$ of $\Z_p$-modules, we define the derived $p$-adic completion as $\widehat{K} := \R\lim_n K \otimes_\Z \Z/p^n$; if $K$ has $\Z_p$-flat terms, then $\widehat{K}$ is computed as the termwise $p$-adic completion of $K$. Note that the notation is inconsistent in the case that $K = A$ is $\Z_p$-algebra that is not $\Z_p$-flat, in which case we will always mean the derived completion. We also set $T_p(K) := \R\Hom_\Z(\Q_p/\Z_p,K)$; there is a natural equivalence $T_p(K) \simeq \widehat{K}[-1]$. All exterior powers that occur are derived as in \cite[\S 7]{QuillenCRCNotes}.

We often employ topological terminology when talking about complexes. A complex $K$ over an abelian category $\calA$ is called {\em connective} if $\pi_{-i}(K) = H^i(K) = 0$ for $i > 0$; it is called {\em coconnective} if the preceding vanishing holds for $i < 0$ instead. We say that $K$ is {\em eventually connective} if some shift of $K$ is connective, and similarly for {\em eventually coconnective}; these notions correspond to right-boundedness and left-boundedness in the derived category. A complex $K$ is said to be {\em $n$-connected} if $\pi_i(K) = 0$ for $i \leq n$. All these notions are compatible with the usual topological ones under the Dold-Kan correspondence, which will be used without further comment.

The symbol $\Delta$ denotes the category of simplices. For a category $\calC$, we let $s\calC$ denote the category $\Fun(\Delta^\opp,\calC)$ of simplicial objects in $\calC$; dually, we use $c\calC$ to denote the category $\Fun(\Delta,\calC)$ of cosimplicial objects. For an object $X$ in a category $\calC$, we let $\calC_{/X}$ (resp. $\calC_{X/}$) denote the category of objects of $\calC$ lying over (resp. lying under) $X$, and for a map $X \to Y$, we write $\calC_{X/ /Y}$ for $\calC_{X/} \times_{\calC} \calC_{/Y}$. 

If $P_\bullet \in sc\calA$ is a simplicial cosimplicial object in an abelian category $\calA$, then we let $|P_\bullet| \in \Ch^\bullet(\calA)$ denote the cochain complex obtained by totalising the associated double complex (via direct sums); this is a homotopy-colimit over $\Delta^\opp$ when $P_\bullet$ is viewed as defining an object of $s\Ch^\bullet(\calA)$ via the Dold-Kan correspondence. The canonical filtration on each cosimplicial object $P_n$ fits together to define an increasing bounded below separated exhaustive filtration on $|P_\bullet|$ that we call the {\em conjugate filtration} $\Fil_\bullet^\conj(|P_\bullet|)$. The associated graded piece $\gr^\conj_k(|P_\bullet|)$ may be identified as the object in $\Ch^\bullet(\calA)$ defined by $\pi^k(P_\bullet)[-k]$. Dually, if $Q^\bullet \in cs\calA$ denotes a cosimplicial simplicial object in an abelian category $\calA$, then we let $\Tot(Q^\bullet) \in \Ch_\bullet(\calA)$ be the chain complex obtained by taking a homotopy-limit over $\Delta$ of $Q^\bullet$, viewed as an object of $c\Ch_\bullet(\calA)$; the canonical filtration on each simplicial object $Q^n$ fits together to define a descreasing bounded above separated complete filtration on $\Tot(Q^\bullet)$ that we also call the {\em conjugate filtration} $\Fil_\conj^\bullet(\Tot(Q^\bullet))$. The associated graded piece $\gr_\conj^k(\Tot(Q^\bullet))$ may be identified as the object in $\Ch_\bullet(\calA)$ defined by $\pi_k(Q^\bullet)[k]$.

The following facts  will be used freely. If $A_\bullet \to B_\bullet$ is a weak equivalence of simplicial rings, and $M_\bullet$ is a simplicial $A_\bullet$-module with $M_n$ flat over $A_n$ for each $n$, then the adjunction map $M_\bullet \to M_\bullet \otimes_{A_\bullet} B_\bullet$ is an equivalence of simplicial abelian groups; see \cite[\S I.3.3.2 and Corollary I.3.3.4.6]{IllusieCC1}. A map $M \to N$ of (possibly unbounded) complexes of $\Z/p^n$-modules is a quasi-isomorphism if and only if $M \otimes_{\Z/p^n} \Z/p \to N \otimes_{\Z/p^n} \Z/p$ is so; we refer to this phenomenon as ``devissage.'' 

Let $\Set$, $\Ab$, $\Mon$, and $\Alg$ be the categories of sets, abelian groups, commutative monoids, and commutative rings respectively. There are some obvious pairs of adjunctions between these categories, and we employ the following notation to refer to these: $\Free^\Set_\Ab:\Set \to \Ab$ denotes the free abelian group functor with right adjoint $\Forget^\Ab_\Set$, while $\Free^{s\Set}_{s\Ab}:s\Set \to s\Ab$ denotes the induced functor on simplicial objects, etc. A simplicial object in a concrete category (like $\Set$, $\Ab$, $\Mon$, $\Alg$, etc) is called discrete if the underlying simplicial set is so.

Finally, many theorems in the paper are formulated and proven in ring-theoretic language. The globalisation to quasi-compact quasi-separated schemes is immediate (either via rings in a topos using \cite[\S II.2.3]{IllusieCC2}, or by Mayer-Vietoris arguments and homotopy-coherence considerations), but we ignore this issue here to improve readability.

\subsection{Acknowledgements}
The author warmly thanks Sasha Beilinson for enlightening conversations and communications. The overwhelming intellectual debt this paper owes to \cite{Beilinsonpadic} is evident. Moreover, the idea that $p$-adically completed derived de Rham cohomology (but without Hodge completion) could lead to a crystalline analogue of \cite{Beilinsonpadic} was expressed as a ``hope'' by Beilinson, and was the starting point of the author's investigations. An equally substantial yet intangible debt is owed to Quillen's manuscripts \cite{QuillenCRCNotes,QuillenCRC} for teaching the author how to work with simplicial commutative rings. Special thanks are due to Johan de Jong for numerous useful conversations, especially about homological algebra, and consistent encouragement. The author is also grateful to Jacob Lurie for conversations that clarified many homotopical aspects of this work, and to Martin Olsson for a discussion of \cite{OlssonLogCot}.

\section{The derived de Rham complex}
\label{sec:ddr-generalities}

Illusie's derived de Rham complex \cite[\S VIII.2]{IllusieCC2} is a replacement for the usual de Rham complex that works better for singular morphisms; the idea, roughly, is to replace the cotangent sheaf with the cotangent complex in the definition of the usual de Rham complex. In this section, we remind the reader of the definition, and some basic properties; these depend on a good understanding of projective resolutions of simplicial commutative rings, and a robust formal framework for these is provided by Quillen's model structure \cite{QuillenCRCNotes} on $s\Alg$ reviewed in \S \ref{sss:modelstrsalg}.

\begin{definition}
\label{defn:ddr}
Let $A \to B$ be a ring map, and let $L_{B/A}$ denote the cotangent complex. Then the derived de Rham complex of $B$ over $A$ is defined to be $|\Omega^\bullet_{P_\bullet(B/A)/A}| \in D(\Mod_A)$ where $P_\bullet(B/A) \to B$ is the canonical free resolution of $B$ as an $A$-algebra, where $\Omega^\bullet_{C/A}$ denotes the usual de Rham complex of an $A$-algebra $C$. More generally, the same definition applies when $A \to B$ is a map of simplicial commutative rings in a topos.
\end{definition}

Elaborating on Definition \ref{defn:ddr}, observe that $\Omega^\bullet_{P_\bullet(B/A)/A}$ naturally has the structure of a simplicial cochain $A$-complex. The associated total complex $| \Omega^\bullet_{P_\bullet(B/A)/A} |$ is constructed using {\em direct sums} along antidiagonals, and may be viewed as a homotopy-colimit over $\Delta^\opp$ of the $A$-cochcain complex valued functor $n \mapsto \Omega^\bullet_{P_n(B/A)/A}$; we typically picture it as a second quadrant bicomplex. This description makes it clear that  $\dR_{B/A}$ comes equipped with an $E_\infty$-algebra structure, and a decreasing separated exhaustive multiplicative Hodge filtration $\Fil^\bullet_H$.  One can show that $\dR_{B/A}$ can be defined using any free resolution $P_\bullet \to B$, and thus the functor of $\dR_{-/A}$ commutes with filtered colimits; see also \cite[\S VIII.2.1.1]{IllusieCC2}. In fact, the functor $\dR_{-/A}$ commutes with {\em arbitrary} colimits when viewed as a functor $E_\infty$-algebras,  but we do not discuss that here (see Proposition \ref{prop:ddrbasechange} though).

\begin{remark}[Lurie]
\label{rmk:lurielke}
Fix a ring $A$, and let $s\Alg_{A/}$ be the $\infty$-category of simplicial $A$-algebras. There is a natural subcategory $\Alg^\Free_{A/} \subset s\Alg_{A/}$ spanned by free simplicially constant $A$-algebras. In fact, $\Alg^\Free_{A/}$ generates $s\Alg_{A/}$ under homotopy-colimits. The functor $B \mapsto \dR_{B/A}$ on $s\Alg_{A/}$ is the left Kan extension of $F \mapsto \Omega^\bullet_{F/A}$ on $\Alg^\Free_{A/}$.
\end{remark}

An important structure present on $\dR_{B/A}$ is the conjugate filtration:

\begin{proposition}
\label{prop:conjfiltgeneral}
Let $A \to B$ be a ring map (or a map of simplicial commutative rings). Then there exists a functorial increasing bounded below separated exhaustive filtration $\Fil^\conj_\bullet$ on $\dR_{B/A}$. This filtration can be defined using the conjugate filtration on the bicomplex $\Omega^\bullet_{P_\bullet/A}$ for any free $A$-algebra resolution $P_\bullet \to A$, and is independent of the choice of $P_\bullet$. In particular, there is a convergent spectral sequence, called the {\em conjugate spectral sequence}, of the form
\[ E_1^{p,q}:H_{p+q}(\gr^\conj_p(\dR_{B/A})) \Rightarrow H_{p+q}(\dR_{B/A}) \]
that is functorial in $A \to B$ (here we follow the homological convention that $d_r$ is a map $E_r^{p,q} \to E_r^{p-r,q+r-1}$).
\end{proposition}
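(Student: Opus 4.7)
The plan is to construct the conjugate filtration by instantiating the general recipe from \S \ref{sec:notation} on the bicomplex $\Omega^\bullet_{P_\bullet(B/A)/A}$, and then to verify the listed formal properties. Fixing the canonical free resolution $P_\bullet \to B$ (or any free resolution, with each $P_n$ a polynomial $A$-algebra so that $\Omega^i_{P_n/A}$ is a free $P_n$-module), one obtains a simplicial cochain $A$-complex whose totalization (via direct sums on antidiagonals) is $\dR_{B/A}$. The canonical good-truncation filtration $\tau_{\leq k} \Omega^\bullet_{P_n/A}$ on each row assembles into an increasing filtration $\Fil^\conj_\bullet$ on $\dR_{B/A}$; by the recipe of \S \ref{sec:notation} its $k$-th graded piece is the object of $D(\Mod_A)$ represented by the simplicial $A$-module $[n] \mapsto H^k(\Omega^\bullet_{P_n/A})$, placed in homological degree $k$.

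The formal properties are then immediate. Each row $\Omega^\bullet_{P_n/A}$ is supported in cohomological degrees $\geq 0$, so $\Fil^\conj_k = 0$ for $k < 0$, giving bounded-belowness and separatedness. Exhaustiveness follows because $\tau_{\leq k} C^\bullet = C^\bullet$ for $k \gg 0$ on each row. The conjugate spectral sequence is the standard spectral sequence associated to $\Fil^\conj_\bullet$, and it converges strongly because the filtration is bounded below and exhaustive; the homological indexing convention matches the one fixed in \S \ref{sec:notation}.

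The remaining content is independence of the resolution and functoriality. Given a weak equivalence $P_\bullet \to Q_\bullet$ between two free $A$-algebra resolutions of $B$, the induced map $|\Omega^\bullet_{P_\bullet/A}| \to |\Omega^\bullet_{Q_\bullet/A}|$ respects $\Fil^\conj_\bullet$, so it suffices to show it is a filtered quasi-isomorphism. Passing to conjugate graded pieces, this reduces to checking that $H^k(\Omega^\bullet_{P_\bullet/A}) \to H^k(\Omega^\bullet_{Q_\bullet/A})$ is a levelwise weak equivalence of simplicial $A$-modules for each $k$. Since each $P_n$, $Q_n$ is a free $A$-algebra, the modules $\Omega^i_{P_n/A}$ and $\Omega^i_{Q_n/A}$ are free, so the invariance-under-weak-equivalence fact recalled at the end of \S \ref{sec:notation} (flat base change along a weak equivalence of simplicial rings) propagates weak equivalences through the de Rham bicomplex, and cohomology preserves this conclusion. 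Functoriality in $A \to B$ then follows from functoriality of the canonical free resolution together with the naturality of $\tau_{\leq k}$.

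The main (and essentially only) nontrivial point is the invariance-of-resolution step: one must be able to propagate weak equivalences of simplicial $A$-algebras through the $\Omega^\bullet$ construction, which is precisely what the flatness of $\Omega^i_{P_n/A}$ over $P_n$ (a consequence of freeness of the $P_n$) provides. Everything else is direct bookkeeping with the general formalism of filtered complexes recorded in \S \ref{sec:notation}.
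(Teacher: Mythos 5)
The construction, the verification of the formal properties of the filtration, and the conclusion about the spectral sequence all line up with the paper, so those parts are fine. The gap is in the independence-of-resolution step.

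First, a formulation issue: you say the independence reduces to showing $H^k(\Omega^\bullet_{P_\bullet/A}) \to H^k(\Omega^\bullet_{Q_\bullet/A})$ is a \emph{levelwise} weak equivalence. That is too strong and generally false (for discrete objects it would mean a level-by-level isomorphism, which certainly fails for two different free resolutions). What is actually needed is that the induced map of simplicial $A$-modules is a weak equivalence after Dold--Kan.

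Second, and more seriously, the flat base change fact from \S \ref{sec:notation} does not deliver this. That lemma concerns the adjunction map $M_\bullet \to M_\bullet \otimes_{R_\bullet} S_\bullet$ for a termwise flat module $M_\bullet$ over a simplicial ring $R_\bullet$ along a weak equivalence $R_\bullet \to S_\bullet$. Two things break when you try to apply it here. The natural comparison $\Omega^i_{P_\bullet/A} \otimes_{P_\bullet} Q_\bullet \to \Omega^i_{Q_\bullet/A}$ is not an isomorphism in general: the target also sees derivations ``in the direction of'' $P_\bullet \to Q_\bullet$, and $P_n \to Q_n$ need not be smooth or flat. And even granting some control on $\Omega^i$, passing to $H^k(\Omega^\bullet_{-/A})$ is a highly nonlinear operation (it involves cocycles of the de Rham differential, which is not $P_n$-linear), so ``cohomology preserves this conclusion'' does not follow from any base-change statement.

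What actually makes the argument work, and is what the paper does, is the stronger model-categorical input: any two free (hence cofibrant) resolutions $P_\bullet, Q_\bullet$ of $B$ in $s\Alg_{A/}$ are simplicially homotopy equivalent, since every object of $s\Alg$ is fibrant. Applying any functor on $A$-algebras levelwise --- in particular $R \mapsto \tau_{\leq k}\Omega^\bullet_{R/A}$ or $R \mapsto H^k(\Omega^\bullet_{R/A})$ --- carries simplicial homotopies to simplicial homotopies, so the resulting simplicial cochain complexes (or simplicial modules) are again simplicially homotopy equivalent, hence weakly equivalent after totalization. This gives the filtered quasi-isomorphism you want. Replacing your flat-base-change reasoning with this homotopy-equivalence argument closes the gap; the rest of your proof stands.
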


\begin{proof}
The filtration in question is simply the conjugate filtration on the homotopy-colimit of a simplicial cosimplicial abelian $A$-module, as explained in \S \ref{sec:notation}; we briefly reproduce the relevant arguments for the readers convenience. Let $P_\bullet \to B$ as an $A$-algebra. Then each $\Omega^\bullet_{P_n/A}$ comes equipped with the canonical filtration by cohomology sheaves. This leads to an increasing bounded below (at $0$) separated exhaustive filtration of the bicomplex $\Omega^\bullet_{P_\bullet/A}$ (filter each column by its canonical filtration). The associated graded pieces of this filtration are naturally simplicial $A$-cochain complexes, with the $i$-th one given by simplicial $A$-cochain complex defined by $n \mapsto H^i(\Omega^\bullet_{P_n/A})[-i]$. The conjugate filtration $\Fil^\conj_\bullet$ on $\dR_{B/A}$ is simply the corresponding filtration on the associated single complex $|\Omega^\bullet_{P_\bullet/A}|$. If $F_\bullet$ is a different free resolution of $B$ as an $A$-algebra, then $F_\bullet$ is homotopy equivalent to $P_\bullet$. In particular, the simplicial $A$-cochain complexes $n \mapsto H^i(\Omega^\bullet_{P_n/A})$ and $n \mapsto H^i(\Omega^\bullet_{F_n/A})$ are homotopy equivalent, which ensures that the resulting two filtrations on the associated single complex $|\Omega^\bullet_{P_\bullet/A}| \simeq \dR_{B/A} \simeq |\Omega^\bullet_{F_\bullet/A}|$ coincide. Finally, the claim about the spectral sequence is a general fact about increasing bounded below separated exhaustive filtrations on cochain complexes; see \cite[Proposition 1.2.2.14]{LurieHA} for more on this spectral sequence. 
\end{proof}

\begin{remark}
Proposition \ref{prop:conjfiltgeneral} refers to the potentially nebulous notion of filtrations on objects of the derived category $D(\Mod_A)$ of $A$-modules. To make this precise, one could work with $D(\Fun(\N,\Mod_A))$ where $\N$ is the poset of natural numbers, viewed as a category. There is a (left Quillen) homotopy-colimit functor $D(\Fun(\N,\Mod_A)) \to D(\Mod_A)$, and the first assertion of Proposition \ref{prop:conjfiltgeneral} amounts to a canonical lift $\widetilde{\dR_{B/A}}$ of $\dR_{B/A}$ along this functor (by the formula $n \mapsto |\tau_{\leq n} \Omega^*_{P_\bullet/A}|$). The graded pieces $\gr^\conj_p(\dR_{B/A})$ described above are recovered as the cone of the map $\ev_{p-1}(\widetilde{\dR_{B/A}}) \to \ev_p(\widetilde{\dR_{B/A}})$, where $\ev_n:D(\Fun(\N,\Mod_A)) \to D(\Mod_A)$ is evaluation at $n \in \N$, etc. In the sequel, this picture will be implicit in all discussions of filtered objects in derived categories.
\end{remark}

A corollary of Proposition \ref{prop:conjfiltgeneral} if that if $f:B \to C$ is a map of $A$-algebras that induces an equivalence $\gr^\conj_i(f)$ for all $i$, then it also induces an equivalence on $\dR_{B/A} \to \dR_{C/A}$, i.e., the passage from $\dR_{B/A}$ to $\oplus_p \gr^\conj_p(\dR_{B/A})$ is conservative. A consequence is that the conjugate filtration is degenerate in characteristic $0$:

\begin{corollary}
\label{cor:ddrchar0}
Let $A \to B$ be a map of $\Q$-algebras. Then $A \simeq \dR_{B/A}.$
\end{corollary}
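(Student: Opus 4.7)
The plan is to reduce the statement to a direct computation of the graded pieces of the conjugate filtration on $\dR_{B/A}$ supplied by Proposition \ref{prop:conjfiltgeneral}. Since the conjugate filtration is bounded below, separated, and exhaustive, it suffices (by the conservativity remark immediately preceding the corollary, or equivalently by the convergence of the conjugate spectral sequence) to check that $\gr^\conj_0(\dR_{B/A}) \simeq A$ and $\gr^\conj_i(\dR_{B/A}) \simeq 0$ for $i>0$.

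To compute these graded pieces, I would pick the canonical free resolution $P_\bullet(B/A) \to B$, so each $P_n$ is a polynomial $A$-algebra on some (possibly very large) set. By the description of $\gr^\conj_i$ recalled in Section \ref{sec:notation}, $\gr^\conj_i(\dR_{B/A})$ is the totalization of the simplicial $A$-cochain complex $n \mapsto H^i(\Omega^\bullet_{P_n/A})[-i]$. Hence the problem reduces to computing the cohomology of the de Rham complex of a polynomial $A$-algebra when $A$ is a $\Q$-algebra.

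The main (and essentially only) input is the algebraic Poincar\'e lemma in characteristic zero: if $A$ is a $\Q$-algebra and $R = A[x_1,\dots,x_m]$, then $\Omega^\bullet_{R/A}$ is quasi-isomorphic to $A$ concentrated in degree $0$, via the standard contracting homotopy that divides by the degree of each monomial. For a polynomial ring $A[X]$ on an infinite set $X$, one writes $A[X] = \colim_S A[S]$ over finite subsets $S \subset X$; since $\Omega^\bullet_{-/A}$ and the formation of cohomology both commute with filtered colimits, the same conclusion holds. Thus $H^0(\Omega^\bullet_{P_n/A}) = A$ and $H^i(\Omega^\bullet_{P_n/A}) = 0$ for $i>0$, for every $n$.

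Feeding this back in, $\gr^\conj_i(\dR_{B/A})$ vanishes for $i>0$, while $\gr^\conj_0(\dR_{B/A})$ is the totalization of the constant simplicial $A$-module with value $A$, which is $A$ itself. Therefore $\dR_{B/A} \simeq A$. I do not expect any real obstacle here: the only subtlety is passing from finitely generated to arbitrary polynomial rings, which is handled by the filtered-colimit compatibility noted after Definition \ref{defn:ddr}.
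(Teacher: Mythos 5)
Your proof is correct and follows essentially the same route as the paper's: reduce via the conjugate filtration to the de Rham cohomology of the free terms $P_n$, then apply the characteristic-zero Poincar\'e lemma for polynomial $A$-algebras to see that all positive conjugate graded pieces vanish, and conclude by convergence of the conjugate spectral sequence. The only difference is that you spell out the filtered-colimit reduction to finitely many variables, which the paper leaves implicit.
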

\begin{proof}
Let $A \to P_\bullet \to B$ be a free resolution of $B$ relative to $A$. Then $\Omega^\bullet_{P_n/A} \simeq A[0]$ as polynomial algebras in characteristic $0$ have no de Rham cohomology. It follows that  $\gr^\conj_i(\dR_{B/A}) = 0$ for $i > 0$, and $\gr^\conj_0(\dR_{B/A}) = A$. The convergence of the conjugate spectral sequence then does the rest.
\end{proof}

\begin{remark}
Corollary \ref{cor:ddrchar0} renders derived de Rham theory useless in characteristic $0$. A satisfactory fix is to {\em define} $\dR_{B/A}$ as the Hodge-completed version of the complex used above; roughly speaking, this amounts to using the product totalisation instead of the direct sum totalisation when defining the derived de Rham complex. More practically, the derived Hodge-to-de-Rham spectral sequence is forced to converge, which immediately gives meaning to the resulting theory as it specialises to classical de Rham cohomology for smooth maps. This is also the variant used in \cite{Beilinsonpadic}, but is insufficient for the $p$-adic applications of \S \ref{sec:cst}. In \cite{Bhatthodgecompleteddrchar0}, we show that this Hodge-completed theory {\em always} coincides with Hartshorne's algebraic de Rham cohomology \cite{HartshorneAlgdR} for finite type maps of noetherian $\Q$-schemes (and thus with Betti cohomology over $\C$), generalising Illusie's theorem \cite[Theorem VIII.2.2.8]{IllusieCC2} from the lci case.
\end{remark}

We will see later that the conjugate filtration is quite non-trivial away from characteristic $0$, and, in fact, forms the basis of most of our computations. We end this section by discussing the behaviour under tensor products.

\begin{proposition}
\label{prop:ddrbasechange}
Let $A \to B$ and $A \to C$ be ring maps. Then we have the Kunneth formula
\[ \dR_{B \otimes_A C/A} \simeq \dR_{B/A} \otimes_A \dR_{C/A} \]
and a base change formula.
\[ \dR_{B/A} \otimes_A C \simeq \dR_{B \otimes_A C/C}, \]
where all tensor products are derived.
\end{proposition}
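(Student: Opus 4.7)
The plan is to prove both identities by computing with explicit free resolutions, reducing each statement to its classical counterpart for polynomial algebras over $A$. For the Kunneth formula, I would choose canonical free resolutions $P_\bullet \to B$ and $Q_\bullet \to C$ of $A$-algebras. Since a tensor product of free commutative $A$-algebras is again free, the levelwise tensor product $R_\bullet := P_\bullet \otimes_A Q_\bullet$ is a simplicial free $A$-algebra, and one checks (using that each $P_n, Q_n$ is $A$-flat and that homotopy colimits commute with tensor products) that $R_\bullet \to B \otimes_A C$ is a resolution, hence computes $\dR_{B \otimes_A C/A}$. For polynomial $A$-algebras $P$ and $Q$, the classical Kunneth isomorphism $\Omega^\bullet_{P \otimes_A Q/A} \simeq \Omega^\bullet_{P/A} \otimes_A \Omega^\bullet_{Q/A}$ holds on the nose, via the derivation $d \otimes 1 + 1 \otimes d$ on the tensor product of de Rham complexes. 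Applied levelwise to $R_\bullet$ and then totalised, this yields an identification $|\Omega^\bullet_{R_\bullet/A}| \simeq |\Omega^\bullet_{P_\bullet/A}| \otimes_A |\Omega^\bullet_{Q_\bullet/A}|$ of homotopy colimits of bicomplexes, which is the desired formula.

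For the base change formula, I would take a free $A$-algebra resolution $P_\bullet \to B$ and observe that $P_\bullet \otimes_A C \to B \otimes_A C$ is then a free $C$-algebra resolution (freeness is preserved by base change, and flatness of $P_n$ over $A$ together with the fact that $B \otimes_A C$ is derived gives that the augmentation is a weak equivalence). The classical identity $\Omega^\bullet_{P_n \otimes_A C/C} \simeq \Omega^\bullet_{P_n/A} \otimes_A C$ for polynomial $A$-algebras is immediate on generators, and totalising over $\Delta^\opp$ gives $\dR_{B \otimes_A C / C} \simeq |\Omega^\bullet_{P_\bullet/A}| \otimes_A C \simeq \dR_{B/A} \otimes_A C$. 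Alternatively, the base change formula can be deduced from the Kunneth formula by verifying that $\dR_{C/C} \simeq C$ and using functoriality along $A \to C$, but the direct argument via resolutions seems cleaner.

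A conceptually cleaner alternative, using Remark \ref{rmk:lurielke}, is to observe that both sides of each identity, viewed as functors of $B$ (and $C$, in the Kunneth case) on simplicial commutative $A$-algebras, preserve sifted homotopy colimits: on the left this uses that $\dR_{-/A}$ is a left Kan extension from free $A$-algebras and that tensor products preserve colimits, and on the right this is automatic from the same Kan extension property for $\dR_{-/C}$ together with colimit-preservation of base change. It then suffices to check the identities when $B$ and $C$ are free simplicially constant $A$-algebras, which reduces to the classical Kunneth and base change statements for the de Rham complex of a polynomial ring.

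The main potential obstacle is ensuring that the manipulations with homotopy colimits of bicomplexes behave correctly — in particular, that the direct-sum totalisation used to define $\dR$ commutes with $\otimes_A$ in the appropriate derived sense. This is where the hypothesis that $P_\bullet$ and $Q_\bullet$ are levelwise free (and thus levelwise $A$-flat, with de Rham complex a bounded complex of free $A$-modules) is used, guaranteeing that the relevant tensor products need not be further derived and that the totalised double complex on the right genuinely computes $\dR_{B/A} \otimes_A^\L \dR_{C/A}$.
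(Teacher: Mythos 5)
Your proof takes exactly the same approach as the paper: reduce to the polynomial case via free resolutions, using the classical Kunneth and base-change identities for de Rham complexes of polynomial $A$-algebras. You simply fill in the details that the paper leaves implicit (the Eilenberg--Zilber reduction for the levelwise tensor product, the compatibility of direct-sum totalisation with $\otimes_A$), and the Lurie-style left Kan extension alternative you sketch is also sound and consistent with Remark \ref{rmk:lurielke}.
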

\begin{proof}
Both claims are clear when the algebras involved are polynomial $A$-algebras. The general case follows from this by passage to free resolutions.
\end{proof}

\section{Derived de Rham cohomology modulo $p^n$}
\label{sec:ddr-mod-p}

In this section, we investigate derived de Rham cohomology for maps of $\Z/p^n$-algebras. By an elementary devissage, almost all problems considered reduce to the case of $\F_p$-algebras. In this positive characteristic setting,  our main observation is that {\em derived} Cartier theory gives a useable description of derived de Rham cohomology, and can be effectively used to reduce questions in derived de Rham theory to questions about the cotangent complex.

\begin{notation}[Frobenius twists]
\label{not:frobtwistddr}
Let $f:A \to B$ be a map of $\F_p$-algebras. Let $\Frob_A:A \to A$ be the Frobenius morphism on $A$, and define $B^{(1)} := B \otimes_{A,\Frob} A = B \otimes_A \Frob_* A = \Frob_A^* B$ to be the Frobenius twist of $A$, viewed as a simplicial commutative ring; explicitly, if $P_\bullet \to B$ denotes a free resolution of $B$ over $A$, then $P_\bullet \otimes_A \Frob_* A$ computes $B^{(1)}$. If $\Tor_i^A(\Frob_* A, B) = 0$ for $i > 0$, then $B^{(1)}$ coincides with the usual (underived) Frobenius twist, which will be the primary case of interest to us. The following diagram and maps will be used implicity when talking about these twists:
\[ \xymatrix{ B  & & \\
			  & B^{(1)} \ar[ul]^-{\Frob_f} & B \ar@/_/[ull]_-{\Frob_B} \ar[l]_-{\Frob_A} \\
			  & A \ar@/^/[uul]^-f \ar[u]_-{f^{(1)}}  & A \ar[l]_-{\Frob_A} \ar[u]_-f } \]

\end{notation}

The main reason to introduce (derived) Frobenius twists (for us) is that $\dR_{B/A}$ is naturally a complex of $B^{(1)}$-modules; this can be seen directly in the case of polynomial algebras,  and thus follows in general. 

\subsection{Review of classical Cartier theory}
\label{sss:cartierfreealg}
We briefly review the classical Cartier isomorphism in the context of {\em free} algebras; see \cite[Theorem 1.2]{DeligneIllusie} for more. 

\begin{theorem}[Classical Cartier isomorphism]
\label{thm:classicalcartier}
Let $A \to F$ be a free algebra with $A$ an $\F_p$-algebra. Then there is a canonical isomorphism of $F^{(1)}$-modules
\[ C^{-1}:\wedge^k L_{F^{(1)}/A} \simeq H^k(\Omega^\bullet_{F/A}) \]
which extends to a graded $F^{(1)}$-algebra isomorphism
\[ C^{-1}: \oplus_{k \geq 0} \wedge^k L_{F^{(1)}/A}[-k] \to \oplus_{k \geq 0} H^k(\Omega^\bullet_{F/A})[-k]. \]
\end{theorem}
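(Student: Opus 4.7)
The plan is to construct $C^{-1}$ explicitly on generators and reduce isomorphy to a single-variable computation; since $F$ and $F^{(1)}$ are polynomial algebras over $A$, the cotangent complex $L_{F^{(1)}/A}$ coincides with $\Omega^1_{F^{(1)}/A}$ and $\wedge^k L_{F^{(1)}/A} \simeq \Omega^k_{F^{(1)}/A}$, so the assertion reduces to the classical Cartier isomorphism for $F$ over $A$.

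I would construct $C^{-1}$ in degree $0$ as the relative Frobenius $F^{(1)} \to F$, $b \otimes a \mapsto a b^p$, whose image lies in $\ker d$ since $d(b^p) = p b^{p-1} db = 0$ in characteristic $p$. In degree $1$, I would first define a map $\gamma \colon F \to H^1(\Omega^\bullet_{F/A})$ by $b \mapsto [b^{p-1} db]$; the key verification is additivity modulo coboundaries, which follows from the polynomial identity
\[ p\bigl((a+b)^{p-1} d(a+b) - a^{p-1} da - b^{p-1} db\bigr) = d\bigl((a+b)^p - a^p - b^p\bigr) = d\Bigl(\sum_{k=1}^{p-1} \tbinom{p}{k} a^k b^{p-k}\Bigr) \]
and the integrality of $\tfrac{1}{p}\binom{p}{k}$ for $1 \leq k \leq p-1$, which together allow one to cancel $p$ and reduce modulo $p$. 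Using the twisted Leibniz identity $\gamma(bc) = b^p \gamma(c) + c^p \gamma(b)$ (immediate from $d(bc) = b\,dc + c\,db$), the rule $b \otimes a \mapsto a \cdot \gamma(b)$ defines an $A$-linear derivation $F^{(1)} \to H^1(\Omega^\bullet_{F/A})$ with $F^{(1)}$ acting on the target via relative Frobenius. The universal property of $\Omega^1_{F^{(1)}/A}$ then yields an $F^{(1)}$-linear map in degree $1$, and the universal property of the exterior algebra---using that $\omega \wedge \omega = 0$ for any one-form $\omega$ in the K\"ahler de Rham complex, so classes in $H^1$ square to zero---promotes this to the desired graded $F^{(1)}$-algebra map $C^{-1}$.

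To verify $C^{-1}$ is an isomorphism, I would use the Kunneth formula for polynomial algebras, $\Omega^\bullet_{F_1/A} \otimes_A \Omega^\bullet_{F_2/A} \simeq \Omega^\bullet_{F_1 \otimes_A F_2/A}$, together with compatibility with filtered colimits, to reduce to $F = A[x]$. There the de Rham complex is $A[x] \xrightarrow{d} A[x]\, dx$ with $d(x^n) = n x^{n-1} dx$: one reads off $\ker d = A[x^p] = F^{(1)}$ and $\coker d = F^{(1)} \cdot [x^{p-1} dx]$, matching $\Omega^1_{F^{(1)}/A} = F^{(1)} \cdot dy$ under $C^{-1}$. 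The main obstacle is the additivity check for $\gamma$, which is the only place the characteristic-$p$ hypothesis is used essentially and encodes the core of classical Cartier theory; the remaining verifications and the one-variable computation are routine.
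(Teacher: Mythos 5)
Your argument is correct, and it follows a genuinely different route from the one the paper takes. The paper first reduces to $A = \F_p$ by base change, then lifts $F$ to a free $W_2$-algebra $\widetilde F$ together with a Frobenius lift $\widetilde\Frob$, and defines $C^{-1} = \widetilde\Frob^*/p$ on $\Omega^1$, extending by exterior powers; the divisibility of $\widetilde\Frob^*$ by $p$ is the one point requiring an argument. This is the Deligne--Illusie mechanism, and the paper chooses it because the same mechanism drives the derived liftable Cartier isomorphism (Proposition \ref{prop:derivedcartier}) and the explicit identification of the comparison map with crystalline cohomology (Lemma \ref{lem:ddrcryscompbase}). You instead build $C^{-1}$ entirely in characteristic $p$, via $\gamma(b) = [b^{p-1}\,db]$, verifying additivity modulo coboundaries by the $\frac{1}{p}\binom{p}{k}$-integrality trick over $\Z$, establishing the twisted Leibniz rule, and invoking the universal properties of $\Omega^1$ and of the exterior algebra; isomorphy is then checked in one variable, as in the paper. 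This is the classical Cartier construction and is more elementary and self-contained, but it produces the same map (as one sees by taking the paper's $\widetilde F = (\Z/p^2)[X]$ and $\widetilde\Frob(X) = X^p$, where $\widetilde\Frob^*/p$ gives exactly $X^{p-1}\,dX$), and the lifting formulation is what generalizes cleanly in the rest of the paper. One tiny remark: when you cancel $p$ in the displayed identity, you should say you are working in $\Omega^1_{\Z[a,b]/\Z}$, which is free hence $p$-torsion-free, before reducing modulo $p$; you clearly have this in mind, but the justification is worth making explicit.
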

\begin{proof}
To define $C^{-1}$, we may reduce to the case $A = \F_p$ via base change. Once over $\F_p$, we can pick a deformation $\widetilde{F}$ of the free algebra $F$ to $W_2$ together with a compatible lift $\widetilde{\Frob}:\widetilde{F} \to \widetilde{F}$ of Frobenius. We then define
\[ C^{-1} = \frac{\widetilde{\Frob}^*}{p}\]
in degree $1$, and extend it by taking exterior products; this makes sense because $\widetilde{\Frob}^*:\Omega^1_{\widetilde{F}/W_2} \to \Omega^1_{\widetilde{F}/W_2}$ is divisible by $p$ (as it is zero modulo $p$). One can then check using local co-ordinates that this recipe leads to the desired description of $H^p(\dR_{F/A})$. 
\end{proof}

\begin{remark}
Continuing the notation of (the proof of) Theorem \ref{thm:classicalcartier}, we note that one can do slightly better than stated: taking tensor products shows that $\widetilde{F}^*:\Omega^i_{\widetilde{F}/W_2} \to \Omega^i_{\widetilde{F}/W_2}$ is divisible by $p^i$, and hence $0$ for $i \geq 2$. It follows that the definition for $C^{-1}$ given above leads to an equivalence of {\em complexes}
\[ \oplus_{k \geq 0} \wedge^k L_{F^{(1)}/A}[-k] \stackrel{\simeq}{\to} \Omega^\bullet_{F/A}, \]
i.e., that the de Rham complex $\Omega^\bullet_{F/A}$ is {\em formal}. This decomposition depends on the choices of $\widetilde{F}$ and $\widetilde{\Frob}$, but the resulting map on cohomology is independent of these choices.
\end{remark}

\begin{remark}
Theorem \ref{thm:classicalcartier} is also true when the free algebra $F$ is replaced by any smooth $A$-algebra $B$.  A direct way to see this is to observe that both sides of the isomorphism $C^{-1}$ occurring in Theorem \ref{thm:classicalcartier} localise for the \'etale topology on $F^{(1)}$; since smooth morphisms $A \to B$ are obtained from polynomial algebras by \'etale localisation, the claim follows Zariski locally on $\Spec(B)$, and hence globally by patching. The underlying principle here of localising the de Rham cohomology on the Frobenius twist will play a prominent role in this paper (in the derived context).
\end{remark}

\subsection{Derived Cartier theory}

We begin by computing the graded pieces of the conjugate filtration in characteristic $p$.

\begin{proposition}[Derived Cartier isomorphism]
\label{prop:conjss}
Let $A \to B$ be a map of $\F_p$-algebras. Then the conjugate filtration $\Fil^\conj_\bullet$ on $\dR_{B/A}$ is $B^{(1)}$-linear, and has graded pieces computed by 
\[ \Cartier_i:\gr^\conj_i(\dR_{B/A}) \simeq \wedge^i L_{B^{(1)}/A}[-i].\]
In particular, the conjugate spectral sequence takes the form
\[ E_1^{p,q}: H_{2p+q}(\wedge^p L_{B^{(1)}/A}) \Rightarrow H_{p+q}(\dR_{B/A}). \]
\end{proposition}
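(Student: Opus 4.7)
The plan is to write both sides of the proposed isomorphism in terms of a common free resolution $P_\bullet \to B$ of $B$ as a simplicial $A$-algebra, and then to match them up using the classical Cartier isomorphism termwise. Explicitly, $\dR_{B/A}$ is presented as $|\Omega^\bullet_{P_\bullet/A}|$, and its conjugate filtration was defined (in Proposition \ref{prop:conjfiltgeneral} and the general discussion in \S \ref{sec:notation}) as the filtration induced by the canonical filtration on each cosimplicial term $\Omega^\bullet_{P_n/A}$. Under that description, the $i$-th graded piece is the homotopy colimit over $\Delta^\opp$ of the simplicial $A$-complex
\[ n \mapsto H^i\bigl(\Omega^\bullet_{P_n/A}\bigr)[-i]. \]
So the task reduces to identifying this simplicial $A$-complex with $\wedge^i L_{B^{(1)}/A}[-i]$.

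Next, I would invoke the classical Cartier isomorphism (Theorem \ref{thm:classicalcartier}): for each $n$, since $P_n$ is a free $A$-algebra, there is a canonical isomorphism of $P_n^{(1)}$-modules $H^i(\Omega^\bullet_{P_n/A}) \simeq \wedge^i L_{P_n^{(1)}/A}$, and this isomorphism is functorial in maps of free $\F_p$-algebras (since it is built from the relative Frobenius, which is natural). Hence the graded piece above becomes
\[ n \mapsto \wedge^i L_{P_n^{(1)}/A}[-i], \]
and because $P_n^{(1)}$ is again a free $A$-algebra, the cotangent complex $L_{P_n^{(1)}/A}$ is concentrated in degree $0$ as the free $P_n^{(1)}$-module $\Omega^1_{P_n^{(1)}/A}$.

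The final step is to recognize the resulting simplicial object as a model for $\wedge^i L_{B^{(1)}/A}$. Since $P_\bullet$ is termwise flat over $A$, the simplicial commutative ring $P_\bullet^{(1)} = P_\bullet \otimes_A \Frob_* A$ is a free (hence cofibrant) resolution of $B^{(1)}$, so $\wedge^i L_{B^{(1)}/A}$ is computed by $\wedge^i \Omega^1_{P_\bullet^{(1)}/A} \otimes_{P_\bullet^{(1)}} B^{(1)}$. Using flatness of $\wedge^i \Omega^1_{P_n^{(1)}/A}$ over $P_n^{(1)}$ together with the invariance statements recalled at the end of \S \ref{sec:notation} (\cite[\S I.3.3.2, Cor.~I.3.3.4.6]{IllusieCC1}), the base-change map from $\wedge^i \Omega^1_{P_\bullet^{(1)}/A}$ to this tensor product is a weak equivalence. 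Chaining these identifications gives $\Cartier_i : \gr_i^\conj(\dR_{B/A}) \simeq \wedge^i L_{B^{(1)}/A}[-i]$, from which the form of the $E_1$-page of the conjugate spectral sequence follows by substituting into Proposition \ref{prop:conjfiltgeneral}. The $B^{(1)}$-linearity of the filtration is inherited from the standard fact that in characteristic $p$ the de~Rham differential annihilates $p$-th powers, so each $\Omega^\bullet_{P_n/A}$ is a complex of $P_n^{(1)}$-modules (via the relative Frobenius), and this structure is preserved by passage to the homotopy colimit.

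The main obstacle I anticipate is verifying that the termwise Cartier isomorphisms assemble into a genuinely simplicial isomorphism, i.e., checking naturality along the face and degeneracy maps of $P_\bullet$. The point to emphasize is that although the complex-level splitting in Theorem \ref{thm:classicalcartier} depends on the auxiliary choices of a $W_2$-lift and a Frobenius lift, the induced map on cohomology $C^{-1}$ is intrinsic and functorial for arbitrary morphisms of free (or smooth) $\F_p$-algebras; this is exactly what is needed to propagate it through the simplicial diagram. A secondary bookkeeping point, which I expect to handle via the independence-of-resolution argument already used in the proof of Proposition \ref{prop:conjfiltgeneral}, is that the resulting identification of $\gr_i^\conj(\dR_{B/A})$ with $\wedge^i L_{B^{(1)}/A}[-i]$ does not depend on the chosen resolution $P_\bullet$.
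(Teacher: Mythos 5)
Your proposal is correct and takes essentially the same route as the paper: pass to a free simplicial resolution $P_\bullet \to B$, apply the classical Cartier isomorphism termwise to identify the graded pieces of the conjugate filtration as $n \mapsto \Omega^i_{P_n^{(1)}/A}[-i]$, and observe that $P_\bullet^{(1)}$ is a free resolution of $B^{(1)}$ so the totalization computes $\wedge^i L_{B^{(1)}/A}[-i]$. The paper's version is terser but makes exactly these moves; your expansion of the naturality point (that $C^{-1}$ on cohomology is canonical even though the chain-level splitting depends on lift choices) and the flat base-change justification are the right things to be careful about, and the paper elides them as ``the rest follows formally.''
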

\begin{proof} 
Let $P_\bullet \to B$ be the canonical free resolution of $B$ over $A$ by free $A$-algebras. The associated graded pieces $\gr^\conj_i(\dR_{B/A})$ are given by totalisations of the simplicial $A$-cochain complexes determined by $n \mapsto H^i(\Omega^\bullet_{P_n/A})[-i]$. 
By Theorem \ref{thm:classicalcartier}, one has $H^i(\Omega^\bullet_{P_n/A})[-i] \simeq \Omega^i_{P_n^{(1)}/A}$, and hence
\[ \gr^\conj_i(\dR_{B/A}) \simeq \wedge^i L_{B^{(1)}/A}[-i].\]
The rest follows formally.
\end{proof}

Before discussing applications, we make a definition.

\begin{definition}
A map $A \to B$ of $\F_p$-algebras is called {\em relatively perfect} if $B^{(1)} \to B$ is an equivalence; the same definition applies to simplicial commutative $\F_p$-algebras as well. A map $A \to B$ of $\Z/p^n$-algebras is called {\em relatively perfect modulo $p$} if $A \otimes_{\Z/p^n} \F_p \to B \otimes_{\Z/p^n} \F_p$ is relatively perfect; similarly for $\Z_p$-algebras.
\end{definition}

\begin{example} 
Any \'etale map is relatively perfect, and any map between perfect $\F_p$-algebras is relatively perfect.  By base change, the map $\Z_p \to W(R)$ is relatively perfect for any perfect $\F_p$-algebra $R$.
\end{example}

The connection between the preceding definition and de Rham theory is:

\begin{corollary}
\label{cor:ddretaleinv}
Let $A \to B$ be a map of $\Z/p^n$-algebras that is relatively perfect modulo $p$.  Then $L_{B/A} \simeq 0$, and $\dR_{B/A} \simeq B$. 
\end{corollary}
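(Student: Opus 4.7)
My plan is to reduce both claims to the setting of $\F_p$-algebras via devissage, and then extract them from the derived Cartier isomorphism (Proposition \ref{prop:conjss}) together with the classical vanishing of Frobenius on Kähler differentials. For the devissage, base change of the cotangent complex gives $L_{B/A} \otimes_{\Z/p^n} \F_p \simeq L_{(B \otimes \F_p)/(A \otimes \F_p)}$, and Proposition \ref{prop:ddrbasechange} yields the analogous identification $\dR_{B/A} \otimes_{\Z/p^n} \F_p \simeq \dR_{(B \otimes \F_p)/(A \otimes \F_p)}$. Since the vanishing of a complex of $\Z/p^n$-modules can be detected modulo $p$, it thus suffices to treat the case where $A \to B$ is a relatively perfect map of $\F_p$-algebras.

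In this positive characteristic case, I would exploit the transitivity triangle associated to the factorisation $A \to B^{(1)} \xrightarrow{\Frob_f} B$. By hypothesis $\Frob_f$ is an equivalence, so $L_{B/B^{(1)}} \simeq 0$, and the connecting map
\[ \alpha: L_{B^{(1)}/A} \otimes_{B^{(1)}} B \longrightarrow L_{B/A} \]
is an equivalence. On the other hand, choosing a free simplicial resolution $P_\bullet \to B$ over $A$ (which yields a free resolution $P_\bullet^{(1)} \to B^{(1)}$ after Frobenius twisting), the map $\alpha$ is represented by the map of Kähler differentials induced by the levelwise relative Frobenius $P_\bullet^{(1)} \to P_\bullet$. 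Since $d(x^p) = p \cdot x^{p-1} dx = 0$ in characteristic $p$, this representative is levelwise zero, so $\alpha$ is null-homotopic. A map that is simultaneously an equivalence and null-homotopic forces its source and target to vanish, giving $L_{B/A} \simeq 0$.

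For the derived de Rham statement, I would feed this vanishing into Proposition \ref{prop:conjss}: the graded pieces of the conjugate filtration read $\gr^\conj_i(\dR_{B/A}) \simeq \wedge^i L_{B^{(1)}/A}[-i]$, and since $L_{B^{(1)}/A}$ is the Frobenius base change of $L_{B/A} \simeq 0$, these vanish for $i \geq 1$, while the $i=0$ piece is $B^{(1)} \simeq B$. The conjugate filtration being bounded below, separated, and exhaustive then collapses the conjugate spectral sequence and yields $\dR_{B/A} \simeq B$. The most delicate point in this strategy is the null-homotopy of $\alpha$: it is not enough that $d(x^p)$ vanish for a single representative of a class, one must verify that the vanishing is functorial in the simplicial free resolution, which is what makes the relative Frobenius induce a well-defined (and in this case null-homotopic) map on the cotangent complex.
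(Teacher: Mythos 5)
Your proof is correct and follows essentially the same route as the paper: devissage to $\F_p$-algebras, the observation that relative Frobenius lifts to free simplicial resolutions where it annihilates Kähler differentials (since $d(x^p)=0$ in characteristic $p$), and the conjugate filtration of Proposition \ref{prop:conjss} to conclude for $\dR_{B/A}$. The only cosmetic difference is that the paper does not route through the transitivity triangle for $A \to B^{(1)} \to B$: it directly observes that the functoriality map $L_{B^{(1)}/A} \to L_{B/A}$ induced by the ring isomorphism $B^{(1)} \xrightarrow{\sim} B$ is simultaneously an isomorphism and null-homotopic, which is slightly more economical; but your version, using $L_{B/B^{(1)}}\simeq 0$ to deduce that the base-change map $\alpha$ is an equivalence, captures the same content.
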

\begin{proof}
By devissage, we may immediately reduce to the case that $A$ is an $\F_p$-algebra, and $A \to B$ is relatively perfect. We first show that $L_{B/A} \simeq 0$. Indeed, for any $A$-algebra $B$,  the $A$-algebra map $B^{(1)} \to B$ induces the $0$ map $L_{B^{(1)}/A} \to L_{B/A}$ (resolve $B$ by free $A$-algebras, and use that Frobenius on the terms of the free resolution lifts Frobenius on $B$). Thus, if $B^{(1)} \to B$ is an isomorphism, then $L_{B/A}$ and $L_{B^{(1)}/A}$ must both be $0$. The conjugate filtration on $\dR_{B/A}$ is therefore trivial in degree $> 0$, so one obtains $\dR_{B/A} \simeq B^{(1)} \simeq B$, where the second equality follows from the relative perfectness.
\end{proof}

\begin{question} 
\label{ques:etaleinv}
What is an example of an $\F_p$-algebra map $A \to B$ with $L_{B/A} \simeq 0$ but $B^{(1)} \to B$ not an isomorphism? For $A = \F_p$ itself, we are asking for $\F_p$-algebras $B$ with $L_{B/\F_p} = 0$ that are not perfect; note that such algebras have to be discrete. It is conceivable that such examples do not exist, but we do not have a proof (except when $A \to B$ is finitely presented). This question also arose in Scholze's work \cite{ScholzePerfectoid1} on perfectoid spaces.
\end{question}

We use the derived Cartier isomorphism to show that derived de Rham cohomology coincides with classical de Rham cohomology for smooth maps:

\begin{corollary}
\label{cor:ddrindepsmooth}
Let $A \to B$ be a map of $\Z/p^n$-algebras, and let $P_\bullet \to B$ be a resolution of $B$ by {\em smooth} $A$-algebras (not necessarily free). Then there is a natural equivalence
\[ \dR_{B/A} \simeq |\Omega^\bullet_{P_\bullet/A}|. \]
In particular, if $A \to B$ is a smooth map of $\Z/p^n$-algebras, then $\dR_{B/A} \simeq \Omega^\bullet_{B/A}$.
\end{corollary}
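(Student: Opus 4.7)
The plan is to construct a natural comparison map $\dR_{B/A} \to |\Omega^\bullet_{P_\bullet/A}|$, equip both sides with the conjugate filtration, and check the map is an equivalence by comparing graded pieces via the classical Cartier isomorphism for smooth algebras.

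First I would produce the map. Let $F_\bullet \to B$ be the canonical free resolution defining $\dR_{B/A}$. Since $F_\bullet$ is free (hence cofibrant in the model structure of \S\ref{sss:modelstrsalg}) and $P_\bullet \to B$ is a weak equivalence of simplicial $A$-algebras, a standard lifting argument gives an $A$-algebra map $F_\bullet \to P_\bullet$ covering the augmentations to $B$, unique up to simplicial homotopy. Applying $\Omega^\bullet_{-/A}$ termwise yields a map of bicomplexes $\Omega^\bullet_{F_\bullet/A} \to \Omega^\bullet_{P_\bullet/A}$, and totalization gives a well-defined map $\dR_{B/A} \to |\Omega^\bullet_{P_\bullet/A}|$ in the derived category.

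Next, equip $|\Omega^\bullet_{P_\bullet/A}|$ with the conjugate filtration induced by the canonical filtration on each $\Omega^\bullet_{P_n/A}$, exactly as in Proposition \ref{prop:conjfiltgeneral}. By construction, the comparison map preserves these filtrations, so it suffices to show it is an equivalence on each $\gr^\conj_i$. To do this, I would reduce to characteristic $p$: both sides commute with $-\otimes^\L_{\Z/p^n} \F_p$, for $\dR_{B/A}$ via Proposition \ref{prop:ddrbasechange}, and for $|\Omega^\bullet_{P_\bullet/A}|$ because flatness of the smooth $P_n$ makes the derived reduction of $\Omega^\bullet_{P_n/A}$ agree with the underived $\Omega^\bullet_{(P_n/p)/(A/p)}$. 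So by devissage one may assume $A$ and $B$ are $\F_p$-algebras.

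In that setting, Proposition \ref{prop:conjss} identifies $\gr^\conj_i(\dR_{B/A}) \simeq \wedge^i L_{B^{(1)}/A}[-i]$. On the right-hand side, $\gr^\conj_i(|\Omega^\bullet_{P_\bullet/A}|)$ is the realization of the simplicial $A$-module $n \mapsto H^i(\Omega^\bullet_{P_n/A})[-i]$. Since each $P_n$ is smooth, the classical Cartier isomorphism (Theorem \ref{thm:classicalcartier}, extended from free to smooth algebras by \'etale localization) identifies this with $\wedge^i \Omega^1_{P_n^{(1)}/A}[-i]$. Because $P_\bullet^{(1)} \to B^{(1)}$ is a resolution of $B^{(1)}$ by smooth (hence flat) $A$-algebras, and derived exterior powers of the cotangent complex can be computed from any such simplicial resolution by smooth algebras, the realization yields $\wedge^i L_{B^{(1)}/A}[-i]$, matching the left-hand side. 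Functoriality of Cartier under maps of smooth $\F_p$-algebras ensures these identifications intertwine the comparison map, so the map is an equivalence on each $\gr^\conj_i$ and hence an equivalence. The final claim follows by taking $P_\bullet = B$ as the constant simplicial resolution when $B$ is smooth over $A$, giving $|\Omega^\bullet_{P_\bullet/A}| = \Omega^\bullet_{B/A}$. The main obstacle is this last matching of graded pieces, i.e., verifying that the two Cartier-based identifications (one for the free resolution used in Proposition \ref{prop:conjss}, one for the smooth resolution $P_\bullet$) are compatible under the comparison map; this reduces to the naturality of the classical Cartier isomorphism.
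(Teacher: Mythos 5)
Your proposal is correct and follows essentially the same route as the paper's proof: construct the comparison map via cofibrancy of a free resolution mapping to $P_\bullet$ over $B$, reduce to $\F_p$-algebras by devissage, and match graded pieces of the conjugate filtration via the Cartier isomorphism, using that $L_{B^{(1)}/A}$ (and its wedge powers) can be computed from any resolution by smooth $A$-algebras. The paper condenses the last two steps into a reference to the conjugate spectral sequence and a citation to Illusie's Proposition III.3.1.2, whereas you spell out the Cartier-level identification and its naturality, but the underlying argument is the same.
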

\begin{proof}
To see this, let $Q_\bullet \to B$ be a resolution of $B$ by free $A$-algebras. By the cofibrancy of $Q_\bullet$, we can pick a map $Q_\bullet \to P_\bullet$ lying over $B$ (and hence an equivalence). This defines a map $\dR_{B/A} \to |\Omega^\bullet_{P_\bullet/A}|$. To check that this map is an equivalence, we may reduce to the case that $A$ and $B$ are both $\F_p$-algebras by devissage and and base change. In this case, the claim follows from the convergence of the conjugate spectral sequence and the fact that $L_{B^{(1)}/A}$ can be computed using $Q_\bullet^{(1)}$ or $P_\bullet^{(1)}$ (see \cite[Proposition III.3.1.2]{IllusieCC1}).
\end{proof}

\begin{question}
Observe that the proof above also shows that when $A \to B$ is smooth map of $\F_p$-algebras, then the conjugate filtration on $\dR_{B/A}$ coincides with the {\em canonical} filtration. What can be said modulo $p^n$?
\end{question}

\begin{remark}
Note that Corollary \ref{cor:ddrindepsmooth} is completely false in characteristic $0$. By Corollary \ref{cor:ddrchar0}, one has $\dR_{B/A} \simeq A$ whenever $\Q \subset A$. On the other hand, if $A \to B$ is smooth, then $B$ itself provides a smooth resolution of $B$ in the category of $A$-algebras, and the resulting de Rham cohomology groups are the usual de Rham cohomology groups of $A \to B$ (by\cite{GrothendieckAlgdR}) which need not be concentrated in degree $0$. For example, $\Q \to \Q[x,x^{-1}]$ has a one-dimensional (usual) de Rham cohomology group of degree $1$ (with generator $\frac{dx}{x}$), but no derived de Rham cohomology.
\end{remark}

Using the conjugate filtration, we can prove a connectivity estimate for derived de Rham cohomology:

\begin{corollary}
\label{cor:connectivity}
Let $A \to B$ be a map of $\Z/p^n$-algebras such that $\Omega^1_{B/A}$ is generated by $r$ elements for some $r \in \Z_{\geq 0}$. Then $\dR_{B/A}$ is $(-r-1)$-connected. 
\end{corollary}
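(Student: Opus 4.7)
The plan is to reduce modulo $p$ to the setting of simplicial $\F_p$-algebras, and then use the conjugate spectral sequence of Proposition~\ref{prop:conjss} to translate the desired connectivity of $\dR_{B/A}$ into a connectivity bound for the derived exterior powers $\wedge^p L_{B^{(1)}/A}$. The base change formula of Proposition~\ref{prop:ddrbasechange} gives $\dR_{B/A} \otimes_{\Z/p^n} \F_p \simeq \dR_{B'/A'}$ with $A' := A \otimes_{\Z/p^n} \F_p$ and $B' := B \otimes_{\Z/p^n} \F_p$; a standard devissage (using the triangle $\dR_{B/A} \xrightarrow{p} \dR_{B/A} \to \dR_{B/A} \otimes_{\Z/p^n} \F_p$ and the nilpotence of $p$ on $\dR_{B/A}$) shows that $\dR_{B/A}$ is $(-r-1)$-connected as soon as $\dR_{B'/A'}$ is.

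In the $\F_p$-case, the conjugate spectral sequence
\[ E_1^{p,q}: H_{2p+q}(\wedge^p L_{B^{(1)}/A}) \Rightarrow H_{p+q}(\dR_{B/A}) \]
translates the vanishing $H_n(\dR_{B/A}) = 0$ for $n < -r$ into the vanishing $H_m(\wedge^p L_{B^{(1)}/A}) = 0$ for $m < p - r$ for every $p \geq 0$, i.e., into the assertion that $\wedge^p L_{B^{(1)}/A}$ is $(p-r-1)$-connected. For $p \leq r$ this is automatic from connectivity of the cotangent complex, so the substantive case is $p > r$.

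For $p > r$, the strategy is to present $L_{B^{(1)}/A}$ as an extension of a free rank-$r$ module by a $0$-connected complex and then run a derived Koszul filtration. Since $B^{(1)}$ is a base change of $B$ along Frobenius, the base change formula for K\"ahler differentials forces $\Omega^1_{B^{(1)}/A} = \pi_0 L_{B^{(1)}/A}$ to again be generated by $r$ elements. Choose a surjection $(B^{(1)})^r \twoheadrightarrow \Omega^1_{B^{(1)}/A}$ and lift it, using projectivity of the source in the derived category of $B^{(1)}$-modules, to a map $\alpha: (B^{(1)})^r \to L_{B^{(1)}/A}$. Its cofiber $M := \mathrm{cofib}(\alpha)$ is connective with $\pi_0 M = 0$. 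The derived Koszul-type filtration associated to the cofiber sequence $(B^{(1)})^r \to L_{B^{(1)}/A} \to M$ endows $\wedge^p L_{B^{(1)}/A}$ with a finite filtration whose graded pieces are $\wedge^i (B^{(1)})^r \otimes_{B^{(1)}} \wedge^{p-i} M$ for $0 \leq i \leq p$. The factor $\wedge^i (B^{(1)})^r$ vanishes for $i > r$, while for $i \leq r < p$ the standard connectivity lemma for derived exterior powers in the sense of \cite[\S 7]{QuillenCRCNotes} (namely, that $\wedge^j N$ is $(j-1)$-connected whenever $N$ is $0$-connected) makes each surviving piece at least $(p-i-1) \geq (p-r-1)$-connected. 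Reassembling through the filtration yields the desired $(p-r-1)$-connectivity of $\wedge^p L_{B^{(1)}/A}$.

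The main obstacle I anticipate is the technical justification of the inputs of the last step: the derived Koszul-type filtration on $\wedge^p$ of a cofiber sequence of simplicial modules, together with the connectivity lemma for derived exterior powers of $0$-connected complexes. Both are standard properties of the non-abelian derived functors of $\wedge^p$, but their use in characteristic $p$ invokes the usual parity analysis (exterior versus divided-power behaviour in derived exterior powers of shifted modules), which deserves careful handling.
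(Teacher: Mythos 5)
Your proof matches the paper's argument essentially step for step: devissage to $\F_p$, the conjugate spectral sequence reduction to connectivity of $\wedge^n L_{B^{(1)}/A}$, the cofiber sequence $F \to L_{B^{(1)}/A} \to Q$ with $F$ free of rank $r$ and $Q$ $0$-connected, and the filtration of wedge powers by $\wedge^i F \otimes \wedge^{p-i}Q$. The parity worry you raise at the end is not an obstruction: the connectivity bound (derived $\wedge^j$ of a $0$-connected complex is $(j-1)$-connected) is uniform in $j$ and requires no identification of what $\wedge^j Q$ actually is --- the proof in Quillen reduces to $Q \simeq N[1]$ where $\wedge^j(N[1]) \simeq \Gamma^j(N)[j]$ is visibly $(j-1)$-connected, so the exterior-vs-divided-power dichotomy never needs to be tracked.
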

\begin{proof}
To see this, first note that by devissage, we may reduce to the case that both $A$ and $B$ are $\F_p$-algebras. In this case, via the conjugate spectral sequence, it suffices to check that $\wedge^n L_{B^{(1)}/A}$ is $(n-r-1)$-connected for each $n$. By base change from $B$, note that a choice of generators of $\Omega^1_{B/A}$ defines a triangle of $B^{(1)}$-modules
\[ F \to L_{B^{(1)}/A} \to Q \]
with $F$ a free module of rank $r$, and $Q$ a connected $B^{(1)}$-module. The claim now follows by filtering wedge powers of $L_{B^{(1)}/A}$ using the preceding triangle, and noting that $\wedge^a F = 0$ for $a > r$, while $\wedge^b Q$ is $(b-1)$-connected.
\end{proof}

Next, we show that derived de Rham cohomology localises for the \'etale topology; note that there is no analogous description for usual de Rham cohomology in characteristic $0$.

\begin{corollary}
\label{cor:etalelocalisation}
Let $A \to B \to C$ maps of $\F_p$-algebras, and assume that $B \to C$ is \'etale (or simply that $B \to C$ is flat with $L_{C/B} = 0$). Then 
\[ \dR_{B/A} \otimes_{B^{(1)}} C^{(1)} \simeq \dR_{C/A} \quad \textrm{and} \quad H^i(\dR_{B/A}) \otimes_{B^{(1)}} C^{(1)} \simeq H^i(\dR_{C/A}),\]
where all Frobenius twists are computed relative to $A$.
\end{corollary}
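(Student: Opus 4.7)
The plan is to use the conjugate filtration from Proposition \ref{prop:conjss} to reduce the assertion to the analogous statement for cotangent complexes, where the hypothesis $L_{C/B} = 0$ enters via transitivity.

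First I would construct the natural comparison map. The functoriality arrow $\dR_{B/A} \to \dR_{C/A}$ is $C^{(1)}$-linear on the target, so it factors through a $C^{(1)}$-linear map
\[ \phi: \dR_{B/A} \otimes_{B^{(1)}} C^{(1)} \to \dR_{C/A}. \]
A key preliminary observation is that the Frobenius twist can be computed as a base change along $B \to B^{(1)}$: one has
\[ C^{(1)} \;=\; C \otimes_{A,\Frob_A} A \;\simeq\; (B \otimes_{A,\Frob_A} A) \otimes_B C \;=\; B^{(1)} \otimes_B C. \]
Since $B \to C$ is flat with $L_{C/B} = 0$, it follows that $B^{(1)} \to C^{(1)}$ is flat with $L_{C^{(1)}/B^{(1)}} \simeq L_{C/B} \otimes_B B^{(1)} = 0$.

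Next I would show $\phi$ is a filtered equivalence. Because $B^{(1)} \to C^{(1)}$ is flat, tensoring preserves the conjugate filtration, and it suffices to check the induced map on graded pieces. By Proposition \ref{prop:conjss}, the $i$-th graded piece of $\phi$ is the map
\[ \wedge^i L_{B^{(1)}/A} \otimes_{B^{(1)}} C^{(1)}[-i] \to \wedge^i L_{C^{(1)}/A}[-i]. \]
The transitivity triangle $L_{B^{(1)}/A} \otimes_{B^{(1)}} C^{(1)} \to L_{C^{(1)}/A} \to L_{C^{(1)}/B^{(1)}}$ together with the vanishing $L_{C^{(1)}/B^{(1)}} = 0$ gives $L_{B^{(1)}/A} \otimes_{B^{(1)}} C^{(1)} \simeq L_{C^{(1)}/A}$, and passage to derived exterior powers commutes with flat base change. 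Running the conjugate spectral sequence (which converges by Proposition \ref{prop:conjss}) then yields the first equivalence $\dR_{B/A} \otimes_{B^{(1)}} C^{(1)} \simeq \dR_{C/A}$.

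The second statement about $H^i$ is now immediate: flatness of $B^{(1)} \to C^{(1)}$ lets tensoring with $C^{(1)}$ commute with cohomology, so $H^i(\dR_{B/A}) \otimes_{B^{(1)}} C^{(1)} \simeq H^i(\dR_{B/A} \otimes_{B^{(1)}} C^{(1)}) \simeq H^i(\dR_{C/A})$. The only subtle step in the argument is the identification $C^{(1)} \simeq B^{(1)} \otimes_B C$, which is what makes flatness and the vanishing of the relative cotangent complex transfer from $B \to C$ to $B^{(1)} \to C^{(1)}$; once that is in hand, the rest is routine manipulation with the conjugate filtration.
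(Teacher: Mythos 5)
Your proof is correct and follows the same route as the paper: construct the natural $C^{(1)}$-linear comparison map, use that $B^{(1)} \to C^{(1)}$ is a flat base change of $B \to C$, compare conjugate filtrations via Proposition \ref{prop:conjss}, and reduce to the cotangent-complex identification $\wedge^q L_{B^{(1)}/A}\otimes_{B^{(1)}}C^{(1)}\simeq \wedge^q L_{C^{(1)}/A}$ coming from the transitivity triangle and $L_{C^{(1)}/B^{(1)}}=0$. You simply spell out the steps (notably the identification $C^{(1)}\simeq B^{(1)}\otimes_B C$ and the compatibility of derived exterior powers with flat base change) that the paper compresses into one line.
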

\begin{proof}
The first statement implies the second by taking cohomology and using:  $\dR_{B/A}$ is a complex of $B^{(1)}$-modules while $B^{(1)} \to C^{(1)}$ is flat since it is a base change of $B \to C$ along $B \to B^{(1)}$. For the first, note that there is indeed a natural map $\dR_{B/A} \otimes_{B^{(1)}} C^{(1)} \to \dR_{C/A}$. The claim now follows by computing both sides using the conjugate spectral sequence and noting that $\wedge^q L_{B^{(1)}/A} \otimes_{B^{(1)}} C^{(1)} \simeq \wedge^q L_{C^{(1)}/A}$ since $L_{C^{(1)}/B^{(1)}} = \Frob_A^* L_{C/B} = 0$.
\end{proof}

Next, we relate the first differential of the conjugate spectral sequence (or, rather, the first extension determined by the conjugate filtration) to a liftability obstruction.  Let $f:A \to B$ be a map of $\F_p$-algebras. Then one has an exact triangle
\[ \gr^\conj_{q-1}(\dR_{B/A}) \to \Fil^\conj_q (\dR_{B/A})/ \Fil^\conj_{q-2}(\dR_{B/A}) \to \gr^\conj_q(\dR_{B/A}) \]
By Proposition \ref{prop:conjss}, we have $\gr^\conj_q(\dR_{B/A}) \simeq \wedge^q L_{B^{(1)}/A}[-q]$. The above triangle thus determines a map
\[ \ob_q:\wedge^q L_{B^{(1)}/A} \to \wedge^{q-1} L_{B^{(1)}/A}[2]\] 
We can relate $\ob_1$ to a geometric invariant of $B$ as follows:

\begin{proposition}
\label{prop:conjssfirstd}
In the preceding setup, assume that a lift $\widetilde{A}$ of $A$ to $\Z/p^2$ has been specified. Then the map $\ob_1$ coincides with the obstruction to lifting $B^{(1)}$ to $\widetilde{A}$ when viewed as a point of $\Map(L_{B^{(1)}/A},B^{(1)}[2])$. 
\end{proposition}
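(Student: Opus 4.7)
The plan is to realize both classes as the same explicit cocycle built from a free simplicial resolution. Let $P_\bullet \to B$ be the canonical free resolution over $A$; base changing along Frobenius gives a simplicial resolution $P_\bullet^{(1)} \to B^{(1)}$ by polynomial $A$-algebras. By the proof of Proposition \ref{prop:conjfiltgeneral}, the extension
\[ B^{(1)} \to \Fil^\conj_1(\dR_{B/A}) \to L_{B^{(1)}/A}[-1] \]
that defines $\ob_1$ is the geometric realization of the levelwise truncations
\[ P_n^{(1)} \to \tau_{\leq 1} \Omega^\bullet_{P_n/A} \to \Omega^1_{P_n^{(1)}/A}[-1]. \]

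The key observation is that for each polynomial $P_n$, the data of polynomial lifts $\widetilde{P_n}$ and $\widetilde{P_n^{(1)}}$ over $\widetilde{A}$, together with a lift $\widetilde{\Frob}_n : \widetilde{P_n^{(1)}} \to \widetilde{P_n}$ of the relative Frobenius $\Frob_{P_n/A}$, simultaneously provides (a) a splitting of the $n$-th level extension via the Cartier map $\widetilde{\Frob}_n^{*}/p$, and (b) the lift $\widetilde{P_n^{(1)}}$ of $P_n^{(1)}$ over $\widetilde{A}$; these trivialize $\ob_1$ and the lifting obstruction, respectively, at level $n$. Such data always exists at the level of a polynomial algebra: take the canonical polynomial lifts and let $\widetilde{\Frob}_n$ send each $\widetilde{y_i}$ to $\widetilde{x_i}^p$.

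After making such choices for every $n$, both $\ob_1$ and the lifting obstruction are controlled by the failure of these pointwise choices to respect the simplicial structure maps of $P_\bullet$. Concretely, for a structure map $s: P_m \to P_n$, choose lifts $\widetilde{s}: \widetilde{P_m} \to \widetilde{P_n}$ and $\widetilde{s}^{(1)}: \widetilde{P_m^{(1)}} \to \widetilde{P_n^{(1)}}$; the discrepancy $\widetilde{s} \circ \widetilde{\Frob}_m - \widetilde{\Frob}_n \circ \widetilde{s}^{(1)}$ is divisible by $p$, and division by $p$ yields an $A$-derivation $P_m^{(1)} \to P_n$. Assembled via Dold-Kan, these derivations produce a single $2$-cocycle in $\R\Hom_{B^{(1)}}(L_{B^{(1)}/A}, B^{(1)})$ that represents both classes: $\ob_1$, as the coherence failure of the Cartier splittings along the resolution; and the lifting obstruction, via Illusie's standard deformation-theoretic recipe applied to $P_\bullet^{(1)} \to B^{(1)}$ (see \cite[Ch. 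III]{IllusieCC1}).

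The main obstacle is the last step: verifying that this single cocycle genuinely computes both classes, with matching signs. This reduces to an explicit comparison of the connecting map for the conjugate filtration with Illusie's obstruction class; at the free polynomial level, both are controlled by the same data, namely the choice of $\widetilde{\Frob}_n$ (equivalently, of $\widetilde{P_n^{(1)}}$), and matching them is a bookkeeping exercise once the pointwise Cartier splitting is made explicit.
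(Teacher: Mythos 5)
Your overall strategy — work level-by-level on a free resolution, observe that at each free level the Frobenius lift data simultaneously splits the conjugate extension (via Cartier) and provides a lift of $P_n^{(1)}$ over $\widetilde{A}$, then extract the class from the failure of coherence across the simplicial structure — is the same as the paper's. The difference is that the paper resolves the crucial pointwise identification cleanly by invoking \cite[Theorem 3.5]{DeligneIllusie}: the boundary map of the truncated de Rham triangle at each free level is \emph{canonically} equal, as a point of the mapping space, to the Kodaira--Spencer composition $L_{P_n^{(1)}/A}[-1] \to L_{A/\Z_p} \otimes_A P_n^{(1)} \to P_n^{(1)}[1]$ that defines the lifting obstruction, and then one simply totalizes. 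Your proposal replaces this citation with an explicit cocycle comparison and flags it as the main obstacle; that flag is accurate, and the gap is not a mere bookkeeping exercise.

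Two concrete issues block the comparison as you've set it up. First, the Frobenius-discrepancy derivation $\tfrac{1}{p}\bigl(\widetilde{s}\circ\widetilde{\Frob}_m - \widetilde{\Frob}_n\circ\widetilde{s}^{(1)}\bigr)$ is an $A$-derivation $P_m^{(1)} \to P_n$, \emph{not} into $P_n^{(1)}$; these differ by the Frobenius-image inclusion $P_n^{(1)} \hookrightarrow P_n$, and for general choices of $\widetilde{\Frob}_n$ the derivation does not factor through it (already visible for $P_n=\F_p[t]$ with lifts $t\mapsto t^p$ and $t\mapsto t^p + pg(t)$). A representing cocycle for a class in $\R\Hom_{B^{(1)}}(L_{B^{(1)}/A},B^{(1)}[2])$ must have entries in $P_\bullet^{(1)}$. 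Second, a derivation indexed by a single structure map is a degree-one datum — it is a nullhomotopy recording how the Cartier splittings fail to commute with one face map — whereas both $\ob_1$ and Illusie's lifting obstruction (indexed by pairs of composable face maps whose simplicial identities fail modulo $p^2$, with entries $P_n^{(1)} \to P_{n-2}^{(1)}$) are genuinely degree two. Passing from your nullhomotopy $1$-cochain, valued in $P_\bullet$, to the correct boundary $2$-cocycle, valued in $P_\bullet^{(1)}$, and then matching it to Illusie's obstruction cocycle is precisely the content of the Deligne--Illusie comparison the paper quotes; as written, your argument does not carry this out.
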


\begin{proof}[Sketch of proof]
We first construct $\ob_1$ explicitly. Fix a free resolution $P_\bullet \to B$, and let $\tau_{\leq 1} \Omega^\bullet_{P_n/A}$ denote the two-term cochain complex $P_n \to Z^1(\Omega^1_{P_n/A})$; the association  $n \mapsto \tau_{\leq 1} \Omega^\bullet_{P_n/A}$ defines a simplicial cochain complex totalising to $\Fil^\conj_1 \dR_{B/A}$. Identifying the cohomology of $\Omega^\bullet_{P_n/A}$ via the Cartier isomorphism then gives a exact triangle of simplicial cochain complexes
\[ P_\bullet^{(1)} \to \tau_{\leq 1} \Omega^\bullet_{P_\bullet/A} \to L_{P_\bullet^{(1)}/A}[-1].\]
Taking a homotopy-colimit and identifying the terms then gives an exact triangle of $B^{(1)}$-modules
\[ B^{(1)} \to K \to L_{B^{(1)}/A}[-1].\]
The boundary map $L_{B^{(1)}/A}[-1] \to B^{(1)}[1]$ for this triangle realises $\ob_1$.  To see the connection with liftability, observe that the boundary map $L_{P_n^{(1)}/A}[-1] \to P_n^{(1)}[1]$ defines a point of $\Map(L_{P_n^{(1)}/A},P_n^{(1)}[2])$ that is {\em canonically} identified with the point defining the obstruction to lifting $P_n^{(1)}$ to $\widetilde{A}$, i.e., with the map 
\[ L_{P_n^{(1)}/A}[-1] \stackrel{a_n}{\to} L_{A/\Z_p} \otimes_A P_n^{(1)} \stackrel{b_n}{\to} P_n^{(1)}[1]\] 
where $a_n$ is the Kodaira-Spencer map for $A \to P_n^{(1)}$ and $b_n$ is the derivation classifying the square-zero extension $\widetilde{A} \to A$ pulled back to $P_n^{(1)}$; see \cite[Theorem 3.5]{DeligneIllusie}. Taking homotopy-colimits then shows that the point $\ob_1 \in \Map(L_{B^{(1)}/A},B^{(1)}[2])$ constructed above also coincides with the map
\[ L_{B^{(1)}/A}[-1] \stackrel{a}{\to} L_{A/\Z_p} \otimes_A B^{(1)} \stackrel{b}{\to} B^{(1)}[1], \]
where $a = |a_\bullet|$ is the Kodaira-Spencer map for $A \to B^{(1)}$, while $b = |b_\bullet|$ is the derivation classifying the square-zero extension $\widetilde{A} \to A$ pulled back to $B^{(1)}$; the claim follows.
\end{proof}

\begin{remark}
As mentioned in \cite{IllusieCC2ERR}, there is a mistake in \cite[\S VIII.2.1.4]{IllusieCC2} where it is asserted that for any algebra map $A \to B$, there is a natural isomorphism $\oplus_p \wedge^p L_{B^{(1)}/A}[-p] \simeq \dR_{B/A}$ rather than simply an isomorphism of the graded pieces; we thank Beilinson for pointing out \cite{IllusieCC2ERR} to us. Based on Proposition \ref{prop:conjssfirstd}, a non-liftable (to $W_2$) singularity gives an explicit counterexample to the direct sum decomposition. A particularly simple example, due to Berthelot and Ogus, is $A = \F_p$ and $B = \F_p[x_1,\dots,x_6](x_i^p,x_1x_2 + x_3x_4 + x_5x_6)$.
\end{remark}

The classical Cartier isomorphism has an important extension \cite[Remark 5.5.1]{IllusieFrobHodge}: the description of the cohomology of the de Rham complex of a smooth morphism in terms of differentials on the Frobenius twists lifts to a description of the entire de Rham complex in the presence of $\Z/p^2$-lift of everything in sight, including Frobenius.  We show next that a similar picture is valid in the derived context:

\begin{proposition}[Liftable Cartier isomorphism]
\label{prop:derivedcartier}
Let $\widetilde{A} \to \widetilde{B}$ be a map of flat $\Z/p^2$-algebras such that there exist compatible endomorphisms $\widetilde{F_A}$ and $\widetilde{F_B}$ lifting the Frobenius endomorphisms of $A = \widetilde{A} \otimes_{\Z/p^2} \F_p$ and $B = \widetilde{B} \otimes_{\Z/p^2} \F_p$. Then there exists an equivalence of algebras
\[ \Cartier^{-1}:\oplus_{k \geq 0} \wedge^k L_{B^{(1)}/A}[-k]  \simeq \dR_{B/A}  \]
splitting the conjugate filtration from Proposition \ref{prop:conjss}.
\end{proposition}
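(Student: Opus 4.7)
The plan is to bootstrap the liftable Cartier isomorphism from the classical smooth case (discussed in the remark after Theorem \ref{thm:classicalcartier}) to the derived setting by finding a simplicial resolution of $\widetilde{B}$ over $\widetilde{A}$ that carries a strictly compatible Frobenius lift. First, I would choose a cofibrant replacement $\widetilde{P}_\bullet \to \widetilde{B}$ in the model category of simplicial $\widetilde{A}$-algebras, so that each $\widetilde{P}_n$ is a free (hence flat) $\widetilde{A}$-algebra; the reduction $P_\bullet \to B$ is then a free resolution of $B$ over $A$.

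Next I would produce a Frobenius lift $\widetilde{F}_\bullet: \widetilde{P}_\bullet \to \widetilde{P}_\bullet$ that is $\widetilde{F}_A$-semilinear and satisfies $\pi \circ \widetilde{F}_\bullet = \widetilde{F}_B \circ \pi$, where $\pi$ denotes the augmentation. This amounts to the lifting problem
$$\xymatrix{ & \widetilde{P}_\bullet \ar[d]^\pi \\ \widetilde{F}_A^* \widetilde{P}_\bullet \ar[r]^-{\widetilde{F}_B \circ \pi} \ar@{-->}[ur]^{\widetilde{F}_\bullet} & \widetilde{B} }$$
in $s\Alg_{\widetilde{A}/}$, where $\widetilde{F}_A^* \widetilde{P}_\bullet = \widetilde{A} \otimes_{\widetilde{A}, \widetilde{F}_A} \widetilde{P}_\bullet$ is the Frobenius-twisted resolution. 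The bottom map is well-defined because $\widetilde{F}_B$ is $\widetilde{F}_A$-semilinear, the source $\widetilde{F}_A^* \widetilde{P}_\bullet$ is still cofibrant (being a base change of a cofibrant object), and $\pi$ is a trivial fibration (surjective and a weak equivalence); thus Quillen's lifting axiom provides $\widetilde{F}_\bullet$.

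Given $\widetilde{F}_\bullet$, I would invoke the classical liftable Cartier isomorphism at each simplicial degree: for each $n$, the map $\widetilde{F}_n^*$ on $\Omega^1_{\widetilde{P}_n/\widetilde{A}}$ is divisible by $p$ (because it vanishes modulo $p$), so wedge powers of $\widetilde{F}_n^*/p$ define a degree-wise quasi-isomorphism of cochain complexes
$$\oplus_{k \geq 0} \Omega^k_{P_n^{(1)}/A}[-k] \stackrel{\simeq}{\to} \Omega^\bullet_{P_n/A}.$$
Because $\widetilde{F}_\bullet$ is an actual morphism of simplicial $\widetilde{A}$-algebras, these maps are functorial in $n$ and assemble into a morphism of simplicial cochain complexes. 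Totalising yields the desired equivalence
$$\oplus_{k \geq 0} \wedge^k L_{B^{(1)}/A}[-k] \stackrel{\simeq}{\to} \dR_{B/A},$$
using $|\Omega^k_{P_\bullet^{(1)}/A}| \simeq \wedge^k L_{B^{(1)}/A}$ to identify the left-hand side. By construction the $k$-th summand lands in $\Fil^\conj_k$, and the induced map on $\gr^\conj_k$ recovers the derived Cartier isomorphism of Proposition \ref{prop:conjss}, so the splitting is compatible with the conjugate filtration.

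The main obstacle I expect is the intermediate step of producing $\widetilde{F}_\bullet$ as a genuine endomorphism of a single cofibrant resolution (not merely a homotopy coherent diagram), simultaneously compatible with $\widetilde{F}_A$ on the base and $\widetilde{F}_B$ on the target. This is exactly where the flatness hypothesis over $\Z/p^2$ and the model-categorical lifting property between cofibrations and trivial fibrations are essential. Once $\widetilde{F}_\bullet$ exists, the strict simplicial functoriality of the classical Cartier splitting in the chosen Frobenius lift reduces everything else to assembly and a direct verification on associated gradeds.
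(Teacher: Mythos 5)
Your overall plan is the same as the paper's, but the crucial lifting step is set up against the wrong map, and this leaves a real gap. Your lifting problem forces $\pi \circ \widetilde{F}_\bullet = \widetilde{F}_B \circ \pi$, i.e.\ compatibility with the augmentation to $\widetilde{B}$. But this is not the constraint the rest of the argument needs: you immediately assert that ``$\widetilde{F}_n^*$ on $\Omega^1_{\widetilde{P}_n/\widetilde{A}}$ is divisible by $p$ (because it vanishes modulo $p$),'' and that vanishing holds only if $\widetilde{F}_n \bmod p$ is \emph{equal} to the relative Frobenius $P_n^{(1)} \to P_n$. The lifting axiom for $\pi$ produces \emph{some} lift of $\widetilde{F}_B \circ \pi$, whose mod-$p$ reduction is merely \emph{homotopic} to the relative Frobenius (both being lifts of $F_B$ along the trivial fibration $\bar\pi$), but in general not equal to it. For a concrete failure: with $A = \F_p$, $\widetilde{A} = \Z/p^2$, $\widetilde{B} = \Z/p^2$, $\widetilde{P}_0 = \Z/p^2[x]$ with $\pi(x) = 0$, the map $\widetilde{F}_0(x) = x$ satisfies your lifting condition, yet $\widetilde{F}_0^*(dx) = dx$ is not divisible by $p$. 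So the termwise classical Cartier splitting, which is what you want to totalise, is simply not available for the lift you produce.

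The paper resolves exactly this by using a \emph{different} model-categorical tool: it starts with any lift $\widetilde{g}$ whose mod-$p$ reduction is homotopic to the relative Frobenius, and then invokes the covering homotopy theorem along the fibration $\widetilde{P}_\bullet \to P_\bullet$ to replace $\widetilde{g}$ by a homotopic map whose mod-$p$ reduction is \emph{strictly} the relative Frobenius; Lemma \ref{lem:nullmodp} is then used to conclude $\Omega^k(\widetilde{g}^*) = 0$ for $k \geq 2$, which is what makes the wedge powers a strict morphism of simplicial cochain complexes. If you prefer to stay with a single lifting problem, you would have to lift against $(\pi, \,\mathrm{mod}\,p) \colon \widetilde{P}_\bullet \to \widetilde{B} \times_B P_\bullet$ rather than against $\pi$ alone, and you would then need to verify that this combined map is a trivial fibration (this is where the $\Z/p^2$-flatness of $\widetilde{A}$ and $\widetilde{B}$ enters in an essential way: one shows the kernel is $p\ker(\pi)$, which is acyclic because $\ker(\pi)$ is an acyclic complex of flat $\Z/p^2$-modules). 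As written, though, the proposed proof does not establish the key $p$-divisibility and is incomplete.
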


\begin{proof}
Our proof uses the model structure on simplicial commutative rings due to Quillen \cite{QuillenHA}, see \S \ref{sss:modelstrsalg}. The liftability assumption on Frobenius shows that if $\widetilde{P_\bullet} \to \widetilde{B}$ denotes a free $\widetilde{A}$-algebra resolution of $\widetilde{B}$, then there exists a map $\widetilde{h}: \widetilde{P_\bullet} \to \widetilde{P_\bullet}$ which is compatible with $\widetilde{F_A}$ and $\widetilde{F_B}$ up to homotopy, and has a modulo $p$ reduction that is homotopic to the Frobenius endomorphism of $P_\bullet := \widetilde{P_\bullet} \otimes_{\Z/p^2} \Z/p$. Now set $\widetilde{P_\bullet}^{(1)} := \widetilde{P_\bullet} \otimes_{\widetilde{A},\widetilde{F_A}} \widetilde{A}$, and let $\widetilde{g}:\widetilde{P_\bullet}^{(1)} \to \widetilde{P_\bullet}$ denote the induced $\widetilde{A}$-algebra map. Observe that $\widetilde{P_\bullet}^{(1)}$ is cofibrant as an $\widetilde{A}$-algebra (as it is the base change of the cofibrant $\widetilde{A}$-algebra $\widetilde{P_\bullet}$ along some map $\widetilde{A} \to \widetilde{A}$), and the reduction modulo $p$ map $\widetilde{P_\bullet} \to P_\bullet$ is a fibration (since it is so as a map of simplicial abelian groups). Using general model categorical principles (more precisely, the ``covering homotopy theorem,'' see \cite[Chapter 1, page 1.7, Corollary]{QuillenHA}), we may replace $\widetilde{g}$ with a homotopic $\widetilde{A}$-algebra map to ensure that the modulo $p$ reduction of $\widetilde{g}$ is {\em equal} to the relative Frobenius map $P_\bullet^{(1)} \to P_\bullet$. With this choice, the induced map 
\[ \Omega^1(\widetilde{g}^*):  \Omega^1_{\widetilde{P_\bullet}^{(1)}/\widetilde{A}} \to \Omega^1_{\widetilde{P_\bullet}/\widetilde{A}} \]
reduces to the $0$ map modulo $p$. Taking wedge powers and using Lemma \ref{lem:nullmodp}, all the induced maps 
\[ \Omega^k(\widetilde{g}^*):  \Omega^k_{\widetilde{P_\bullet}^{(1)}/\widetilde{A}} \to \Omega^k_{\widetilde{P_\bullet}/\widetilde{A}} \]
are $0$ for $k > 1$. In particular, there exist well-defined maps 
\[ \frac{1}{p} \cdot \Omega^k(\widetilde{g}^*): \Omega^k_{P_\bullet^{(1)}/A}  \to  \Omega^k_{P_\bullet/A},\]
all $0$ for $k > 1$, with the property that the square 
\[ \xymatrix{ \Omega^1_{P_\bullet^{(1)}/A} \ar[rr]^-{\frac{1}{p} \cdot \Omega^1(\widetilde{g}^*)} \ar[d]^d & & \Omega^1_{P_\bullet/A} \ar[d]^d \\
			  \Omega^2_{P_\bullet^{(1)}/A} \ar[rr]^-{\frac{1}{p} \cdot \Omega^2(\widetilde{g}^*)}  & & \Omega^2_{P_\bullet/A} } \]		
commutes. Since the bottom map is $0$, there is a well-defined map of double complexes
\[ \Omega^1_{P_\bullet^{(1)}/A}[-1] \to \Omega^\bullet_{P_\bullet/A} \]
which totalises to give a map
\[ L_{B^{(1)}/A}[-1] \to \dR_{B/A}.\]
The Cartier isomorphism in the smooth case shows that the preceding morphism splits the conjugate filtration in degree $1$. We leave it to the reader to check that taking wedge powers and using the algebra structure on $\dR_{B/A}$ now defines the desired isomorphism
\[ \Cartier^{-1}: \oplus_{k \geq 0} \wedge^k L_{B^{(1)}/A} [-k] \to \dR_{B/A}. \qedhere\]
\end{proof}

The following lemma used in the proof of Proposition \ref{prop:derivedcartier}.

\begin{lemma}
\label{lem:nullmodp}
Let $R$ be a flat $\Z/p^2$-algebra. Let $f:K_1 \to K_2$ be a map of simplicial $R$-modules. Assume that $f \otimes_R \F_p$ is $0$ as a map of complexes. If $K_2$ has projective terms, then $\wedge^k f = 0$ for $k > 1$.
\end{lemma}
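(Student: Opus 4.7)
The plan is to argue termwise in the simplicial direction, making essential use of the fact that $p^2 = 0$ in $R$. Fix a simplicial degree $n$: the hypothesis that $f \otimes_R \F_p$ vanishes as a map of complexes implies, in particular, that $f_n \otimes_R R/p = 0$, which (since tensoring with $R/p$ is reduction mod $p$) is equivalent to the inclusion $f_n((K_1)_n) \subseteq p \cdot (K_2)_n$.

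Now consider the naive termwise exterior power $\wedge^k f_n \colon \wedge^k_R (K_1)_n \to \wedge^k_R (K_2)_n$. Given $x_1,\dots,x_k \in (K_1)_n$, choose $a_i \in (K_2)_n$ with $f_n(x_i) = p a_i$, which is possible by the previous paragraph. Then
\[ (\wedge^k f_n)(x_1 \wedge \cdots \wedge x_k) = (p a_1) \wedge \cdots \wedge (p a_k) = p^k \,(a_1 \wedge \cdots \wedge a_k), \]
which vanishes for $k \geq 2$ since $p^2 = 0$ in $R$. Hence $\wedge^k f_n = 0$ strictly as a map of $R$-modules for each $n$, and therefore $\wedge^k f = 0$ strictly as a map of simplicial $R$-modules.

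The remaining point is to verify that this naive termwise construction represents the \emph{derived} exterior power of $f$, so that the strict vanishing translates to vanishing in the homotopy category; this is precisely where the projectivity hypothesis on $K_2$ enters. Since the terms of $K_2$ are projective (hence $R$-flat), $K_2$ is already suitably cofibrant, and the termwise $\wedge^k_R(-)$ agrees with the derived exterior power (cf.\ \cite[\S 7]{QuillenCRCNotes}). I do not anticipate any real obstacle: the entire content of the lemma is the observation that any wedge of $k \geq 2$ elements drawn from $p \cdot (K_2)_n$ carries a factor of $p^k$, which is zero.
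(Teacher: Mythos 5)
Your proof is correct and follows essentially the same route as the paper's: the hypothesis forces $f$ to land termwise in $p\cdot K_2$, and wedging $k\geq 2$ such elements produces the factor $p^k = 0$, while projectivity of $K_2$ lets the termwise computation detect the vanishing of the derived $\wedge^k f$. The paper merely packages the same observation as a factorization of $f$ through the inclusion $pK_2 \hookrightarrow K_2$ followed by noting that $\wedge^k$ of that inclusion vanishes for $k>1$.
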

\begin{proof}[Sketch of proof]
The assumption that $f$ is $0$ modulo $p$ implies that $f$ factors as a map
\[ K_1 \to p \cdot K_2 \stackrel{i}{\hookrightarrow} K_2.\]
Moreover, since $K_2$ is projective, the derived exterior powers of $f$ are computed in the naive sense, without any cofibrant replacement of the source. Thus, $\wedge^k(f)$ factors through $\wedge^k(i)$. However, it is clear $\wedge^k(i) = 0$ for $k > 1$, and so the claim follows.
\end{proof}

\begin{remark}
The proof of Proposition \ref{prop:derivedcartier} made use of certain choices, but the end result is independent of these choices. If we have a different free resolution $\widetilde{P_\bullet}' \to \widetilde{B}$ and a different lift $\widetilde{g}'$ of Frobenius on $\widetilde{P_\bullet}'$ compatible with the chosen lift on $B$, then one can still run the same argument to get a decomposition of $\dR_{B/A}$. The resulting map $\wedge^k L_{B^{(1)}/A}[-k] \to \dR_{B/A}$ is homotopic to the one constructed in the proof above, since the lifts $\widetilde{g}$ and $\widetilde{g}'$ are homotopic as maps of $\widetilde{A}$-algebras, i.e., we may choose an $\widetilde{A}$-algebra equivalence $\widetilde{P_\bullet} \to \widetilde{P_\bullet}'$ lying over $\widetilde{B}$ that commutes with $\widetilde{g}$ and $\widetilde{g}'$, up to specified homotopy.
\end{remark}

\begin{remark}
Some homological analysis in the proof of Proposition \ref{prop:derivedcartier} becomes simpler if we specify lifts to $\Z_p$ instead $\Z/p^2$. Indeed, once $\Z_p$-lifts have been specified, the resulting map on forms (the analogue of the map labelled $\Omega^1(\widetilde{g}^*)$ above) is divisible by $p$ {\em as a map}, and hence the maps $\Omega^k(\widetilde{g}^*)$ will be divisible by $p^k$ as maps for all $k$, without modifying the original choice of $\widetilde{g}$ as we did at the start of the proof.
\end{remark}

Using Proposition \ref{prop:derivedcartier}, we can give an explicit example of a morphism of $\F_p$-algebras whose derived de Rham cohomology is not left-bounded. In particular, this shows that derived de Rham cohomology cannot arise as the cohomology of a sheaf of rings on a topos. In future work \cite{BhattDerCrys}, we will construct a {\em derived crystalline site} which will be a simplicially ringed $\infty$-topos functorially attached to a morphism $f:X \to S$ of schemes, and show that the cohomology of the structure sheaf on this topos is canonically isomorphic to derived de Rham cohomology.

\begin{example}[Non-coconnectivity of derived de Rham cohomology]
Let $A$ be a $\F_p$-algebra with the following two properties: (a) the cotangent complex $L_{A/\F_p}$ is unbounded on the left (i.e., the singularity $\Spec(A)$ is not lci), (b) the algebra $A$ admits a lift to $\Z/p^2$ along with a lift of the Frobenius map. For example, we can take $A = \F_p[x,y]/(x^2,xy,y^2)$ with the obvious lift (same equations), and obvious Frobenius lift (raise to the $p$-th power on the variables). Then the derived Cartier isomorphism shows that
\[ \dR_{A/\F_p} \simeq \oplus_{i \geq 0} \wedge^i L_{A/\F_p}[-i]. \]
In particular, the complex $\dR_{A/\F_p}$ is unbounded on the left. 
\end{example}

Next, we discuss the transitivity properties for derived de Rham cohomology. Our treatment here is unsatisfactory as we do not develop the language of coefficients in this paper. 

\begin{proposition}
\label{prop:conjfiltddrcomposite}
Let $A \to B \to C$ be a composite of maps of $\F_p$-algebras. Then $\dR_{C/A}$ admits an increasing bounded below separated exhaustive filtration with graded pieces of the form
\[ \dR_{B/A} \otimes_{\Frob_A^* B} \Frob_A^* \big( \wedge^n L_{C/B} [-n] \big),\]
where the second factor on the right hand side is the base change of $\wedge^n L_{C/B}[-n]$, viewed as an $B$-module, along the map $\Frob_A:B \to \Frob_A^* B$.
\end{proposition}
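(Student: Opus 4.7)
The plan is to lift the classical Gauss-Manin (transitivity) filtration on the de Rham complex of a composite of smooth maps to the derived setting, and then to identify the graded pieces using the Cartier isomorphism (Theorem \ref{thm:classicalcartier}).

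Choose a free $A$-algebra resolution $P_\bullet \to B$ together with a bisimplicial lift $R_{\bullet,\bullet} \to C$ such that, for each $n$, $R_{n,\bullet}$ is a free $P_n$-algebra resolution of $C$ (viewing $C$ as a $P_n$-algebra through $P_n \to B \to C$); the diagonal of $R_{\bullet,\bullet}$ is then a free $A$-algebra resolution of $C$, so $\dR_{C/A} \simeq |\Omega^\bullet_{R_{\bullet,\bullet}/A}|$. For each pair $(n,m)$, the short exact sequence $0 \to \Omega^1_{P_n/A} \otimes_{P_n} R_{n,m} \to \Omega^1_{R_{n,m}/A} \to \Omega^1_{R_{n,m}/P_n} \to 0$ gives an increasing ``fiber degree'' filtration $\Fil_\bullet$ on $\Omega^\bullet_{R_{n,m}/A}$ with graded pieces $\gr_k = \Omega^\bullet_{P_n/A} \otimes_{P_n} \Omega^k_{R_{n,m}/P_n}[-k]$. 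This is functorial in $(n,m)$, and hence totalizes to an increasing bounded below separated exhaustive filtration on $\dR_{C/A}$ with $k$-th graded piece $\gr_k(\dR_{C/A}) \simeq |\Omega^\bullet_{P_\bullet/A} \otimes_{P_\bullet} \Omega^k_{R_{\bullet,\bullet}/P_\bullet}|[-k]$.

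To identify this with $\dR_{B/A} \otimes_{\Frob_A^* B} \Frob_A^*(\wedge^k L_{C/B})[-k]$ I proceed in two steps. First, realize the inner $m$-direction: $|\Omega^k_{R_{n,\bullet}/P_n}| \simeq \wedge^k L_{C/P_n}$ by the definition of derived exterior powers from a free resolution. Cotangent transitivity for $P_\bullet \to B \to C$ combined with $|L_{B/P_\bullet}| \simeq 0$ (since the cotangent complex commutes with colimits of simplicial rings, and $P_\bullet \to B$ is an equivalence) identifies the simplicial $P_\bullet$-module $n \mapsto \wedge^k L_{C/P_n}$ with $\wedge^k L_{C/B}$ pulled back along $P_\bullet \to B$. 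Second, apply the classical Cartier isomorphism to the outer realization: each $\Omega^\bullet_{P_n/A}$ has cohomology $\wedge^j L_{P_n^{(1)}/A}$, which is naturally $P_n^{(1)} = \Frob_A^* P_n$-linear, so the tensor product $\Omega^\bullet_{P_n/A} \otimes_{P_n} M$ with a $P_n$-module $M$ factors cohomologically through the Frobenius twist $\Frob_A^* M = M \otimes_{P_n} P_n^{(1)}$. Realizing in $n$ and using $|\Omega^\bullet_{P_\bullet/A}| = \dR_{B/A}$ by definition, the $k$-th graded piece becomes $\dR_{B/A} \otimes_{B^{(1)}} \Frob_A^*(\wedge^k L_{C/B})[-k]$, with $B^{(1)} = |P_\bullet^{(1)}| = \Frob_A^* B$ appearing as the natural target of the Frobenius twist on realization.

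The main obstacle is the coherence of the various Frobenius twists, base changes, and simplicial totalizations under the double filtration. This can be handled by combining the derived Cartier description of $\dR_{B/A}$ from Proposition \ref{prop:conjss} with the functoriality of the cotangent transitivity sequence; alternatively, one may verify the claim by reorganizing the bifiltration obtained from the conjugate filtration on $\dR_{C/A}$ together with the wedge-power filtration on each graded piece $\wedge^n L_{C^{(1)}/A}$ arising from the cotangent transitivity triangle $L_{B^{(1)}/A} \otimes_{B^{(1)}} C^{(1)} \to L_{C^{(1)}/A} \to L_{C^{(1)}/B^{(1)}}$, and matching pieces by relative degree.
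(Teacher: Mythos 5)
There is a genuine gap: you filter by ``fiber degree'' (the Koszul/Hodge-type filtration coming from the exact sequence of $\Omega^1$'s), but the paper filters by the \emph{canonical truncation} $\tau_{\leq n}\Omega^\bullet_{Q_k/P_k}$ of the fiber de Rham complex --- that is, the conjugate-type filtration in the fiber direction. These are different filtrations with different graded pieces, and it is the latter, not the former, that produces the graded pieces claimed by the proposition. The point is that after passing to the canonical truncation, the classical Cartier isomorphism identifies $H^n(\Omega^\bullet_{Q_k/P_k}) \simeq \Frob_{P_k}^* \Omega^n_{Q_k/P_k}$, and --- this is the crucial ingredient you are missing --- Katz's theorem (\cite[Theorem 5.10]{KatzNilp}) shows that the Gauss--Manin connection on this cohomology group is the Frobenius descent connection. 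Only then does Lemma \ref{lem:frobdescent} apply to rewrite the graded piece as a tensor product over $P_k^{(1)}$, and the Frobenius twist $\Frob_A^*$ on the coefficient module arises precisely from this Cartier isomorphism applied along the \emph{fiber} map $P_k \to Q_k$. In your version, the graded pieces involve $\Omega^k_{R_{n,m}/P_n}$ rather than its Cartier-twisted cohomology, the Gauss--Manin connection on $\Omega^k_{R_{n,m}/P_n}$ is not a Frobenius descent connection, Lemma \ref{lem:frobdescent} does not apply, and the Frobenius twist never appears; your second step (applying Cartier only to the base direction via the $B^{(1)}$-module structure on $\dR_{B/A}$) cannot compensate for this, as it produces a different twist than the one required.

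To see that the discrepancy is real and not merely notational, take $A = \F_p$, $P = B = \F_p[t]$, $Q = C = \F_p[t,x]$. Your filtration's $\gr^1$ is the complex $\F_p[t,x]\,dx \to \F_p[t,x]\,dt \wedge dx$ (Gauss--Manin differential $\partial_t$), with cohomology $H^1 = \F_p[t^p, x]$, which is free of rank $p$ over $C^{(1)}$. The paper's $\gr_1$ is $\Omega^\bullet_{B/\F_p}\bigl(H^1(\Omega^\bullet_{C/B})\bigr)[-1]$, with $H^1 = \F_p[t^p, x^p]\,x^{p-1}dx$, free of rank one over $C^{(1)}$; this is exactly what $\dR_{B/A}\otimes_{B^{(1)}} \Frob_A^*\bigl(\wedge^1 L_{C/B}\bigr)[-1]$ gives. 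The two are not isomorphic, so your filtration does not satisfy the conclusion. Two secondary issues: the expression $\Omega^\bullet_{P_n/A} \otimes_{P_n} \Omega^k_{R_{n,m}/P_n}$ is not a complex (the de Rham differential on $\Omega^\bullet_{P_n/A}$ is not $P_n$-linear); the correct object is the de Rham complex with Gauss--Manin connection. And the ``fiber degree'' filtration is naturally decreasing (fiber degree $\geq k$ is preserved by $d$, fiber degree $\leq k$ is not), while the proposition calls for an increasing filtration; the canonical truncation naturally provides an increasing one. Your closing ``alternatively'' --- reorganizing the conjugate filtration on $\dR_{C/A}$ together with the cotangent transitivity triangle --- is a genuinely plausible alternative route, closer in spirit to the conjugate spectral sequence, but it is only sketched.
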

\begin{proof}
Let $P_\bullet \to B$ be a polynomial $A$-algebra resolution of $B$, and let $Q_\bullet \to P_\bullet$ be a termwise-polynomial $P_\bullet$-algebra resolution of $C$. Then $\dR_{C/A} \simeq | \Omega^\bullet_{Q_\bullet/A} |$. For each $k \in \Delta^\opp$, transitivity for de Rham cohomology (along smooth morphisms, see \cite[\S 3]{KatzNilp}) endows each complex $\Omega^\bullet_{Q_k/R}$ with an increasing bounded below separated exhaustive filtration $\Fil_\bullet$ given by the (usual) de Rham complexes $\Omega^\bullet_{P_k/A}(\tau_{\leq n} \Omega^\bullet_{Q_k/P_k})$, where $\tau_{\leq n} \Omega^\bullet_{Q_k/P_k})$ is the canonical trunction in degrees $\leq n$ of $\Omega^\bullet_{Q_k/P_k}$ equipped with the Gauss-Manin connection for the composite $A \to P_k \to Q_k$. The graded pieces of this filtration are then computed to be the (usual) de Rham complexes $\Omega^\bullet_{P_k/A}(H^n(\Omega^\bullet_{Q_k/P_k})[-n])$. By the classical Cartier isomorphism, the group $H^n(\Omega^\bullet_{Q_k/P_k})$ is computed as $\Frob_{P_k}^* \Omega^n_{Q_k/P_k}$, and the Gauss-Manin connection coincides with the induced Frobenius descent connection; see also \cite[Theorem 5.10]{KatzNilp}. Lemma \ref{lem:frobdescent} below then gives an identification
\[ \Omega^\bullet_{P_k/A}(H^n(\Omega^\bullet_{Q_k/P_k})[-n]) \simeq \Omega^\bullet_{P_k/A} \otimes_{\Frob_A^* P_k} \big(\Frob_A^* \Omega^n_{Q_k/P_k}[-n]) \simeq \Omega^\bullet_{P_k/A} \otimes_{\Frob_A^* P_k} \Frob_A^* \big(\wedge^n L_{Q_k/P_k}[-n]\big).\]
The desired claim now follows by taking a homotopy-colimit over $k \in \Delta^\opp$.
\end{proof}

\begin{remark}
Let $A \stackrel{f}{\to} B \stackrel{g}{\to} C$ be two composable maps of $\Z/p^n$-algebras. Proposition \ref{prop:conjfiltddrcomposite} is a shadow of an isomorphism $\dR_f(\dR_g) \simeq \dR_{g \circ f}$; we do not develop the language here to make sense of the left hand side, but simply point out that in the case that $f$ and $g$ are both smooth, this is the transitivity isomorphism for crystalline cohomology using Berthelot's comparison theorem between de Rham and crystalline cohomology (and Corollary \ref{cor:ddrindepsmooth}). A similarly satisfactory explanation in general will be given in \cite{BhattDerCrys}.
\end{remark}

The following general fact about Frobenius descent connections was used in Proposition \ref{prop:conjfiltddrcomposite}.

\begin{lemma}
\label{lem:frobdescent}
Let $f:A \to B$ be a map of $\F_p$-algebras that exhibits $B$ as a polynomial $A$-algebra. Let $M$ be a $B^{(1)}$-module. Then the de Rham cohomology of the Frobenius descent connection on $\Frob_f^* M$ takes the shape:
\[ \Omega^\bullet_{B/A}(\Frob_f^* M) \simeq \Omega^\bullet_{B/A} \otimes_{B^{(1)}} M.\]
\end{lemma}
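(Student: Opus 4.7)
The plan is to compute both sides directly and observe they are literally equal as chain complexes, after a flatness check to justify why the ordinary tensor product on the right represents the derived one.

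First I would set up coordinates. Writing $B = A[\{x_i\}_{i \in I}]$, the Frobenius twist is $B^{(1)} = A[\{y_i\}_{i \in I}]$ and the relative Frobenius $\Frob_f : B^{(1)} \to B$ sends $y_i \mapsto x_i^p$. This exhibits $B$ as a \emph{free} $B^{(1)}$-module with basis $\{\prod x_i^{e_i} : 0 \leq e_i < p\}$. In particular, each $\Omega^k_{B/A}$ (a free $B$-module on the $dx_{i_1} \wedge \dots \wedge dx_{i_k}$) is flat, even free, as a $B^{(1)}$-module. I would also record that the de Rham differential $d : \Omega^k_{B/A} \to \Omega^{k+1}_{B/A}$ is $B^{(1)}$-linear: by the Leibniz rule and $d(b^p) = p b^{p-1} db = 0$ in characteristic $p$, any element pulled back from $B^{(1)}$ along $\Frob_f$ is flat for $d$. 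Consequently $\Omega^\bullet_{B/A}$ is a bona fide complex of $B^{(1)}$-modules, and the ordinary tensor product $\Omega^\bullet_{B/A} \otimes_{B^{(1)}} M$ computes the derived tensor by flatness.

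Next I would identify the two complexes term by term. By definition, $\Frob_f^* M = B \otimes_{B^{(1)}} M$ as a $B$-module, so in degree $k$
\[ \Omega^k_{B/A} \otimes_B \Frob_f^* M \;=\; \Omega^k_{B/A} \otimes_B (B \otimes_{B^{(1)}} M) \;=\; \Omega^k_{B/A} \otimes_{B^{(1)}} M, \]
which matches the right-hand side termwise.

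The key step is matching the differentials. The Frobenius descent connection $\nabla$ on $\Frob_f^* M$ is characterised by $\nabla(1 \otimes m) = 0$ for $m \in M$, extended by the Leibniz rule. Writing a general element of $\Omega^k_{B/A} \otimes_B \Frob_f^* M$ as $\omega \otimes (1 \otimes m)$, the de Rham differential of a connection is
\[ d(\omega \otimes (1 \otimes m)) \;=\; d\omega \otimes (1 \otimes m) + (-1)^k \omega \wedge \nabla(1 \otimes m) \;=\; d\omega \otimes (1 \otimes m). \]
Under the identification above, this is exactly $(d \otimes \id_M)(\omega \otimes m)$, i.e.\ the tensor product differential on $\Omega^\bullet_{B/A} \otimes_{B^{(1)}} M$.

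There is no serious obstacle: the lemma is essentially a tautology once one unwinds the definition of the Frobenius descent connection and observes that $\Omega^\bullet_{B/A}$ carries a $B^{(1)}$-linear differential. The only technical point requiring care is the distinction between the derived and underived tensor product on the right-hand side, which is handled by the freeness of $B$ over $B^{(1)}$ in the polynomial case --- precisely the hypothesis of the lemma.
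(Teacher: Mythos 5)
Your proof is correct and takes essentially the same approach as the paper: both observe that the lemma is tautological given that the Frobenius descent connection on $\Frob_f^* M$ is precisely what makes the de Rham complex into $\Omega^\bullet_{B/A} \otimes_{B^{(1)}} M$. You simply unwind the definition more explicitly (checking $B^{(1)}$-linearity of $d$, freeness, and the Leibniz computation), whereas the paper remarks in one line that the connection is \emph{defined} by the first differential of the complex on the right.
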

\begin{proof}
This lemma is essentially tautological as the connection on $\Frob_f^* M$ is defined to be the first differential in the complex appearing on the right above.
\end{proof}

\subsection{Connection with crystalline cohomology}

Classical crystalline cohomology is very closely related to de Rham cohomology modulo $p^n$: the two theories coincide for smooth morphisms. We will show that there exists an equally tight connection classical crystalline cohomology and derived de Rham cohomology: the two theories coincide for {\em lci} morphisms. In future work \cite{BhattDerCrys}, we enhance this result by constructing {\em derived crystalline cohomology} that always coincides with derived de Rham cohomology, and also with the classical crystalline cohomology for lci maps.

We start off by constructing a natural transformation from derived de Rham cohomology to crystalline cohomology. For simplicity of notation, we restrict ourselves to the affine case.

\begin{proposition}
\label{prop:ddrcryscompmap}
Let $f:A \to B$ be a map of $\Z/p^n$-algebras. Then there is a natural map of Hodge-filtered $E_\infty$-algebras
\[ \comp_{B/A}:\dR_{B/A} \to \R\Gamma( (B/A)_\crys,\calO_\crys) \]
that is functorial in $A \to B$, and agrees with the one coming from \cite[Theorem IV.2.3.2]{BerthelotCCLNM} when $A \to B$ is smooth (via Corollary \ref{cor:ddrindepsmooth}).
\end{proposition}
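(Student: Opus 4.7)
The approach is to reduce to the smooth case and glue via a simplicial resolution. Concretely, choose a free $A$-algebra resolution $P_\bullet \to B$. For each $n$, the polynomial $A$-algebra $P_n$ is smooth over $A$, so \cite[Theorem IV.2.3.2]{BerthelotCCLNM} supplies a natural Hodge-filtered $E_\infty$-equivalence
\[ \Omega^\bullet_{P_n/A} \simeq \R\Gamma((P_n/A)_\crys,\calO_\crys). \]
The $A$-algebra structure map $P_n \to B$ induces a morphism of crystalline topoi, and hence a filtered multiplicative pullback
\[ \R\Gamma((P_n/A)_\crys,\calO_\crys) \to \R\Gamma((B/A)_\crys,\calO_\crys), \]
which is compatible with the cosimplicial structure on the left (the target being constant in $n$). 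Taking homotopy-colimits over $\Delta^\opp$ assembles these into the desired map
\[ \comp_{B/A}:\dR_{B/A} \simeq |\Omega^\bullet_{P_\bullet/A}| \to \R\Gamma((B/A)_\crys,\calO_\crys). \]
A conceptually cleaner formulation uses Remark \ref{rmk:lurielke}: since $B \mapsto \dR_{B/A}$ is the left Kan extension of $F \mapsto \Omega^\bullet_{F/A}$ from free $A$-algebras, and $B \mapsto \R\Gamma((B/A)_\crys,\calO_\crys)$ agrees with $\Omega^\bullet_{-/A}$ on free (and indeed smooth) algebras by Berthelot, the universal property produces $\comp_{B/A}$ as the canonical natural transformation.

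The things to verify are: (a) independence of the resolution $P_\bullet$, which is standard since any two free $A$-algebra resolutions of $B$ are connected by a simplicial zigzag of weak equivalences lying over $B$, and all the data above is natural in such resolutions; (b) functoriality in $A \to B$, which is handled by choosing compatible resolutions on source and target and invoking naturality of Berthelot's map and of crystalline pullback; (c) preservation of the Hodge filtration, since Berthelot's identification is Hodge-filtered and the crystalline pullback respects the PD-filtration on $\calO_\crys$ that induces the Hodge filtration, and the realization of a simplicial object in filtered complexes carries the induced filtration; (d) preservation of the $E_\infty$-structure, since Berthelot's isomorphism and the crystalline pullback are symmetric monoidal, and homotopy-colimits of simplicial $E_\infty$-algebras are again $E_\infty$-algebras; (e) agreement with Berthelot's map when $A \to B$ is smooth, which follows by using any free resolution $P_\bullet \to B$, observing via Corollary \ref{cor:ddrindepsmooth} that $|\Omega^\bullet_{P_\bullet/A}| \simeq \Omega^\bullet_{B/A}$, and noting that the composite of this equivalence with our $\comp_{B/A}$ coincides with Berthelot's comparison for $B$ itself by the functoriality of Berthelot's map applied to each augmentation $P_n \to B$.

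The main obstacle is not conceptual but bookkeeping: carefully tracking the Hodge-filtered $E_\infty$-algebra structure under the simplicial realization. The left Kan extension viewpoint sidesteps this cleanly, provided one is willing to work in the $\infty$-categorical language of filtered $E_\infty$-$A$-algebras; the explicit simplicial construction is more concrete but requires checking compatibilities by hand. Either way, no new input beyond Berthelot's smooth comparison theorem and the universal property of the derived de Rham complex is needed.
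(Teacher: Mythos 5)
Your construction is correct and matches the paper's: take a free simplicial resolution $P_\bullet \to B$, apply Berthelot's comparison levelwise, map to the crystalline cohomology of $B$, and totalize — and the paper's Remark \ref{rmk:lkedrcrys} is exactly your left Kan extension reformulation.

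The one implementation difference is worth flagging. You invoke the abstract morphism of crystalline topoi attached to $P_n \to B$, which introduces a family of levelwise equivalences and pullback maps in the derived category, whose coherence with the simplicial structure then needs to be supplied (the "bookkeeping" you acknowledge). The paper instead strictifies: it uses Berthelot's pd-envelope model $\Omega^\bullet_{P_k/A}\otimes_{P_k} D_{P_k}(I_k)$ directly and produces an honest map of simplicial cochain complexes
\[ \Omega^\bullet_{P_\bullet/A} \longrightarrow \Omega^\bullet_{P_\bullet/A}\otimes_{P_\bullet} D_{P_\bullet}(I_\bullet) \]
via base change along $P_\bullet \to D_{P_\bullet}(I_\bullet)$. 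This makes the comparison map, its multiplicativity, and its filtration compatibility visible at the level of strict complexes, with no derived-level coherence to track; identifying the target with $\R\Gamma((B/A)_\crys,\calO_\crys)$ is then a separate, purely computational check. It is the same Berthelot-over-a-resolution idea as yours, but the pd-envelope strictification is what makes the Hodge-filtered $E_\infty$-structure come out for free.
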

We remind the reader that the right hand side is the crystalline cohomology of $\Spec(B) \to \Spec(A)$, and is defined using nilpotent thickenings of $B$ relative to $A$ (as $\Z/p^n$-algebras) equipped with a pd-structure on the ideal of definition compatible with the pd-structure on $(p)$; see \cite[Chapter IV]{BerthelotCCLNM}.

\begin{proof}
Let  $P_\bullet \to B$ be a free simplicial resolution of $B$ over $A$. For each $k \geq 0$, the map $P_k \to B$ is a surjective map from a free $A$-algebra onto $B$; let $I_k \subset P_k$ be the kernel of this map. Since we are working over $\Z/p^n$, it follows from Berthelot's theorem (see \cite[Theorem V.2.3.2]{BerthelotCCLNM}) that we have a filtered quasi-isomorphism
\[ \Omega^\bullet_{P_k(B/A)/A} \otimes_{P_k} D_{P_k}(I_k)  \stackrel{\simeq}{\to} \R\Gamma( (B/A)_\crys, \calO_\crys) \]
where $D_{P_k}(I_k)$  denotes the pd-envelope of the ideal $I_k$ compatible with the standard divided powers on $p$. As $k$ varies, we obtain a map of simplicial cochain complexes
\begin{equation}
\label{eq:dr-crys-comp}
\comp^\bullet_{B/A}:\Omega^\bullet_{P_\bullet/A} \to \Omega^\bullet_{P_\bullet/A} \otimes_{P_\bullet} D_{P_\bullet}(I_\bullet)
\end{equation}
By Berthelot's theorem, the right hand simplicial object is quasi-isomorphic to the constant simplicial cochain complex on the crystalline cohomology of $B$ relative to $A$. More precisely, the natural map
\[ \Omega^\bullet_{P_0/A} \otimes_{P_0} D_{P_0}(I_0) \to |\Omega^\bullet_{P_\bullet/A} \otimes_{P_\bullet} D_{P_\bullet}(\ker(P_\bullet \to B))|\]
is an equivalence with both sides computing the crystalline cohomology of $A \to B$. The map $\comp^\bullet_{B/A}$ then defines a map
\[ \comp_{B/A}:\dR_{B/A} \to \R\Gamma( (B/A)_\crys,\calO_\crys ) \]
in the derived category. This morphism respects the Hodge filtration $\Fil^\bullet_H$ as the map \eqref{eq:dr-crys-comp} does so. It is clear from the construction and the proof of Corollary \ref{cor:ddrindepsmooth} that this map agrees with the classical one when $A \to B$ is smooth.
\end{proof}

\begin{remark}[Lurie]
\label{rmk:lkedrcrys}
Remark \ref{rmk:lurielke} can be used to give an alternate construction of the map $\comp_{B/A}$. For $F \in \Alg_{A/}^\Free$, Berthelot's theorem \cite[Theorem V.2.3.2]{BerthelotCCLNM} gives a natural map $\Omega^\bullet_{F/A} \to \R\Gamma_\crys( (F/A)_\crys,\calO_\crys)$. If one defines the crystalline cohomology of $B \in s\Alg_{A/}$ as that of $\pi_0(B)$, then general properties of left Kan extensions give a map $\dR_{B/A} \to \R\Gamma_\crys( (B/A)_\crys,\calO_\crys)$ for any $B \in s\Alg_{A/}$. 
\end{remark}

The goal of this section is to prove the following theorem:

\begin{theorem}
\label{thm:ddrcryscomp}
Let $f:R \to B$ be a map of flat $\Z/p^n$-algebras for some $n > 0$. Assume that $f$ is lci. Then the map $\comp_{B/R}$ from Proposition \ref{prop:ddrcryscompmap} is an isomorphism.
\end{theorem}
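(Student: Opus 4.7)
The plan is to reduce to the characteristic $p$ case by devissage, and then compare both sides by showing they carry matching conjugate filtrations.

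\textbf{Reduction to $\F_p$.} A map of $\Z/p^n$-complexes is a quasi-isomorphism iff its reduction modulo $p$ is. The formation of $\dR_{B/R}$ commutes with base change along $\Z/p^n \to \F_p$ by Proposition \ref{prop:ddrbasechange}, and the flatness hypothesis on $R$ and $B$ ensures that crystalline cohomology and the comparison map $\comp_{B/R}$ from Proposition \ref{prop:ddrcryscompmap} behave similarly (pd-envelopes of regular ideals in flat $\Z/p^n$-algebras are flat, so commute with reduction mod $p$). Thus it suffices to treat the case where $R$ and $B$ are $\F_p$-algebras with $R \to B$ flat and lci.

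\textbf{Conjugate filtrations on both sides.} Under the $\F_p$ hypothesis, equip $\dR_{B/R}$ with the conjugate filtration from Proposition \ref{prop:conjss}, whose graded pieces are $\wedge^i L_{B^{(1)}/R}[-i]$. On the crystalline side, present $B$ locally as $F/I$ with $F$ a polynomial $R$-algebra and $I$ generated by a regular sequence (using the lci hypothesis); by Berthelot's theorem, crystalline cohomology is computed by $\Omega^\bullet_{F/R} \otimes_F D_F(I)$, on which the canonical truncation in each column defines a conjugate filtration. Since the comparison map in \eqref{eq:dr-crys-comp} arises levelwise from an inclusion of bicomplexes, it is automatically compatible with the conjugate filtrations. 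To make this work globally, one chooses a simplicial resolution $P_\bullet \to B$ by polynomial $R$-algebras in which each $P_k \twoheadrightarrow B$ has kernel generated by a regular sequence; explicit resolutions of this type are constructed in \cite{IyengarAQ} and avoid the canonical (overly large) free resolution.

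\textbf{Matching graded pieces.} The graded pieces on the crystalline side must be identified with $\wedge^i L_{B^{(1)}/R}[-i]$, and the comparison map must induce the identity on associated graded. Since $R \to B$ is flat lci, $L_{B/R}$ is perfect of Tor-amplitude $[-1,0]$, so its derived wedge powers are computable via Koszul-type resolutions. On the level of a single $P_k = F$ with regular $I_k = I$, the graded pieces of the truncation filtration on $\Omega^\bullet_{F/R} \otimes_F D_F(I)$ can be computed via the classical Cartier isomorphism on $\Omega^\bullet_{F/R}$ combined with the behavior of Frobenius on $D_F(I)$ (Frobenius kills the pd-ideal and sends $f^{[p]}$ to an element explicitly related to $f^p \in F^{(1)}$), yielding graded pieces supported on $B^{(1)}$ and representing the wedge powers of the conormal complex $I/I^2[1] \to \Omega^1_{F/R} \otimes_F B$, i.e., of $L_{B^{(1)}/R}$. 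Once this identification is made at each simplicial level, taking $|\,-\,|$ and using transitivity (Proposition \ref{prop:conjfiltddrcomposite}) identifies the graded pieces globally.

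\textbf{Main obstacle and conclusion.} The hard step is the crystalline graded-piece calculation: one must check that the canonical filtration on the pd-de Rham complex of a regular quotient recovers the derived exterior powers of the Frobenius-twisted cotangent complex, and that the map $\comp_{B/R}$ induces the natural identification (not merely some isomorphism) on graded pieces. This is where the lci hypothesis is essential, since otherwise $L_{B/R}$ is not perfect of amplitude $[-1,0]$ and the Koszul-style computations break down. Once the graded pieces match, convergence of both conjugate spectral sequences (they are bounded below, separated, exhaustive) upgrades the graded-piece isomorphism to an isomorphism $\comp_{B/R} : \dR_{B/R} \xrightarrow{\simeq} \R\Gamma((B/R)_\crys, \calO_\crys)$.
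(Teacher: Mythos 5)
Your reduction to $\F_p$ is correct and matches the paper. But the ``Conjugate filtrations on both sides'' step has a genuine gap, and it is the heart of the theorem.

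The filtration you propose on the crystalline side is misidentified. You want to take the canonical truncation of each column $\Omega^\bullet_{P_k/R} \otimes_{P_k} D_{P_k}(I_k)$ of the simplicial bicomplex and then totalize, exactly as one does for $\Omega^\bullet_{P_k/R}$ on the de Rham side. But Berthelot's theorem makes $\Omega^\bullet_{P_k/R} \otimes_{P_k} D_{P_k}(I_k) \simeq \R\Gamma((B/R)_\crys,\calO_\crys)$ for \emph{every} $k$, so each column has cohomology $H^i_\crys(B/R)$ independently of $k$, and the resulting filtration after totalization is just the canonical filtration of $\R\Gamma_\crys$ with graded pieces $H^i_\crys(B/R)[-i]$. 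These are \emph{not} $\wedge^i L_{B^{(1)}/R}[-i]$ -- the conjugate spectral sequence relates them, but identifying them is precisely the content of the theorem, so your argument becomes circular. Moreover your closing claim that ``the canonical filtration on the pd-de Rham complex of a regular quotient recovers the derived exterior powers of the Frobenius-twisted cotangent complex'' is false as stated: the derived exterior powers live on the $E_1$ page, not on the associated graded of the abutment.

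The filtration the paper actually uses on the crystalline side is not a homological truncation at all; it is the algebra-level filtration of $D_A(I)$ by the $B^{(1)}$-submodules generated by $\gamma_{kp}(f)$ with $f \in I$, $k \le n$ (Lemma~\ref{lem:conjfiltpdenv}), extended to the de Rham complex via the Gauss--Manin connection (Lemma~\ref{lem:conjfiltpdenvdr}). Its graded pieces are identified with $\Gamma^i_{B^{(1)}}(\pi_0(I^{[p]} \otimes_A B^{(1)})) \simeq \wedge^i L_{B^{(1)}/A}[-i]$ by Remark~\ref{rmk:conjfiltpdenvaltdesc}. The step you cannot skip is then verifying that $\comp_{B/A}$ carries the simplicial conjugate filtration to this $\gamma_{kp}$-filtration and induces the \emph{identity} (not merely some isomorphism) on graded pieces; this is established by an explicit computation in the base case $\F_p[x] \stackrel{x\mapsto 0}{\to} \F_p$ (Lemma~\ref{lem:ddrcryscompbase}), using the bar resolution from \cite{IyengarAQ} and the liftable Cartier isomorphism (Proposition~\ref{prop:derivedcartier}) to trace the generator $y$ of $H^{-1}(L_{(\F_p[x]/(x^p))/\F_p[x]})$ to $-\gamma_p(x) \in \F_p\langle x\rangle$, and then checking that divided powers of $y$ map to $\gamma_{kp}(x)$ up to units. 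From the base case one builds up: higher-codimension regular quotients by Kunneth and pd-envelope Kunneth (Lemma~\ref{lem:kunnethforpdenv}, Corollary~\ref{cor:ddrcryscompquot}), and the general lci case by factoring $R \to A \to B$ with $R \to A$ free, $A \to B$ a regular quotient, and matching the filtrations of Proposition~\ref{prop:conjfiltddrcomposite} and Lemma~\ref{lem:conjfiltpdenvdr}. Your proposal never constructs this base case, which is where the arithmetic of divided powers actually enters.
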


\begin{remark}
We suspect Theorem \ref{thm:ddrcryscomp} is true without the flatness condition on $R$ (or, equivalently, on $B$ since $f$ has finite $\Tor$-dimension). However, we do not pursue this question here.
\end{remark}

Our strategy for proving Theorem \ref{thm:ddrcryscomp} is to first deal with the special case that $B = R/(f)$ for some regular element $f \in R$, and then build the general case from this one using products, Berthelot's comparison theorem, and Corollary \ref{cor:ddrindepsmooth}. The following special case therefore forms the heart of the proof:

\begin{lemma}
\label{lem:ddrcryscompbase}
Let $A \to B$ be the map $\F_p[x] \stackrel{x \mapsto 0}{\to} \F_p$. Then $\comp_{B/A}$ from Proposition \ref{prop:ddrcryscompmap} is an isomorphism.
\end{lemma}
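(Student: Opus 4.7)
\emph{Proof plan.} The plan is to compute both sides of $\comp_{B/A}$ explicitly and identify them as pd-polynomial algebras over $B^{(1)}$.

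\emph{Computing the crystalline side.} Since $A = \F_p[x]$ is smooth over itself and surjects onto $B = \F_p$ with kernel $(x)$, the argument in the proof of Proposition \ref{prop:ddrcryscompmap} (Berthelot's comparison, applied to the ``resolution'' $P_0 = A$) yields
\[ \R\Gamma((B/A)_\crys, \calO_\crys) \simeq \Omega^\bullet_{A/A} \otimes_A D_A((x)) \simeq D_A((x)), \]
the pd-envelope of $(x) \subset A$, concentrated in degree $0$.

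\emph{Computing the derived side.} The element $x$ is regular in $A$, so $A \to B = A/(x)$ is lci with $L_{B/A} \simeq B[1]$. Base-changing the Koszul resolution $(A \xrightarrow{\cdot x} A) \to B$ along $\Frob_A$ gives $(A \xrightarrow{\cdot x^p} A)$, hence $B^{(1)} \simeq A/(x^p)$ is discrete and $L_{B^{(1)}/A} \simeq B^{(1)}[1]$. Proposition \ref{prop:conjss} then yields
\[ \gr^\conj_i(\dR_{B/A}) \simeq \wedge^i L_{B^{(1)}/A}[-i] \simeq \Gamma^i(B^{(1)}), \]
a free $B^{(1)}$-module of rank $1$ concentrated in degree $0$ (using that the derived $\wedge^i$ of a shifted flat module equals divided powers). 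Thus the conjugate spectral sequence degenerates trivially, and $\dR_{B/A}$ is concentrated in degree $0$ with an exhaustive filtration whose graded pieces are $\Gamma^i(B^{(1)})$.

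\emph{Splitting and matching.} The lifts $\widetilde A = \Z/p^2[x]$ and $\widetilde B = \Z/p^2$ are flat over $\Z/p^2$, with compatible Frobenius lifts ($x \mapsto x^p$ on $\widetilde A$, the identity on $\widetilde B$). Proposition \ref{prop:derivedcartier} therefore provides an $A$-algebra isomorphism
\[ \dR_{B/A} \simeq \bigoplus_{i \geq 0} \Gamma^i(B^{(1)}) = B^{(1)}\langle \epsilon \rangle, \]
the free pd-polynomial $B^{(1)}$-algebra on a generator $\epsilon$ arising from the Cartier map. On the crystalline side, the identity $(x^{[p]})^{[k]} = c_k \cdot x^{[pk]}$ in $D_A((x))$, with $c_k = (pk)!/((p!)^k\,k!)$ a unit in $\F_p$ by Kummer's theorem, exhibits $D_A((x)) \simeq B^{(1)}\langle x^{[p]} \rangle$ as $B^{(1)}$-algebras. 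It then suffices to unwind the construction of $\comp_{B/A}$ on a convenient free resolution of $B$ over $A$ and verify that it carries $\epsilon$ to $x^{[p]}$ (up to a sign); both sides are then free pd-polynomial $B^{(1)}$-algebras on a single generator matched by the comparison map.

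\emph{Main obstacle.} The delicate step is the last one: verifying $\comp_{B/A}(\epsilon) = \pm x^{[p]}$. This demands a simultaneous computation of the Cartier splitting of Proposition \ref{prop:derivedcartier} and the comparison map $\comp_{B/A}$ on a common free resolution of $B$ (a small Koszul-type resolution in the spirit of \cite{IyengarAQ} is well-suited), along with careful bookkeeping of signs coming from the décalage between exterior and divided powers.
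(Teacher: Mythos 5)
Your plan is essentially the paper's plan: split $\dR_{B/A}$ via the liftable Cartier isomorphism (Proposition \ref{prop:derivedcartier}) using the $\Z/p^2$-lift and Frobenius lift, compute $L_{B^{(1)}/A}$ explicitly, identify both sides as free pd-polynomial $B^{(1)}$-algebras on a single generator, reduce the whole lemma to tracking the image of that generator, and use the elementary fact that $(pk)!/((p!)^k\,k!)$ is a $p$-adic unit to conclude that the divided powers match up. All of this is correct and agrees with the paper up to the final step.

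The problem is that you stop exactly where the real work begins. Your ``Main obstacle'' paragraph correctly identifies the crux — verifying that $\comp_{B/A}$ sends the Cartier generator $\epsilon$ to $\pm x^{[p]}$ — but then leaves it as an unresolved obstacle rather than carrying it out. This is not a routine bookkeeping matter that can be waved off: the map $\comp_{B/A}$ is built from the canonical (or any projective) free $A$-algebra resolution of $B$, and the liftable Cartier isomorphism involves a concrete choice of Frobenius lift on that resolution, so actually computing the image of $\epsilon$ requires picking a specific resolution, realising the Cartier splitting in its coordinates, and following the resulting $1$-cocycle through the pd-envelope of the resolution. In the paper this is precisely Claim \ref{claim:comp-dr-crys}, whose proof is roughly half the length of the whole argument: one takes the bar resolution $\widetilde{P_\bullet}$ of $\Z/p^2$ over $\Z/p^2[x]$ (with the obvious monomial Frobenius lift), observes that the degree-$1$ Cartier generator is represented by the cocycle $t^{p-1}\,dt$ in $\Omega^1_{\F_p[x,t]/\F_p[x]}$, rewrites $t^{p-1}\,dt = d_v(\gamma_p(t))$ in the pd-enlarged bicomplex, and uses the vanishing of the total differential on a cohomology class to convert this vertical coboundary into the horizontal class $-d_H(\gamma_p(t)) = -\gamma_p(x)$. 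Without some such computation, you have two abstractly isomorphic filtered algebras and a filtered algebra map between them, but no guarantee the map is not, say, the one killing the degree-$1$ generator. So while your route is the right one, the proof as written is missing its load-bearing step.
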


The idea of the proof of Lemma \ref{lem:ddrcryscompbase} is very simple. The liftable Cartier isomorphism from Proposition \ref{prop:derivedcartier} lets one explicitly compute $\dR_{B/A}$, while the crystalline cohomology can be explicitly computed by Berthelot's theorem \cite[Theorem IV.2.3.2]{BerthelotCCLNM}; both sides turn out to be isomorphic to $\F_p \langle x \rangle$, the free pd-algebra in one variable over $\F_p$. Checking that $\comp_{B/A}$ is an isomorphism takes a little tracing through definitions, leading to a slightly long proof.

\begin{proof}
We fix the $\Z/p^2$-lift $\widetilde{A} := \Z/p^2[x] \to \Z/p^2 =: \widetilde{B}$ of $A \to B$ together with the lifts of Frobenius determined by the identity on $\widetilde{B}$ and $x \mapsto x^p$ on $\widetilde{A}$. Using the liftable Cartier isomorphism, the preceding choice gives a presentation
\[ \dR_{B/A} \simeq \oplus_{i \geq 0} \wedge^i L_{B^{(1)}/A}[-i]\]
as algebras. Let $I = (x) \subset A$ denote the ideal defining $A \to B$. One easily computes that $B^{(1)} \simeq \F_p[x]/(x^p)$ as an $A$-algebra, and hence 
\[ a:L_{B^{(1)}/A} \simeq I^p/I^{2p}[1] \simeq B^{(1)} \cdot y [1]\] 
is free of rank $1$ on a generator $y$ in degree $1$ that we choose to correspond to $x^p \in I^p/I^{2p}$ under the isomorphism $a$. Computing derived exterior powers then gives a presentation
\begin{equation}
\label{eq:dr-lci-comp}
\dR_{B/A} \simeq \oplus_{i \geq 0} \F_p[x]/(x^p) \cdot \gamma_i(y)
\end{equation}
as an algebra. On the other hand, since the crystalline cohomology of $A \to B$ is given by $\F_p \langle x \rangle$ (by \cite[Theorem V.2.3.2]{BerthelotCCLNM}, for example), we have a presentation
\begin{equation}
\label{eq:crys-lci-comp}
\R\Gamma( (B/A)_\crys, \calO_\crys) \simeq \F_p \langle x \rangle \simeq \oplus_{i \geq 0} \F_p[x]/(x^p) \cdot \gamma_{ip}(x)
\end{equation}
as algebras. We will show that the map $\comp_{B/A}$ respects the direct sum decompositions appearing in formulas \eqref{eq:dr-lci-comp} and \eqref{eq:crys-lci-comp}, and induces an isomorphism on each summand; the idea is to first understand the image of $y$, and then its divided powers.

\begin{claim}
\label{claim:comp-dr-crys}
The map $\comp_{B/A}$ sends $y$ to $- \gamma_p(x) \in \F_p \langle x \rangle$.
\end{claim}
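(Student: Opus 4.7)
The plan is to represent $y$ explicitly at the cochain level via the liftable Cartier isomorphism, apply $\comp^\bullet_{B/A}$ to land in the pd-envelope complex computing crystalline cohomology, and then recognize the resulting cohomology class as $-\gamma_p(x)$ through a Wilson-theorem zigzag.

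Fix the $\Z/p^2$-lifts $\widetilde{A} = \Z/p^2[x]$ and $\widetilde{B} = \Z/p^2$ with the Frobenius lifts $\widetilde{F_A}(x) = x^p$ and $\widetilde{F_B} = \mathrm{id}$ used in \eqref{eq:dr-lci-comp}. Following the recipe in the proof of Proposition~\ref{prop:derivedcartier}, choose a free simplicial $\widetilde{A}$-algebra resolution $\widetilde{P_\bullet} \to \widetilde{B}$ (e.g.\ a Koszul-style resolution killing $x$) containing a variable $t$ at simplicial degree $1$ with $\partial_0(t) = x$ and $\partial_1(t) = 0$, together with a compatible Frobenius lift $\widetilde{h}$ satisfying $\widetilde{h}(t) = t^p$. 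Under the identification $L_{B^{(1)}/A}[-1] \simeq I^p/I^{2p} \simeq B^{(1)}\cdot x^p$, the generator $y$ corresponds to $x^p$, and at the cochain level is represented by the cocycle $dt \in \Omega^1_{\widetilde{P_1}^{(1)}/\widetilde{A}}$ in bidegree $(\mathrm{simp}~1,\mathrm{dR}~1)$ (total cohomological degree $0$). The Cartier splitting $\tfrac{1}{p}\,\Omega^1(\widetilde{h}^*)$ converts this into the representative $t^{p-1}\,dt \in \Omega^1_{P_1/A}$ for $y$ inside $|\Omega^\bullet_{P_\bullet/A}|$.

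Applying $\comp^\bullet_{B/A}$ from \eqref{eq:dr-crys-comp} embeds $t^{p-1}\,dt$ into the pd-envelope bicomplex $\Omega^\bullet_{P_\bullet/A}\otimes_{P_\bullet} D_{P_\bullet}(I_\bullet)$. Inside the pd-envelope the derivation identity
\[ d\gamma_p(t) = \gamma_{p-1}(t)\,dt = \tfrac{t^{p-1}}{(p-1)!}\,dt \]
combined with Wilson's congruence $(p-1)! \equiv -1 \pmod p$ gives $t^{p-1}\,dt = -d\gamma_p(t)$. With the standard Koszul sign on the total differential, the element $\gamma_p(t) \in D_{P_1}(I_1)$ at bidegree $(\mathrm{simp}~1,\mathrm{dR}~0)$ has total boundary of the form $t^{p-1}\,dt + \gamma_p(x)$ (using $\partial_0(t) = x$ and $\partial_1(t) = 0$), so $t^{p-1}\,dt$ is cohomologous to $-\gamma_p(x) \in D_{P_0}(I_0)$. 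Berthelot's quasi-isomorphism $\Omega^\bullet_{P_\bullet/A}\otimes_{P_\bullet} D_{P_\bullet}(I_\bullet) \simeq \F_p\langle x\rangle$ then identifies $\comp^\bullet_{B/A}(y)$ with $-\gamma_p(x)$, as asserted. The main obstacle is the careful bookkeeping with signs through the simplicial--pd-envelope zigzag (and the identification of the Cartier representative in the twisted bicomplex); the negative sign itself, however, is unavoidable, being forced by Wilson's congruence.
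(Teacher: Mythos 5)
Your proof is correct and follows essentially the same route as the paper: represent $y$ by $t^{p-1}\,dt$ via the Cartier splitting applied to a resolution with a degree-one variable $t$ satisfying $\partial_0(t)=x$, $\partial_1(t)=0$ and Frobenius lift $t\mapsto t^p$, then chase through the pd-envelope bicomplex using the zigzag element $\gamma_p(t)$ so that Wilson's congruence together with the simplicial face maps produce $-\gamma_p(x)$. The only difference is that the paper pins down the resolution explicitly as the bar resolution of Remark~\ref{rmk:barresolution} (which is what actually supplies the Frobenius lift $t\mapsto t^p$ you assume), but otherwise the two computations coincide, including the same mildly informal treatment of the residual Koszul sign.
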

\begin{proof}
First, we make the derived Cartier isomorphism explicit by choosing particularly nice free resolutions and Frobenius lifts as follows. Let $\widetilde{P_\bullet} \to \Z/p^2$ be the bar resolution of $\Z/p^2$ as a $\Z/p^2[x]$-algebra as described in, say, \cite[Construction 4.13]{IyengarAQ}; see Remark \ref{rmk:barresolution} for a more functorial description. The first few (augmented) terms are:
\[ \xymatrix{\Big(\dots \ar@<0.4ex>[r] \ar[r] \ar@<-0.4ex>[r] & \Z/p^2[x,t] \ar@<.4ex>[r] \ar@<-.4ex>[r] & \Z/p^2[x]\Big) \ar[r]^{\simeq} & \Z/p^2 } \]
where the two $\Z/p^2[x]$-algebra maps from $\Z/p^2[x,t] \to \Z/p^2[x]$ are given by $t \mapsto x$ and $t \mapsto 0$ respectively. This resolution has the property that the terms $\widetilde{P_n}$ are polynomial algebras $\Z/p^2[x][X_n]$ over a set $X_n$ with $n$ elements, and the simplicial $\Z/p^2[x]$-algebra map $\Z/p^2[x][X_n] \to \Z/p^2[x][X_m]$ lying over a map $[m] \to [n] \in \Map(\Delta)$ is induced by a map of sets $X_n \to X_m \cup \{x,0\}$. In particular, the map $\widetilde{\Frob}_n:\Z/p^2[x][X_n] \to \Z/p^2[x][X_n]$ defined by $\widetilde{\Frob}_n(x) = x^p$ and $\widetilde{\Frob}_n(x_i) = x_i^p$ for each $x_i \in X_n$ defines an endomorphism $\widetilde{\Frob}:\widetilde{P_\bullet} \to \widetilde{P_\bullet}$ of $\widetilde{P_\bullet}$ which visibly lifts Frobenius modulo $p$, and also lies over the chosen Frobenius endomorphism of $\Z/p^2[x]$. Set $P_\bullet = \widetilde{P_\bullet} \otimes_{\Z/p^2} \F_p$. We will use the free resolution $P_\bullet \to \F_p$ together with the lift $\widetilde{P_\bullet}$ and the Frobenius endomorphism described above in order to understand the derived Cartier isomorphism and its composition with $\comp_{B/A}$.

The element $dt \in \Omega^1_{\F_p[x,t]/\F_p[x]}$ realises a generator of $H^{-1}(L_{\F_p/\F_p[x]})$ when we use $\Omega^1_{P_\bullet/\F_p[x]}$ to calculate $L_{\F_p/\F_p[x]}$. The Frobenius pullback of this class then determines a generator of $H^{-1}(L_{(\F_p[x]/(x^p))/\F_p[x]})$ which coincides with $y$; this can be easily checked. Chasing through the definition of the Cartier isomorphism, we find that the image of $y$ in $\dR_{B/A}$ is given by $t^{p-1} dt \in \Omega^1_{\F_p[x,t]/\F_p[x]}$ when the latter group is viewed as a subgroup of the group of $0$-cocyles in $\dR_{B/A} = | \Omega^\bullet_{P_\bullet/A} |$. On the other hand, after adjoining divided powers of $t$ and $x$ (i.e., moving to the crystalline side following the comparison recipe from \S \ref{prop:ddrcryscompmap}), the element $t^{p-1} dt \in \Omega^1_{\F_p[x,t]/\F_p[x]} \otimes_{\F_p[x,t]} \F_p \langle x,t \rangle$ may be written as 
\[ t^{p-1} dt = d_v\Big((p-1)! \gamma_p(t)\Big) = (p-1)! \cdot d_v(\gamma_p(t)) = d_v(\gamma_p(t)),\] 
where $d_v:\F_p \langle x,t \rangle \to \Omega^1_{\F_p[x,t]/\F_p[x]} \otimes_{\F_p[x,t]} \F_p \langle x,t \rangle$ is the vertical differential in the first column of 
\[ |\Omega^\bullet_{P_\bullet/\F_p[x]} \otimes_{P_\bullet} D_{\ker(P_\bullet \to \F_p)}(P_\bullet)|, \]
the bicomplex computing the crystalline cohomology of $\F_p[x] \to \F_p$ via the resolution $P_\bullet$. Since the sum of the vertical and horizontal differentials is $0$ in cohomology, it follows that the image of $t^{p-1} dt$ in crystalline cohomology coincides with the element 
\[ -d_H(\gamma_p(t)) \in \F_p \langle x \rangle.\] 
The horizontal differential $d_H:\F_p \langle x,t \rangle \to \F_p \langle x \rangle$ is the difference of the $\F_p[x]$-algebra maps obtained by sending $t$ to $x$ and $0$ respectively, and so we have $-d_H ( \gamma_p(t) ) = -  \gamma_p(x)$, as claimed.
\end{proof}

By Claim \ref{claim:comp-dr-crys}, the map $\comp_{B/A}$ induces an isomorphism of the first two summands appearing in formulas \eqref{eq:dr-lci-comp} and \eqref{eq:crys-lci-comp}. The rest follows by simply observing that 
\[ \gamma_k ( -  \gamma_p(x) ) = \frac{ (-1)^k \cdot x^{kp}}{k!\cdot (p!)^k} = \gamma_{kp}(x) \cdot u, \]
where $u$ is a {\em unit} in $\F_p$; the point is that $n_k := \frac{ (kp)!}{k! p^k}$ is an integer, and $n_k$ and $n_{k+1}$ differ multiplicatively by a unit modulo $p$.  Thus, $\comp_{B/A}$ induces isomorphisms on all the summands, as desired.
\end{proof}

\begin{remark}
\label{rmk:barresolution}
The bar resolution used in Lemma \ref{lem:ddrcryscompbase} may be described more functorially (in the language of \S \ref{ss:modelstrsmon}) as follows. Let $\calC = \E\N$ be the category with object set $\N$, and a unique morphism $n_1 \to n_2$ if $n_2 - n_1 \geq 0$. The monoid law on $\N$ makes $\calC$ a strict symmetric monoidal category, and the obvious map on object sets defines a strict symmetric monoidal functor $\N \to \calC$. Passing to nerves gives a map $\N \to N(\calC)$ of simplicial commutative monoids. In co-ordinates, $N(\calC)_k  = \N^{k+1}$ with the identification sending $(n_0,\dots,n_k) \in \N^{k+1}$ to the $k$-simplex of $\calC$ with vertices $n_0$, $n_0 + n_1$, \dots , $n_0 + n_1 + \dots + n_k$. The map $\N \to N(\calC)_k = \N^{k+1}$ is simply $n \mapsto (n,0,0,\dots,0)$, i.e., map $n \in \N$ to the constant $k$-simplex based at $n \in \calC$.  The category $\calC$ has an initial object, so the augmentation $N(\calC) \to \ast$ is a weak equivalence (see \cite[Corollary 2, page 84]{QuillenAlgKtheory1}). Moreover, the explicit description makes it clear that $\N \to N(\calC)$ is a termwise free $\N$-algebra (using \cite[Remark 4, page 4.11]{QuillenHA}, one can also check that $N(\calC)$ is cofibrant in $s\Mon_{\N/}$). By Proposition \ref{prop:modelstrsalgsmon}, the induced free algebra map $\Z[\N] \to \Z[N(\calC)]$ in $s\Alg_{\Z[\N]/}$ is a free $\Z[\N]$-algebra resolution of $\Z[\N] \stackrel{\N \mapsto 1}{\to} \Z$. Identifying $\Z[\N] \simeq \Z[t]$ via $1 \in \N \mapsto t$ gives an explicit free $\Z[t]$-algebra resolution of $\Z[t] \stackrel{t \mapsto 1}{\to} \Z$. The reduction modulo $p^2$ of this resolution (up to a change of variables $t \mapsto t+1$) was used in Lemma \ref{lem:ddrcryscompbase}; the Frobenius lift is induced by the multiplication by $p$ map on monoids.
\end{remark}

\begin{remark}
The proof of Lemma \ref{lem:ddrcryscompbase} ``explains'' the two algebra isomorphisms 
\[\Big(\F_p[x]/(x^p)\Big)\langle y \rangle \simeq \oplus_{i \geq 0} \F_p[x]/(x^p) \cdot \gamma_{ip}(x) \simeq \F_p \langle x \rangle, \]
where the first one maps $\gamma_k(y)$ to $\gamma_{kp}(x)$. Indeed, the first one is the usual splitting of the pd-filtration on a divided power polynomial algebra, while the second one arises by the splitting the conjugate filtration on $\dR_{\F_p/\F_p[x]}$ provided by $\Z/p^2$-lifts of Frobenius. Iterating this procedure with $\F_p$ replaced by $\F_p[x]/(x^p)$  gives an isomorphism\footnote{This isomorphism seems to be known to the experts, and was discovered by the author in conversation with Andrew Snowden.} of algebras
\[ \F_p[x_0,x_1,x_2,\dots]/(x_0^p,x_1^p,x_2^p\dots) \simeq \F_p \langle x \rangle\]
defined via $x_i \mapsto (\gamma_p \circ \gamma_p \circ \dots \circ \gamma_p)(x)$, where the composition is $i$-fold with $\gamma_0$ being the identity. We do not know a derived de Rham interpretation of this isomorphism.
\end{remark}

\begin{remark}
A theorem of Illusie \cite[Corollary VIII.2.2.8]{IllusieCC2} shows that Hodge-completed derived de Rham cohomology always agrees with Hodge-completed crystalline cohomology. Lemma \ref{lem:ddrcryscompbase} can therefore be regarded as decompleted version of this theorem. The difference between the Hodge-completed and the non-completed theories is, however, rather large: the latter is degenerate in characteristic $0$ by Corollary \ref{cor:ddrchar0}, while the former is not.
\end{remark}

\begin{question} 
In the presentation \eqref{eq:crys-lci-comp}, the Hodge filtration $\Fil_H^\bullet$ coincides with divided-power filtration on the right, while in the presentation \eqref{eq:dr-lci-comp}, the conjugate filtration is realised by setting $\Fil^\conj_n$ to be the first $n$ pieces of the direct sum decomposition on the right. Thus, we have $\Fil^\conj_i \cap \Fil_H^{ip} \simeq \gr^\conj_i \simeq \oplus_{j=ip}^{ip+p-1} \gr^j_H$. What can be said about the relative positions of the Hodge and conjugate filtrations in general? 
\end{question}

\begin{remark}
\label{rmk:altproofddrcrys}
A slightly less computational proof of the main result of \S \ref{lem:ddrcryscompbase} can be given as follows; it comes at the expense of more careful bookkeeping of homotopies, an issue we largely eschew below. Illusie's theorem for the Hodge-completed comparison isomorphism \cite[Corollary VIII.2.2.8]{IllusieCC2} gives a canonical isomorphism 
\[ \dR_{\F_p/\F_p[x]}/\Fil^k_H \simeq \F_p \langle x \rangle / \Fil^k_H\]
for all $k$. In particular, there is a canonical equivalence of exact triangles
\[ \xymatrix{ \Fil^k_H/\Fil^{k+2j}_H(\dR_{\F_p/\F_p[x]}) \ar[r] \ar[d]^\simeq & \Fil^k_H/\Fil^{k+j}_H(\dR_{\F_p/\F_p[x]}) \ar[r]^-{\delta^\dR_{k,j}} \ar[d]^\simeq & \Fil^{k+j}_H / \Fil^{k+2j}_H(\dR_{\F_p/\F_p[x]})[1] \ar[d]^\simeq \\
 \Fil^k_H/\Fil^{k+2j}_H(\F_p\langle x\rangle) \ar[r] & \Fil^k_H/\Fil^{k+j}_H(\F_p\langle x\rangle) \ar[r]^-{\delta^\crys_{k,j}} & \Fil^{k+j}_H / \Fil^{k+2j}_H(\F_p \langle x \rangle)  [1], }\]
for all values of $k$ and $j$. Now we claim:

\begin{claim} 
The map $\delta^\crys_{jp,p}$ is naturally equivalent to $0$, for all values $j$ as a map of complexes of $\F_p[x]$-modules.
\end{claim}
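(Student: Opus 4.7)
The plan is to observe that, since $\F_p\langle x\rangle$ is concentrated in degree $0$ by Berthelot's theorem, the morphism $\delta^\crys_{jp,p}$ is literally the boundary map in the derived category associated to a short exact sequence of $\F_p[x]$-modules
\[ 0 \to \Fil^{jp+p}_H/\Fil^{jp+2p}_H \to \Fil^{jp}_H/\Fil^{jp+2p}_H \to \Fil^{jp}_H/\Fil^{jp+p}_H \to 0. \]
Vanishing of $\delta^\crys_{jp,p}$ is therefore equivalent to this sequence being split as $\F_p[x]$-modules; I would produce such a splitting explicitly.

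First, I would unwind the $\F_p[x]$-module structure on $\F_p\langle x\rangle$. The structure map $\F_p[x] \to \F_p\langle x\rangle$ sends $x$ to $\gamma_1(x)$, and the pd-identity $\gamma_1(x) \cdot \gamma_n(x) = (n+1)\gamma_{n+1}(x)$ gives $x^k \cdot \gamma_n(x) = \frac{(n+k)!}{n!}\gamma_{n+k}(x)$. In particular, for every $k \geq 1$,
\[ x^p \cdot \gamma_{kp}(x) = \frac{(kp+p)!}{(kp)!}\gamma_{kp+p}(x) = 0 \quad\text{in }\F_p\langle x\rangle, \]
because the factor $(k+1)p$ kills the numerator modulo $p$. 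The Hodge filtration on the crystalline side is the pd-filtration $\Fil^k_H\F_p\langle x\rangle = \bigoplus_{n \geq k}\F_p\cdot \gamma_n(x)$, so the computation above shows that the cyclic $\F_p[x]$-submodule generated by $\gamma_{ip}(x)$ modulo $\Fil^{ip+p}_H$ is isomorphic to $\F_p[x]/(x^p)$, spanned by $\{\gamma_{ip}(x),\gamma_{ip+1}(x),\dots,\gamma_{ip+p-1}(x)\}$.

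The key point is now the following: inside $\Fil^{jp}_H/\Fil^{jp+2p}_H$, the two elements $\gamma_{jp}(x)$ and $\gamma_{jp+p}(x)$ each generate a cyclic $\F_p[x]$-submodule isomorphic to $\F_p[x]/(x^p)$, and together they account for all $2p$ basis vectors $\gamma_{jp}(x),\dots,\gamma_{jp+2p-1}(x)$. Comparing $\F_p$-dimensions and using the relation $x \cdot \gamma_{(j+1)p-1}(x) = 0$ to confirm that neither submodule leaks into the other, one obtains a direct sum decomposition of $\F_p[x]$-modules
\[ \Fil^{jp}_H/\Fil^{jp+2p}_H \;\simeq\; \bigl(\F_p[x]/(x^p)\bigr)\cdot \gamma_{jp}(x)\;\oplus\;\bigl(\F_p[x]/(x^p)\bigr)\cdot \gamma_{jp+p}(x) \]
in which the second summand is exactly $\Fil^{jp+p}_H/\Fil^{jp+2p}_H$. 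This exhibits the short exact sequence above as split, so $\delta^\crys_{jp,p} \simeq 0$ in the derived category.

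The argument is essentially a direct calculation with divided powers, so there is no deep obstacle; the only subtle point is to verify that the decomposition respects the full $\F_p[x]$-action rather than just the underlying $\F_p$-structure, but this reduces to the single identity $x\cdot\gamma_{kp-1}(x) = 0$. Naturality of the splitting in $j$ is manifest from the explicit construction (the generators $\gamma_{ip}(x)$ and the filtration are both canonical), which will be important if one later wishes to assemble these splittings into a global decomposition of $\dR_{\F_p/\F_p[x]}$ compatible with the conjugate filtration.
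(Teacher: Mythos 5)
Your proof is correct and is essentially the same as the paper's: both reduce the vanishing of $\delta^\crys_{jp,p}$ to splitting the short exact sequence of pd-filtration pieces as $\F_p[x]$-modules, and both produce the splitting from the identity $x^p\cdot\gamma_{jp}(x)=0$ in $\F_p\langle x\rangle$ (the coefficient $\frac{((j+1)p)!}{(jp)!}$ contains the factor $(j+1)p$ and so vanishes mod $p$). Your write-up spells out the resulting direct-sum decomposition of $\Fil^{jp}_H/\Fil^{jp+2p}_H$ more explicitly, but the underlying observation is identical.
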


\begin{proof}
Identifying terms explicitly, the claim amounts to showing that the short exact sequence
\[ \xymatrix{ 1 \ar[r] &  \langle \gamma_{(j+1)p}(x) \rangle / \langle \gamma_{(j+2)p}(x) \rangle \ar[r] \ar[d]^\simeq & \langle \gamma_{jp}(x) \rangle / \langle \gamma_{(j+2)p}(x) \rangle \ar[r] \ar[d]^\simeq &  \langle \gamma_{jp}(x) \rangle / \langle \gamma_{(j+1)p}(x) \rangle \ar[r] \ar[d]^\simeq &  1\\
			1 \ar[r] & \F_p[x]/(x^p) \cdot \gamma_{j+1}(p) \ar[r] & \Fil^{jp}(\F_p \langle x \rangle)/\Fil^{(j+2)p}(\F_p \langle x \rangle) \ar[r] & \F_p[x]/(x^p) \cdot \gamma_{jp}(x) \ar[r] & 1 }
 \]
is split in the category of $\F_p[x]$-modules. This follows from fact that $\gamma_{jp}(x) \in \Fil^{jp}(\F_p \langle x \rangle) / \Fil^{(j+2)p}(\F_p \langle x\rangle)$ is killed by $x^p$ (since $x^p \cdot \gamma_{jp}(x) = (j+1) \cdot p \cdot \gamma_{(j+1)p}(x) = 0$), and hence defines a splitting of the surjection above in the category of $\F_p[x]$-modules.
\end{proof}

We remark that there is some ambiguity in the choice of splittings used above, but this will cancel itself out at the end. We will use this information as follows. First, we partially totalise the (say canonical) bicomplex computing $\dR_{\F_p/\F_p[x]}$, i.e., we totalise rows $0$ through $p-1$, rows $p$ through $2p-1$, etc.; the result is still a bicomplex whose associated single complex computes $\dR_{\F_p/\F_p[x]}$. Moreover, the rows are of a very specific form: the $j$-th row is naturally quasi-isomorphic to 
\[ K_j := \Fil^{jp}_H(\dR_{\F_p/\F_p[x]})/\Fil^{(j+1)p}_H(\dR_{\F_p/\F_p[x]})[j],\] 
(and hence a perfect complex of $\F_p[x]$-modules), and the differential 
\[ K_j \to K_{j+1} \]
is identified with $\delta^{\dR}_{jp,p}$, which is itself isomorphic to $\delta^\crys_{jp,p}$, and hence equivalent to $0$ in a manner prescribed as above. We leave it to the reader to check that any such bicomplex is canonically split, i.e., we have a canonical equivalence
\[ \oplus_j K_j[-j] \simeq |K_\bullet|.\]
Putting it all together, we obtain an equivalence 
\[ \dR_{\F_p/\F_p[x]} \simeq \oplus_{j \in \Z_{\geq 0}} \Fil^{jp}_H/\Fil^{(j+1)p}_H(\dR_{\F_p/\F_p[x]}) \simeq \oplus_{j \in \Z_{\geq 0}} \Fil^{jp}_H/\Fil^{(j+1)p}_H(\R\Gamma_\crys( (\F_p/\F_p[x]),\calO_\crys)) \simeq \F_p \langle x \rangle,\]
where the first map is non-canonical and constructed using the above splittings, the second map comes from Illusie's theorem, and the last map comes from explicit construction; the choices that go into constructing the last map are exactly the ones that go into making the first map as well.
\end{remark}

Next, we show that pd-envelopes behave well under taking suitable tensor products; this is useful in passing from the situation handled in Lemma \ref{lem:ddrcryscompbase} to  complete intersections of higher codimension.

\begin{lemma}
\label{lem:kunnethforpdenv}
Let $A \to B = A/I$ be a quotient map of $\F_p$-algebras. Assume that $I$ is generated by a regular sequence $f_1,\dots,f_r$ of length $r$. Then one has
\[ D_A(I) \simeq A \langle x_1,\dots,x_r \rangle/ (x_1 - f_1,\dots,x_r - f_r) \simeq \otimes_i A \langle x \rangle/(x - f_i) \simeq \otimes_i D_A(f_i) \]
where all tensor products are derived.
\end{lemma}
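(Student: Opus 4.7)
The plan is to derive all three isomorphisms from Berthelot's classical description of the pd-envelope of a regular ideal, combined with one elementary non-zero-divisor check inside $A\langle x\rangle$. First, I would appeal to Berthelot's theorem from \cite[Ch.~I]{BerthelotCCLNM}: since $I = (f_1, \ldots, f_r)$ is generated by a regular sequence, the natural surjection $A\langle x_1, \ldots, x_r\rangle/(x_1 - f_1, \ldots, x_r - f_r) \to D_A(I)$ determined by $x_i \mapsto f_i$ is an isomorphism, and both sides are free over $A$ on the basis $\{\prod_i \gamma_{n_i}(f_i)\}_{(n_1, \ldots, n_r) \in \Z_{\geq 0}^r}$. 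This gives the first isomorphism. The third isomorphism is the $r = 1$ case of the same result applied factor by factor: $A\langle x\rangle/(x - f_i) \simeq D_A(f_i)$, a free $A$-module on $\{\gamma_n(f_i)\}_{n \geq 0}$.

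The middle isomorphism is the substantive step. The crucial observation, which I would verify by a direct computation, is that $x - f$ is a non-zero-divisor on $A\langle x\rangle$ whenever $f$ is a non-zero-divisor on $A$: writing an element as $\sum_n a_n \gamma_n(x)$ and using the multiplication rule $x \cdot \gamma_n(x) = (n+1)\gamma_{n+1}(x)$, the equation $(x - f)\sum_n a_n \gamma_n(x) = 0$ unfolds to the relations $f a_0 = 0$ and $m a_{m-1} = f a_m$ for all $m \geq 1$, and these force $a_n = 0$ for all $n$ by induction. Combined with the explicit basis from the previous paragraph, this confirms that each $A\langle x\rangle/(x - f_i)$ is a flat (indeed free) $A$-module concentrated in degree zero, so the derived tensor product $\bigotimes_{A,i}^{\L} A\langle x\rangle/(x - f_i)$ agrees with its underived counterpart.

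To close the loop, I would identify this underived tensor product with the claimed quotient via the pushout computation
\[ \bigotimes_{A,i} A\langle x_i\rangle/(x_i - f_i) \simeq \Bigl(\bigotimes_{A,i} A\langle x_i\rangle\Bigr)\Big/(x_1 - f_1, \ldots, x_r - f_r) \simeq A\langle x_1, \ldots, x_r\rangle/(x_1 - f_1, \ldots, x_r - f_r), \]
where the final identification $\bigotimes_{A,i} A\langle x_i\rangle \simeq A\langle x_1, \ldots, x_r\rangle$ follows from the universal property of free pd-algebras (or equivalently from each $A\langle x_i\rangle$ being $A$-free, so the tensor product is underived). The main obstacle is really the elementary non-zero-divisor check above; the remainder is bookkeeping with Berthelot's classical computation. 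Note that the argument as sketched needs each $f_i$ to be individually a non-zero-divisor on $A$, a mild strengthening of the bare regular-sequence hypothesis; in the intended application inside the proof of Theorem \ref{thm:ddrcryscomp}, this is automatic because the $f_i$'s will take the form $y_i - g_i$ inside a polynomial ring and are thus manifestly regular.
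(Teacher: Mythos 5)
Your non-zero-divisor check --- that $x - f$ is a non-zero-divisor on $A\langle x\rangle$ whenever $f$ is one on $A$ --- is correct and is indeed a key ingredient; it is essentially the same use of regularity the paper makes. But there is a genuine gap where you invoke freeness. You assert that $D_A(I)$ (and each factor $A\langle x\rangle/(x-f_i)$) is free over $A$ on the divided-power monomials, and then use that freeness to conclude that the derived tensor product agrees with the underived one. This freeness claim is false over $\F_p$-algebras. Concretely, in $D_{\F_p[t]}(t) = \F_p\langle t\rangle$ one has $t \cdot \gamma_{p-1}(t) = p\,\gamma_p(t) = 0$, so $\gamma_{p-1}(t)$ is a nonzero $t$-torsion element and $\F_p\langle t\rangle$ is not even flat over $\F_p[t]$. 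The statement you are thinking of --- that $D_A(I)$ is free on the divided-power monomials --- is a Berthelot theorem valid in characteristic $0$ or over $\Z$-torsion-free bases; it is exactly the failure of this freeness mod $p$ that makes the present lemma nontrivial and necessitates the Frobenius-twisted analysis.

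The paper's route around this is different in both places where you lean on freeness. For the first isomorphism, the paper does not cite freeness at all; it derives $D_A(I) \simeq A\langle x_1,\dots,x_r\rangle/(x_1-f_1,\dots,x_r-f_r)$ from the Koszul presentation of $I$, exactness properties of $\Gamma^n$, and the elementary observation that $\gamma_n(f_j x_i - f_i x_j)$ already lies in the ideal $(x_i - f_i, x_j - f_j)$. For the middle isomorphism, the paper computes that
\[
A\langle x_i\rangle/(x_i - f_i) \;\simeq\; \bigoplus_{j \geq 0} A/(f_i^p)\cdot \gamma_{jp}(f_i),
\]
i.e. each factor is a free $A/(f_i^p)$-module (not a free $A$-module), and then obtains discreteness of the derived tensor product from the regularity of the Frobenius-twisted sequence $f_1^p, \dots, f_r^p$. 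This is a Tor-independence argument rather than a flatness argument, and the appearance of $f_i^p$ rather than $f_i$ is precisely the conjugate-filtration phenomenon the paper is built around. Your non-zero-divisor computation can in fact be upgraded to show that $x_1-f_1,\dots,x_r-f_r$ is a regular sequence on $A\langle x_1,\dots,x_r\rangle$ provided one keeps track of the $f_i^p$'s, which would repair the argument; but as written the appeal to $A$-freeness does not hold.
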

\begin{proof}
The elements $f_i$ define the Koszul presentation 
\[ \wedge^2 F \to F \to I \to 0 \]
where $F = \oplus_{i = 1}^r A \cdot x_i$ is a free module of rank $r$, the map $F \to I$ is given by $x_i \mapsto f_i$, and the map $\wedge^2 F \to F$ is the usual Koszul differential determined by $x_i \wedge x_j \mapsto f_j x_i - f_i x_j$. 
Exactness properties of $\Gamma^n$ (see \cite[Corollary (A.5)]{BOGUSNotes}) give a presentation
\[ \oplus_{i = 1}^n  (\Gamma^i_A(\wedge^2 F) \otimes_A \Gamma^{n-i}_A(F)) \to \Gamma^n_A(F) \to \Gamma^n_A(I) \to 0. \]
This leads to an algebra presentation
\[ \Gamma^*_A(I) \simeq \Gamma_A^*(F) / (\langle f_j x_i - f_i x_j \rangle_{i,j}).  \]
The pd-envelope $D_A(I)$ is then obtained by the formula
\[ D_A(I) = \Gamma_A^*(I)/(x_1 - f_1,\dots,x_r - f_r) \simeq \Gamma_A^*(F)/ (\langle f_j x_i - f_i x_j \rangle_{i,j}, x_1 - f_1,\dots,x_r - f_r). \]
To simplify this, we observe that for each pair $i,j$ and each positive integer $n$, we have 
\[ \gamma_n(f_j x_i - f_i x_j) \in (x_i - f_i, x_j - f_j) \subset  (x_1 - f_1,\dots,x_r - f_r)\]
in the algebra $\Gamma_A^*(F)$. Indeed, this follows by expanding the left hand side modulo the ideal $(x_i - f_i,x_j - f_j)$, and using the equality $\gamma_n(x_i) \cdot x_j^n = \gamma_n(x_i x_j) = x_i^n \cdot \gamma_n(x_j)$. Thus, we can simplify the preceding presentation of $D_A(I)$ to write
\[ D_A(I) = \Gamma_A^*(F) / (x_1 - f_1,\dots,x_r - f_r) \simeq A \langle x_1,\dots,x_r \rangle / (x_1 - f_1,\dots,x_r - f_r). \]
Using the regularity of each $f_i$, the same reasoning also shows that
\[ D_A(f_i)  \simeq A \langle x_i \rangle / (x_i - f_i).\]
It remains to check that
\[ \otimes_{i = 1}^r A \langle x_i \rangle / (x_i - f_i) \simeq A \langle x_1,\dots,x_r \rangle / (x_1 - f_1,\dots,x_r - f_r). \]
The natural map from the left hand side to the right hand side naturally realises the latter as $\pi_0$ of the former. Hence, it suffices to check that the left hand side is discrete. The regularity of $f_i$ implies the regularity of $x_i - f_i \in A \langle x_i \rangle$. Hence, we have a chain of isomorphisms (with derived tensor products)
\begin{eqnarray*} 
A \langle x_i \rangle / (x_i - f_i) &=& A \otimes_{\F_p[t]} \big(\F_p[t] \langle x_i \rangle \stackrel{x_i-t}{\to} \F_p[t]\langle x_i \rangle\big) \quad \textrm{via} \quad t \mapsto f_i \\
&=&   A \otimes_{\F_p[t]} \F_p \langle t \rangle   \\
&=& A \otimes_{\F_p[t]} \Big( \oplus_{j \in \Z_{\geq 0}} \F_p[t]/(t^p) \cdot \gamma_{jp}(t) \Big) \\
&=&  \oplus_{j \in \Z_{\geq 0}} A/(f_i)^p \cdot \gamma_{jp}(f_i),
\end{eqnarray*}
where the last equality uses the regularity of $f_i^p \in A$.  In particular, each ring $A \langle x_i \rangle / (x_i - f_i)$ is a free module over $A/(f_i^p)$. The desired discreteness now follows by commuting the tensor product with direct sums, and using that $f_1^p,\dots,f_r^p$ is a regular sequence since $f_1,\dots,f_r$ is so.
\end{proof}

We need some some base change properties of crystalline cohomology. First, we deal with pd-envelopes.

\begin{lemma}
\label{lem:basechangepdenv}
Let $A \to B = A/I$ be a quotient map of flat $\Z/p^n$-algebras. Assume that $I$ is generated by a regular sequence. Then the pd-envelope $D_A(I)$ (compatible with divided powers on $p$) is $\Z/p^n$-flat, and its formation commutes with reduction modulo $p$, i.e., $D_A(I) \otimes_{\Z/p^n} \F_p \simeq D_{A/p}(I + (p)/(p))$.
\end{lemma}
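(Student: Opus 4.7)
The plan is to imitate Lemma~\ref{lem:kunnethforpdenv}: reduce to the single-variable case via the same divided-power manipulations, and then handle that case by an explicit Koszul-style argument. The formal steps in the proof of Lemma~\ref{lem:kunnethforpdenv} (the Koszul presentation of $I$ and the identity $\gamma_n(f_jx_i - f_ix_j) \in (x_i - f_i, x_j - f_j)$) use only the regularity of $(f_1,\dots,f_r)$ and formal properties of divided powers, so they extend verbatim from $\F_p$-algebras to flat $\Z/p^n$-algebras. This yields a presentation
\[ D_A(I) \;\simeq\; A\langle x_1,\dots,x_r\rangle / (x_1 - f_1,\dots,x_r - f_r), \]
where $A\langle x_1,\dots,x_r\rangle$ denotes the divided power polynomial algebra compatible with $(p)$ — a free $A$-module on the divided monomials $\prod_i x_i^{[\alpha_i]}$, and hence $\Z/p^n$-flat.

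The main task will be to show that $(x_1 - f_1,\dots,x_r - f_r)$ is a regular sequence in $A\langle x_1,\dots,x_r\rangle$ whose image is still regular in $(A/p)\langle x_1,\dots,x_r\rangle$. Given this, the Koszul resolution of $D_A(I)$ consists of $\Z/p^n$-flat modules and reduces modulo $p$ to the Koszul resolution of $D_{A/p}(\bar I)$; both conclusions of the lemma then follow at once. I would prove the regularity by induction on $r$. The base case $r = 1$ is a direct calculation in the $A$-basis $\{x^{[n]}\}_{n \ge 0}$ of $A\langle x\rangle$: the identity $x\cdot x^{[n]} = (n+1)x^{[n+1]}$ shows that an element $\sum_n a_n x^{[n]}$ in the kernel of multiplication by $x - f$ satisfies the recursion $n \cdot a_{n-1} = f \cdot a_n$ for all $n$, which forces $a_n = 0$ for all $n$ by induction on $n$ using regularity of $f$; the same argument applies to $\bar f$ in $(A/p)\langle x\rangle$.

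For the inductive step, I would use the decomposition $A\langle x_1,\dots,x_r\rangle = A\langle x_2,\dots,x_r\rangle \otimes_A A\langle x_1\rangle$ and apply the base case over the $\Z/p^n$-flat ring $A\langle x_2,\dots,x_r\rangle$, producing a quotient isomorphic to $A\langle x_2,\dots,x_r\rangle \otimes_A D_A(f_1)$ on which the remaining sequence $(x_2 - f_2,\dots,x_r - f_r)$ must still be regular. The main obstacle is that $D_A(f_1)$ is not $A$-flat, so regularity of the remaining sequence cannot be deduced by pure base change from $A$. I would handle this by running the induction jointly for $A$ and $A/p$: one tracks simultaneously the $\Z/p^n$-flatness of each partial quotient and the regularity of the next element on it, exploiting at each stage that the hypothesis on $(f_i)$ in $A$ forces the image sequence in $A/p$ to be a regular sequence with $\Z/p^n$-flat partial quotients, which is exactly what makes the modulo-$p$ step of the argument propagate to the next index.
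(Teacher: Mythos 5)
Your overall strategy is close to the paper's: both arguments use the presentation $D_A(I) \simeq A\langle x_1,\dots,x_r\rangle/(x_1 - f_1,\dots,x_r - f_r)$, and both deduce flatness and base change from a Koszul-type resolution of that quotient. The explicit recursion $n\, a_{n-1} = f\, a_n$ in the pd-basis $\{x^{[n]}\}$ is a perfectly good way to see that $x - f$ is a nonzerodivisor in $A\langle x\rangle$.

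There is, however, a genuine gap at the sentence ``the same argument applies to $\bar f$ in $(A/p)\langle x\rangle$.'' To run the recursion there, you need $\bar f$ to be a nonzerodivisor in $A/p$ --- but that is precisely the nontrivial content of the base-change assertion, and it is not a formal consequence of $f$ being regular in $A$: a regular element can become a zerodivisor after passing to a quotient. The same unjustified assertion reappears in your inductive step, where you write that ``the hypothesis on $(f_i)$ in $A$ forces the image sequence in $A/p$ to be a regular sequence with $\Z/p^n$-flat partial quotients.'' That is exactly what remains to be proved, and nothing in the rest of your proposal supplies the argument.

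The missing input is the heart of the paper's proof. Since $A$ is $\Z/p^n$-flat and $f$ is regular, the complex $A \stackrel{f}{\to} A$ is a length-one resolution of $A/f$ by flat $\Z/p^n$-modules, so $A/f$ has finite flat dimension over $\Z/p^n$; over $\Z/p^n$ any module of finite flat dimension is flat (for instance because $\Tor_i^{\Z/p^n}(-,\F_p)$ is $2$-periodic for $i \geq 1$). It follows that $\Tor_1^{\Z/p^n}(A/f,\F_p) = 0$, which is exactly the statement that $\bar f$ is a nonzerodivisor in $A/p$. Once you insert this lemma, your base case closes; moreover, applying the same flat-dimension argument to the partial quotients $A/(f_1,\dots,f_i)$ shows they are $\Z/p^n$-flat and that each $\bar f_{i+1}$ remains regular on $(A/(f_1,\dots,f_i))/p$, which is what actually makes the joint induction you sketch propagate from one index to the next.
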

\begin{proof}
Let $I = (f_1,\dots,f_r)$ be generated by the displayed regular sequence. The proof of Lemma \ref{lem:kunnethforpdenv} shows that
\[ D_A(I) \simeq A \langle x_1,\dots,x_r \rangle / (x_1 - f_1,\dots,x_r - f_r). \]
Thus, to show flatness over $\Z/p^n$, we may reduce to the case $r = 1$, i.e., $I = (f)$ for some regular element $f$. We need to check that $A \langle x \rangle / (x - f)$ is $\Z/p^n$-flat. The regularity of $f$ and the $\Z/p^n$-flatness of $A$ imply that  $\Big(A \langle x \rangle \stackrel{x-f}{\to} A \langle x \rangle\Big)$ is a $\Z/p^n$-flat resolution of $D_A(I)$. Since any $\Z/p^n$-module with a finite flat resolution is flat, $D_A(I)$ is also $\Z/p^n$-flat. For base change, it now suffices to show that if $f \in A$ is regular, then so is its image in $A/p$, i.e., the sequence
\[ 0 \to A/p \stackrel{f}{\to} A/p \to (A/f)/p \to 0\]
is exact. Since $A$ is $\Z/p^n$-flat, the regularity of $f$ shows that $A/f$ has finite flat dimension over $\Z/p^n$, and hence is flat as above. The desired exactness then follows from the vanishing of $\Tor^{\Z/p^n}_1(A/f,\F_p)$.
\end{proof}

Next, we show that the formation of crystalline cohomology often commutes with reduction modulo $p$.

\begin{lemma}
\label{lem:basechangecrys}
Let $A \to B$ be an lci map of flat $\Z/p^n$-algebras. Then the formation of $R\Gamma( (B/A)_\crys,\calO)$ commutes with $- \otimes_{\Z/p^n} \F_p$.
\end{lemma}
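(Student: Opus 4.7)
The plan is to reduce crystalline cohomology to an explicit de Rham--pd complex via Berthelot's comparison theorem and then deduce the base-change claim termwise from Lemma \ref{lem:basechangepdenv}. Because the map is lci, working locally on $\Spec B$ I would present $B = P/I$ where $P = A[x_1,\dots,x_m]$ is a polynomial $A$-algebra in finitely many variables and $I = (f_1,\dots,f_r) \subset P$ is generated by a regular sequence. Berthelot's theorem \cite[Theorem V.2.3.2]{BerthelotCCLNM} then produces a natural equivalence
\[ \R\Gamma((B/A)_\crys,\calO) \simeq \Omega^\bullet_{P/A} \otimes_P D_P(I).\]

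With this presentation in hand, the task is to check that each factor is $\Z/p^n$-flat and that its formation commutes with $-\otimes_{\Z/p^n}\F_p$. The de Rham factor $\Omega^k_{P/A}$ is a finite free $P$-module for each $k$, hence flat over $\Z/p^n$, with obvious base change $\Omega^k_{P/A}\otimes_{\Z/p^n}\F_p \simeq \Omega^k_{(P/p)/(A/p)}$. The pd-envelope factor $D_P(I)$ is $\Z/p^n$-flat and satisfies $D_P(I)\otimes_{\Z/p^n}\F_p \simeq D_{P/p}(I\cdot P/p)$ by Lemma \ref{lem:basechangepdenv}. Combining these, the derived reduction mod $p$ of the Berthelot complex is computed termwise and coincides with the analogous Berthelot complex for $A/p \to B/p$; another invocation of Berthelot's theorem---valid because the $\bar f_i$ remain a regular sequence in $P/p$, as recorded in the proof of Lemma \ref{lem:basechangepdenv}---identifies this with $\R\Gamma(((B/p)/(A/p))_\crys,\calO)$.

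The main obstacle is not analytic but organizational: the presentation $B = P/I$ by a regular sequence need not exist globally on $\Spec B$, only locally. This is handled by the affine-local-plus-Mayer--Vietoris framework that the paper (per \S \ref{sec:notation}) carries implicitly; alternatively, one can replace the single presentation with a simplicial polynomial resolution $P_\bullet \to B$ in which each $\ker(P_k \to B)$ is regularly generated (such a resolution exists precisely because $A \to B$ is lci) and run the same argument levelwise before totalising. Once this is set up, the rest is formal and Lemma \ref{lem:basechangepdenv} does the real work.
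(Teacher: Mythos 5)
Your proof is correct and follows essentially the same route as the paper's: choose a presentation $A \to F \to B$ with $F$ a polynomial $A$-algebra and $F \to B$ a quotient by a regular sequence, invoke Berthelot's theorem to identify $\R\Gamma((B/A)_\crys,\calO)$ with $\Omega^\bullet_{F/A}\otimes_F D_F(I)$, and then deduce the base-change statement termwise from the flatness and base change of $D_F(I)$ established in Lemma \ref{lem:basechangepdenv}. You are somewhat more explicit than the paper about the need to glue from local regular presentations (the paper simply assumes the global factorisation exists, consistent with how it treats lci maps elsewhere, e.g.\ in the proof of Theorem \ref{thm:ddrcryscomp}); this extra care is reasonable, though note that your parenthetical claim that the canonical simplicial resolution automatically has regularly generated kernels in each level is not obviously justified and would need an argument if you wanted to use that variant.
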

\begin{proof}
Choose a factorisation $A \to F \to B$ with $A \to F$ a polynomial algebra, and $F \to B$ an lci quotient by a regular sequence; the reduction modulo $p$ of this factorisation defines a similar presentation of $B/p$ by free $A/p$-algebras. Then by Berthelot's theorem \cite[Theorem IV.2.3.2]{BerthelotCCLNM}, $R\Gamma( (B/A)_\crys, \calO)$ is computed by the de Rham complex $\Omega^\bullet_{F/A} \otimes_A D_A(I)$, and similarly modulo $p$. By Lemma \ref{lem:basechangepdenv} and the freeness over $A$ of each $\Omega^i_{F/A}$, the formation of this complex commutes with reduction modulo $p$, so the claim follows.
\end{proof}

The preceding few lemmas  and Lemma \ref{lem:ddrcryscompbase} combine to show more instances of Theorem \ref{thm:ddrcryscomp}.

\begin{corollary}
\label{cor:ddrcryscompquot}
Let $A \to B = A/I$ be a quotient map of flat $\Z/p^n$-algebras. Assume that $I$ is generated by a regular sequence. Then the map $\comp_{B/A}$ from Proposition \ref{prop:ddrcryscompmap} is an isomorphism. 
\end{corollary}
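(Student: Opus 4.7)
\medskip

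\noindent\textbf{Proof plan.} My strategy is to first apply devissage to reduce to the case of $\F_p$-algebras, then to use Kunneth to reduce to a single regular element, and finally to base-change from the universal case handled in Lemma \ref{lem:ddrcryscompbase}.

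\medskip

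\noindent\emph{Step 1 (Devissage to characteristic $p$).} Both sides of $\comp_{B/A}$ commute with reduction modulo $p$: for $\dR_{B/A}$ this follows from the base change formula of Proposition \ref{prop:ddrbasechange}, while for crystalline cohomology this is precisely Lemma \ref{lem:basechangecrys}, which applies because the quotient by a regular sequence is lci. Moreover, the image of $f_1,\dots,f_r$ in $A/p$ remains a regular sequence by the argument in Lemma \ref{lem:basechangepdenv}. Thus we may assume throughout that $A$ is an $\F_p$-algebra and $f_1,\dots,f_r$ is a regular sequence in $A$.

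\medskip

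\noindent\emph{Step 2 (Reduction to codimension one via Kunneth).} Regularity of the sequence yields a canonical equivalence $B \simeq A/(f_1) \otimes^L_A \cdots \otimes^L_A A/(f_r)$. Applying the Kunneth formula of Proposition \ref{prop:ddrbasechange} iteratively then gives
\[ \dR_{B/A} \;\simeq\; \dR_{A/(f_1)/A} \otimes^L_A \cdots \otimes^L_A \dR_{A/(f_r)/A}. \]
On the crystalline side, Berthelot's theorem identifies $\R\Gamma((A/(f_i)/A)_\crys, \calO) \simeq D_A((f_i))$ (using $A$ itself as a free resolution of $A$ over $A$), while Lemma \ref{lem:kunnethforpdenv} supplies the matching decomposition $D_A(I) \simeq D_A(f_1) \otimes^L_A \cdots \otimes^L_A D_A(f_r)$. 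The map $\comp_{B/A}$ is functorially compatible with these tensor decompositions (both sides being constructed from the free resolution $P_\bullet(B/A)$, which one can take to be a termwise tensor product of the individual resolutions). Hence it suffices to prove that $\comp_{A/(f)/A}$ is an isomorphism whenever $f \in A$ is a single regular element.

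\medskip

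\noindent\emph{Step 3 (Codimension one via base change).} For a regular element $f \in A$, consider the pushout square determined by the maps $\F_p[x] \to A$ sending $x \mapsto f$ and $\F_p[x] \to \F_p$ sending $x \mapsto 0$. Regularity of $f$ ensures $A \otimes^L_{\F_p[x]} \F_p \simeq A/(f)$. Proposition \ref{prop:ddrbasechange} then yields
\[ \dR_{A/(f)/A} \;\simeq\; A \otimes^L_{\F_p[x]} \dR_{\F_p / \F_p[x]}, \]
and the $r=1$ case of Lemma \ref{lem:kunnethforpdenv} (combined with Berthelot's theorem on the universal side) similarly gives
\[ \R\Gamma((A/(f)/A)_\crys, \calO) \;\simeq\; D_A((f)) \;\simeq\; A \otimes^L_{\F_p[x]} \F_p\langle x\rangle \;\simeq\; A \otimes^L_{\F_p[x]} \R\Gamma((\F_p/\F_p[x])_\crys, \calO). \]
By the functoriality of $\comp$ in Proposition \ref{prop:ddrcryscompmap}, the map $\comp_{A/(f)/A}$ is the base change of $\comp_{\F_p/\F_p[x]}$ along $A \otimes^L_{\F_p[x]} -$. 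Since $\comp_{\F_p/\F_p[x]}$ is an isomorphism by Lemma \ref{lem:ddrcryscompbase}, the same holds for $\comp_{A/(f)/A}$, which completes the proof.

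\medskip

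\noindent\emph{Main obstacle.} The only non-formal point is the compatibility of $\comp_{B/A}$ with the Kunneth decompositions in Step 2 — one must choose a free resolution $P_\bullet(B/A)$ whose associated de Rham and crystalline bicomplexes split as (derived) tensor products along the regular sequence, and check that the comparison map \eqref{eq:dr-crys-comp} respects this splitting. This is ultimately forced by the fact that both theories are defined through the same resolutions and Berthelot's envelope construction, but keeping track of this compatibility (rather than the actual isomorphism statement) is where all the bookkeeping lives.
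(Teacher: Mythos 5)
Your proof is correct and follows essentially the same route as the paper: devissage to $\F_p$-algebras, Kunneth (using Lemma \ref{lem:kunnethforpdenv} on the pd-envelope side) to reduce to a single regular element, and then base change along $\F_p[x]\to A$, $x\mapsto f$, from the universal case of Lemma \ref{lem:ddrcryscompbase}. The only cosmetic difference is that you cite Lemma \ref{lem:basechangecrys} rather than Lemma \ref{lem:basechangepdenv} for the crystalline devissage; both work, the former resting on the latter.
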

\begin{proof}
We want to show that $\dR_{B/A} \to R\Gamma( (B/A)_\crys,\calO)$ is an isomorphism. Since the formation of both sides commutes with derived base change (by Proposition \ref{prop:ddrbasechange} and Lemma \ref{lem:basechangepdenv}), we may reduce (by devissage) to the case that $A$ and $B$ are $\F_p$-algebras, and $I = (f_1,\dots,f_r)$ is generated by a regular sequence. The target is computed as $D_A(I)$ by Berthelot's theorem \cite[Theorem IV.2.3.2]{BerthelotCCLNM}. Proposition \ref{prop:ddrbasechange} and Lemma \ref{lem:kunnethforpdenv} then immediately reduce us to the case $r = 1$, i.e. $I = (f)$ for some regular element $f \in A$. In this case, the target is $D_A(f) \simeq A \langle x \rangle / (x - f)$. To compute the source, observe that we have a commutative square
\[ \xymatrix{ \F_p [t] \ar[r]^{t \mapsto f} \ar[d]^{t \mapsto 0} & A \ar[d] \\
			   \F_p \ar[r] & A/f. } \]
This square can be checked to be a (derived) pushout using the resolution of $\F_p$ given by multiplication by $t$ on $\F_p[t]$. By base change in derived de Rham cohomology and Lemma \ref{lem:ddrcryscompbase}, we obtain
\[ \dR_{B/A} \simeq \F_p \langle t \rangle \otimes_{\F_p[t]} A \simeq \big( \F_p[t] \langle x \rangle \stackrel{x - t}{\to} \F_p[t] \langle x \rangle \big) \otimes_{\F_p[t]} A \simeq \big(A \langle x \rangle \stackrel{x - f}{\to} A \langle x \rangle\big) \simeq A \langle x \rangle/(x-f) \simeq D_A(f), \]
as desired; here the second-to-last isomorphism comes from the regularity of $x-f \in A \langle x \rangle$ which, in turn, comes from the regularity of $f \in A$.
\end{proof}

The next lemma proves a Tor-independence result for lci quotients, and is here for psychological comfort.

\begin{lemma}
\label{lem:lcitorindep}
Let $A \to B = A/I$ be a quotient map of $\F_p$-algebras. Assume that $I$ is generated by a regular sequence. Then $B^{(1)}$ is discrete, i.e., $\Frob_{\ast} A$ and $B$ are Tor-independent over $A$.
\end{lemma}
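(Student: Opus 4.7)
The plan is to use the Koszul resolution of $B$ as an $A$-module provided by the regular sequence generating $I$, and to observe that tensoring this resolution with $\Frob_* A$ produces the Koszul complex on the $p$-th powers of the generators, which is again a regular sequence.

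More precisely, write $I = (f_1, \ldots, f_r)$ for a regular sequence. Since this sequence is regular, the Koszul complex $K_\bullet(f_1, \ldots, f_r; A)$ is a free $A$-module resolution of $B = A/I$. Therefore
\[ B^{(1)} = B \otimes_A^{\L} \Frob_\ast A \simeq K_\bullet(f_1, \ldots, f_r; A) \otimes_A \Frob_\ast A. \]
Unwinding the $A$-module structure on $\Frob_\ast A$, multiplication by $f_i \in A$ on $\Frob_\ast A$ is the same as multiplication by $f_i^p$ in the ring $A$. Consequently, the right-hand side is identified with the Koszul complex $K_\bullet(f_1^p, \ldots, f_r^p; A)$, viewed as a complex of abelian groups (with its natural $\Frob_\ast A$-module structure).

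The key input is now the classical fact that if $f_1, \ldots, f_r$ is a regular sequence in $A$, then so is $f_1^p, \ldots, f_r^p$ (this follows, for instance, by induction on $r$ using the fact that in a domain — or more generally, when $f$ is a nonzerodivisor — the element $f^p$ is also a nonzerodivisor, combined with the standard permanence properties of regularity under passage to quotients by regular elements). Given this, the Koszul complex $K_\bullet(f_1^p, \ldots, f_r^p; A)$ is acyclic in positive degrees, with $H_0 = A/(f_1^p, \ldots, f_r^p)$. Hence $\Tor_i^A(B, \Frob_\ast A) = 0$ for $i > 0$, so $B^{(1)}$ is discrete, as desired.

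There is no serious obstacle here; the entire argument reduces to the permanence of regularity under taking $p$-th powers, which is an elementary commutative algebra statement. The role of the lemma is largely psychological, as indicated in the text: it confirms that the derived Frobenius twist $B^{(1)}$ appearing in the conjugate filtration agrees with the naive underived Frobenius twist in the lci setting, so that the computations carried out in Corollary \ref{cor:ddrcryscompquot} involve no hidden higher-homotopy contributions.
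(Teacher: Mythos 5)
Your proof is correct and takes essentially the same approach as the paper: both arguments compute $\Frob_\ast A \otimes_A^{\mathrm{L}} B$ via the Koszul resolution (you use the full Koszul complex; the paper unwinds it factor by factor as a derived tensor product $\otimes_i A/(f_i)$), and both hinge on the same key fact that $f_1^p,\dots,f_r^p$ is again a regular sequence.
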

\begin{proof}
The assumption implies that $B \simeq \otimes_i A/(f_i)$; here the tensor product is derived and relative to $A$, and $I = (f_1,\dots,f_r)$ with the $f_i$'s spanning a regular sequence. The desired Tor-independence follows from the following sequence of canonical isomorphisms:
\begin{eqnarray*}
\Frob_\ast A \otimes_A B &=&  \big(\Frob_\ast A \otimes_A A/(f_1)\big) \otimes_{\Frob_\ast A} \dots \otimes_{\Frob_\ast A} \big(\Frob_{\ast} A \otimes_A A/(f_r) \big) \\
						&=&  \Frob_\ast (A/f_1^p) \otimes_{\Frob_\ast A} \dots \otimes_{\Frob_\ast A} \Frob_\ast (A/f_r^p) \\
							 &=&  \Frob_\ast \big( (A/(f_1^p) \otimes_A \dots \otimes_A A/(f_r^p) \big) \\
						&=&  \Frob_\ast \big(A/(f_1^p,\dots,f_r^p)\big).
\end{eqnarray*}
Here all tensor products are derived, and the last equality uses the regularity of the sequence $(f_1^p,\dots,f_r^p)$.
\end{proof}

Next, we discuss the conjugate filtration on pd-envelopes of ideals generated by regular sequences; this is pure algebra, but will correspond to the conjugate filtration on derived de Rham cohomology once Theorem \ref{thm:ddrcryscomp} is shown.

\begin{lemma}
\label{lem:conjfiltpdenv}
Let $A \to B = A/I$ be a quotient map of $\F_p$-algebras. Assume that $I$ is generated by a regular sequence. Then $D_A(I)$ admits a natural increasing bounded below (at $0$) separated exhaustive filtration $\Fil^\conj_\bullet$ by $B^{(1)}$-submodules, with graded pieces given by
\[ \gr^\conj_i(D_A(I)) \simeq \Gamma^i_{B^{(1)}}( \pi_0(I^{[p]} \otimes_A B^{(1)}) ) \simeq \Frob_A^* \Big(\Gamma^i_B(I/I^2)\Big),\]
 where $I^{[p]} = (f_1^p,\dots,f_r^p)$ denotes the Frobenius-twisted ideal. 
\end{lemma}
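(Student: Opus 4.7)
The plan is to reduce to the case of a single regular element by invoking the K\"unneth formula $D_A(I) \simeq \bigotimes_{i=1}^r D_A(f_i)$ of Lemma \ref{lem:kunnethforpdenv}, and then transport a tensor-product filtration. Once the $r=1$ case is handled and a natural $B^{(1)}$-linear filtration on each $D_A(f_i)$ is constructed, the filtration on $D_A(I)$ is defined by $\Fil^\conj_n = \sum_{n_1+\dots+n_r=n} \bigotimes_i \Fil^\conj_{n_i} D_A(f_i)$, and its graded pieces split as $\gr^\conj_n \simeq \bigoplus_{\sum n_i = n} \bigotimes_i \gr^\conj_{n_i}(D_A(f_i))$. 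On the other hand, the decomposition $\pi_0(I^{[p]} \otimes_A B^{(1)}) \simeq \bigoplus_i B^{(1)} \cdot f_i^p$ (coming from regularity of $f_1^p,\dots,f_r^p$, proved exactly as in Lemma \ref{lem:lcitorindep}), combined with the standard identity $\Gamma^n_{B^{(1)}}(M_1 \oplus \dots \oplus M_r) \simeq \bigoplus_{\sum n_i = n} \bigotimes_i \Gamma^{n_i}_{B^{(1)}}(M_i)$, matches this K\"unneth decomposition summand by summand. So it suffices to treat $r=1$ and then quote this formal bookkeeping.

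In the $r=1$ case, the proof of Lemma \ref{lem:kunnethforpdenv} already produced the direct-sum decomposition $D_A(f) \simeq \bigoplus_{j \geq 0} B^{(1)} \cdot \gamma_{jp}(f)$ of $B^{(1)}$-modules, where $B^{(1)} = A/(f^p)$ (which is forced on $D_A(f)$ because $f^p = p! \gamma_p(f) = 0$ in characteristic $p$). I would simply take $\Fil^\conj_i D_A(f) := \bigoplus_{j \leq i} B^{(1)} \cdot \gamma_{jp}(f)$, so that $\gr^\conj_i D_A(f) \simeq B^{(1)} \cdot \gamma_{ip}(f)$ is free of rank one over $B^{(1)}$. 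On the other side, $I^{[p]} = (f^p)$ is a free $A$-module of rank one (since $f^p$ is regular), so $\pi_0(I^{[p]} \otimes_A B^{(1)})$ is a free rank-one $B^{(1)}$-module on the class of $f^p$, and hence $\Gamma^i_{B^{(1)}}(\pi_0(I^{[p]} \otimes_A B^{(1)}))$ is free rank-one on $\gamma_i(f^p)$. The required comparison map sends $\gamma_i(f^p) \mapsto \gamma_{ip}(f)$; this is a well-defined isomorphism of rank-one free $B^{(1)}$-modules, which is natural in $(A,f)$ because both source and target are canonical functors of the pair. The second identification $\Gamma^i_{B^{(1)}}(\pi_0(I^{[p]} \otimes_A B^{(1)})) \simeq \Frob_A^*(\Gamma^i_B(I/I^2))$ then follows from the compatibility of $\Gamma^i$ with base change along $B \to B^{(1)}$ and the canonical identification $\pi_0(I^{[p]} \otimes_A B^{(1)}) \simeq \Frob_A^*(I/I^2)$ provided by the Frobenius map $I \to I^{[p]}$, $f \mapsto f^p$.

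The main technical obstacle, I expect, is not the construction itself but the verification of naturality: since $\gamma_i(f^p) = 0$ inside $D_A(f)$ in characteristic $p$, the identification between $\gr^\conj_i$ and $\Gamma^i_{B^{(1)}}(\cdot)$ is not realised by any inclusion of elements but must be established as an abstract isomorphism of rank-one free modules with distinguished generators. One must check that this pairing of generators is independent of the chosen regular sequence $(f_1,\dots,f_r)$ generating $I$, which amounts to verifying that under any change of regular sequence $f_i = \sum_j u_{ij} g_j$ the induced change-of-basis matrix on $I^{[p]}/(I^{[p]})^2$ (namely $u_{ij}^p$) is compatible with the one on $\gr^\conj_i D_A(I)$ read off from the explicit decomposition. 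A conceptually cleaner route, available once Corollary \ref{cor:ddrcryscompquot} has been used, is to import the filtration from the conjugate filtration on $\dR_{B/A}$ via the comparison isomorphism: then Proposition \ref{prop:conjss} identifies $\gr^\conj_i \simeq \wedge^i L_{B^{(1)}/A}[-i]$, and since $L_{B/A} \simeq I/I^2[1]$ is free (as $I$ is regular), the formula $\wedge^i(M[1]) \simeq \Gamma^i(M)[i]$ for derived exterior powers of shifted projectives yields the claimed description of the graded pieces, and simultaneously settles naturality.
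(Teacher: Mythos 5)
Your construction is essentially correct and computes the right thing, but you are working harder than necessary and, as you yourself notice, your route creates a naturality problem that the paper avoids entirely. You \emph{define} the filtration on $D_A(I)$ by transporting a tensor-product filtration across the K\"unneth isomorphism $D_A(I)\simeq\bigotimes_i D_A(f_i)$, which makes the definition depend a priori on the chosen regular sequence, forcing the change-of-basis verification in your final paragraph. The paper instead defines $\Fil^\conj_n(D_A(I))$ \emph{intrinsically}, as the $B^{(1)}$-submodule of $D_A(I)$ generated by $\gamma_{kp}(f)$ for $f\in I$ and $k\le n$; this is manifestly natural in $(A,I)$ and independent of any choices, and the K\"unneth isomorphism and the $r=1$ computation are then used only to \emph{compute} this intrinsically-defined filtration (showing in passing that it agrees with your tensor-product filtration, which is therefore also independent of the generating sequence — but you get this as a consequence, not by checking the change-of-basis matrices by hand). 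The $r=1$ computation itself is the same in both treatments.

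Your proposed ``cleaner route'' via Corollary \ref{cor:ddrcryscompquot} should be treated with care. That corollary proves $\comp_{B/A}$ is an equivalence, but says nothing about conjugate filtrations, and the statement that $\comp_{B/A}$ intertwines the conjugate filtration on $\dR_{B/A}$ with the filtration of the present lemma is precisely what is needed (and left to the reader) at the very end of the proof of Theorem \ref{thm:ddrcryscomp}. Defining the filtration on $D_A(I)$ by transport along $\comp_{B/A}$ would therefore beg the question the lemma is built to answer, and Remark \ref{rmk:conjfiltpdenvaltdesc} deliberately presents the identification with $\wedge^i L_{B^{(1)}/A}[-i]$ as a consequence rather than a definition. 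The self-contained, crystalline-side description of the filtration is the point.
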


Note that the natural map $A \to D_A(I)$ sends $I^{[p]}$ to $0$ as $f^p = p \cdot \gamma_p(f) = 0$ for any $f \in I$, so $D_A(I)$ may be viewed as a $B^{(1)}$-algebra. By Lemma \ref{lem:lcitorindep}, the algebra $B^{(1)}$ is also discrete, and the $B^{(1)}$-module $\pi_0(I^{[p]} \otimes_A B^{(1)})$ is a locally free module of rank $r$, where $r$ is the length of a regular sequence generating $I$. Moreover, this $B^{(1)}$-module can be identified with the pushout of $I/I^2$ along $\Frob_A:B \to B^{(1)}$, which explains the last equality above.

\begin{proof}
The filtration can be defined by setting $\Fil^\conj_n(D_A(I))$ to be the $B^{(1)}$-submodule of $D_A(I)$ generated by $\gamma_{kp}(f)$ for $f \in I$ and $k \leq n$.  To compute this filtration, observe that if $I = (f)$ with $f \in A$ regular, then, as in the proof of Lemma \ref{lem:kunnethforpdenv}, one has 
\[ D_A(I) \simeq \oplus_{i \in \Z_{\geq 0}} A/(f^p) \cdot \gamma_{ip}(x). \]
Under this isomorphism, one has $\Fil^\conj_n(D_A(I)) \simeq \oplus_{i=0}^n A/(f^p) \cdot \gamma_{np}(f)$, i.e., the conjugate filtration coincides with the evident filtration by the number of factors on the direct sum decomposition above. The claim about associated graded pieces is clear in this case well.  The general case follows from this special case and Lemma \ref{lem:kunnethforpdenv}.
\end{proof}

\begin{remark}
\label{rmk:conjfiltpdenvaltdesc}
If $A \to B = A/I$ is a quotient by an ideal $I$ generated by a regular sequence, then one has $L_{B/A} \simeq I/I^2[1]$, and $L_{B^{(1)}/A} \simeq \pi_0(I^{[p]} \otimes_A B^{(1)})[1]$ by Lemma \ref{lem:lcitorindep}. Thus, the graded pieces of the conjugate filtration appearing in Lemma \ref{lem:conjfiltpdenv} may be rewritten as
\[ \gr^\conj_i(D_A(I)) \simeq \Gamma^i_{B^{(1)}}( \pi_0(I^{[p]} \otimes_A B^{(1)}) ) \simeq \wedge^i L_{B^{(1)}/A}[-i], \]
which brings it much closer to the derived de Rham theory by Proposition \ref{prop:conjss}. In \cite{BhattDerCrys}, we will define a notion of a ``derived pd-envelope'' $\L D_A(I)$ of an arbitrary ideal $I \subset A$ in such a way that the analogue of the previous statement is true without the assumption that $I$ is generated by a regular sequence.
\end{remark}

The conjugate filtration introduced in Lemma \ref{lem:conjfiltpdenv} respects the Gauss-Manin connection if the base comes equipped with derivations. The following lemma identifies the induced connection on the graded pieces.

\begin{lemma}
\label{lem:conjfiltpdenvconn}
Let $A \to B = A/I$ be a quotient map of $\F_p$-algebras. Assume that $I$ is generated by a regular sequene. Let $R \to A$ be another map of $\F_p$-algebras. Then the conjugate filtration $\Fil^\conj_\bullet$ from Lemma \ref{lem:conjfiltpdenv} on $D_A(I)$ is compatible with the natural $R$-linear connection $D_A(I) \to D_A(I) \otimes_A \Omega^1_{A/R}$. The induced connection on $\gr^\conj_n(D_A(I)) \simeq \Frob_A^* \Gamma^n_B(I/I^2)$ coincides with the Frobenius descent connection. 
\end{lemma}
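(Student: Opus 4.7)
The plan is to reduce to the principal case $I = (f)$ via the K\"unneth decomposition of Lemma \ref{lem:kunnethforpdenv}, then compute directly on the generators of $D_A(f)$ using a Lucas-type identity to verify both that $\Fil^\conj_\bullet$ is preserved and that the induced connection on $\gr^\conj_n$ has the expected form, and finally invoke the universal characterisation of the Frobenius descent connection.

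First, for $I = (f_1,\dots,f_r)$ generated by a regular sequence, Lemma \ref{lem:kunnethforpdenv} identifies $D_A(I) \simeq \bigotimes_i D_A(f_i)$ and $\Fil^\conj_\bullet$ with the convolution of the conjugate filtrations on the factors. The pd-derivation $\nabla$ is the tensor-product derivation via Leibniz, and the Frobenius descent connection on a tensor product of Frobenius pullbacks is the tensor product of the individual Frobenius descent connections. Both the filtration-compatibility assertion and the identification on $\gr^\conj_n$ thus reduce to the case $r = 1$.

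Second, when $I = (f)$ with $f$ regular, the proof of Lemma \ref{lem:kunnethforpdenv} gives $D_A(f) \simeq \bigoplus_{k \geq 0} A/(f^p)\cdot\gamma_{kp}(f)$, with $\Fil^\conj_n$ equal to the sum of the first $n+1$ summands. The pd-derivation satisfies $\nabla(\gamma_{kp}(f)) = \gamma_{kp-1}(f)\,df$. Writing $kp-1 = (k-1)p + (p-1)$ and applying Lucas, one finds $\binom{kp-1}{p-1} \equiv 1 \pmod p$, so in characteristic $p$
\[ \gamma_{kp-1}(f) = \gamma_{(k-1)p}(f)\cdot\gamma_{p-1}(f), \]
and $\gamma_{p-1}(f) = f^{p-1}/(p-1)! \in A$ involves no nontrivial divided powers. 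Hence
\[ \nabla(\gamma_{kp}(f)) = \gamma_{(k-1)p}(f)\cdot\gamma_{p-1}(f)\,df \in \Fil^\conj_{k-1} \otimes_A \Omega^1_{A/R}, \]
which proves compatibility with $\Fil^\conj_\bullet$ and simultaneously shows that the induced connection $\bar\nabla$ on $\gr^\conj_n$ annihilates the generating class $[\gamma_{np}(f)]$.

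Third, under the identification $\gr^\conj_n \simeq \Frob_A^* \Gamma^n_B(I/I^2)$ of Remark \ref{rmk:conjfiltpdenvaltdesc}, the class $[\gamma_{np}(f)]$ corresponds to $\gamma_n(\bar f) \otimes 1$ (with $\bar f \in I/I^2$). The Frobenius descent connection on $\Frob_A^* M$ is uniquely determined as the $R$-linear Leibniz connection making the sections $m \otimes 1$ flat. Since the classes $\gamma_n(\bar f) \otimes 1$ (together with their K\"unneth analogues in the multi-generator case) generate $\Frob_A^* \Gamma^n_B(I/I^2)$ as an $A$-module and are flat for $\bar\nabla$ by the previous step, $\bar\nabla$ must coincide with the Frobenius descent connection.

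The only real subtlety is bookkeeping: tracking the identification from Remark \ref{rmk:conjfiltpdenvaltdesc} carefully enough to match $[\gamma_{np}(f)]$ with $\gamma_n(\bar f)\otimes 1$, and checking that the K\"unneth isomorphism intertwines the tensor-product derivation with the tensor-product Frobenius descent connection. Once these compatibilities are in place, the Lucas computation in the principal case supplies all the content.
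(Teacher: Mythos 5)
Your proposal is correct, and it takes a more computational route than the paper's. For the first claim the paper simply asserts that filtration compatibility "follows directly" from the explicit description of $\Fil^\conj_\bullet$ in Lemma \ref{lem:conjfiltpdenv}; for the second claim the paper sets up a diagram of Frobenii, identifies $\gr^\conj_n$ as the pullback along the relative Frobenius $\Frob_f:\Frob_R^*A\to A$ of an $\Frob_R^*A$-module via Remark \ref{rmk:conjfiltpdenvaltdesc}, and then leaves the coincidence of connections to the reader. You instead reduce to the principal case via K\"unneth (legitimate, as the proof of Lemma \ref{lem:conjfiltpdenv} itself proceeds this way), carry out the explicit Lucas calculation giving $\nabla(\gamma_{kp}(f))=\gamma_{(k-1)p}(f)\cdot\gamma_{p-1}(f)\,df$ with $\gamma_{p-1}(f)$ a unit multiple of $f^{p-1}\in A$, and invoke the uniqueness of an $R$-linear Leibniz connection with prescribed flat generating sections. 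This simultaneously verifies both claims with the Lucas identity carrying the real content, and is arguably more self-contained than the paper's sketch; the paper's phrasing, on the other hand, makes the conceptual role of the relative Frobenius more visible. The one thing worth making explicit if you write this up: the step matching $[\gamma_{np}(f)]$ with a section of the form $m\otimes 1$ goes through the chain of identifications $\gr^\conj_n\simeq\Gamma^n_{B^{(1)}}\bigl(\pi_0(I^{[p]}\otimes_A B^{(1)})\bigr)\simeq\Gamma^n_B(I/I^2)\otimes_B B^{(1)}$ from Lemma \ref{lem:conjfiltpdenv} and its Remark, together with the factorisation $\Frob_A = \Frob_f\circ(\textrm{base change of }\Frob_R)$, which is exactly where the paper's diagram is doing its work.
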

\begin{proof}
The first claim follows directly from the description of the conjugate filtration given in the proof of Lemma \ref{lem:conjfiltpdenv}. For the second part, we first explain what the Frobenius descent connection is. The natural Frobenii on $A$ and $R$ define a diagram of simplicial commutative rings
\[ \xymatrix{ R \ar[r]^f \ar[d]^-{\Frob_R} & A \ar[r]^g \ar[d]^-{\Frob_R} & B \ar[d]^-{\Frob_R} \\
			  R \ar[r]^a \ar[rd]^f & \Frob_R^* A \ar[r]^b \ar[d]^-{\Frob_f} & \Frob_R^* B \ar[d]^-{c} \\
								& A \ar[r]^d \ar[rd]^g & \Frob_A^* B \ar[d]^-{\Frob_g} \\
								&					&   B. } \]
All squares here are cartesian. Now the free $\Frob_A^* B$-module $\gr^\conj_n(D_A(I))$ is identified with $\wedge^n L_d[-n]$ by Remark \ref{rmk:conjfiltpdenvaltdesc}. In particular, as an $A$-module, this module is the pullback along $\Frob_f$ of the $\Frob_R^* A$-module $\wedge^n L_b [-n]$, viewed as an $\Frob_R^* A$-module via restriction of scalars along $b$. For any $\Frob_R^* A$-module $M$, the pullback $\Frob_f^* M$ acquires a connection relative to $R$, which is called the Frobenius descent connection. We leave it to the reader to check that this Frobenius descent connection coincides with the standard one on (conjugate graded pieces of) $D_A(I)$.
\end{proof}

The de Rham cohomology of a module equipped with a connection coming from Frobenius descent takes a particularly nice form, and this leads to a tractable description of the de Rham complex associated to the Gauss-Manin connection acting on the conjugate filtration.

\begin{lemma}
\label{lem:conjfiltpdenvdr}
Let $R$, $A$ and $B$ be as in Lemma \ref{lem:conjfiltpdenvconn}. If the map $R \to A$ exhibits $A$ as a free $R$-algebra, then one has an identification of de Rham complexes
\[ \dR_{A/R}(\gr^\conj_n(D_A(I))) \simeq \dR_{A/R} \otimes_{\Frob_R^* A} \Frob_R^* \big(\Gamma^n_B(I/I^2)\big), \]
where the second factor on the right hand side is the base change of $\Gamma^n_B(I/I^2)$, viewed as an $A$-module, along the map $\Frob_R:A \to \Frob_R^* A$.
\end{lemma}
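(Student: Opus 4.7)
The plan is to reduce this to Lemma \ref{lem:frobdescent} (the general de Rham computation for Frobenius-descent connections) via the identification of the Gauss-Manin connection on $\gr^\conj_n(D_A(I))$ supplied by Lemma \ref{lem:conjfiltpdenvconn}. The key structural fact is that, as an $A$-module with connection relative to $R$, the graded piece $\gr^\conj_n(D_A(I))$ is the pullback along relative Frobenius of an $\Frob_R^* A$-module; once this is set up, Lemma \ref{lem:frobdescent} does all the work.

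First, I would combine Lemma \ref{lem:conjfiltpdenvconn} with Remark \ref{rmk:conjfiltpdenvaltdesc} to give an explicit description of $\gr^\conj_n(D_A(I))$ as an $A$-module with connection: using the notation of the diagram in the proof of Lemma \ref{lem:conjfiltpdenvconn}, it is $\Frob_f^* \bigl(\wedge^n L_b[-n]\bigr)$, where $b\colon \Frob_R^* A \to \Frob_R^* B$ is the base change of $g\colon A\to B$ along $\Frob_R$. Base change for the cotangent complex together with the identification $\wedge^n L_{B/A}[-n] \simeq \Gamma^n_B(I/I^2)$ (valid because $I$ is regularly generated, by Lemma \ref{lem:lcitorindep} and the standard description of $L_{B/A}$) rewrites this as $\Frob_f^* \Frob_R^* \Gamma^n_B(I/I^2)$, equipped with its Frobenius descent connection.

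Second, I would invoke Lemma \ref{lem:frobdescent} for the map $R\to A$, which is free (hence polynomial-like, i.e.\ a coproduct of polynomial algebras) by hypothesis, taking $M := \Frob_R^* \Gamma^n_B(I/I^2)$ as the $\Frob_R^* A$-module. This yields directly
\[ \Omega^\bullet_{A/R}\bigl(\Frob_f^* M\bigr) \simeq \Omega^\bullet_{A/R} \otimes_{\Frob_R^* A} \Frob_R^* \Gamma^n_B(I/I^2). \]
Finally, since $R\to A$ is free, Corollary \ref{cor:ddrindepsmooth} identifies $\dR_{A/R}$ with $\Omega^\bullet_{A/R}$, and the twisted versions match because the Gauss-Manin connection on $\gr^\conj_n(D_A(I))$ is precisely the Frobenius descent connection on $\Frob_f^* M$ by Lemma \ref{lem:conjfiltpdenvconn}. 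Assembling these identifications produces the displayed equivalence.

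The only real subtlety — and the step I would check most carefully — is bookkeeping around the two distinct Frobenius twists in play: $B^{(1)} = \Frob_A^* B$ (used in Remark \ref{rmk:conjfiltpdenvaltdesc}) versus $\Frob_R^* A$ (the codomain of the Frobenius descent we actually invoke). The factorisation $\Frob_A = \Frob_f \circ (b\text{-base change})$ encoded in the diagram of Lemma \ref{lem:conjfiltpdenvconn} is exactly what bridges these two viewpoints, so the matching is automatic once the diagram is internalised. Everything else is a direct citation.
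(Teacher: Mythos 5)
Your proposal is correct and follows exactly the route the paper takes: the paper's proof is the one-line citation "This lemma follows from Lemma \ref{lem:conjfiltpdenvconn} and Lemma \ref{lem:frobdescent}," and you have simply unpacked that citation, including the bookkeeping of $B^{(1)} = \Frob_A^* B$ versus $\Frob_R^* A$ that the diagram in Lemma \ref{lem:conjfiltpdenvconn} is there to organise. The invocation of Corollary \ref{cor:ddrindepsmooth} is harmless but not really needed, since $\dR_{A/R}(-)$ here already denotes the classical de Rham complex with coefficients for the free map $R\to A$, and the rest of your reasoning is exactly the intended one.
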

\begin{proof}
This lemma follows from Lemma \ref{lem:conjfiltpdenvconn} and Lemma \ref{lem:frobdescent}.
 \end{proof}

We now have enough tools to finish proving Theorem \ref{thm:ddrcryscomp}.

\begin{proof}[Proof of Theorem \ref{thm:ddrcryscomp}]
Let $R \to A \to B$ be a composite map of flat $\Z/p^n$-algebras, with $R \to A$ a free $R$-algebra, and $A \to B = A/I$ a quotient map with $I \subset A$ an ideal generated by a regular sequence.  We want to show that $\comp_{B/R}:\dR_{B/R} \to R\Gamma( (B/R)_\crys,\calO)$ is an isomorphism. Since the formation of either side commutes with base change (by Proposition \ref{prop:ddrbasechange} and Lemma \ref{lem:basechangecrys}), we may reduce (by devissage) to the case $n = 1$, i.e., we may assume that all algebras in sight are $\F_p$-algebras. By Proposition \ref{prop:conjfiltddrcomposite}, $\dR_{B/R}$ admits an increasing bounded below separated exhaustive filtration with graded pieces:
\begin{equation}
\label{eq:compthm1}
\Omega^\bullet_{A/R}(\gr^\conj_n (\dR_{B/A})) \simeq \dR_{A/R} \otimes_{\Frob_R^* A} \Frob_R^* \big( \wedge^n L_{B/A} [-n] \big).
\end{equation}
Transitivity for crystalline cohomology together with Lemma \ref{lem:conjfiltpdenvdr} show that $R\Gamma( (B/R)_\crys,\calO)$ admits an increasing bounded below separated exhaustive filtration with terms given by
\begin{equation}
\label{eq:compthm2}
 \Omega^\bullet_{A/R}(\gr^\conj_n(D_A(I))) \simeq \dR_{A/R} \otimes_{\Frob_R^* A} \Frob_R^* \big(\Gamma^n_B(I/I^2)\big) \simeq \dR_{A/R} \otimes_{\Frob_R^* A} \Frob_R^* \big( \wedge^n L_{B/A} [-n] \big),
\end{equation}
where the last equality uses Remark \ref{rmk:conjfiltpdenvaltdesc}. We leave it to the reader to check that $\comp_{B/A}$ respects both these filtrations, and induces the identity isomorphism between \eqref{eq:compthm1} and \eqref{eq:compthm2}.
\end{proof}

\begin{remark}
The identification of crystalline and derived de Rham cohomology provided in Theorem \ref{thm:ddrcryscomp} answers \cite[Question VIII.2.2.8.2]{IllusieCC2} in the case of $\Z/p^n$-algebras. The case of characteristic $0$ has a negative answer by \S \ref{cor:ddrchar0}, and hence this seems like the best possible answer.
\end{remark}

A consequence of Theorem \ref{thm:ddrcryscomp} and the Frobenius action on crystalline cohomology is the Frobenius action on $\dR_{A/(\Z/p^n)}$ for flat lci $\Z/p^n$-algebras $A$. In fact, this is a completely general phenomenon:

\begin{proposition}
\label{prop:frobactionddr}
Let $A$ be a $\Z/p^n$-algebra. Then $\dR_{A/(\Z/p^n)}$ has a canonical Frobenius action commuting with the Frobenius on $\R\Gamma( (B/A)_\crys,\calO_\crys)$ under $\comp_{B/A}$.
\end{proposition}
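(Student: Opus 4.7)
The approach is to construct the Frobenius action by reducing to the case of polynomial $\Z/p^n$-algebras (where both derived de Rham and crystalline cohomology collapse to classical de Rham cohomology), invoking the crystalline Frobenius there, and then propagating the resulting natural transformation to all $\Z/p^n$-algebras via the left Kan extension characterization of $\dR$ from Remark \ref{rmk:lurielke}.

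First, I would treat the case of a free $\Z/p^n$-algebra $F \in \Alg^\Free_{(\Z/p^n)/}$. By Corollary \ref{cor:ddrindepsmooth} (applied to the smooth algebra $F$), we have $\dR_{F/(\Z/p^n)} \simeq \Omega^\bullet_{F/(\Z/p^n)}$, and by Berthelot's theorem this further coincides with $\R\Gamma((F/(\Z/p^n))_\crys, \calO_\crys)$. Classical crystalline cohomology carries a canonical Frobenius endomorphism that is functorial in the algebra (concretely on $\Omega^\bullet_F$ it is computed by any choice of Frobenius lift $F \to F$, the resulting map in the derived category being independent of this choice). Transporting along the above equivalences endows $\dR_{-/(\Z/p^n)}$ restricted to $\Alg^\Free_{(\Z/p^n)/}$ with a natural endomorphism $\Phi$.

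Next, I would invoke Remark \ref{rmk:lurielke}, which asserts that $\dR_{-/(\Z/p^n)}: s\Alg_{(\Z/p^n)/} \to D(\Z/p^n)$ is the left Kan extension of its restriction to $\Alg^\Free_{(\Z/p^n)/}$. Natural endomorphisms of a left Kan extension are canonically induced from natural endomorphisms of the original functor, so $\Phi$ extends uniquely to a natural endomorphism of $\dR_{-/(\Z/p^n)}$ on all of $s\Alg_{(\Z/p^n)/}$. Specializing to any $\Z/p^n$-algebra $A$ produces the desired canonical Frobenius action on $\dR_{A/(\Z/p^n)}$.

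Finally, for compatibility with $\comp$ under the comparison map of Proposition \ref{prop:ddrcryscompmap}: by Remark \ref{rmk:lkedrcrys}, this comparison map itself arises from left Kan extending Berthelot's identification on $\Alg^\Free_{(\Z/p^n)/}$. On free algebras $\Phi$ was \emph{defined} to match the crystalline Frobenius under this identification, so compatibility is tautological on $\Alg^\Free$; by functoriality of left Kan extension, it propagates to all of $s\Alg_{(\Z/p^n)/}$. The main obstacle is ensuring the natural transformation $\Phi$ on $\Alg^\Free$ is genuinely well-defined: the naive formula $x_i \mapsto x_i^p$ is not natural in arbitrary maps of polynomial algebras over $\Z/p^n$, but the crystalline construction of Frobenius circumvents this (the ambiguity in choosing Frobenius lifts is absorbed by the derived nature of $\R\Gamma_\crys$), so the argument reduces to the standard naturality of the crystalline Frobenius.
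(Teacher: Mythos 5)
Your proof is correct and takes essentially the same route as the paper: reduce to the crystalline Frobenius on free algebras, then propagate by functoriality. The paper phrases this concretely (fix a free resolution $P_\bullet \to A$, endow each $\Omega^\bullet_{P_m/(\Z/p^n)}$ with the crystalline Frobenius of $P_m/p$, and totalize), whereas you phrase it via the left Kan extension characterization from Remark \ref{rmk:lurielke} — but these are two packagings of the same construction, and the paper itself indicates in Remark \ref{rmk:lkedrcrys} that it views $\comp$ exactly this way. Your closing remark that the naive lift $x_i \mapsto x_i^p$ fails naturality, and that one must appeal to the intrinsic functoriality of crystalline Frobenius, is a point the paper silently relies on ("Since the Frobenius action on crystalline cohomology is functorial\dots"); making it explicit is a mild improvement in exposition, not a change of method.
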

\begin{proof}
Let $P_\bullet \to A$ be a free $(\Z/p^n)$-algebra resolution of $A$. Then $\Omega^\bullet_{P_m/(\Z/p^n)}$ has a natural Frobenius action coming from the isomorphism of $\Omega^\bullet_{P_m/(\Z/p^n)}$ with the crystalline cohomology of $\Z/p^m \to P_m/p$ (compatible with divided powers on $p$). Since the Frobenius action on crystalline cohomology is functorial, Frobenius also acts on the bicomplex $\Omega^\bullet_{P_\bullet/(\Z/p^n)}$, and hence on $\dR_{A/(\Z/p^n)}$. The compatibility with $\comp_{B/A}$ is clear from construction.
\end{proof}

\begin{remark}
It seems possible to use Mazur's theorem (or, rather, Ogus's generalization of it) to explicitly characterise the ``image'' of the Frobenius map defined above: it is the homotopy colimit over $m \in \Delta^\opp$ of the complexes $\L \eta \Omega^\bullet_{P_m/(\Z/p^n)}$, where $\L \eta$ denotes the cogauge used in Ogus's theorem. However, this does not seem very useful as derived de Rham cohomology tends to be unbounded outside the smooth case.
\end{remark}

\section{Some simplicial algebra}
\label{sec:ddr-scr-log}

The purpose of this section is to record some basic notions in simplicial algebra. In \S \ref{sss:modelstrsalg}, we review the usual model structures on simplicial sets, abelian groups, and commutative rings that are used in practice to defined derived functors. In \S \ref{ss:modelstrsmon}, we extend these ideas to simplicial commutative monoids.  This material will be used in \S \ref{sec:logalg} to set up some basic formalism for derived logarithmic geometry.

\subsection{Review of some standard model structures}
\label{sss:modelstrsalg}

We simply collect (with references) some of Quillen's results from \cite{QuillenHA}. All model structures we consider are {\em closed}, so we will not use this adjective. We refer the reader to the end of \S \ref{sec:notation} for our conventions concerning simplicial sets, simplicial rings, etc.

\subsubsection*{Simplicial sets and abelian groups} The category $s\Set$ is always equipped with the model structure where weak equivalences are the usual ones (defined by passage to geometric realisations), and fibrations are Kan fibrations. Similarly, we equip $s\Ab$ with the model structure where weak equivalences (resp. fibrations) are the maps which induce weak equivalences (resp. fibrations) of underlying simplicial sets. In particular, $\Forget^{s\Ab}_{s\Set}$ is a right Quillen functor with left adjoint given by $\Free^{s\Set}_{s\Ab}$. A good reference for these model structures is \cite{QuillenHA}. We follow here the convention that $(|-|,\Sing(-))$ denotes the usual adjunction between $s\Set$ and topological spaces.

\subsubsection*{Simplicial commutative rings} 
The category $\Alg$ has finite limits, all filtered colimits, and  enough projectives (given by retracts of free algebras $\Free^\Set_\Alg(X) \simeq \Z[X]$, since effective epimorphisms are just surjective maps). Hence, by Quillen's theorem \cite[Chapter 2, \S 4, Theorem 4]{QuillenHA},  we can equip the category $s\Alg$ with a model structure where fibrations (resp. weak equivalences) are those maps $A_\bullet \to B_\bullet$ such that for every projective $P \in \Alg$, the induced map $\Hom_{\Alg}(P,A_\bullet) \to \Hom_{\Alg}(P,B_\bullet)$ is a fibration (resp. weak equivalence). Note that a projective $P$ is a retract of a free algebra $\Free^\Set_\Alg(X)$ for some set $X$, and that for a set $X$, we have
\[ \Hom_{\Alg}(\Free^\Set_\Alg(X),A) \simeq \Hom_{\Set}(X,A) \simeq A^X.   \]
Thus, a fibration (resp. weak equivalence) $A_\bullet \to B_\bullet$ in $s\Alg$ is precisely a map such that for any set $X$, the map $A_\bullet^X \to B_\bullet^X$ of simplicial sets is a fibration (resp. weak equivalence). In particular, $\Free^{s\Set}_{s\Alg}$ is a left Quillen functor with right adjoint $\Forget^{s\Alg}_{s\Set}$, and similarly for the pair $(\Free^{s\Ab}_{s\Alg},\Forget^{s\Alg}_{s\Set})$. In fact, we have:

\begin{proposition}
A map $A_\bullet \to B_\bullet$ in $s\Alg$ is a fibration (resp. weak equivalence) if and only if it is so as a map of simplicial sets.
\end{proposition}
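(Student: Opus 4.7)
The plan is to unpack the definition of the model structure on $s\Alg$ and reduce the claim to two stability properties for arbitrary products in $s\Set$, which are standard once one exploits the simplicial abelian group structure. By the characterization recalled just above, $f:A_\bullet \to B_\bullet$ in $s\Alg$ is a fibration (resp.\ weak equivalence) if and only if $f^X:A_\bullet^X \to B_\bullet^X$ is a fibration (resp.\ weak equivalence) of simplicial sets for every set $X$. Taking $X$ to be a one-point set gives the ``only if'' direction for free, so the real content is the converse: assuming the underlying map of simplicial sets is a fibration (resp.\ weak equivalence), I would need to show the same for each power $f^X$.

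For this, I would first observe that the underlying simplicial set of any $A_\bullet \in s\Alg$ is in particular a simplicial abelian group, hence a Kan complex with canonical basepoint $0$. Thus the claim reduces to two assertions about arbitrary products taken in $s\Set$: products of Kan fibrations (resp.\ weak equivalences) between simplicial abelian groups remain Kan fibrations (resp.\ weak equivalences).

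The fibration half will follow from the characterization of Kan fibrations by the right lifting property against the horn inclusions $\Lambda^n_k \hookrightarrow \Delta^n$, since any right lifting property is manifestly stable under arbitrary products of morphisms: a lifting problem against $\prod f_\alpha$ decomposes into independent componentwise problems. For the weak equivalence half, I would lean on the simplicial abelian group structure: the basepoint $0$ is canonical, and the homotopy groups of a simplicial abelian group commute with arbitrary products, so $\pi_n(f^X) \cong \pi_n(f)^X$ is an isomorphism whenever $\pi_n(f)$ is. Equivalently, under Dold--Kan the product in $s\Ab$ agrees with the termwise product of chain complexes, and homology commutes with products.

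The only mild subtlety is the weak equivalence stability, since products do not in general preserve weak equivalences of arbitrary simplicial sets. However, this is entirely standard once the simplicial abelian group structure is brought to bear via Dold--Kan (or, equivalently, via the identification of $\pi_n$ with $\pi_n$ at the canonical basepoint $0$), so no serious obstacle is anticipated.
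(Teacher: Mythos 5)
Your proof is correct, and the fibration half is identical to the paper's (closure of RLP classes under products). The weak-equivalence half, however, takes a genuinely different route. The paper argues via the model-categorical Whitehead theorem: the underlying simplicial set of an object of $s\Alg$ is a Kan complex (being a simplicial abelian group) and is cofibrant (as every simplicial set is), so a weak equivalence $f$ on underlying simplicial sets is in fact a simplicial homotopy equivalence; since homotopy equivalences are stable under arbitrary products and the forgetful functor commutes with products, each $f^X$ is again a homotopy equivalence and in particular a weak equivalence. You instead compute directly with homotopy groups: using that, under Dold--Kan, the normalized chains functor and taking homology both commute with arbitrary products, one gets $\pi_n(A_\bullet^X) \cong \pi_n(A_\bullet)^X$ for simplicial abelian groups, from which the claim follows. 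Both are valid; the paper's argument is more categorical and avoids any computation, while yours is more hands-on and transparently pinpoints exactly which property of $s\Ab$ rescues the failure of products to preserve weak equivalences of general simplicial sets. One small point worth making explicit in your write-up: one needs the isomorphism $\pi_n(f^X,y)$ for \emph{every} basepoint $y$ of $A_\bullet^X$, not just $y=0$; this reduces to $y=0$ by the usual translation argument for simplicial abelian groups (all components are translates of the identity component, compatibly with $f^X$), which is what your parenthetical remark about the canonical basepoint is implicitly invoking.
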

\begin{proof}
For fibrations, this follows because an arbitrary product of fibrations (in any model category) is always a fibration. For weak equivalences, note that the simplicial set underlying any object of $s\Alg$ is automatically Kan fibrant as it is a simplicial abelian group (see \cite[Chapter 2, \S 3, Corollary to Proposition 1, page 3.8]{QuillenHA}), and hence fibrant-cofibrant since all simplicial sets are cofibrant (see \cite[Chapter 2, \S 3, page 3.15, Proposition 2]{QuillenHA}). Thus, a map $A_\bullet \to B_\bullet$ in $s\Alg$ that induces a weak equivalence on underlying simplicial sets actually induces a homotopy equivalence on underlying simplicial sets. The claim now follows from the fact that homotopy equivalences are closed under arbitrary products, and the fact that $\Forget^\Alg_\Set$ commutes with products.
\end{proof}

\subsection{Model structures on simplicial commutative monoids} 
\label{ss:modelstrsmon}

Quillen's theorem used in \S \ref{sss:modelstrsalg} also leads to a model structure on $s\Mon$, and we summarise the result as:

\begin{proposition}
The category $s\Mon$ admits a model structure with a map $f:M_\bullet \to N_\bullet$ being a (trivial) fibration if and only if it is so as a map in $s\Set$.
\end{proposition}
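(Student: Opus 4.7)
The plan is to replay the argument for $s\Alg$ in \S\ref{sss:modelstrsalg} verbatim, with $\Alg$ replaced by $\Mon$. First, we apply Quillen's theorem \cite[Chapter 2, \S 4, Theorem 4]{QuillenHA} to produce a model structure on $s\Mon$: the hypotheses require that $\Mon$ have finite limits, filtered colimits (both true since $\Mon$ is a category of algebras for a finitary algebraic theory, with all such limits and colimits computed on underlying sets), and enough projectives (supplied by the free monoids $\Free^\Set_\Mon(X) = \N^{(X)}$, since surjections are the effective epimorphisms). The resulting model structure declares a map $f:M_\bullet \to N_\bullet$ to be a (trivial) fibration precisely when, for every projective $P \in \Mon$, the induced map $\Hom_\Mon(P, M_\bullet) \to \Hom_\Mon(P, N_\bullet)$ in $s\Set$ is a (trivial) fibration.

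Since projectives are retracts of free monoids, and since retracts of (trivial) fibrations of simplicial sets are again (trivial) fibrations, it suffices to test this condition when $P = \Free^\Set_\Mon(X)$ for an arbitrary set $X$. In that case, the usual adjunction identifies
\[ \Hom_\Mon(\Free^\Set_\Mon(X), M_\bullet) \simeq \Hom_\Set(X, M_\bullet) \simeq M_\bullet^X, \]
the $X$-fold product of the underlying simplicial set of $M_\bullet$. Thus $f$ is a (trivial) fibration in $s\Mon$ iff $M_\bullet^X \to N_\bullet^X$ is a (trivial) fibration in $s\Set$ for every set $X$. Specialising to $X = \{*\}$ gives one direction of the proposition; for the converse, arbitrary products preserve (trivial) fibrations in any model category (since the defining right lifting property is stable under limits), so the product condition is automatic from the case $X = \{*\}$.

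The main obstacle I anticipate is checking the technical hypotheses of Quillen's theorem --- roughly, the existence of suitable path objects in $s\Mon$ so that the lifting and factorisation axioms can be verified. This is essentially formal, since $\Mon$ behaves, for the purposes of this argument, in the same way as $\Alg$: the relevant path objects are built from products in $s\Mon$ combined with the simplicial structure, exactly as in the $s\Alg$ case, and no new difficulty arises beyond those already implicit in \S\ref{sss:modelstrsalg}.
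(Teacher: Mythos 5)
Your proof is correct and follows essentially the same route as the paper: apply Quillen's small-object theorem to $s\Mon$ via the free monoids $\N^{(X)}$, reduce (trivial) fibrations to the condition that $M_\bullet^X \to N_\bullet^X$ be a (trivial) fibration for all sets $X$, and observe that this is equivalent to the $X = \{*\}$ case. The only deviation is in the last step: where the paper invokes axiom \underline{SM7} (with $A = \emptyset$) together with a theorem of Quillen's to get stability under powers, you appeal directly to the fact that maps defined by a right lifting property are closed under arbitrary products. Your version is a bit more elementary and self-contained; both are valid.
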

\begin{proof}
The category $\Mon$ has finite limits, all filtered colimits, and enough projectives (given by retracts of free monoids $\Free^\Set_\Mon(X) \simeq \N^{(X)} := \oplus_{x \in X} \N \cdot x$, since effective epimorphisms are just surjective maps). By Quillen's theorem \cite[Chapter 2, \S 4, Theorem 4]{QuillenHA}, there is a model structure on $s\Mon$ with a map $f:M_\bullet \to N_\bullet$ being a (trivial) fibration if and only if the associated map $\Hom(\N^{(X)},M_\bullet) \to \Hom(\N^{(X)},N_\bullet)$ of simplicial sets is a trivial fibration for any set $X$ (as any projective is a retract of one of the form $\N^{(X)}$). By adjunction, this last map may be identified with the map $(M_\bullet)^X \to (N_\bullet)^X$. Hence, it suffices to show that a map $M_\bullet \to N_\bullet$ in $s\Set$ is a (trivial) fibration if and only if $(M_\bullet)^X \to (N_\bullet)^X$ is so for any set $X$. This follows from axiom \underline{SM7} of \cite[Chapter 2, \S 2, Definition 2]{QuillenHA} (applied with $A = \emptyset$) and \cite[Chapter 2, \S 3, Theorem 3]{QuillenHA}. 
\end{proof}

There is a forgetful functor $\Forget^{\Ab}_\Mon:\Ab \to \Mon$ which is a right adjoint with left adjoint given by $M \to M^\grp$, the group completion functor, denoted $(-)^\grp$ in the sequel. These functors interact well with the model structures:

\begin{proposition}[Olsson]
\label{prop:modelstrgrpcompl}
The adjoint pair $( (-)^\grp, \Forget^{s\Ab}_{s\Mon})$ is a Quillen adjunction. Moreover, if $P_\bullet \to M$ is an equivalence in $s\Mon$ with $M$ with discrete, then $P_\bullet^\grp \to M^\grp$ is also an equivalence.
\end{proposition}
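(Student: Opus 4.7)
The Quillen adjunction is immediate from the model structures recalled just above: in both $s\Mon$ and $s\Ab$, (trivial) fibrations are those maps whose underlying maps of simplicial sets are (trivial) fibrations, and $\Forget^{s\Ab}_{s\Mon}$ does not change the underlying simplicial set. Hence this forgetful functor is right Quillen, which establishes the first assertion.

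For the second assertion, let $P_\bullet \to M$ be a weak equivalence in $s\Mon$ with $M$ discrete. Since weak equivalences in $s\Mon$ are detected on underlying simplicial sets, $\pi_0(P_\bullet) \simeq M$ and $\pi_i(P_\bullet) = 0$ for $i > 0$. To show $P_\bullet^\grp \to M^\grp$ is a weak equivalence in $s\Ab$ it suffices to prove $\pi_0(P_\bullet^\grp) \simeq M^\grp$ and $\pi_i(P_\bullet^\grp) = 0$ for $i > 0$. The $\pi_0$ statement is formal: both $(-)^\grp$ and $\pi_0$ are left adjoints out of $s\Mon$ (to $s\Ab$ and $\Mon$ respectively), so they commute, yielding $\pi_0(P_\bullet^\grp) = \pi_0(P_\bullet)^\grp = M^\grp$.

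For the vanishing of higher homotopy, the plan is to exploit the description of monoid group completion as a filtered colimit. For any commutative monoid $N$, one has $N^\grp = \colim_F N[F^{-1}]$, where $F$ ranges over the filtered poset of finite subsets of $N$ and each $N[F^{-1}] = \colim_k(N \xrightarrow{+s_F} N \xrightarrow{+s_F} \cdots)$, with $s_F = \sum_{s \in F} s$, is a sequential colimit of translations. Applied degreewise and organized simplicially, this exhibits $P_\bullet^\grp$ as a filtered colimit of simplicial sets of the form $P_\bullet \xrightarrow{+s_F} P_\bullet \xrightarrow{+s_F} \cdots$. Since filtered colimits of simplicial sets commute with homotopy groups, $\pi_i(P_\bullet^\grp)$ is computed as the localization of the $\pi_0(P_\bullet)$-module $\pi_i(P_\bullet)$ at the multiplicative set $\pi_0(P_\bullet) = M$ (the module structure coming from the $H$-space action on homotopy groups); for $i > 0$, $\pi_i(P_\bullet) = 0$, so the localization vanishes.

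The principal obstacle is organizing the filtered colimit compatibly with the simplicial structure, since the elements one needs to invert in $P_n$ vary with $n$ and are not in general degenerate from elements of $P_0$. The cleanest way around this is to first reduce, via Ken Brown's lemma for the Quillen adjunction just established, to a termwise-free cofibrant resolution $Q_\bullet \to P_\bullet$ in $s\Mon$ (for instance the bar resolution from Remark~\ref{rmk:barresolution}), where $Q_\bullet^\grp$ is a termwise-free simplicial abelian group and the filtered-colimit argument can be carried out on the generators; the conclusion for $P_\bullet$ then follows by zigzagging through a common cofibrant resolution of $M$.
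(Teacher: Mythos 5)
The first part of your proof is correct and matches the paper's reasoning exactly: both model structures detect (trivial) fibrations on underlying simplicial sets, so the forgetful functor is right Quillen. The $\pi_0$ computation in the second part is also sound (the square of right adjoints $c, U$ between $s\Mon$, $\Mon$, $s\Ab$, $\Ab$ commutes on the nose, so the square of left adjoints $\pi_0$, $(-)^\grp$ commutes up to natural isomorphism). Note that the paper itself does not attempt a proof of the second part --- it simply cites \cite[Theorem A.5]{OlssonLogCot} --- so you are trying to do more than the paper does.

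However, the rest of your argument for the higher homotopy vanishing has a genuine gap, and it sits precisely in the step you yourself flagged as ``the principal obstacle.'' The filtered colimit presentation $N^\grp = \colim_F N[F^{-1}]$ uses finite subsets $F \subset P_n$ that vary uncontrollably with the simplicial degree $n$, so the colimit does not assemble into a filtered colimit of simplicial objects, and the interchange of $\pi_i$ with the colimit is not justified. Your proposed fix --- pass to a termwise-free cofibrant resolution $Q_\bullet \to P_\bullet$ and invoke Ken Brown's lemma --- does not close the gap: Ken Brown's lemma says a left Quillen functor preserves weak equivalences \emph{between cofibrant objects}, but here neither $P_\bullet$ nor the discrete $M$ is assumed cofibrant. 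So knowing $Q_\bullet^\grp$ behaves well tells you nothing about $Q_\bullet^\grp \to P_\bullet^\grp$ being an equivalence, which is exactly the kind of statement the proposition is asserting. The final ``zigzagging through a common cofibrant resolution of $M$'' is therefore circular: each leg of the zigzag you want to compare requires precisely the conclusion you are trying to establish. Furthermore, even in the termwise-free case, the claim that the localizing multiplicative sets can be ``carried out on the generators'' needs an argument, since face and degeneracy maps in a free resolution (e.g.\ the bar resolution) need not take generators to generators. The correct proof requires a substantive argument (which is what Olsson's Theorem A.5 supplies), not a formal reduction to the cofibrant case.
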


\begin{proof}
The first part is clear as (trivial) fibrations in $s\Mon$ and $s\Ab$ are defined by passing to underlying simplicial sets; the second part is \cite[Theorem A.5]{OlssonLogCot}.
\end{proof}

Regarding a commutative ring as a commutative monoid under multiplication defines a forgetful functor $\Forget^{\Alg}_\Mon:\Alg \to \Mon$ with left adjoint $\Free^\Mon_\Alg$, and similar simplicial functors. As in the case of abelian groups, one has:

\begin{proposition}
\label{prop:modelstrsalgsmon}
The adjoint pair $(\Free^{s\Mon}_{s\Alg}, \Forget^{s\Alg}_{s\Mon})$ is a Quillen adjunction. Moreover, $\Free^{s\Mon}_{s\Alg}$ preserves all weak equivalences.
\end{proposition}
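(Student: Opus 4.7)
The plan is to treat the two assertions in sequence, both reducing cleanly to the observation that the various model structures in sight are all ``transferred'' from the model structure on simplicial sets.

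For the Quillen adjunction part, I will exploit the fact that the model structures on both $s\Mon$ and $s\Alg$ are characterised on underlying simplicial sets: as explained in \S \ref{sss:modelstrsalg} and the preceding proposition, a map is a (trivial) fibration in either category precisely when its underlying map in $s\Set$ is so. The forgetful functor $\Forget^{s\Alg}_{s\Mon}$ sends a simplicial commutative ring $A_\bullet$ to its multiplicative monoid, which has the same underlying simplicial set. Thus $\Forget^{s\Alg}_{s\Mon}$ preserves fibrations and trivial fibrations on the nose, so it is right Quillen and the pair $(\Free^{s\Mon}_{s\Alg}, \Forget^{s\Alg}_{s\Mon})$ is a Quillen adjunction.

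For the preservation of \emph{all} weak equivalences by $\Free^{s\Mon}_{s\Alg}$, I would first note that the underlying simplicial abelian group of $\Free^{s\Mon}_{s\Alg}(M_\bullet) = \Z[M_\bullet]$ coincides with $\Free^{s\Set}_{s\Ab}$ applied to the underlying simplicial set of $M_\bullet$: in each degree one freely generates a $\Z$-module, discarding the multiplicative structure. Since weak equivalences in $s\Mon$ are detected on underlying simplicial sets (as that is the category from which the structure is transferred), and weak equivalences in $s\Alg$ are likewise detected on underlying simplicial sets, it suffices to show that $\Free^{s\Set}_{s\Ab}\colon s\Set \to s\Ab$ carries weak equivalences of simplicial sets to weak equivalences of simplicial abelian groups.

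This last statement is classical: under the Dold--Kan correspondence, $\Z[X_\bullet]$ corresponds to the unnormalised chain complex of $X_\bullet$, whose homology groups compute $H_*(|X_\bullet|,\Z)$. Since any weak equivalence $X_\bullet \to Y_\bullet$ in $s\Set$ induces an isomorphism on integral singular homology, the induced map $\Z[X_\bullet] \to \Z[Y_\bullet]$ is a quasi-isomorphism of non-negatively graded chain complexes, equivalently a weak equivalence in $s\Ab$. Combining the two reductions gives the claim. The only mild subtlety is the bookkeeping needed to correctly match up the notions of weak equivalence across $s\Set$, $s\Ab$, $s\Mon$, and $s\Alg$; once those identifications are granted, the argument is entirely formal and there is no serious obstacle.
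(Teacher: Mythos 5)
Your argument is correct and follows the same skeleton as the paper's: (trivial) fibrations in $s\Mon$ and $s\Alg$ are detected in $s\Set$, so $\Forget^{s\Alg}_{s\Mon}$ is right Quillen; and the underlying simplicial abelian group of $\Free^{s\Mon}_{s\Alg}(M_\bullet)$ is $\Free^{s\Set}_{s\Ab}$ of the underlying simplicial set of $M_\bullet$, reducing everything to showing $\Z[-]\colon s\Set \to s\Ab$ preserves weak equivalences. The only divergence is in how you establish that last fact: the paper invokes Ken Brown's lemma together with the cofibrancy of all simplicial sets, whereas you identify $\Z[X_\bullet]$ via Dold--Kan with the chain complex computing $H_*(|X_\bullet|;\Z)$ and appeal to the homology invariance of weak equivalences. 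Both are standard and correct; yours is more concrete and self-contained, the paper's is more uniformly ``model-categorical'' and so fits the surrounding formalism a bit more snugly.
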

\begin{proof}
The first part is clear as (trivial) fibrations in both $s\Mon$ and $s\Alg$ are defined by passing to $s\Set$.  For the second part, note that if $M_\bullet \in s\Mon$, then 
\[ \Forget^{s\Alg}_{s\Ab} \circ \Free^{s\Mon}_{s\Alg}(M_\bullet) \simeq \Free^{s\Set}_{s\Ab} \circ \Forget^{s\Mon}_{s\Set} (M_\bullet) \simeq \Z M_\bullet, \]
i.e., the abelian group underlying the free algebra on a monoid $M$ is the same as the free abelian group on the set underlying $M$. Now if $f:M_\bullet \to N_\bullet$ is a weak equivalence in $s\Mon$, then the map $f$ is also a weak equivalence when regarded as a map of simplicial sets. Ken Brown's lemma (which ensures that a left Quillen functor preserves all weak equivalences between cofibrant objects) and the cofibrancy of all simplicial sets then show that the induced map  $\Free^{s\Set}_{s\Ab}(f) = \Z f: \Z M_\bullet \to \Z N_\bullet$ is a weak equivalence of simplicial abelian groups (and hence underlying simplicial sets). The claim now follows from the description of weak equivalences in $s\Alg$.
\end{proof}

The next few lemmas prove easy properties about simplicial commutative monoids. First, we relate a simplicial monoid to its singular complex.

\begin{lemma}
\label{lem:singmon}
For any object $M_\bullet \in s\Mon$, the singular complex $\Sing(|M_\bullet|)$ acquires the structure of a simplicial commutative monoid. The natural map $M_\bullet \to \Sing(|M_\bullet|)$ is a weak equivalence of simplicial commutative monoids.
\end{lemma}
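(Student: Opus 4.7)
The plan is to transport the monoid structure of $M_\bullet$ across the adjunction $(|{-}|,\Sing)$ in two stages: first produce a topological commutative monoid structure on $|M_\bullet|$, then apply $\Sing$ to get back a simplicial commutative monoid structure. The key ingredient is that $|{-}|$ preserves finite products (working in a convenient category of topological spaces, e.g.\ compactly generated Hausdorff spaces), so that the multiplication $m:M_\bullet\times M_\bullet\to M_\bullet$ in $s\Mon$ realizes to a continuous map $|M_\bullet|\times|M_\bullet|\simeq |M_\bullet\times M_\bullet|\to |M_\bullet|$, and similarly for the unit and the commutativity/associativity/unit diagrams; the resulting diagrams commute in $\mathrm{Top}$ because they commute after taking $|{-}|$ of the corresponding diagrams in $s\Mon$. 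Then since $\Sing$ is a right adjoint and in particular commutes with finite products, applying $\Sing$ to this data yields a simplicial commutative monoid structure on $\Sing(|M_\bullet|)$.

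Next, I would verify that the unit map $\eta_{M_\bullet}:M_\bullet\to\Sing(|M_\bullet|)$ of the adjunction is a map of simplicial commutative monoids. This is a formal consequence of the naturality of $\eta$ together with the fact that the monoid structure on $\Sing(|M_\bullet|)$ was defined by transporting the one on $M_\bullet$ across the adjunction: the required compatibilities reduce to the statement that the two natural maps $M_\bullet\times M_\bullet\to\Sing(|M_\bullet|)$ obtained by going around either side of the square involving $\eta$ and the monoid multiplications agree, which follows because both are adjoint to the same map $|M_\bullet\times M_\bullet|\simeq|M_\bullet|\times|M_\bullet|\to|M_\bullet|$.

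Finally, for the weak equivalence statement: by the definition of the model structure on $s\Mon$ (recalled above), a map in $s\Mon$ is a weak equivalence if and only if its underlying map of simplicial sets is so. But the unit $X\to\Sing(|X|)$ of the adjunction $(|{-}|,\Sing)$ is a weak equivalence for \emph{every} simplicial set $X$, as part of the classical Quillen equivalence between $s\Set$ and $\mathrm{Top}$. Applying this to the simplicial set underlying $M_\bullet$ concludes the argument. The main subtlety in this plan is the preservation of finite products by $|{-}|$, which requires the standard convention of working in compactly generated spaces; once this is granted, the rest is formal adjunction bookkeeping together with invocation of the Quillen equivalence.
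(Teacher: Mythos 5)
Your proof is correct and follows the same route as the paper's: transport the monoid structure through $|{-}|$ (using preservation of finite products) and then $\Sing$ (a right adjoint), check the unit is a monoid map, and invoke that the unit of the $(|{-}|,\Sing)$ adjunction is a weak equivalence of underlying simplicial sets. The paper is slightly more terse (it does not spell out the compactly generated caveat or the adjunction bookkeeping), but the argument is the same.
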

\begin{proof}
The geometric realisation functor $|-|$ commutes with finite products of simplicial sets, so the multiplication map $M_\bullet \times M_\bullet \to M_\bullet$ defines the structure of commutative monoid on $|M_\bullet|$. The singular complex functor, by virtue of being a right adjoint, also commutes with finite products, so $\Sing(|M_\bullet|)$ becomes a simplicial commutative monoid. It is clear that the map $M_\bullet \to \Sing(|M_\bullet|)$ is a map of simplicial commutative monoids. Moreover, the map $|M_\bullet| \to |\Sing(|M_\bullet|)|$ is a weak equivalence (which is true for any simplicial set), so the last claim follows.
\end{proof}

Next, we relate a simplicial monoid to its set of connected components.

\begin{lemma}
	\label{lem:pi0mon}
Let $M_\bullet \in s\Mon$. Then $\pi_0(M_\bullet)$ (computed on the underlying simplicial set) has the natural structure of a commutative monoid. The map $M_\bullet \to \pi_0(M_\bullet)$ is the universal map from $M_\bullet$ to a simplically constant object of $s\Mon$. Moreover, $M_\bullet$ is discrete if and only if $M_\bullet \to \pi_0(M_\bullet)$ is a weak equivalence.
\end{lemma}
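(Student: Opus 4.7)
The plan is to exploit the fact that $\pi_0 \colon s\Set \to \Set$ preserves finite products, which will transfer the commutative monoid structure from $M_\bullet$ to $\pi_0(M_\bullet)$ formally. First I would verify this preservation: the natural map $\pi_0(K \times L) \to \pi_0(K) \times \pi_0(L)$ is a bijection for any $K, L \in s\Set$, as one sees either from the explicit description of $\pi_0(K)$ as the coequalizer of the two face maps $K_1 \rightrightarrows K_0$, or by appeal to $|K \times L| \simeq |K| \times |L|$ after geometric realisation.

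With this in hand, the first assertion becomes formal: the commutative monoid structure on $M_\bullet$ is encoded by simplicial maps $m \colon M_\bullet \times M_\bullet \to M_\bullet$ and $e \colon \ast \to M_\bullet$ making the usual associativity, commutativity, and unit diagrams commute, and applying $\pi_0$ produces maps $\pi_0(m)$ and $\pi_0(e)$ satisfying the analogous diagrams in $\Set$. This exhibits $\pi_0(M_\bullet)$ as a commutative monoid, and the canonical map $M_\bullet \to \pi_0(M_\bullet)$ as a morphism in $s\Mon$ by naturality.

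For the universal property, I would observe that a simplicially constant object $N \in s\Mon$ amounts to an ordinary commutative monoid. A morphism $f \colon M_\bullet \to N$ in $s\Mon$ is in particular a map of simplicial sets into a constant one, so it factors uniquely as $M_\bullet \to \pi_0(M_\bullet) \xrightarrow{\bar{f}} N$ in $s\Set$ by the universal property of $\pi_0$. Applying $\pi_0$ to the multiplicativity diagram for $f$, and using product preservation once more, shows that $\bar{f}$ is a monoid map, giving the desired factorisation in $s\Mon$.

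The final equivalence is then a direct corollary of the universal property. If $M_\bullet$ is discrete, meaning it is weakly equivalent to some simplicially constant $N \in s\Mon$, then the universality identifies $N$ with $\pi_0(M_\bullet)$, so $M_\bullet \to \pi_0(M_\bullet)$ is itself a weak equivalence; conversely, such a weak equivalence displays $M_\bullet$ as equivalent to a constant object, hence discrete by definition. The only technical point throughout is the product preservation by $\pi_0$, which is standard, so I do not anticipate a genuine obstacle.
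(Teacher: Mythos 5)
Your proof is correct and follows essentially the same route as the paper: transfer the monoid structure along $\pi_0$ using preservation of finite products, invoke the universal property of $\pi_0$ on underlying simplicial sets, and read off the final equivalence. The paper's only difference is in the last claim, where it simply notes that the statement holds at the level of underlying simplicial sets (weak equivalences in $s\Mon$ being detected there), which is marginally more direct than your appeal to the universal property.
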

\begin{proof}
The multiplication map $M_\bullet \times M_\bullet \to M_\bullet$ defines the multiplication on $\pi_0(M_\bullet)$ as $\pi_0(-)$ commutes with products of simplicial sets. The universal property comes directly from that of $\pi_0$ of any simplicial set. The last claim is true for any simplicial set.
\end{proof}

Recall that an object in $s\Mon$ or $s\Alg$ is called {\em discrete} if the underlying simplicial set has a discrete geometric realization. We show next that $\Free^{s\Mon}_{s\Alg}$ preserves and reflects discreteness:

\begin{proposition}
\label{prop:mondiscrete}
An object $M_\bullet \in s\Mon$ is discrete if and only if $\Free^{s\Mon}_{s\Alg}(M_\bullet)$ is discrete. 
\end{proposition}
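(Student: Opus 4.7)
The plan is to handle the two directions separately. The "only if" direction falls out of the machinery already established: if $M_\bullet$ is discrete, then by Lemma \ref{lem:pi0mon} the canonical map $M_\bullet \to \pi_0(M_\bullet)$ is a weak equivalence in $s\Mon$, and since $\Free^{s\Mon}_{s\Alg}$ preserves all weak equivalences (Proposition \ref{prop:modelstrsalgsmon}), applying it produces a weak equivalence $\Free^{s\Mon}_{s\Alg}(M_\bullet) \to \Free^{s\Mon}_{s\Alg}(\pi_0(M_\bullet)) = \Z[\pi_0(M_\bullet)]$; the target is simplicially constant, hence discrete, forcing the source to be so as well.

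For the "if" direction, my starting point would be the observation (already exploited in the proof of Proposition \ref{prop:modelstrsalgsmon}) that the simplicial abelian group underlying $\Free^{s\Mon}_{s\Alg}(M_\bullet)$ coincides with $\Free^{s\Set}_{s\Ab}(M_\bullet) = \Z M_\bullet$. Discreteness of $\Free^{s\Mon}_{s\Alg}(M_\bullet)$ therefore forces $\pi_n(\Z M_\bullet) = 0$ for $n > 0$; i.e., $H_n(|M_\bullet|; \Z) = 0$ for $n > 0$, via the standard Dold-Kan identification $\pi_\ast(\Z X_\bullet) \cong H_\ast(|X_\bullet|; \Z)$. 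To upgrade this homological vanishing to discreteness of $M_\bullet$, I would exploit the commutative monoid structure on $|M_\bullet|$ (which is a topological commutative monoid since realization commutes with finite products, cf. Lemma \ref{lem:singmon}). Decomposing $|M_\bullet|$ into connected components indexed by $\pi_0(M_\bullet)$, the homology vanishing splits as vanishing of reduced integral homology on each component; the identity component is then a connected topological H-space with trivial reduced integral homology, hence weakly contractible by the classical H-space Hurewicz argument ($\pi_1$ is abelian and therefore coincides with $H_1 = 0$, after which one iterates).

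The main obstacle is propagating weak contractibility to the non-identity components. My approach would be to invoke the homological group completion theorem for $M_\bullet \to M_\bullet^\grp$: combined with the vanishing already in hand, this gives $H_n(M_\bullet^\grp; \Z) = 0$ for $n > 0$, whence Moore's theorem forces the simplicial abelian group $M_\bullet^\grp$ to be discrete. One then combines the discreteness of $M_\bullet^\grp$ with the weak contractibility of the identity component—via a translation-by-a-lift-of-$\alpha$ argument exploiting the monoid-action of the identity component on the $\alpha$-component—to conclude that every component of $|M_\bullet|$ is weakly contractible. Making this translation step rigorous, without actual invertibility of $\alpha$-lifts in $M_\bullet$, is the most delicate point; a clean way to package it is probably to show directly that the commutative square formed by $M_\bullet$, $M_\bullet^\grp$, $\pi_0(M_\bullet)$, and $\pi_0(M_\bullet)^\grp$ is a homotopy pullback in $s\Mon$, so that the weak equivalence $M_\bullet^\grp \to \pi_0(M_\bullet)^\grp$ pulls back to the desired weak equivalence $M_\bullet \to \pi_0(M_\bullet)$.
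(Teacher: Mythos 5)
Your forward direction is correct and matches the paper's argument exactly.

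For the converse, your worry about non-identity components is well-founded, and it is precisely the point the paper glosses over: Eckmann-Hilton gives abelianity of $\pi_1$ only at the unit, hence only on the identity component, and a commutative topological monoid can have a non-identity component with nonabelian fundamental group. So the paper's assertion that $\pi_1$ is abelian ``for any base point'' is not justified, and you were right to distrust it.

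However, your proposed repair does not close the gap. The square you hope is a homotopy pullback --- formed from $M_\bullet$, $M_\bullet^\grp$, $\pi_0(M_\bullet)$, and $\pi_0(M_\bullet)^\grp$ --- fails to be one as soon as $M_\bullet$ has an absorbing element. Take $M_\bullet = X_\bullet \sqcup \{1\}$, with $X_\bullet$ a reduced simplicial set (say $X_\bullet = \overline{W}G$ for a group $G$), and multiplication making $1$ the unit and sending any pair of $n$-simplices of $X_\bullet$ to the degenerate basepoint $s_0^n(\ast)$. This is a simplicial commutative monoid in which $s_0^n(\ast)$ is absorbing in $M_n$, so $M_n^\grp$ is trivial for every $n$ and $\pi_0(M_\bullet)^\grp$ is trivial too; the homotopy pullback of the resulting cospan is then $\pi_0(M_\bullet)$, which is weakly equivalent to $M_\bullet$ only if $X_\bullet$ was already contractible. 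Worse, taking $G$ to be a nontrivial acyclic group (necessarily perfect, hence nonabelian; mitotic or binate groups furnish examples), $X_\bullet = \overline{W}G$ has vanishing reduced integral homology but nontrivial $\pi_1$, and then $\Z M_\bullet \simeq \Z X_\bullet \oplus \Z$ has trivial homotopy in positive degrees while $\pi_1(|M_\bullet|,\ast) = G \neq 0$: that is, $\Free^{s\Mon}_{s\Alg}(M_\bullet)$ is discrete but $M_\bullet$ is not. So the statement appears to need an extra hypothesis (e.g.\ integrality of the monoids involved, or a restriction to the cofibre products that actually arise in Proposition \ref{prop:monoidflat}) before either your route or the paper's can handle the non-identity components.
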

\begin{proof}
The forward direction follows from Proposition \ref{prop:modelstrsalgsmon} applied to the map $M_\bullet \to \pi_0(M_\bullet)$ using Lemma \ref{lem:pi0mon}. For the converse, note that $|M_\bullet|$ is a topological space with an abelian fundamental group (since $M_\bullet$ is commutative monoid, the space $|M_\bullet|$ is a commutative $H$-space) for any base point, and that the singular chain complex $\Z\Sing(|M_\bullet|)$ of $|M_\bullet|$ is equivalent to the $\Z M_\bullet \simeq \Forget^{s\Alg}_{s\Ab} \circ \Free^{s\Mon}_{s\Alg}(M_\bullet)$ by Lemma \ref{lem:singmon}. If the latter is discrete, then each connected component of $|M_\bullet|$ has no homology. Since the fundamental group is abelian, each component is therefore contractible (by Hurewicz). The claim now follows.
\end{proof}

In preparation for discussing flat morphisms of log schemes, we make the following definition:

\begin{definition}
\label{defn:monoidflat}
A map $h:M \to N$ of monoids is {\em flat} if for all maps $M \to M'$, the natural map $M' \sqcup^h_M N \to M' \sqcup_M N$ is an equivalence or, equivalently, if $M' \sqcup_M^h N$ is discrete. Here $M' \sqcup^h_M N$ is the {\em homotopy-pushout} of $M \to N$ along $M \to M'$, defined by taking a cofibrant replacement for $M \to M'$ and applying the naive pushout.
\end{definition}

The definition given above is a general definition in model category, and specialises to the case of flatness in the case of $s\Alg$, which explains the nomenclature. Our main observation is:

\begin{proposition}
\label{prop:monoidflat}
A map $h:M \to N$ in $\Mon$ is flat if $\Free^\Mon_\Alg(M) \to \Free^\Mon_\Alg(N)$ is flat in $\Alg$.
\end{proposition}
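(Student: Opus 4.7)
The plan is to use Proposition \ref{prop:mondiscrete} to translate the question of discreteness of the homotopy-pushout $M' \sqcup^h_M N$ into the corresponding question about its free algebra, where the flatness hypothesis on $\Free^\Mon_\Alg(M) \to \Free^\Mon_\Alg(N) = \Z[M] \to \Z[N]$ can be applied directly. Concretely, to verify flatness of $h$, I would fix a map $M \to M'$ of monoids and choose a cofibrant replacement $M \to P_\bullet \to M'$ in $s\Mon_{M/}$, so that $M' \sqcup^h_M N$ is modeled by the ordinary pushout $P_\bullet \sqcup_M N$ in $s\Mon$. The goal is then to show this pushout is discrete.

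Next, I would apply the functor $\Free^\Mon_\Alg$ and use the two properties established in Proposition \ref{prop:modelstrsalgsmon}: as a left Quillen functor it preserves cofibrations and pushouts, giving an isomorphism
\[ \Free^\Mon_\Alg(P_\bullet \sqcup_M N) \simeq \Z[P_\bullet] \sqcup_{\Z[M]} \Z[N] \]
in $s\Alg$, where $\Z[M] \to \Z[P_\bullet]$ is a cofibration; and it preserves all weak equivalences, so $\Z[P_\bullet] \to \Z[M']$ remains a weak equivalence. Thus the right-hand side computes the homotopy-pushout $\Z[M'] \sqcup^h_{\Z[M]} \Z[N]$, which in $s\Alg$ is the derived tensor product $\Z[M'] \otimes^L_{\Z[M]} \Z[N]$.

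Finally, the flatness hypothesis on $\Z[M] \to \Z[N]$ makes this derived tensor product equal to the ordinary tensor product $\Z[M'] \otimes_{\Z[M]} \Z[N] \simeq \Z[M' \sqcup_M N]$, which is discrete. Applying the nontrivial direction of Proposition \ref{prop:mondiscrete} then forces $P_\bullet \sqcup_M N$ itself to be discrete, whence (by Lemma \ref{lem:pi0mon}) it is weakly equivalent to $\pi_0(P_\bullet \sqcup_M N) = M' \sqcup_M N$, which is exactly flatness of $h$ in the sense of Definition \ref{defn:monoidflat}. The only step requiring care is the bookkeeping around the left Quillen functor $\Free^\Mon_\Alg$ — verifying that it really transports the homotopy-pushout in $s\Mon$ to the derived tensor product in $s\Alg$ — but this is immediate from preservation of cofibrations, pushouts, and weak equivalences, all of which are explicit in Proposition \ref{prop:modelstrsalgsmon}.
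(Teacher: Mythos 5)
Your proof is correct and follows essentially the same route as the paper's: apply $\Free^{s\Mon}_{s\Alg}$ to the homotopy pushout, use Proposition \ref{prop:modelstrsalgsmon} to identify the result with the derived tensor product $\Z[M']\otimes^L_{\Z[M]}\Z[N]$, invoke flatness to see this is discrete, and conclude via the nontrivial direction of Proposition \ref{prop:mondiscrete}. The only difference is that you spell out the cofibrant-replacement bookkeeping which the paper compresses into the statement that the left derived functor agrees with the naive one and preserves homotopy colimits.
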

\begin{proof}
By Proposition \ref{prop:modelstrsalgsmon}, the left derived functor $\L\Free^{s\Mon}_{s\Alg}$ coincides (up to equivalence) with the naive functor $\Free^{s\Mon}_{s\Alg}$. Hence, since the former preserves homotopy-colimits, we can write
\[ \Free^{s\Mon}_{s\Alg}(N \sqcup^h_M M') \simeq \Free^\Mon_\Alg(N) \otimes_{\Free^\Mon_\Alg(M)}^L \Free^\Mon_\Alg(M'). \]
By assumption, the right hand side is discrete, and hence so is the left hand side. Proposition \ref{prop:mondiscrete} then shows that $N \sqcup^h_M M'$ is discrete, as desired.
\end{proof}

\begin{example}
\label{ex:flatmonoid}
An integral homomorphism of integral monoids is flat by \cite[Proposition 4.1]{KatoLogFI}, and can therefore be used to compute homotopy pushouts.
\end{example}

\section{The category $\Log\Alg^\pre$ of prelog rings and its model structure} 
\label{sec:logalg}

Our goal in this section is to define the basic object of logarithmic algebraic geometry: a prelog ring. We define this next, and introduce a model structure on simplicial prelog rings immediately after; this model structure will replace the usual model structure on $s\Alg$ in logarithmic version of the cotangent complex and the derived de Rham complex.

\begin{definition}
Let $\Log\Alg^\pre$ be the category of maps $\alpha:M \to A$ with $M$ a monoid, $A$ an algebra, and $\alpha$ a monoid homomorphism where $A$ is regarded as a monoid via multiplication; objects of this category are often called {\em prelog rings}. For an object $P \in \Log\Alg^\pre$, we often write $P_\Alg$ and $P_\Mon$ for the rings and monoids appearing in $P$. Given a ring $A$, we often use $A$ to denote the prelog ring $\alpha:0 \to A$.
\end{definition}

\begin{remark}
As the notation suggests, a prelog ring is a weaker version of the notion of a log ring. More precisely, a prelog ring $\alpha:M \to A$ is called a {\em log ring} if $\alpha^{-1}(A^*) \to A^*$ is an isomorphism, at least after sheafification for some topology (typically \'etale) on $\Spec(A)$. It turns out that it is much easier to develop the basic theory of the cotangent complex (see \cite[\S 8]{OlssonLogCot}) with prelog rings, so we focus on these, and only discuss genuine log rings occasionally.
\end{remark}

The association $P \mapsto (P_\Mon,P_\Alg)$ defines forgetful functors
\[ \Forget^{\Log\Alg^\pre}_{\Set \times \Set}:\Log\Alg^\pre \to \Set \times \Set \quad \textrm{and} \quad \Forget^{\Log\Alg^\pre}_{\Mon \times \Alg}:\Log\Alg^\pre \to \Mon \times \Alg.\]
Both functors $\Forget^{\Log\Alg^\pre}_{\Set \times \Set}$ and $\Forget^{\Log\Alg^\pre}_{\Mon \times \Alg}$ admit left adjoints defined by
\[ \Free^{\Set \times \Set}_{\Log\Alg^\pre}(X,Y) := (\N^{(X)} \to \Free^\Set_\Alg(X \sqcup Y) \simeq \Free^\Mon_\Alg(\N^{(X)}) \otimes_\Z \Free^\Set_\Alg(Y))\]
and
\[ \Free^{\Mon \times \Alg}_{\Log\Alg^\pre}(N,B) := (N \to \Free^\Mon_\Alg(N) \otimes_\Z B).\]
Here $\N^{(X)}$ denotes the free monoid on a set $X$, i.e., a {\em direct sum} of copies of $\N$ indexed by $X$.  Using these functors, one can construct a model structure on $s\Log\Alg^\pre$:

\begin{proposition}
\label{prop:modelstrlogalg}
The category $s\Log\Alg^\pre$ admits a simplicial model structure with (trivial) fibrations being those maps $P \to Q$ which induce a (trivial) fibration after application of $\Forget^{s\Log\Alg^\pre}_{s\Set \times s\Set}$. Under this model structure, both $\Forget^{s\Log\Alg^\pre}_{s\Set \times s\Set}$ and $\Forget^{s\Log\Alg^\pre}_{s\Mon \times s\Alg}$ are right Quillen functors with left adjoints $\Free^{s\Set \times s\Set}_{\Log\Alg^\pre}$ and $\Free^{s\Mon \times s\Alg}_{s\Log\Alg^\pre}$ respectively.
\end{proposition}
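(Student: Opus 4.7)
The proof will follow the template established in the preceding sections for $s\Alg$ and $s\Mon$, namely by applying Quillen's theorem \cite[Chapter 2, \S 4, Theorem 4]{QuillenHA}, so I must first verify its hypotheses for $\Log\Alg^\pre$. The category $\Log\Alg^\pre$ clearly has finite limits (computed termwise using the limits in $\Mon$ and $\Alg$) and filtered colimits (same recipe). For the existence of enough projectives, I claim that any object of the form $F := \Free^{\Set \times \Set}_{\Log\Alg^\pre}(X,Y)$ is projective: indeed, by adjunction
\[ \Hom_{\Log\Alg^\pre}(F, P) \simeq \Hom_{\Set}(X,P_\Mon) \times \Hom_{\Set}(Y,P_\Alg) \simeq P_\Mon^X \times P_\Alg^Y, \]
so projectivity of $F$ reduces to the observation that effective epimorphisms in $\Log\Alg^\pre$ are those maps $P \to Q$ that are surjective on both $P_\Mon \to Q_\Mon$ and $P_\Alg \to Q_\Alg$. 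Moreover, any $P \in \Log\Alg^\pre$ admits a surjection from such an $F$ (take $X = P_\Mon$ and $Y = P_\Alg$), so we have enough projectives.

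Quillen's theorem then produces a simplicial model structure on $s\Log\Alg^\pre$ in which a map $P \to Q$ is a (trivial) fibration if and only if $\Hom_{\Log\Alg^\pre}(F, P) \to \Hom_{\Log\Alg^\pre}(F, Q)$ is a (trivial) fibration of simplicial sets for every projective $F$, equivalently (since every projective is a retract of something of the form $\Free^{\Set \times \Set}_{\Log\Alg^\pre}(X,Y)$) for every $F$ of this shape. The next step is to translate this criterion: using the adjunction above, we have
\[ \Hom_{s\Log\Alg^\pre}(\Free^{\Set \times \Set}_{\Log\Alg^\pre}(X,Y), P_\bullet) \simeq (P_\bullet)_\Mon^X \times (P_\bullet)_\Alg^Y \]
as simplicial sets, so a (trivial) fibration in $s\Log\Alg^\pre$ amounts to a map $P_\bullet \to Q_\bullet$ such that $(P_\bullet)_\Mon^X \to (Q_\bullet)_\Mon^X$ and $(P_\bullet)_\Alg^Y \to (Q_\bullet)_\Alg^Y$ are (trivial) fibrations of simplicial sets for all sets $X$ and $Y$. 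Since arbitrary products of (trivial) fibrations are (trivial) fibrations, and since weak equivalences of simplicial sets are stable under arbitrary products of Kan fibrant objects (any simplicial monoid or simplicial algebra being automatically Kan, as used in \S \ref{sss:modelstrsalg}), this condition is equivalent to the induced map on $\Forget^{s\Log\Alg^\pre}_{s\Set \times s\Set}$ being a (trivial) fibration. This identifies the model structure as claimed.

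For the Quillen adjunction claims, one simply observes that fibrations and weak equivalences on each side are detected by forgetting to $s\Set \times s\Set$: for $s\Log\Alg^\pre$ by the previous paragraph, for $s\Mon \times s\Alg$ by the analogous facts recalled in \S \ref{sss:modelstrsalg} and \S \ref{ss:modelstrsmon}, and for $s\Set \times s\Set$ by definition. Since $\Forget^{s\Log\Alg^\pre}_{s\Set \times s\Set}$ factors as $\Forget^{s\Mon \times s\Alg}_{s\Set \times s\Set} \circ \Forget^{s\Log\Alg^\pre}_{s\Mon \times s\Alg}$, both forgetful functors preserve fibrations and trivial fibrations, hence are right Quillen.

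I expect the only subtle point to be the second step: checking that Quillen's abstract criterion (a (trivial) fibration on all $\Hom(F,-)$-sets with $F$ projective) coincides with the more concrete condition of being a (trivial) fibration after forgetting to $s\Set \times s\Set$. The forward direction is immediate from the adjunction computation; the reverse direction requires knowing that an arbitrary product of Kan fibrations is a Kan fibration and that an arbitrary product of weak equivalences between fibrant simplicial sets is a weak equivalence, both of which follow from the fact that products of homotopy equivalences are homotopy equivalences combined with the cofibrancy of all simplicial sets, exactly as in the proof for $s\Alg$ in \S \ref{sss:modelstrsalg}.
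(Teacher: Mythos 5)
Your proof is correct and follows the same template as the paper's: verify the hypotheses of Quillen's theorem (finite limits, filtered colimits, enough projectives via the free functors and the characterisation of effective epimorphisms as pointwise surjections), invoke the theorem, then translate the resulting model-structural (trivial) fibrations into the $\Forget^{s\Log\Alg^\pre}_{s\Set \times s\Set}$ condition via the adjunction and stability of (trivial) fibrations under products of Kan-fibrant objects. The only presentational difference is that the paper phrases "enough projectives'' by exhibiting a single small generator $\Free^{\Set\times\Set}_{\Log\Alg^\pre}(1,1)$ whose coproducts surject onto everything, whereas you produce the covering free object directly from $(P_\Mon, P_\Alg)$; both accomplish the same thing.
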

\begin{proof}
The category $\Log\Alg^\pre$ has all small limits and colimits; the formation of limits commutes with both forgetful functors mentioned above, while the formation of colimits commutes with $\Forget^{\Log\Alg^\pre}_{\Mon \times \Alg}$. Since effective epimorphisms in $\Log\Alg^\pre$ are exactly the maps which induce surjections on underlying sets, one can check (using adjunction) that the objects $\Free^{\Set \times \Set}_{\Log\Alg^\pre}(X,Y)$ are projective, and that 
\[ \Free^{\Set \times \Set}_{\Log\Alg^\pre}(1,1) \simeq \Free^{\Set \times \Set}_{\Log\Alg^\pre}( (1,\emptyset) \sqcup (\emptyset,1) )   \simeq \Free^{\Set \times \Set}_{\Log\Alg^\pre}(1,\emptyset) \sqcup \Free^{\Set \times \Set}_{\Log\Alg^\pre}(\emptyset,1) \] 
generates the category $\Log\Alg^\pre$, i.e., every object admits an effective epimorphism from a coproduct of copies of $\Free^{\Set \times \Set}_{\Log\Alg^\pre}(1,1)$. Quillen's theorem \cite[Chapter 2, \S 4, Theorem 4]{QuillenHA} then shows that $s\Log\Alg^\pre$ has a simplicial model structure with (trivial) fibrations being defined by applying $\Forget^{s\Log\Alg^\pre}_{s\Set \times s\Set}$; note that $P_\bullet \to Q_\bullet$ is a (trivial) fibration if and only if $P_{\bullet,\Alg} \to Q_{\bullet,\Alg}$ and $P_{\bullet,\Mon} \to Q_{\bullet,\Mon}$ are (trivial) fibrations in $s\Alg$ and $s\Mon$ respectively, so we have checked all claims.
\end{proof}

\begin{remark}
\label{rmk:cofiblogalg}
The cofibrations in $s\Log\Alg^\pre$ can be described explicitly as follows (see \cite[Chapter 2, \S 4, page 4.11, Remark 4]{QuillenHA}): a free cofibration is a map $(M \to A) \to P_\bullet$ with each $P_n \simeq \Free^{\Set \times \Set}_{\Log\Alg^\pre_{(M \to A)/}}(X,Y)$ for suitable sets $X$ and $Y$ with the additional property that all degeneracies are induced from $\Set \times \Set$, and a general cofibration is a retract of a free one.
\end{remark}

Proposition \ref{prop:modelstrlogalg} implies that the formation of homotopy-limits commutes with the right derived functors of the forgetful functor $\Forget^{s\Log\Alg^\pre}_{s\Mon \times s\Alg}$; it turns out that the same is true for homotopy-colimits:

\begin{proposition}
\label{prop:modelstrlogalg2}
The functor $\Forget^{s\Log\Alg^\pre}_{s\Mon \times s\Alg}$ is a left Quillen functor.
\end{proposition}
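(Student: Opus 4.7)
The plan is to exhibit an explicit right adjoint to $\Forget^{s\Log\Alg^\pre}_{s\Mon \times s\Alg}$ and then verify that this forgetful functor preserves cofibrations and trivial cofibrations.

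For the right adjoint, I would define $R: \Mon \times \Alg \to \Log\Alg^\pre$ by $R(N,B) := (\pi_B: N \times B \to B)$, where $N \times B$ is the product monoid ($B$ viewed under multiplication) and $\pi_B(n,b) = b$. Unwinding the definitions, a morphism $(\alpha: M \to A) \to (\pi_B: N \times B \to B)$ in $\Log\Alg^\pre$ consists of a monoid homomorphism $f = (f_N, f_B): M \to N \times B$ together with an algebra homomorphism $g: A \to B$ satisfying $\pi_B \circ f = g \circ \alpha$; this last condition forces $f_B = g \circ \alpha$, so the remaining free data is exactly a pair $(f_N, g) \in \Hom_\Mon(M, N) \times \Hom_\Alg(A, B)$. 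Applying $R$ termwise yields the desired right adjoint on simplicial objects.

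For preservation of cofibrations, I would invoke Remark \ref{rmk:cofiblogalg}: every cofibration in $s\Log\Alg^\pre$ is a retract of a free one, i.e., of a map $(M \to A) \to P_\bullet$ with $P_n \simeq (M \to A) \sqcup \Free^{\Set\times\Set}_{\Log\Alg^\pre}(X_n,Y_n)$ and degeneracies induced from $\Set\times\Set$. A direct calculation shows that the coproduct of prelog rings is computed componentwise, namely $(\alpha_1: M_1 \to A_1) \sqcup (\alpha_2: M_2 \to A_2) \simeq (M_1 \oplus M_2 \to A_1 \otimes_\Z A_2)$, so the forgetful functor to $s\Mon \times s\Alg$ preserves coproducts. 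Consequently the monoid component of a free cofibration as above is the free inclusion $M \to M \oplus \N^{(X_\bullet)}$ and the algebra component is $A \to A[X_\bullet \sqcup Y_\bullet]$, with degeneracies on each still induced from $\Set$; these are precisely free cofibrations in $s\Mon$ and $s\Alg$ respectively (by the monoid and algebra analogues of Remark \ref{rmk:cofiblogalg}). A pair of cofibrations is a cofibration in the product model structure, and cofibrations are closed under retracts, so we conclude.

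Finally, preservation of trivial cofibrations reduces to preservation of weak equivalences, which is immediate from Proposition \ref{prop:modelstrlogalg}: weak equivalences in both $s\Log\Alg^\pre$ and $s\Mon \times s\Alg$ are detected on underlying objects of $s\Set \times s\Set$, and the forgetful functor $\Forget^{s\Log\Alg^\pre}_{s\Mon \times s\Alg}$ obviously commutes with the further forgetful functor to $s\Set \times s\Set$. The main subtlety lies in the identification of the right adjoint $R$ and the componentwise description of coproducts of prelog rings, but both are straightforward once formulated.
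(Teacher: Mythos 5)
Your proof is correct, but it takes the dual route to the paper's. The paper verifies the Quillen condition on the \emph{right} adjoint $\Nil^{\Mon\times\Alg}_{\Log\Alg^\pre}:(N,B)\mapsto(N\times B\to B)$: since (trivial) fibrations in $s\Log\Alg^\pre$, $s\Mon$, and $s\Alg$ are all detected after applying the forgetful functor to $s\Set$, and $\Nil$ sends $(N,B)$ to something whose underlying pair of simplicial sets is $(N\times B, B)$, preservation of (trivial) fibrations is immediate because a product of two (trivial) fibrations in $s\Set$ is again one. You instead verify the condition on the \emph{left} adjoint, checking directly that $\Forget^{s\Log\Alg^\pre}_{s\Mon\times s\Alg}$ preserves cofibrations (via the explicit description from Remark \ref{rmk:cofiblogalg} and the componentwise computation of coproducts of prelog rings) and weak equivalences (both detected on $s\Set\times s\Set$). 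Both criteria are equivalent characterizations of a Quillen adjunction, so the proposal is a valid proof; the paper's version is slightly shorter because it avoids unpacking the structure of cofibrations entirely, whereas yours has the small advantage of making the cofibration preservation explicit, which is sometimes what one actually wants downstream. One remark: having exhibited the right adjoint, the componentwise coproduct formula is automatic (left adjoints preserve colimits), so that step is not logically necessary — you include it only to read off the monoid and algebra components of a free cofibration, which is fine.
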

\begin{proof}
We first observe that $\Forget^{\Log\Alg^\pre}_{\Mon \times \Alg}$ is a left adjoint functor with right adjoint $\Nil^{\Mon \times \Alg}_{\Log\Alg^\pre}$ given by $(N,B) \mapsto (N \times B \to B)$. The resulting simplicial functor $\Nil^{s\Mon \times s\Alg}_{s\Log\Alg^\pre}$  preserves (trivial) fibrations since (trivial) fibrations are defined  in $s\Mon$ and $s\Alg$ by passing to $s\Set$, and similarly for $s\Log\Alg^\pre$. Hence, $\Nil^{s\Mon \times s\Alg}_{s\Log\Alg^\pre}$ is a right Quillen functor with left Quillen adjoint given  by $\Forget^{s\Log\Alg^\pre}_{s\Mon \times s\Alg}$.
\end{proof}

Next, we define the prelog avatar of the canonical free resolution:

\begin{definition}
For a map $(M \to A) \to (N \to B)$ in $\Log\Alg^\pre$, and let $P_{(M \to A)}(N \to B)$ be the simplicial object in $s\Log\Alg^\pre_{(M \to A)/}$ built using the adjunction $(\Free^{\Set \times \Set}_{\Log\Alg^\pre_{(M \to A)/}},\Forget^{\Log\Alg^\pre_{(M \to A)/}}_{\Set \times \Set})$ applied to the object $(N \to B)$; the counit defines an augmentation $P_{(M \to A)}(N \to B) \to (N \to B)$, and we call this the {\em canonical free resolution} of the $(M \to A)$-algebra $(N \to B)$. In general, any trivial fibration $P_\bullet \to (N \to B)$ with $P_\bullet$ cofibrant in $s\Log\Alg^\pre_{(M \to A)/}$ will be called a {\em projective resolution} of $(N \to B)$ as an $(M \to A)$-algebra; the same conventions apply for a morphism in an arbitrary model category.
\end{definition}

One can check that the canonical free resolution is indeed a projective resolution, and any two projective resolutions are homotopy equivalent (see \cite[Chapter 1, \S 1, Lemma 7]{QuillenHA}). Moreover, there is a tight connection between projective resolutions of prelog rings and those of the underlying monoids and algebras:

\begin{proposition}
Given a map $(M \to A) \to (N \to B)$ in $\Log\Alg^\pre$ together with a projective resolution $P_\bullet \to (N \to B)$, the maps $P_{\bullet,\Mon} \to N$ and $P_{\bullet,\Alg} \to B$ are projective resolutions in $s\Mon$ and $s\Alg$ respectively.
\end{proposition}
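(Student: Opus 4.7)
The proof will be a direct application of Propositions \ref{prop:modelstrlogalg} and \ref{prop:modelstrlogalg2}, which together show that the forgetful functor
\[ U := \Forget^{s\Log\Alg^\pre}_{s\Mon \times s\Alg} \]
is simultaneously a left Quillen functor and a right Quillen functor (viewing the target with its product model structure). The key consequence I will use is that $U$ preserves both cofibrations and trivial fibrations. Everything else in the argument is a componentwise reading.

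First I will unpack what projective resolution means on the nose: $P_\bullet \to (N \to B)$ in $s\Log\Alg^\pre_{(M \to A)/}$ is a projective resolution exactly when the structural map $(M \to A) \to P_\bullet$ is a cofibration in $s\Log\Alg^\pre$ and $P_\bullet \to (N \to B)$ is a trivial fibration. Applying $U$ to each of these, the left Quillen property produces a cofibration $(M, A) \to (P_{\bullet,\Mon}, P_{\bullet,\Alg})$ in $s\Mon \times s\Alg$, and the right Quillen property produces a trivial fibration $(P_{\bullet,\Mon}, P_{\bullet,\Alg}) \to (N, B)$ in the same product category.

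Next I will transfer these statements to the individual factors by observing that the product model structure on $s\Mon \times s\Alg$ has all three distinguished classes defined componentwise. For fibrations and weak equivalences this is immediate from the definition of the product model structure, and for cofibrations one may either invoke the usual left-lifting characterization or simply note that the two projection functors $s\Mon \times s\Alg \to s\Mon$ and $s\Mon \times s\Alg \to s\Alg$ are each both left and right Quillen (left adjoint to the constant embedding using the initial object, and right adjoint to the constant embedding using the terminal object in the complementary factor). Therefore the componentwise maps $M \to P_{\bullet,\Mon}$, $A \to P_{\bullet,\Alg}$ are cofibrations in $s\Mon$ and $s\Alg$ respectively, and $P_{\bullet,\Mon} \to N$, $P_{\bullet,\Alg} \to B$ are trivial fibrations; this is exactly the assertion that they are projective resolutions in $s\Mon_{M/}$ and $s\Alg_{A/}$. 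There is no real obstacle in the argument: the content is entirely pre-packaged by the dual Quillen properties of $U$, and the only routine verification needed is the componentwise nature of cofibrations in the product model structure.
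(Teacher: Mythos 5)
Your proof is correct and follows essentially the same route as the paper's: both invoke Propositions \ref{prop:modelstrlogalg} and \ref{prop:modelstrlogalg2} to see that $\Forget^{s\Log\Alg^\pre}_{s\Mon \times s\Alg}$ preserves cofibrations and trivial fibrations, and then read off the conclusion componentwise. The paper's proof is a one-liner; your additional remarks spelling out the componentwise nature of the product model structure are fine but not essential content beyond what the paper records.
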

\begin{proof}
This follows from the fact that $\Forget^{s\Log\Alg^\pre}_{s\Mon \times s\Alg}$ preserves trivial fibrations (since it is a right Quillen functor by Proposition \ref{prop:modelstrlogalg}) and cofibrantions (since it is a left Quillen functor by Proposition \ref{prop:modelstrlogalg2}).
\end{proof}

\section{Logarithmic derived de Rham cohomology}
\label{sec:ddr-log}

Our goal in this section is to use the formalism of \S \ref{sec:logalg} to define the logarithmic version of Illusie's derived de Rham cohomology. First, we recall the key non-derived players:

\begin{definition}
\label{defn:logkahlerdiff}
Let $f:(M \to A) \to (N \to B)$ be a map in $\Log\Alg^\pre$. The $B$-module of {\em logarithmic Kahler differentials} is defined as
\[ \Omega^1_f := \Omega^1_{(N \to B)/(M \to A)} := \Big(\Omega^1_{B/A} \oplus \big(\cok(M \to N)^\grp \otimes_{\Z} B\big) \Big) / \Big((d \beta(n), 0) - (0, n \otimes \beta(n)) \Big) \]
where $\beta:N \to B$ is the structure map; see \cite[\S 1.7]{KatoLogFI}. The monoid map $d\log:N \to \Omega^1_f$ is defined by $n \mapsto d\log(n) := (0,n \otimes 1)$. The derivation $B \to \Omega^1_{B/A}$ defines by composition an $A$-linear derivation $B \to \Omega^1_f$, and we use $\Omega^\bullet_f$ to denote the corresponding complex, called the {\em logarithmic de Rham complex}.
\end{definition}

Note that $\Omega^\bullet_f$ comes equipped with a multiplication, and a descending Hodge filtration. Essentially by construction, there is a natural multiplicative filtered map $\Omega^\bullet_{B/A} \to \Omega^\bullet_f$. Moreover, an easy computation shows that $d\log(n)$ is closed, and hence $d\log$ lifts to a map $N \to \Omega^\bullet_f[1]$, also denoted $d\log$.

\begin{example}
\label{ex:freelogalg}
Let $(M \to A) \to T(X,Y) := \Free^{\Set \times \Set}_{\Log\Alg^\pre_{(M \to A)/}}(X,Y)$ be a free $(M \to A)$-algebra. Then we have
\[ \Omega^1_{T(X,Y)/(M \to A)} \simeq \Free^\Set_{\Alg_{A/}}(X \sqcup Y) \otimes_\Z (\oplus_{x \in X} \Z d\log(x) \oplus_{y \in Y} \Z dy) \]
where $d\log(x)$ and $dy$ are formal symbols; see also \cite[\S 8.4]{OlssonLogCot}. 
\end{example}

The logarithmic cotangent complex is defined by mimicking the construction of the usual cotangent complex using the canonical free resolutions in $\Log\Alg^\pre$ instead of $\Alg$. More precisely,

\begin{definition}[Gabber]
\label{defn:logcotdef}
Let $f:(M \to A) \to (N \to B)$ be a map of prelog rings, and let $P_\bullet \to (N \to B)$ be the canonical free resolution of $f$ in $s\Log\Alg^\pre$.  For each $n \in \Delta$, the prelog $(M \to A)$-algebra $P_n$ has a module $\Omega^1_{P_n/(M \to A)}$ of Kahler differentials as defined in \S \ref{defn:logkahlerdiff}, and as $n$ varies, these fit together to define a simplicial  $P_{\bullet,\Alg}$-module $\Omega^1_{P_\bullet/(M \to A)}$. The log cotangent complex of $f$ is defined to be the corresponding $B$-module, i.e., we have
\[ L_f := \Omega^1_{P_\bullet/(M \to A)} \otimes_{P_{\bullet,\Alg}} B. \]
The maps $d\log:P_{n,\Mon} \to \Omega^1_{P_n/(M \to A)}$  for each $P_n$ fit together to give a map $d\log:P_{\bullet,\Mon} \to L_f$, and hence a map $d\log:N \simeq |P_{\bullet,\Mon}| \to L_f$ in the homotopy category of $s\Mon$. 
\end{definition}

Definition \ref{defn:logcotdef} generalises in the obvious manner to all maps in $s\Log\Alg^\pre$, and the complex $L_f$ can be calculated using any projective resolution as these are all homotopy equivalent. 

\begin{remark}
Gabber's cotangent complex complex $L_f$ from Definition \ref{defn:logcotdef} is denoted $L^G_f$ in \cite[\S 8]{OlssonLogCot}. The same paper \cite{OlssonLogCot} also introduces a different version of the cotangent complex for a morphism of fine log schemes using Olsson's stack-theoretic reformulation of the logarithmic theory \cite{OlssonLogAlgStacks}. The resulting two complexes agree for integral morphisms (\cite[Corollary 8.29]{OlssonLogCot}) and always in small cohomological degrees (\cite[Theorem 8.27]{OlssonLogCot}); a key difference is that Gabber's complex is not necessarily discrete for log smooth maps, while Olsson's is. We will consistently use Gabber's complex for two reasons: (a) Gabber's theory has better functoriality properties (like the transitivity triangle \cite[Theorem 8.14]{OlssonLogCot}), (b) Gabber's theory applies to arbitrary morphisms, while Olsson's theory imposes strong finiteness conditions that will be unavailable to us.
\end{remark}

We have the following compatibility between $L_{f_\Alg}$ and $L_f$.

\begin{proposition}
\label{prop:logcotstrict}
Let $f:(M \to A) \to (N \to B)$ be a map of prelog rings. There is a natural map $L_{f_\Alg} \to L_f$ that is an isomorphism when $M = N$.
\end{proposition}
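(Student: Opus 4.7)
The plan is to produce the map by inspection of the definition of log Kähler differentials, and then verify the isomorphism statement by choosing a resolution in $s\Log\Alg^\pre$ that is ``strict'' on the monoid side.

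First I would construct the natural transformation. Fix a cofibrant resolution $P_\bullet \to (N \to B)$ in $s\Log\Alg^\pre_{(M \to A)/}$, e.g.\ the canonical free resolution. The proposition preceding this one tells us that $P_{\bullet,\Alg} \to B$ is then a projective resolution in $s\Alg_{A/}$, so $L_{f_\Alg}$ is computed by $\Omega^1_{P_{\bullet,\Alg}/A} \otimes_{P_{\bullet,\Alg}} B$. Now inspecting Definition \ref{defn:logkahlerdiff}, the quotient $\Omega^1_{P_n/(M \to A)}$ is built from the direct sum $\Omega^1_{P_{n,\Alg}/A} \oplus (\cok(M \to P_{n,\Mon})^\grp \otimes_\Z P_{n,\Alg})$, with relations that identify certain elements of the first summand with elements of the second. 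In particular the inclusion of the first summand descends to a well-defined $P_{n,\Alg}$-linear map $\Omega^1_{P_{n,\Alg}/A} \to \Omega^1_{P_n/(M \to A)}$, functorial in $n$. Assembling over $\Delta^\opp$ and base-changing to $B$ produces the desired natural map $L_{f_\Alg} \to L_f$; the usual cofibrant-replacement argument shows independence of the chosen resolution.

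To prove the isomorphism when $M = N$, I would exhibit a particularly convenient cofibrant replacement. Let $Q_\bullet \to B$ be the canonical free resolution of $B$ as an $A$-algebra in $s\Alg$, so each $Q_n$ is a free $A$-algebra on a set $Y_n$. Define $P_\bullet := (M \to Q_\bullet)$, with structure map $M \to A \to Q_n$ and the obvious augmentation $P_\bullet \to (M \to B) = (N \to B)$. By the formula $\Free^{\Set\times\Set}_{\Log\Alg^\pre_{(M \to A)/}}(\emptyset, Y) = (M \to A[Y])$, each $P_n$ is a free $(M \to A)$-prelog algebra on $(\emptyset, Y_n)$, and since the degeneracies of $Q_\bullet$ are induced from $\Set$ the whole $P_\bullet$ is free in the sense of Remark \ref{rmk:cofiblogalg}, hence cofibrant. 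Moreover the augmentation $P_\bullet \to (M \to B)$ is a trivial fibration because it is the identity on monoids and a trivial fibration on algebras (and trivial fibrations in $s\Log\Alg^\pre$ are detected after applying $\Forget^{s\Log\Alg^\pre}_{s\Set\times s\Set}$ by Proposition \ref{prop:modelstrlogalg}).

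With this resolution in hand the rest is a direct computation: since $P_{n,\Mon} = M$ and the structural map $M \to M$ is the identity, $\cok(M \to P_{n,\Mon})^\grp = 0$, so Definition \ref{defn:logkahlerdiff} collapses to give $\Omega^1_{P_n/(M \to A)} = \Omega^1_{Q_n/A}$. Taking the associated simplicial object and base-changing to $B$ yields $L_f = \Omega^1_{Q_\bullet/A} \otimes_{Q_\bullet} B = L_{f_\Alg}$, and under this identification the natural map constructed above is the identity. The only real subtlety I anticipate is verifying that $(M \to Q_\bullet)$ really is cofibrant in $s\Log\Alg^\pre_{(M \to A)/}$ — once that is nailed down via Remark \ref{rmk:cofiblogalg}, both the construction of the map and its isomorphy in the strict case are formal.
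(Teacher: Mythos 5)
Your proposal is correct and follows essentially the same route as the paper: construct the map via the natural morphism $\Omega^1_{P_{n,\Alg}/A}\to\Omega^1_{P_n/(M\to A)}$ after observing (via Proposition \ref{prop:modelstrlogalg2}) that the forgetful functor preserves projective resolutions, and then, when $M=N$, use the explicit cofibrant replacement $(M\to Q_\bullet)$ coming from Remark \ref{rmk:cofiblogalg} to compute $\Omega^1_{(M\to Q_n)/(M\to A)}\simeq\Omega^1_{Q_n/A}$. The only difference is that you spell out the cofibrancy verification in more detail than the paper does.
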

\begin{proof}
Let $P_\bullet \to (N \to B)$ be a projective $(M \to A)$-algebra resolution. The functor $\Forget^{s\Log\Alg^\pre}_{s\Mon \times s\Alg}$ preserves projective resolutions by Proposition \ref{prop:modelstrlogalg2}, so the natural map from usual Kahler differentials to the logarithmic version defines the desired map $L_{f_\Alg} \to L_f$. When $M = N$, a projective $A$-algebra resolution $Q_\bullet \to B$ defines a projective $(M \to A)$-algebra resolution $(M \to Q_\bullet) \to (M \to B)$ by Remark \ref{rmk:cofiblogalg}. One then checks directly that $\Omega^1_{Q_\bullet/A} \simeq \Omega^1_{(M \to Q_\bullet)/(M \to A)}$, proving the second claim (or one may use \cite[Lemma 8.17]{OlssonLogCot}).
\end{proof}

The usual cotangent complex can be characterised by its functor of points: for a map $f:A \to B$ of commutative rings, $\Hom(L_f,M)$ classifies $A$-linear derivations $B \to M$ for any complex $M$ of $B$-modules. The next remark gives a similar description in the logarithmic context, and was discovered in conversation with Lurie.

\begin{remark}[Lurie]
\label{rmk:logcotunivprop}
Fix a map $f:(M \to A) \to (N \to B)$ in $s\Log\Alg^\pre$. Then the construction of $L_f$ given above can be characterised by an intrinsic description of its functor of points, analogous to the picture for the usual cotangent complex, as follows. For any simplicial $B$-module $P$, there is a natural equivalence
\begin{equation}
\label{eq:univproplogcot}
\Map_{s\Mod_B}(L_f,P) \simeq  \Sect_{(M \to A)}(N \oplus P \to B \oplus P, N \to B) =:   \Der_{(M \to A)}( (N \to B), P).
\end{equation}
Let us explain briefly what this means. The term on the left is the space of maps $L_f \to P$ in the simplicial model category $s\Mod_B$ given the usual (projective) model structure; the resulting space is homotopy equivalent to $\tau_{\leq 0} \R\Hom_B(L_f,P)$. The middle term is the space of sections of the projection map 
\[ (N \oplus P \to B \oplus P) \to (N \to B)\] 
in the simplicial model category $s\Log\Alg^\pre_{(M \to A)/}$. Here $B \oplus P$ is the trivial square-zero extension of $B$  by $P$, $N \oplus P$ is the trivial square-zero extension of $N$ by $P$ in $\Mon$ with the binary operation given by $(n_1,p_1) \cdot (n_2,p_2) = (n_1 n_2,p_1 + p_2)$, and the structure morphism $N \oplus P \to B \oplus P$ is defined by 
\[ (n,p) \mapsto (\alpha(n),0) \cdot (1,p) = (\alpha(n), \alpha(n) \cdot p)\]
where $\alpha:N \to B$ is the structure map. A section of the projection map is explicitly computed by first replacing $(N \to B)$ by a cofibrant $(M \to A)$-algebra, and then producing a section over the pullback; the $B$-module structure on $\Sect_{(M \to A)}(N \oplus P \to B \oplus P, N \to P)$ is induced by that of $P$.  The universal $(M \to A)$-linear section of $(N \oplus L_f \to B \oplus L_f) \to (N \to B)$ inducing equivalence \eqref{eq:univproplogcot} is determined by the standard derivation $d:B \to L_f$ and the map $d\log:N \to L_f$; both these maps implicitly use a cofibrant replacement. When $M = N$, the equivalence \eqref{eq:univproplogcot} recovers the fact (see \cite[Proposition II.1.2.6.7]{IllusieCC1}) that the cotangent complex of a ring map classifies derivations in the derived category. Another illustrative case is when $P$ is discrete. Here we find that the space $\Map_{s\Mod_B}(L_f,P)$ is also discrete, and $\pi_0(\Map_{s\Mod_B}(L_f,P))$ can be described as the set of pairs $(\lambda,d)$ where $\lambda:N \to P$ is a monoid map that kills the image of $M \to N$, and $d:B \to P$ is an $A$-linear derivation such that $\alpha(n) \cdot \lambda(n) = d(\alpha(n))$. Note that $\lambda$ factors through $N \to N/M \to \pi_0( (N/M)^\grp \otimes_\Z B)$ since $P$ is an abelian group, admits a $B$-action, and is discrete.  Hence, this description identifies $\pi_0(L_f)$ with the sheaf $\Omega^1_f$ from \cite[\S 1.7]{KatoLogFI} or Definition \ref{defn:logcotdef}.
\end{remark}

\begin{remark}
Let $f:(M \to A) \to (N \to B)$ be a map in $\Log\Alg^\pre$. A natural question is to ask for a conceptual description of the cokernel $\calQ$ of $L_{f_\Alg} \to L_f$. Using Remark \ref{rmk:logcotunivprop}, one can interpret $\Map_{s\Mod_B}(\calQ,P)$ as the space of (monoid) maps $\lambda:N \to P$ together with nullhomotopies of the induced maps $M \to P$ and $N \stackrel{\Delta}{\to} N \times N \stackrel{\alpha,\lambda}{\to} B \times P \stackrel{\mathrm{act}}{\to} P$. We do not know if there is a better description.
\end{remark}

\begin{definition}
\label{defn:logddr}
Let $f:(M \to A) \to (N \to B)$ be a map in $\Log\Alg^\pre$, and let $P_\bullet \to (N \to B)$ be the canonical free resolution of $(N \to B)$ as an $(M \to A)$-algebra.  The {\em logarithmic derived de Rham cohomology of $f$}, denoted either $\dR_f$ or $\dR_{(N \to B)/(M \to A)}$, is the total complex associated to bicomplex $\Omega^\bullet_{P_\bullet/(M \to A)}$. The maps $P_{n,\Mon} \to \Omega^\bullet_{P_n/(M \to A)}[1]$ fit together to define a map $d\log: N \simeq |P_{\bullet,\Mon}| \to \dR_f[1]$ in the homotopy category of $s\Mon$.
\end{definition}

Elaborating on Definition \ref{defn:logddr}, the complex $\dR_f$ is naturally the simple complex associated to a simplicial $A$-cochain complex $n \mapsto \Omega^\bullet_{P_n/(M \to A)}$ for $n \in \Delta^\opp$. This definition makes it clear that $\dR_f$ is naturally an $E_\infty$-$A$-algebra equipped with a decreasing and separated Hodge filtration $\Fil^\bullet_H$. Moreover, it can be computed using any projective resolution as in the non-logarithmic case. There are also comparison maps $\Omega^\bullet_{P_{n,\Alg}/A} \to \Omega^\bullet_{P_n/(M \to A)}$ for each $n \in \Delta^\opp$ which fit together to define a natural map $\dR_{f_\Alg} \to \dR_f$. Finally, we have a conjugate filtration:
 
\begin{proposition}
\label{prop:conjfiltgenerallddr}
Let $f:(M \to A) \to (N \to B)$ be a map of prelog rings (or a map in $s\Log\Alg^\pre$). Then there exists a functorial increasing bounded below separated exhaustive filtration $\Fil^\conj_\bullet$ on $\dR_f$. This filtration can be defined using the conjugate filtration on the bicomplex $\Omega^\bullet_{P_\bullet/(M \to A)}$ for any projective $(M \to A)$-algebra resolution $P_\bullet \to (N \to B)$, and is independent of the choice of $P_\bullet$. In particular, there is a convergent spectral sequence, called the {\em conjugate spectral sequence}, of the form
\[ E_1^{p,q}:H_{p+q}(\gr^\conj_p(\dR_f)) \Rightarrow H_{p+q}(\dR_f) \]
that is functorial in $f$ (here we follow the homological convention that $d_r$ is a map $E_r^{p,q} \to E_r^{p-r,q+r-1}$).
\end{proposition}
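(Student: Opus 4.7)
The plan is to transcribe the proof of Proposition \ref{prop:conjfiltgeneral} verbatim into the logarithmic setting, using the model structure on $s\Log\Alg^\pre_{(M \to A)/}$ supplied by Proposition \ref{prop:modelstrlogalg}. Fix a projective resolution $P_\bullet \to (N \to B)$ in $s\Log\Alg^\pre_{(M \to A)/}$ (e.g., the canonical free resolution). For each $n \in \Delta^\opp$, the cochain complex $\Omega^\bullet_{P_n/(M \to A)}$ of $A$-modules, living in non-negative cohomological degrees, comes equipped with its canonical filtration $\tau_{\leq \bullet}$ by cohomology sheaves. As $n$ varies, these assemble into an increasing, bounded below (at $0$), separated, exhaustive filtration of the bicomplex $\Omega^\bullet_{P_\bullet/(M \to A)}$, obtained by filtering each column by its canonical filtration. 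Totalising produces the desired filtration $\Fil^\conj_\bullet$ on $\dR_f \simeq |\Omega^\bullet_{P_\bullet/(M \to A)}|$; the $i$-th associated graded piece is identified with the total complex of the simplicial $A$-cochain complex $n \mapsto H^i(\Omega^\bullet_{P_n/(M \to A)})[-i]$.

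For independence of the resolution, I would argue exactly as in the non-logarithmic case: if $F_\bullet \to (N \to B)$ is a second projective resolution, then $F_\bullet$ and $P_\bullet$ are homotopy equivalent as cofibrant objects of $s\Log\Alg^\pre_{(M \to A)/}$ lying over a common target, so the induced simplicial $A$-cochain complexes $n \mapsto \tau_{\leq k} \Omega^\bullet_{P_n/(M \to A)}$ and $n \mapsto \tau_{\leq k} \Omega^\bullet_{F_n/(M \to A)}$ are homotopy equivalent in $sc\Mod_A$ for each $k$. Totalising then yields a canonical equivalence of the resulting filtered objects in $D(\Mod_A)$, justifying both the well-definedness and the functoriality of $\Fil^\conj_\bullet$ in $f$. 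The convergent conjugate spectral sequence is then a formal consequence of having an increasing, bounded below, separated, exhaustive filtration on a cochain complex (cf.\ \cite[Proposition 1.2.2.14]{LurieHA}).

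The main point requiring verification is that the construction $P_\bullet \mapsto \bigl(n \mapsto \tau_{\leq k}\Omega^\bullet_{P_n/(M \to A)}\bigr)$ carries homotopy equivalences of cofibrant $(M \to A)$-algebra resolutions to levelwise quasi-isomorphisms of simplicial $A$-cochain complexes. This follows because logarithmic Kahler differentials (and hence their wedge powers, and the de Rham differentials between them) are compatible with the ambient model structure: for a cofibrant $P_\bullet$, the terms $\Omega^i_{P_n/(M \to A)}$ are projective $P_{n,\Alg}$-modules by the explicit description in Example \ref{ex:freelogalg}, so levelwise equivalences of cofibrant resolutions induce levelwise equivalences on all $\Omega^i_{-/(M \to A)}$ (via the analogue of \cite[Corollary I.3.3.4.6]{IllusieCC1}), and canonical truncation preserves such equivalences. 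With this verified, the rest of the argument is entirely formal.
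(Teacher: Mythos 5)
Your proposal is correct and matches the paper's approach: the paper's own proof simply says ``This is proven like Proposition \ref{prop:conjfiltgeneral},'' and your argument transcribes that proof into the logarithmic setting using the model structure from Proposition \ref{prop:modelstrlogalg}, which is exactly what is intended. The extra verification in your final paragraph (projectivity of $\Omega^i_{P_n/(M \to A)}$ via Example \ref{ex:freelogalg}) is a reasonable way to make explicit why the construction respects homotopy equivalences of cofibrant resolutions, and is consistent with the conventions the paper uses freely.
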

\begin{proof}
This is proven like Proposition \ref{prop:conjfiltgeneral}.
\end{proof}

\begin{remark}
\label{rmk:lkeldr}
Let $\Log\Alg^{\pre,\Free}_{(M \to A)/}$ be the full subcategory of $\Log\Alg^\pre_{(M \to A)/}$ spanned by free prelog algebras, i.e., prelog algebras of the form $\Free^{\Set \times \Set}_{\Log\Alg^\pre_{(M \to A)/}}(X,Y)$.  Then $\Log\Alg^{\pre,\Free}$ generates $s\Log\Alg^{\pre}$ under homotopy-colimits (as an $\infty$-category). Moreover, the functor $(N \to B) \mapsto \dR_{(N \to B)/(M \to A)}$ on $s\Log\Alg^\pre_{(M \to A)/}$ is the left Kan extension of the functor $(N \to B) \mapsto \Omega^\bullet_{(N \to B)/(M \to A)}$ on $\Log\Alg^{\pre,\Free}_{(M \to A)/}$.
\end{remark}

The first basic result about logarithmic derived de Rham cohomology is that it agrees with the non-logarithmic analogue for strict maps:

\begin{proposition}
\label{prop:logddrstrict}
Let $f:(M \to A) \to (N \to B)$ be a map of prelog rings. Then the natural map $\dR_{f_\Alg} \to \dR_f$ is an isomorphism when $M = N$.
\end{proposition}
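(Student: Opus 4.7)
The plan is to compare the bicomplexes defining both sides by using a single resolution simultaneously in the strict and logarithmic worlds. First, let me choose a cofibrant replacement $Q_\bullet \to B$ of $B$ in $s\Alg_{A/}$ (for concreteness, the canonical free resolution). As observed in the proof of Proposition \ref{prop:logcotstrict}, when $M = N$ the prelog simplicial algebra $(M \to Q_\bullet) \to (M \to B)$ is automatically cofibrant over $(M \to A)$: the free attachments on the ring side lift to $s\Log\Alg^\pre_{(M \to A)/}$ while keeping the monoid equal to $M$, which is precisely the shape of cofibrations given in Remark \ref{rmk:cofiblogalg}. In particular, both $\dR_{f_\Alg}$ and $\dR_f$ may be computed from this common resolution, as $|\Omega^\bullet_{Q_\bullet/A}|$ and $|\Omega^\bullet_{(M \to Q_\bullet)/(M \to A)}|$ respectively.

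The key step is then termwise: for each $n \in \Delta^\opp$, I claim that the natural comparison map $\Omega^\bullet_{Q_n/A} \to \Omega^\bullet_{(M \to Q_n)/(M \to A)}$ is an isomorphism of cochain complexes. Indeed, inspecting Definition \ref{defn:logkahlerdiff} when the monoid map is the identity $M \to M$, the summand $\coker(M \to M)^{\grp} \otimes_\Z Q_n$ vanishes, so that $\Omega^1_{(M \to Q_n)/(M \to A)} \simeq \Omega^1_{Q_n/A}$; higher wedge powers and the de Rham differentials then match tautologically, since the derivation used on the logarithmic side is by construction the composite of the usual derivation with this identification.

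Totalising over $n \in \Delta^\opp$ yields an isomorphism $\dR_{f_\Alg} \simeq \dR_f$, and one checks directly from the construction in Definition \ref{defn:logddr} that this coincides with the natural comparison map. There is no genuine obstacle; the substance of the argument is the tautological observation that in the strict case the logarithmic correction to the module of Kähler differentials is zero, so no actual logarithmic geometry intervenes. (In effect, this is the de Rham upgrade of Proposition \ref{prop:logcotstrict}.)
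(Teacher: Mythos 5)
Your proof is correct and follows precisely the route the paper intends: the paper's proof of Proposition \ref{prop:logddrstrict} simply says "use the proof of Proposition \ref{prop:logcotstrict}," and what you have written is exactly that argument with the de Rham details spelled out. In particular, the observation that $(M \to Q_\bullet)$ is cofibrant via Remark \ref{rmk:cofiblogalg}, and the termwise identification $\Omega^\bullet_{Q_n/A} \simeq \Omega^\bullet_{(M \to Q_n)/(M \to A)}$ from the vanishing of the $\cok(M \to N)^\grp$ summand, are the same two steps the paper relies on.
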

\begin{proof}
One can use the proof of Proposition \ref{prop:logcotstrict}.
\end{proof}

Next, we show some formal properties for tensor product behaviour:

\begin{proposition}
\label{prop:logddrproduct}
Let $f_i:(M \to A) \to (N_i \to B_i)$ for $i = 1,2$ be two $(M \to A)$-algebras, and let $f:(M \to A) \to (N \to B) \simeq (N_1 \to B_1) \sqcup^h_{(M \to A)} (N_1 \to B_2)$ be their homotopy cofibre product. Then the natural map defines an equivalence
\[ \dR_{f_1} \otimes^L_A \dR_{f_2} \simeq \dR_f. \]
If we use $g_1:(N_2 \to B_2) \to (N \to B)$ to denote induced map, then the natural map defines an equivalence
\[ \dR_{f_1} \otimes^L_A B_2 \simeq \dR_{g_1}.\]
\end{proposition}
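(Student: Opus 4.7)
The plan is to imitate the proof of Proposition~\ref{prop:ddrbasechange}: reduce both equivalences to the case of free prelog algebras by choosing projective resolutions, and verify the free case by direct inspection of Example~\ref{ex:freelogalg}.

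For the base change formula, I would choose the canonical free resolution $P_\bullet \to (N_1 \to B_1)$ in $s\Log\Alg^\pre_{(M \to A)/}$. By Proposition~\ref{prop:modelstrlogalg2} and the cofibrancy of $P_\bullet$, the strict pushout $Q_\bullet := P_\bullet \sqcup_{(M \to A)} (N_2 \to B_2)$ computes the homotopy pushout, so $Q_\bullet \to (N \to B)$ is a projective $(N_2 \to B_2)$-algebra resolution. Since $\Free^{\Set \times \Set}_{\Log\Alg^\pre}$ is a left adjoint (Proposition~\ref{prop:modelstrlogalg}), each $Q_n$ is the free prelog $(N_2 \to B_2)$-algebra on the same generating sets $(X_n, Y_n)$ as $P_n$. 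Example~\ref{ex:freelogalg} then identifies $\Omega^1_{Q_n/(N_2 \to B_2)}$ with the base change of $\Omega^1_{P_n/(M \to A)}$ along $P_{n,\Alg} \to Q_{n,\Alg} \simeq P_{n,\Alg} \otimes_A B_2$, and the freeness of the relevant modules over $P_{n,\Alg}$ promotes this to a levelwise identification
\[ \Omega^\bullet_{Q_n/(N_2 \to B_2)} \simeq \Omega^\bullet_{P_n/(M \to A)} \otimes_A B_2. \]
Totalising over $n \in \Delta^\opp$ yields the equivalence $\dR_{g_1} \simeq \dR_{f_1} \otimes^L_A B_2$.

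For the Kunneth formula, I would additionally choose a free projective resolution $P'_\bullet \to (N_2 \to B_2)$ and observe that $R_\bullet := P_\bullet \sqcup_{(M \to A)} P'_\bullet$ is a projective $(M \to A)$-algebra resolution of $(N \to B)$, again by cofibrancy. Because $\Free^{\Set \times \Set}_{\Log\Alg^\pre_{(M \to A)/}}$ preserves coproducts, each $R_n$ is the free prelog $(M \to A)$-algebra on the disjoint union of the generator sets of $P_n$ and $P'_n$. Example~\ref{ex:freelogalg} exhibits $\Omega^1_{R_n/(M \to A)}$ as the direct sum of the pullbacks of $\Omega^1_{P_n/(M \to A)}$ and $\Omega^1_{P'_n/(M \to A)}$ to $R_{n,\Alg}$, and since the generators $d\log(x)$ and $dy$ are closed of degree one, this direct sum upgrades to a tensor-product decomposition of de Rham complexes
\[ \Omega^\bullet_{R_n/(M \to A)} \simeq \Omega^\bullet_{P_n/(M \to A)} \otimes_A \Omega^\bullet_{P'_n/(M \to A)}. \]
Totalising over $n \in \Delta^\opp$ delivers the Kunneth equivalence.

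The only genuinely non-formal step is the pushout description of free prelog algebras (namely that $\Free^{\Set \times \Set}_{\Log\Alg^\pre_{(M \to A)/}}$ sends coproducts in $\Set \times \Set$ to pushouts over $(M \to A)$), together with its compatibility with Example~\ref{ex:freelogalg}; both are immediate from the adjunction in Proposition~\ref{prop:modelstrlogalg}. Once these are granted, the termwise decompositions and their totalisations are pure bookkeeping.
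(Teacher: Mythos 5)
Your proof is correct and follows essentially the same route as the paper (reduce to the free case via projective resolutions, then compute differential forms using Example~\ref{ex:freelogalg}). The only wrinkle is in the base change formula: the paper resolves \emph{both} $f_1$ and $f_2$ and takes their coproduct, so that no flatness is needed, whereas you resolve only $f_1$ and then push out against the possibly non-cofibrant $(N_2 \to B_2)$. That shortcut is fine, but Proposition~\ref{prop:modelstrlogalg2} alone does not justify it: you also need the termwise flatness of a free resolution (the algebra terms $P_{n,\Alg}$ are $A$-free, and the monoid terms $P_{n,\Mon}$ are free over $M$, hence flat by Proposition~\ref{prop:monoidflat}) to see that the strict pushout agrees with the homotopy pushout; the forgetful functor being both left and right Quillen then lets you check this componentwise.
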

\begin{proof}
Using the fact that a cofibre product of cofibrant replacements of each $f_i$ defines a cofibrant replacement for $f$, we reduce to the case that each $f_i$ is free. In this case, both claims follow from computing differential forms.
\end{proof}

In Corollary \ref{cor:ddrchar0}, we saw that derived de Rham cohomology is degenerate in characteristic $0$. The logarithmic theory is only slightly less degenerate: it sees the monoid, but misses the algebras completely. 

\begin{proposition}
\label{prop:logddrchar0}
Let $f:(M \to A) \to (N \to B)$ be a map in $\Log\Alg^\pre_{\Q/}$. Then 
\[ \gr_i^\conj(\dR_f) \simeq \wedge^i( \Cone(M^\grp \to N^\grp) \otimes_\Z A)[-i], \]
where all operations (taking  exterior powers, tensor products, and group completion) are understood to be derived.
\end{proposition}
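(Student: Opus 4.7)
Proof proposal for Proposition~\ref{prop:logddrchar0}:

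The plan is to compute the conjugate filtration of Proposition~\ref{prop:conjfiltgenerallddr} using the canonical free resolution $P_\bullet \to (N \to B)$ in $s\Log\Alg^\pre_{(M \to A)/}$. Writing $P_n = \Free^{\Set \times \Set}_{\Log\Alg^\pre_{(M \to A)/}}(X_n, Y_n)$, so that $P_{n,\Mon} = M \oplus \N^{(X_n)}$ and $P_{n,\Alg} = A[X_n \sqcup Y_n]$, the graded piece $\gr_i^\conj(\dR_f)$ is, by Proposition~\ref{prop:conjfiltgenerallddr}, the totalisation (shifted by $-i$) of the simplicial $A$-cochain complex $n \mapsto H^i(\Omega^\bullet_{P_n/(M \to A)})$.

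The first step is to compute these cohomology groups. By Example~\ref{ex:freelogalg}, $\Omega^\bullet_{P_n/(M \to A)}$ is the Koszul-type exterior algebra over $A[X_n \sqcup Y_n]$ on the generators $\{d\log(x)\}_{x \in X_n} \cup \{dy\}_{y \in Y_n}$, with differential determined by $d(x) = x \cdot d\log(x)$ and $d(y) = dy$. This factors as a tensor product (over $A$) of the complexes $\Omega^\bullet_{A[y]/A}$ for each $y \in Y_n$ and the logarithmic complexes $A[x] \xrightarrow{x \, d\log(x)} A[x] \cdot d\log(x)$ for each $x \in X_n$. Over $\Q$ the first type is acyclic in positive degrees (classical Poincar\'e lemma) with $H^0 = A$, while the second has $H^0 = A$ and $H^1 = A \cdot d\log(x)$ since in characteristic zero the map $x^m \mapsto m x^m$ is surjective onto $x \cdot A[x]$. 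By the K\"unneth formula, we conclude
\[ H^i(\Omega^\bullet_{P_n/(M \to A)}) \simeq \wedge^i_A\bigl(\Z^{(X_n)} \otimes_\Z A\bigr), \]
where $\Z^{(X_n)}$ is identified with $(P_{n,\Mon}/M)^\grp = \cok(M \to P_{n,\Mon})^\grp$.

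The second step is to assemble these as $n$ varies. Since the $\Z^{(X_n)}$ are levelwise free, the exterior powers commute with geometric realisation (derived exterior powers are left Kan extended from free modules, and for simplicially constant free modules they are computed naively), so
\[ \gr_i^\conj(\dR_f) \simeq \Bigl|\, n \mapsto \wedge^i_A\bigl(\Z^{(X_n)} \otimes_\Z A\bigr)\,\Bigr|[-i] \simeq \wedge^i_A\bigl(\,|\Z^{(X_\bullet)}|\otimes_\Z A\,\bigr)[-i]. \]
It remains to identify $|\Z^{(X_\bullet)}|$ with $\Cone(M^\grp \to N^\grp)$ in the derived category of abelian groups. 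By Proposition~\ref{prop:modelstrlogalg2}, $P_{\bullet,\Mon} \to N$ is a cofibrant replacement of $N$ as an $M$-algebra in $s\Mon$; Proposition~\ref{prop:modelstrgrpcompl} ensures that group completion preserves this equivalence, giving $|P_{\bullet,\Mon}^\grp| \simeq N^\grp$, and the termwise short exact sequence $0 \to M^\grp \to P_{\bullet,\Mon}^\grp \to \Z^{(X_\bullet)} \to 0$ then realises $|\Z^{(X_\bullet)}|$ as $\Cone(M^\grp \to N^\grp)$. Combining with the previous display yields the desired identification.

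The main technical point to watch is the derived nature of the operations on the right-hand side: one must argue that exterior powers commute with the relevant homotopy-colimit, and that the naive cokernel $n \mapsto \Z^{(X_n)}$ computes the derived cone $\Cone(M^\grp \to N^\grp)$. Both issues are handled by the freeness of the terms of the canonical resolution (making the relevant quotients and exterior powers homotopically well-behaved), so the argument should go through cleanly.
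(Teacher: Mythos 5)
Your proof is correct, and your argument for the crucial cohomology computation of $\Omega^\bullet_{T(X,Y)/(M \to A)}$ in the free case differs from the paper's in a way worth noting. The paper reduces to the case of $X \sqcup Y$ finite, then to $A = \Q$ and $A = \C$ by base change, and finally invokes the logarithmic Poincar\'e lemma to identify the cohomology with the Betti cohomology of a torus with character lattice $\Z^{(X)}$ (times an affine space). You instead decompose the log de Rham complex of the free algebra as a tensor product over $A$ of elementary one-variable pieces --- ordinary $\Omega^\bullet_{A[y]/A}$ for $y \in Y$, killed in positive degrees by the $\Q$-linear Poincar\'e lemma, and the two-term log complex $A[x] \xrightarrow{x\,d\log(x)} A[x]\,d\log(x)$ for $x \in X$, which over $\Q$ has $H^0 = A$ and $H^1 = A \cdot d\log(x)$ --- and then applies K\"unneth (valid since everything in sight is $A$-free). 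This is more elementary and self-contained: it avoids the analytic detour through $\C$, and it makes the appearance of $\Z^{(X)}$ in degree one transparent as the span of the $d\log(x)$. The assembly step --- that levelwise free exterior powers compute the derived exterior power of the totalisation, the equivalence $|P_{\bullet,\Mon}^\grp| \simeq N^\grp$ via Proposition~\ref{prop:modelstrgrpcompl}, and the termwise-split exact sequence $0 \to M^\grp \to P_{n,\Mon}^\grp \to \Z^{(X_n)} \to 0$ identifying $|\Z^{(X_\bullet)}|$ with $\Cone(M^\grp \to N^\grp)$ --- agrees with the paper's. One small remark: as in the paper, it is cleanest to first reduce to $X_n \sqcup Y_n$ finite by passage to filtered colimits before applying K\"unneth, though this is harmless.
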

We remark that the derived group completion agrees with the naive group completion by Proposition \ref{prop:modelstrgrpcompl}.
\begin{proof}
Let $f:(M \to A) \to T(X,Y)$ be a free map as in Example \ref{ex:freelogalg}. Then one can show that
\[ \oplus_i H^i(\Omega^\bullet_f)[-i] \simeq \wedge^i H^1(\Omega^\bullet_f) \simeq \oplus_i \wedge^i (\Z^{(X)} \otimes_\Z A)[-i] \simeq \wedge^i ((T(X,Y)_\Mon/M)^\grp \otimes_\Z A) [-i]  ,  \]
where the generators in $H^1(\dR_f) \simeq \Z^{(X)}$ correspond to $d\log(x) \in \Omega^1_f$ for $x \in X$. This computation can be carried out by reduction to the case that $X \sqcup Y $ is finite by passage to filtered colimits, then by reduction to $A = \Q$ and then $A = \C$ by base change, and then by using the logarithmic Poincare lemma to reduce to the computation of the Betti cohomology of a torus with character lattice $\Z^{(X)}$ (up to some $\A^n$ factors); we leave the details to the reader. Now in general, for any map $f:(M \to A) \to (N \to B)$,  let $(M \to A) \stackrel{a}{\to} P_\bullet \stackrel{b}{\to} (N \to B)$ be a projective resolution. Then using the preceding calculation, we find that
\[ \gr_i^\conj(\dR_f) \simeq |\wedge^i (P_{\bullet,\Mon}^\grp/M^\grp \otimes_\Z A)|[-i]. \]
By Proposition \ref{prop:modelstrgrpcompl}, the map $P_{\bullet,\Mon}^\grp \to N^\grp$ is an equivalence. Moreover, since $(-)^\grp$ is a left Quillen functor, $M^\grp \to P_{\bullet,\Mon}^\grp/M^\grp$ is a projective resolution of $N^\grp$ in $s\Ab_{M^\grp/}$; the claim now follows.
\end{proof}

Let us give an example of Proposition \ref{prop:logddrchar0}.

\begin{example}
Let $f: (\N^k \to \Q[\N^k]) \to (1 \to \Q)$ be the map on prelog rings associated to the monoid map $\N^k \to 1$; geometrically, this is the inclusion of $(1,1,\dots,1)$ in $\A^k_{\Q}$ with the log structure defined by the co-ordinate hyperplanes. Let $A = \Q[\N^k] = \Gamma(\A^k_{\Q},\calO)$. Then Proposition \ref{prop:logddrchar0} shows that
\[ \gr^\conj_i(\dR_f) \simeq \wedge^i_A (A^k[1]) [-i] \simeq \Gamma^i_A(A^k) \simeq \Sym^i_{A} (A^k). \]
In particular, $\dR_f$ is an ordinary commutative ring with an increasing separated bounded below exhaustive filtration whose associated graded coincides with the associated graded of $\Sym_{A}^\ast (A^k)$ for the degree filtration; I do not know whether $\dR_f$ itself can be identified with $\Sym^\ast_{A} (A^k)$.
\end{example}

We end this section with an example showing that log derived de Rham cohomology may change under passage to the associated log structure in characteristic $0$; this pathology will not occur in characteristic $p$, as we will see later.

\begin{example}[Non-invariance of derived log de Rham cohomology under passing to log structure]
\label{ex:lddrlogstr}
Consider the map $f:(0 \to \Q) \to (0 \to \Q[x,x^{-1}])$ of prelog $\Q$-algebras. Let $f_a:(\Q^* \to \Q) \to (\Q^* \times x^\Z \to \Q[x,x^{-1}])$ be the associated map of log structures. Then there is a natural map
\[\dR_f \to \dR_{f_a}. \]
We will show this map is not an isomorphism. To see this, note that $\dR_f$ is strict, and hence $\dR_f \simeq \Q$ as explained in Proposition \ref{prop:logddrstrict}. On the other hand, calculating $\dR_{f_a}$ using the conjugate filtration gives
\[ \gr^\conj_0(\dR_{f_a}) \simeq \Q \quad \textrm{and} \quad \gr^\conj_1(\dR_{f_a}) \simeq \cok(\Q^* \to \Q^* \times x^\Z) \otimes_\Z \Q[-1] \simeq \Q[-1],\]
while the higher ones vanish. The map $\dR_f \to \dR_{f_a}$ maps the left hand side isomorphically onto $\gr^\conj_0(\dR_{f_a})$, and completely misses $\gr^\conj_1(\dR_{f_a})$, showing that $\dR_f \to \dR_{f_a}$ is not an isomorphism.
\end{example}

\section{Logarithmic derived de Rham cohomology modulo $p^n$}
\label{sec:ddr-mod-p-log}

This section is the logarithmic analog of  \S \ref{sec:ddr-mod-p}, and depends on the theory developed in \S \ref{sec:ddr-scr-log}, \S \ref{sec:logalg}, and \S \ref{sec:ddr-log}. More precisely, we will show in \S \ref{ss:log-cartier} that derived Cartier theory works equally well in the logarithmic context; this leads in \S \ref{ss:logddrlogcrys} to a strong connection between log derived de Rham cohomology and log crystalline cohomology. As a corollary, some of the characteristic $0$ pathologies of log derived de Rham cohomology (such as Example \ref{ex:lddrlogstr}) disappear in modulo $p^n$.

\subsection{Cartier theory}
\label{ss:log-cartier}

The key player in (logarithmic) Cartier theory is the Frobenius twist:

\begin{notation}[Frobenius twists]
\label{not:frobtwistlddr}
Let $(M \to A) \in \Log\Alg^\pre_{\F_p/}$. Then we define the Frobenius map $\Frob_{(M \to A)}:(M \to A) \to (M \to A)$ as the map which is multiplication by $p$ on $M$, and the usual Frobenius on $A$.  If $f:(M \to A) \to (N \to B)$ is a map in $\Log\Alg^\pre_{\F_p/}$, then the Frobenius twist $(N \to B)^{(1)}$ is defined as the homotopy pushout of $f$ along $\Frob_{(M \to A)}:(M \to A) \to (M \to A)$. There are natural maps $f^{(1)}:(M \to A) \to (N \to B)^{(1)}$ and $\Frob_f:(N \to B)^{(1)} \to (N \to B)$ defined as in Notation \ref{not:frobtwistddr}.
\end{notation}

The interaction between Frobenius twists on prelog rings and those on the underlying rings is quite strong:

\begin{lemma}
\label{lem:frobtwistldr}
Let $f:(M \to A) \to (N \to B)$ be a map in $\Log\Alg^\pre_{\F_p/}$. Then there are base change identifications $L_f \otimes_B B^{(1)} \simeq L_{f^{(1)}}$ and $\Frob_A^* \dR_f \simeq \dR_{f^{(1)}}.$ Moreover, $f^{(1)}_\Alg$ is homotopic to $(f_\Alg)^{(1)}$. If $\Frob_A:A \to A$ and $M \stackrel{p}{\to} M$ are flat, then the homotopy pushout $f^{(1)}$ is equivalent to the ordinary pushout. If $M = N$, then $f^{(1)}$ is equivalent to the non-logarithmic pushout (equipped with the log structure defined by $M$), and similarly for $\Frob_f$.
\end{lemma}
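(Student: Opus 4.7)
The plan is to verify the four assertions separately, each by reducing to formal properties of the model structure on $s\Log\Alg^\pre$ already established in the paper.

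For the two base change identifications, apply Proposition \ref{prop:logddrproduct} to the defining homotopy pushout square of $f^{(1)}$: since $(N \to B)^{(1)}$ is built by pushing out $f$ along $\Frob_{(M \to A)}:(M \to A) \to (M \to A)$, the second equivalence of that proposition yields $\dR_{f^{(1)}} \simeq \dR_f \otimes_A^L A \simeq \Frob_A^* \dR_f$, where the tensor uses the Frobenius action on $A$. The analogous base change $L_{f^{(1)}} \simeq L_f \otimes_B^L B^{(1)}$ for the log cotangent complex follows by the same mechanism applied to $L$ in place of $\dR$, using that both functors are left Kan extensions of their values on free prelog algebras (Remarks \ref{rmk:lurielke} and \ref{rmk:lkeldr}) and hence commute with homotopy pushouts in the appropriate sense.

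For the claim that $(f^{(1)})_\Alg$ is homotopic to $(f_\Alg)^{(1)}$, invoke Proposition \ref{prop:modelstrlogalg2}: the forgetful functor $\Forget^{s\Log\Alg^\pre}_{s\Mon \times s\Alg}$ is left Quillen, so it preserves homotopy pushouts. Composing with the projection to $s\Alg$ and observing that $\Forget$ carries $\Frob_{(M \to A)}$ to the usual Frobenius on $A$, we identify $(N \to B)^{(1)}_\Alg$ with the homotopy pushout of $A \to B$ along $\Frob_A$, which is by definition $(f_\Alg)^{(1)}$.

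For the flatness statement, monoidal flatness of $[p]:M \to M$ (Definition \ref{defn:monoidflat}) ensures that the monoid-level homotopy pushout $M \sqcup^h_M N$ agrees with the ordinary pushout, and flatness of $\Frob_A:A \to A$ gives the analogous algebra-level statement. Since $\Forget^{s\Log\Alg^\pre}_{s\Mon \times s\Alg}$ preserves both homotopy pushouts (being left Quillen) and ordinary pushouts (being a left adjoint, as in the proof of Proposition \ref{prop:modelstrlogalg2}), the comparison map from the homotopy to the ordinary pushout becomes an equivalence after forgetting; since equivalences in $s\Log\Alg^\pre$ are detected on underlying simplicial sets, this suffices.

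Finally, in the strict case $M = N$, the pushout of $\id:M \to M$ along $[p]:M \to M$ is $M$ itself (computed directly from the universal property), with the map from the top-right copy equal to $[p]$ and the map from the bottom-left copy equal to $\id$. Combined with the algebra-level identification above, this presents $(N \to B)^{(1)}$ as $(M \to B^{(1)})$ equipped with its natural log structure, and identifies $\Frob_f$ as the identity on the monoid together with the usual non-logarithmic relative Frobenius $B^{(1)} \to B$ on the algebra. The main subtle point throughout is the flatness step, where one must track that the forgetful functor simultaneously preserves both flavors of pushout so that the underlying comparison translates back to an equivalence in $s\Log\Alg^\pre$.
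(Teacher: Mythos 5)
Your proof is correct and follows the same broad lines as the paper's, with two small implementation differences worth flagging. For the base change identifications, you route through Proposition \ref{prop:logddrproduct}, whereas the paper works directly: it takes a projective resolution $P_\bullet \to (N \to B)$, notes that $\Forget^{s\Log\Alg^\pre}_{s\Mon \times s\Alg}$ is both left and right Quillen so $P_\bullet^{(1)} \to (N \to B)^{(1)}$ is again a projective resolution, and then applies base change for $\Omega^1$ of free prelog algebras. These are equivalent since Proposition \ref{prop:logddrproduct} is itself proved that way, so your argument is an acceptable indirection for $\dR_f$. However, the parallel claim for $L_f$ is handled more loosely than it should be: you cite Remarks \ref{rmk:lurielke} and \ref{rmk:lkeldr}, but both concern $\dR$, not the cotangent complex, and "commutes with homotopy pushouts in the appropriate sense" is doing real work here — base change of $L$ along a derived pushout square is the flat (or lci) base change formula for the cotangent complex, and the cleanest justification is exactly the one-line resolution argument the paper uses ($L_{f^{(1)}} \simeq \Omega^1_{P^{(1)}_\bullet/(M \to A)} \otimes_{P^{(1)}_{\bullet,\Alg}} B^{(1)}$, then base change for $\Omega^1$). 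This is not a gap, but the citation should be corrected and the step made explicit.

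For the strict case $M = N$, you compute the monoid-level pushout of $M \xleftarrow{\id} M \xrightarrow{[p]} M$ by hand, whereas the paper refers to the argument of Proposition \ref{prop:logcotstrict} (a strict map admits a strict projective resolution, and then one identifies $\Omega^1$ of the underlying algebra with the log $\Omega^1$). Your direct computation is valid, and perhaps more transparent, provided one notes — as you implicitly do — that the homotopy pushout of a span whose one leg is an identity agrees with the ordinary pushout. The remaining parts ($f^{(1)}_\Alg \simeq (f_\Alg)^{(1)}$ via left-Quillen-ness of the forgetful functor, and discreteness under the flatness hypotheses) match the paper essentially verbatim.
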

\begin{proof}
Let $P_\bullet \to (N \to B)$ be a projective resoltuon of $(N \to B)$ as an $(M \to A)$-algebra. Then $L_f \simeq \Omega^1_{P_\bullet/(M \to A)}$ and $\dR_f \simeq \Omega^\bullet_{P_\bullet/(M \to A)}$, while $L_{f^{(1)}} \simeq \Omega^1_{P_\bullet^{(1)}/(M \to A)}$ and $\dR_{f^{(1)}} \simeq \Omega^\bullet_{P_\bullet^{(1)}/(M \to A)}$, so the first claim follows from the base change properties for $\Omega^1$. Next, note that by Propositions \ref{prop:modelstrlogalg} and \ref{prop:modelstrlogalg2}, the functor $\Forget^{s\Log\Alg^\pre}_{s\Mon \times s\Alg}$ is both a left and right Quillen functor. Hence, $P_{\bullet,\Alg} \to B$ is a projective resolution of $B$ as an $A$-algebra, and similarly for $P^{(1)}_{\bullet,\Alg} \to B^{(1)}$, which immediately implies the second claim. For the third claim, it suffices to check that the base change $(N \to B)^{(1)}$ is discrete after applying $\Forget^{s\Log\Alg^\pre}_{s\Mon \times s\Alg}$, which follows from the assumptions on $M$ and $A$ because $\Forget^{s\Log\Alg^\pre}_{s\Mon \times s\Alg}$ is left and right Quillen. The last claim can be shown as in Proposition \ref{prop:logcotstrict}.
\end{proof}

Next, in preparation for the derived version, we first recall the logarithmic Cartier isomorphism (in the free case).

\begin{theorem}[Classical logarithmic Cartier isomorphism]
\label{thm:logcartier}
Fix sets $X$ and $Y$, and let $T(X,Y) = \Free^{\Set \times \Set}_{\Log\Alg^\pre_{(M \to A)/}}(X,Y) = (M \oplus \N^{(X)} \to A[X \sqcup Y])$ be a free $(M \to A)$-algebra. Then there is a natural equivalence of graded algebras
\[ C^{-1}: \oplus_{i \geq 0} \Omega^i_{f^{(1)}} [-i] \simeq \oplus_{i \geq 0} H^i(\Omega^\bullet_f)[-i]. \]
\end{theorem}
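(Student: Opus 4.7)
The approach is to mimic the proof of the classical Cartier isomorphism (Theorem \ref{thm:classicalcartier}): first construct $C^{-1}$ explicitly using a Frobenius lift to $W_2$, then verify it is an isomorphism by reducing to single-generator cases via Kunneth.

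To construct $C^{-1}$, I would lift $T(X,Y)$ to $W_2$ in the natural way: after reducing to $A = \F_p$ by base change (using that everything in sight commutes with extension of scalars along $(M \to A) \to (M \to A')$), the obvious lift is $\widetilde{T}(X,Y) := (M \oplus \N^{(X)} \to W_2(\F_p)[X \sqcup Y])$. Equip it with the Frobenius lift $\widetilde{\Frob}$ given by multiplication by $p$ on $M \oplus \N^{(X)}$ and by $y \mapsto y^p$ on the ring variables. Then on the module of log Kahler differentials of $\widetilde{T}(X,Y)$ one computes $\widetilde{\Frob}^*(d\log x) = d\log(x^p) = p \cdot d\log x$ and $\widetilde{\Frob}^*(dy) = d(y^p) = p\, y^{p-1}dy$, both divisible by $p$. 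Dividing by $p$ and reducing modulo $p$ yields closed forms which define a multiplicative map
\[ C^{-1}:\bigoplus_i \Omega^i_{f^{(1)}}[-i] \to \bigoplus_i H^i(\Omega^\bullet_f)[-i], \]
sending $d\log x^{(1)} \mapsto d\log x$ and $dy^{(1)} \mapsto y^{p-1}dy$ in degree $1$ and extending multiplicatively.

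To check $C^{-1}$ is an equivalence, I would apply the Kunneth formula (Proposition \ref{prop:logddrproduct}) together with its analogue for ordinary log Kahler differentials to decompose $T(X,Y)$ over $(M \to A)$ as a coproduct of single-generator free prelog algebras, reducing to the cases $|X|+|Y|=1$. For a polynomial generator $T(\emptyset,\{y\}) = (M \to A[y])$, the map $f$ is strict in the sense of Proposition \ref{prop:logcotstrict}, so both sides collapse to their non-logarithmic analogues and the statement becomes Theorem \ref{thm:classicalcartier}. For a log generator $T(\{x\},\emptyset) = (M \oplus \N \to A[x])$, the log de Rham complex is the two-term complex $A[x] \to A[x]\cdot d\log x$ with differential $d(x^n) = n x^n d\log x$, so $H^0(\Omega^\bullet_f) = A[x^p]$ and $H^1(\Omega^\bullet_f) = A[x^p]\cdot d\log x$; these are identified with $\Omega^0_{f^{(1)}}$ and $\Omega^1_{f^{(1)}}$ respectively via $x^{(1)} \leftrightarrow x^p$, and one checks directly that the map $C^{-1}$ realises this identification.

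The main obstacle I anticipate is giving a clean treatment of the Frobenius twist $T(X,Y)^{(1)}$ and its associated de Rham complex at the level of ordinary (rather than derived) prelog algebras, so that the formulas above are literally meaningful. For this one uses that the inclusion $M \hookrightarrow M \oplus \N^{(X)}$ of a direct summand is a flat map of monoids (Proposition \ref{prop:monoidflat}), so the homotopy pushout defining $T(X,Y)^{(1)}$ coincides with the naive pushout and $T(X,Y)^{(1)}$ is again a free prelog $(M \to A)$-algebra of the same shape, with generators relabelled by the Frobenius twist. Once this is in place, the verification reduces to the single-variable computations above, and functoriality is immediate from the construction.
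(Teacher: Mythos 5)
Your construction of $C^{-1}$ via a Frobenius lift to $W_2$ and division by $p$, together with the base change reduction, is essentially what the paper does; the paper additionally cites Kato's logarithmic Cartier isomorphism \cite[Theorem 4.12(1)]{KatoLogFI} for log smooth maps of Cartier type as a shortcut, then sketches the same lifting argument. Your more explicit verification step (Kunneth reduction to the single-generator cases $T(\emptyset,\{y\})$ and $T(\{x\},\emptyset)$, the former strict and hence the classical Cartier isomorphism, the latter a direct hand computation) is a fine and self-contained way to fill in the paper's ``the rest follows as in Theorem \ref{thm:classicalcartier}''; the only small simplification worth noting is that the homotopy pushout computing $T(X,Y)^{(1)}$ agrees with the naive one simply because $T(X,Y)$ is a free, hence cofibrant, $(M\to A)$-algebra, without needing to invoke monoid flatness.
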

\begin{proof}
The map $f:(M \to A) \to T(X,Y)$ is a log smooth map of Cartier type, so this follows directly from Kato's logarithmic Cartier isomorphism \cite[Theorem 4.12 (1)]{KatoLogFI}. We briefly sketch the argument.
The $T(X,Y)^{(1)}_\Alg$-linear map $C^{-1}:\Omega^1_{T(X,Y)^{(1)}/(M \to A)} \to H^1(\Omega^\bullet_f)$ is characterised by the following condition: for $x \in X$, we have
\[ C^{-1}( \Frob_{(M \to A)}^* d\log(x)) = d\log(x) \]
while for $y \in Y$, we have
\[ C^{-1}( \Frob_{(M \to A)}^* dy ) = y^{p-1} dy. \]
Here $\Frob_{(M \to A)}$ is viewed as defining (via base change) the map $T(X,Y) \to T(X,Y)^{(1)}$. In particular, the logarithmic Cartier isomorphism is compatible with the usual one. To construct $C^{-1}$, set $S(X,Y) := \Free^{\Set \times \Set}_{\Log\Alg^\pre_{\Z/p^2/}}(X,Y)$, the corresponding free object over $\Z/p^2$. Then $f$ is obtained via base change from the map $g:(0 \to \Z/p^2) \to S(X,Y)$, and so it suffices to construct the Cartier isomorphism for the reduction modulo $p$ of $g$. Now we note that $g$ comes equipped with a lift of Frobenius (given by $\cdot p$ on $S(X,Y)_\Mon$, and sending variables in $X$ and $Y$ to their $p$-th powers in $S(X,Y)_\Alg$). The rest follows as in Theorem \ref{thm:classicalcartier}.
\end{proof}

As in the non-logarithmic case, one immediate deduces the derived version:

\begin{proposition}[Derived logarithmic Cartier theory]
\label{prop:conjsslddr}
Let $f:(M \to A) \to (N \to B)$ be a map of prelog $\F_p$-algebras. Then the conjugate filtration on $\dR_f$ is $B^{(1)}$-linear, and has graded pieces computed by
\[ \Cartier_i:\gr^\conj_i(\dR_f) \simeq \wedge^i L_{f^{(1)}} [-i]. \]
In particular, the conjugate spectral sequence takes the form
\[E_1^{p,q}: H_{2p+q}(\wedge^p L_{f^{(1)}}) \simeq H_{2p + q}(\Frob_A^* \wedge^p L_f) \to H_{p+q}(\dR_f). \]
\end{proposition}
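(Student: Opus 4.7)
The plan is to mimic the proof of Proposition \ref{prop:conjss} in the logarithmic setting, using the logarithmic Cartier isomorphism (Theorem \ref{thm:logcartier}) in place of the classical one and invoking the log versions of the background results set up in \S \ref{sec:ddr-log}.

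First I would take the canonical free resolution $P_\bullet \to (N \to B)$ in $s\Log\Alg^\pre_{(M \to A)/}$; each term $P_n$ is a free prelog algebra of the form $T(X_n, Y_n)$ as in Example \ref{ex:freelogalg}. By construction and by Proposition \ref{prop:conjfiltgenerallddr}, the conjugate filtration on $\dR_f$ arises by totalisation from the canonical filtrations on the cochain complexes $\Omega^\bullet_{P_n/(M \to A)}$, and its graded pieces compute as totalisations of the simplicial cochain complexes $n \mapsto H^i(\Omega^\bullet_{P_n/(M \to A)})[-i]$.

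Next, for each $n$, Theorem \ref{thm:logcartier} applied to the free prelog algebra $P_n$ yields a natural identification $H^i(\Omega^\bullet_{P_n/(M \to A)})[-i] \simeq \Omega^i_{P_n^{(1)}/(M \to A)}$. Since the map $C^{-1}$ is constructed using the canonical Frobenius lift (raising monoid generators and polynomial variables to their $p$-th powers), it is manifestly functorial in maps of free prelog algebras, and hence the above identifications assemble into an equivalence of simplicial cochain complexes as $n$ varies in $\Delta^\opp$. Totalising therefore yields
\[ \gr^\conj_i(\dR_f) \simeq | \Omega^i_{P_\bullet^{(1)}/(M \to A)} | \simeq \wedge^i L_{f^{(1)}}[-i], \]
where the last step uses that $P_\bullet^{(1)} \to (N \to B)^{(1)}$ is a projective $(M \to A)$-algebra resolution (as in the proof of Lemma \ref{lem:frobtwistldr}, since $\Forget^{s\Log\Alg^\pre}_{s\Mon \times s\Alg}$ is both left and right Quillen by Propositions \ref{prop:modelstrlogalg} and \ref{prop:modelstrlogalg2}), combined with the definition of derived exterior powers of $L_{f^{(1)}}$.

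For the $B^{(1)}$-linearity, I would observe that each complex $\Omega^\bullet_{P_n/(M \to A)}$ is naturally a complex of $P^{(1)}_{n,\Alg}$-modules: in characteristic $p$ the ordinary differential annihilates $p$-th powers (since $d(x^p) = p x^{p-1}\, dx = 0$) and $d\log$ is additive on monoids so that $d\log(p\cdot m) = 0$. This module structure is compatible with the canonical truncations and passes to the homotopy-colimit, so the conjugate filtration on $\dR_f$ is $B^{(1)}$-linear. The spectral sequence statement is then formal from Proposition \ref{prop:conjfiltgenerallddr} and the identification of graded pieces just obtained, with the comparison $H_{2p+q}(\wedge^p L_{f^{(1)}}) \simeq H_{2p+q}(\Frob_A^* \wedge^p L_f)$ coming from the base-change equivalence $L_{f^{(1)}} \simeq L_f \otimes_B B^{(1)}$ of Lemma \ref{lem:frobtwistldr}.

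There is no genuine obstacle here, since all the difficult ingredients have been set up: Theorem \ref{thm:logcartier} supplies the log Cartier isomorphism in the free case, Proposition \ref{prop:conjfiltgenerallddr} supplies the conjugate filtration machinery, and Lemma \ref{lem:frobtwistldr} handles Frobenius twists of projective resolutions. The only point requiring attention is bookkeeping the simplicial functoriality of $C^{-1}$, which, as noted above, is immediate from the explicit description of the Cartier map in terms of a canonical Frobenius lift on the free algebras $T(X_n, Y_n)$.
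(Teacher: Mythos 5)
Your proof is correct and follows exactly the approach the paper indicates: the paper's own argument is simply the one-line reference ``proven like Proposition \ref{prop:conjss} using Theorem \ref{thm:logcartier} instead of Theorem \ref{thm:classicalcartier},'' and your write-up spells out precisely that substitution. The only minor imprecision is the claim that $C^{-1}$ is functorial ``because'' it is built from the canonical Frobenius lift; the maps in the canonical free resolution need not commute with these lifts, and the correct reason for functoriality is that the resulting isomorphism on cohomology is independent of the chosen lift (as noted after Theorem \ref{thm:classicalcartier}), but this does not affect the argument.
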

\begin{proof}
This is proven like Proposition \ref{prop:conjss} using Theorem \ref{thm:logcartier} instead of Theorem \ref{thm:classicalcartier}.
\end{proof}

We now discuss applications. First, we show that pathologies discussed in Example \ref{ex:lddrlogstr} cannot occur modulo $p$:

\begin{corollary}
\label{cor:ddrinvlogstr}
Let $f:(M \to A) \to (N \to B)$ be a map in $\Log\Alg^\pre_{\F_p/}$. Assume that both $M$ and $N$ are integral. Let $f_a:(M_a \to A) \to (N_a \to B)$ be the induced map of log structures. Then the natural map
\[ \dR_f \to \dR_{f_a} \]
is an equivalence. 
\end{corollary}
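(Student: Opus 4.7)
The plan is to compare both sides through their conjugate spectral sequences (Proposition \ref{prop:conjsslddr}) and reduce the question to a statement about log cotangent complexes under passage to the associated log structure. The natural map $\dR_f \to \dR_{f_a}$ is functorial, so it respects the conjugate filtrations; since both filtrations are increasing, bounded below, separated, and exhaustive, it suffices to show that the induced map on the $i$-th graded pieces $\wedge^i L_{f^{(1)}}[-i] \to \wedge^i L_{f_a^{(1)}}[-i]$ is an equivalence for every $i$. By Lemma \ref{lem:frobtwistldr}, the Frobenius twist of the log cotangent complex is obtained by base change along Frobenius, so it is enough to prove that the canonical map $L_f \to L_{f_a}$ is an equivalence of $B$-modules.

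To produce this comparison and prove it is an equivalence, I would apply Gabber's transitivity triangle (\cite[Theorem 8.14]{OlssonLogCot}) to the two factorizations of the composite $(M \to A) \to (N_a \to B)$, namely through $(N \to B)$ and through $(M_a \to A)$. These yield
\[
L_f \otimes_B B \to L_{(N_a \to B)/(M \to A)} \to L_{(N_a \to B)/(N \to B)},
\]
\[
L_{(M_a \to A)/(M \to A)} \otimes_A B \to L_{(N_a \to B)/(M \to A)} \to L_{f_a}.
\]
Both $L_f$ and $L_{f_a}$ thus map to $L_{(N_a \to B)/(M \to A)}$, and both maps will be equivalences provided $L_{(N_a \to B)/(N \to B)}$ and $L_{(M_a \to A)/(M \to A)}$ vanish. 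It therefore suffices to prove the following uniform statement: for any integral prelog $\F_p$-algebra $(P \to C)$, the log cotangent complex $L_{(P_a \to C)/(P \to C)}$ of the map to the associated log ring is contractible.

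The key obstacle is this final vanishing. I would establish it via the universal property from Remark \ref{rmk:logcotunivprop}: for a simplicial $C$-module $Q$, the space $\Map(L_{(P_a \to C)/(P \to C)}, Q)$ identifies with pairs $(d, \mu)$ where $d:C \to Q$ is a $C$-linear derivation (hence zero) and $\mu:P_a \to Q$ is a monoid map vanishing on the image of $P$ such that $\alpha_a(m)\cdot \mu(m) = 0$ in $Q$ for every $m \in P_a$. Since $P$ is integral, the associated log structure is realized as the pushout $P_a = P \sqcup_{\alpha^{-1}(C^*)} C^*$, and by Example \ref{ex:flatmonoid} this coincides with the derived pushout; consequently $\mu$ is determined by its restriction to $C^* \subset P_a$, and the constraint $u \cdot \mu(u) = 0$ with $u \in C^*$ forces $\mu(u) = 0$. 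Applying the same reasoning with $Q$ replaced by its shifts shows $\R\Hom_C(L_{(P_a \to C)/(P \to C)}, Q) = 0$ for every $Q$, whence $L_{(P_a \to C)/(P \to C)} \simeq 0$ by Yoneda. The technical nuance is to run this derivation computation uniformly at the level of the simplicial mapping space rather than just on $\pi_0$; the integrality hypothesis is precisely what ensures that no derived corrections arise in the pushout defining $P_a$, making the level-wise analysis valid.
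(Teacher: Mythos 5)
Your reduction to cotangent complexes via the conjugate filtration is exactly the one the paper makes, and your transitivity-triangle argument correctly isolates the key claim: $L_{(P_a \to C)/(P \to C)} \simeq 0$ whenever $P$ is integral. Where you diverge is that the paper simply invokes Olsson's Theorem 8.16 for this vanishing, after noting (via Ogus, Prop.\ 1.2.2) that $M_a$ and $N_a$ are themselves integral, while you attempt to re-derive the Olsson statement. Your observation that $\alpha^{-1}(C^*) \to C^*$ is flat (any morphism of integral monoids to a group is integral, so Example \ref{ex:flatmonoid} applies) and hence that $P_a$ agrees with the derived pushout is a correct and genuinely useful step, and it is in fact close to the spine of Olsson's own proof.

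However, there is a real gap in your final vanishing argument. The explicit set-theoretic description in Remark \ref{rmk:logcotunivprop} — $\pi_0 \Map(L, Q)$ as pairs $(\lambda, d)$ — is stated only for \emph{discrete} targets $Q$, and that is all your computation establishes. But vanishing of $\pi_0 \Map(L,Q)$ for all discrete $Q$ only gives $\pi_0 L = 0$; it does not give $L \simeq 0$. (For instance, if $L = C[1]$ then $\R\Hom_C(L, Q) \simeq Q[-1]$, so $\tau_{\leq 0}\R\Hom_C(L,Q) = 0$ and the whole mapping space $\Map_{s\Mod_C}(L,Q)$ is contractible for every discrete $Q$, yet $L \neq 0$.) To conclude $L \simeq 0$ you must control $\pi_0 \Map(L, Q[n])$ for $n > 0$, and for such non-discrete targets equivalence \eqref{eq:univproplogcot} produces a space of sections over a cosimplicial diagram of free prelog rings whose $\Tot$ has to be analyzed — the naive pair-description no longer applies levelwise. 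Your remark that ``integrality ensures no derived corrections arise in the pushout'' addresses the wrong issue: the flatness you invoked handles the pushout defining $P_a$, but the derived corrections that matter here live in the mapping space for shifted targets, which is a separate problem that integrality alone does not dispose of. The cleanest fix is to do what the paper does and cite Olsson's Theorem 8.16, which is tailored to exactly this invariance statement.
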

\begin{proof}
By \cite[Proposition 1.2.2]{OgusLogBook}, the category of integral monoids is closed under pushouts provided one the involved terms is a group. In particular, the log structure associated to a prelog ring with an integral monoid is also integral. By comparing conjugate filtrations, we reduce to the analogous claim for the log cotangent complex which follows from \cite[Theorem 8.16]{OlssonLogCot}.
\end{proof}

Next, we prove an analogue of Corollary \ref{cor:ddrindepsmooth}.

\begin{corollary}
\label{cor:logddrcartiertype}
Let $f:(M \to A) \to (N \to B)$ be a map in $\Log\Alg^\pre_{\F_p/}$.  Assume that $f$ is log smooth and of Cartier type. Then the natural map $\dR_f \to \Omega^\bullet_f$ is an equivalence.
\end{corollary}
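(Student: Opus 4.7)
The plan is to follow the template of Corollary \ref{cor:ddrindepsmooth}, leveraging the derived log Cartier theory of Proposition \ref{prop:conjsslddr} and the classical log Cartier isomorphism (Theorem \ref{thm:logcartier} in its extension to log smooth maps of Cartier type, due to Kato).

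First I would construct the natural comparison map $\alpha\colon \dR_f \to \Omega^\bullet_f$ using a projective resolution $P_\bullet \to (N \to B)$ in $s\Log\Alg^\pre_{(M \to A)/}$: the augmentation induces a map of simplicial cochain complexes $\Omega^\bullet_{P_\bullet/(M \to A)} \to \Omega^\bullet_f$ (with target viewed as simplicially constant), and $\alpha$ is its totalisation. Since the totalisation inherits the canonical truncation filtration on each column of the source bicomplex, $\alpha$ automatically carries the conjugate filtration on $\dR_f$ (Proposition \ref{prop:conjfiltgenerallddr}) into the canonical filtration $\tau_{\leq \bullet}\Omega^\bullet_f$ on the target.

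Next I would identify the associated graded pieces. Proposition \ref{prop:conjsslddr} supplies $\gr^\conj_i(\dR_f) \simeq \wedge^i L_{f^{(1)}}[-i]$. Kato's logarithmic Cartier isomorphism \cite[Theorem 4.12]{KatoLogFI} for log smooth maps of Cartier type (the not-necessarily-free generalisation of Theorem \ref{thm:logcartier}) gives $\gr^\can_i(\Omega^\bullet_f) \simeq H^i(\Omega^\bullet_f)[-i] \simeq \Omega^i_{f^{(1)}}[-i]$. Because $f$ is log smooth and Cartier type (hence, in particular, integral), Gabber's log cotangent complex $L_f$ is discrete and equal to the locally free $B$-module $\Omega^1_f$ (compare \cite[Theorem 8.16, Lemma 8.24]{OlssonLogCot}), so derived exterior powers are naive and $\wedge^i L_{f^{(1)}} \simeq \Omega^i_{f^{(1)}}$. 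Consequently the graded pieces on the two sides are abstractly isomorphic.

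The principal obstacle is to verify that the map induced by $\alpha$ on graded pieces really is this Cartier identification (rather than some other map between the same objects). I would attack this by reducing to the free case via Kato's chart theorem: log smooth maps of Cartier type are, strict étale-locally on the source, obtained by base change from a free map of the shape $(M \to \F_p[M]) \to T(X,Y)$. Combining the base change formula of Proposition \ref{prop:logddrproduct} with a strict étale localisation statement for log derived de Rham cohomology (proved exactly as Corollary \ref{cor:etalelocalisation} using the vanishing of $L_g$ for strict étale $g$) cuts the question down to free prelog algebras. In that setting, the identity $T(X,Y) \to T(X,Y)$ is already cofibrant as an $(M \to A)$-algebra by Remark \ref{rmk:cofiblogalg}, so $\alpha$ is literally an identification and compatibility with Cartier is built into the construction in Theorem \ref{thm:logcartier}. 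The convergence of the conjugate spectral sequence and the boundedness of the canonical filtration in the smooth range then promote the graded-piece equivalence to the desired isomorphism $\dR_f \simeq \Omega^\bullet_f$.
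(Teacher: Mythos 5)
Your overall strategy — compare the conjugate filtration on $\dR_f$ with the canonical truncation filtration on $\Omega^\bullet_f$, identify graded pieces via discreteness of $L_f$ and log Cartier — is the same as the paper's, so the core architecture is sound. But there is a gap in the reduction step.

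You claim that log smooth maps of Cartier type are, strict étale-locally, obtained by base change from a \emph{free} prelog map $(M \to \F_p[M]) \to T(X,Y)$. This is not what Kato's chart theorem provides. The charts for log smooth maps are of the form $(P \to \Z[P]) \to (Q \to \Z[Q])$ for an injective map of fine monoids $P \to Q$ (with a few extra hypotheses), and such a map is a free prelog algebra map precisely when $Q$ splits as $P \oplus \N^{(X)}$. Most interesting log smooth maps — starting with the semistable chart $\N \stackrel{\mathrm{diag}}{\to} \N^2$ — do not have this form, and the standard resolution $P_\bullet \to (Q \to \Z[Q])$ in $s\Log\Alg^\pre_{(P \to \Z[P])/}$ is genuinely non-trivial. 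So your final step (``the identity $T(X,Y) \to T(X,Y)$ is already cofibrant \ldots\ so $\alpha$ is literally an identification'') does not dispose of the chart case; it only handles the freer-than-generic situation. You would need to invoke Kato's logarithmic Cartier isomorphism \cite[Theorem 4.12 (1)]{KatoLogFI} directly for the chart map (as the paper does), together with the observation that the map $\alpha$ on conjugate graded pieces agrees with this Cartier isomorphism by naturality: both $\Cartier_i$ of Proposition \ref{prop:conjsslddr} and $C^{-1}$ of Kato are defined by colimiting the free-algebra Cartier map over the resolution, so they match under the augmentation $P_\bullet \to (N \to B)$. With that repair the proof goes through; the discreteness and projectivity of $L_f$ (via \cite[Corollary 8.29]{OlssonLogCot} or the references you cite) plus flatness of $\Frob$-twists then let the conjugate spectral sequence converge on both sides and finish the argument.
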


The log smoothness of $f$ implies that both $M$ and $N$ admit finitely generated charts, while the Cartier type assumption means that $M \to N$ is an integral map of integral monoids (so $M \to N$ is flat by Example \ref{ex:flatmonoid}), and that $\Frob_{f,\Mon}$ is an exact morphism of monoids. 

\begin{proof}
By \cite[Corollary 4.5]{KatoLogFI}, the map $A \to B$ is flat. Since $M \to N$ is flat as well, the homotopy pushout $f^{(1)}$ coincides with the usual one. Now by \cite[Corollary 8.29]{OlssonLogCot}, the cotangent complex $L_f$ coincides with the one coming from Olsson's theory, and hence $L_f$ is discrete with $\pi_0(L_f)$ projective and naturally isomorphic to Kato's $\Omega^1_f$. Twisting by Frobenius, we find that the same holds for $L_{f^{(1)}}$, and hence $\wedge^p L_{f^{(1)}} \simeq \Omega^p_{f^{(1)}}$. The claim now follows by comparing the conjugate filtrations on either side of the map $\dR_f \to \Omega^\bullet_f$ using \cite[Theorem 4.12 (1)]{KatoLogFI} (and the fact that $(X'',M'') = (X',M')$ in the notation of {\em loc. cit.} since $f$ is of Cartier type).
\end{proof}

We can also prove a connectivity estimate:

\begin{corollary}
\label{cor:logddrconnectivity}
Let $f:(M \to A) \to (N \to B)$ be a map in $s\Log\Alg^\pre_{ {\Z/p^n}/}$ for some $n \geq 1$. Assume that $\Omega^1_f$ is generated by $r$ elements for some $r \in \Z_{\geq 0}$. Then $\dR_f$ is $(-r-1)$-connected. 
\end{corollary}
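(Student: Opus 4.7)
My plan is to adapt the non-logarithmic argument of Corollary \ref{cor:connectivity} to the logarithmic setting, substituting the derived logarithmic Cartier theory (Proposition \ref{prop:conjsslddr}) for its non-logarithmic counterpart and using Remark \ref{rmk:logcotunivprop} to access $\Omega^1_f$ from $L_f$.

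First, I would reduce to characteristic $p$. A complex of $\Z/p^n$-modules is $k$-connected iff its derived reduction modulo $p$ is so, and by Proposition \ref{prop:logddrproduct} the formation of $\dR_f$ commutes with derived base change. The hypothesis on $\Omega^1_f$ is preserved under reduction modulo $p$, since $\Omega^1_f \otimes_B (B/p)$ surjects onto $\Omega^1_{\overline{f}}$, where $\overline{f}$ is the reduction of $f$. So I may assume $(M \to A) \to (N \to B)$ is a map of prelog $\F_p$-algebras.

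Next, I apply the conjugate spectral sequence of Proposition \ref{prop:conjsslddr}, which converges to $H_*(\dR_f)$ with graded pieces $\gr^\conj_i(\dR_f) \simeq \wedge^i L_{f^{(1)}}[-i]$. Since connectivity is detected on the associated graded of an increasing bounded below separated exhaustive filtration, it suffices to check that $\wedge^i L_{f^{(1)}}$ is $(i-r-1)$-connected for every $i \geq 0$. By Remark \ref{rmk:logcotunivprop}, $\pi_0(L_f) \simeq \Omega^1_f$, so a choice of $r$ generators of $\Omega^1_f$ lifts to a map $B^r \to L_f$ that is surjective on $\pi_0$. Writing $Q$ for the cofibre, one has a triangle $B^r \to L_f \to Q$ with $Q$ connective (since $L_f$ and $B^r$ are) and $\pi_0(Q) = 0$, i.e.\ $Q$ is $0$-connected. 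Base changing along $\Frob_f$ and using Lemma \ref{lem:frobtwistldr}, I obtain a triangle
\[ F \to L_{f^{(1)}} \to Q', \]
where $F = (B^{(1)})^r$ is free of rank $r$ and $Q' = Q \otimes_B B^{(1)}$ is still $0$-connected.

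Finally, I filter $\wedge^i L_{f^{(1)}}$ via the standard Koszul-type filtration coming from this triangle, whose associated graded pieces are $\wedge^a F \otimes \wedge^b Q'$ with $a+b = i$. The factor $\wedge^a F$ vanishes for $a > r$ and is free otherwise, while $\wedge^b Q'$ is $(b-1)$-connected because $Q'$ is $0$-connected and derived exterior powers of $k$-connected connective modules are $(k+b-1)$-connected (reduce to free resolutions). Hence every surviving piece has $a \leq r$, forcing $b \geq i-r$, and is $(i-r-1)$-connected; this gives the required estimate on $\wedge^i L_{f^{(1)}}$ and hence on $\dR_f$. The argument has no real obstacle beyond bookkeeping of connectivity conventions across the shifts $[-i]$; the substantive inputs (the logarithmic conjugate spectral sequence and the identification of $\pi_0(L_f)$ with $\Omega^1_f$) are already in hand.
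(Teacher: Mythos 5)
Your proof fleshes out exactly the adaptation the paper has in mind: reduce modulo $p$ by devissage, use the logarithmic conjugate filtration (Proposition \ref{prop:conjsslddr}) in place of Proposition \ref{prop:conjss}, build the triangle $F \to L_{f^{(1)}} \to Q'$ from generators of $\Omega^1_f$ via $\pi_0(L_f)\simeq\Omega^1_f$ (Remark \ref{rmk:logcotunivprop}) and the base-change Lemma \ref{lem:frobtwistldr}, and conclude by the Koszul filtration on $\wedge^i L_{f^{(1)}}$ exactly as in Corollary \ref{cor:connectivity}. The only blemishes are cosmetic: the base change is along $B\to B^{(1)}$ (the map denoted $\Frob_A$ in Notation \ref{not:frobtwistddr}), not along $\Frob_f$, and the parenthetical ``$(k+b-1)$-connected'' formula is only what you need for $k=0$, where it reads $(b-1)$-connected; neither affects the argument.
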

\begin{proof}
This is proven exactly like  Corollary \ref{cor:connectivity}.
\end{proof}

Next, we address transitivity in log derived de Rham theory.

\begin{proposition}
\label{prop:conjfiltlddrcomposite}
Let $(M \to A) \stackrel{f}{\to} (N \to B) \stackrel{g}{\to} (P \to C)$ be a composite of maps of prelog $\F_p$-algebras. Then $\dR_{g \circ f}$ admits an increasing bounded below separated exhaustive filtration with graded pieces of the form
\[ \dR_f \otimes_{\Frob_A^* B} \Frob_A^* \big( \wedge^n L_g [-n] \big),\]
where the second factor on the right hand side is the base change of $\wedge^n L_g[-n]$, viewed as an $B$-module, along the map $\Frob_A:B \to \Frob_A^* B$.
\end{proposition}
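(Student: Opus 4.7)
The plan is to mimic the proof of Proposition \ref{prop:conjfiltddrcomposite} using the logarithmic ingredients developed in this section. First I would choose a projective resolution $P_\bullet \to (N \to B)$ of $(N \to B)$ as an $(M \to A)$-algebra in $s\Log\Alg^\pre$, and then a termwise free resolution $P_\bullet \to Q_\bullet \to (P \to C)$ of the induced map $P_\bullet \to (P \to C)$, so that each $Q_k$ is a free $P_k$-prelog algebra augmented over $(P \to C)$. With these choices one has $\dR_{g \circ f} \simeq |\Omega^\bullet_{Q_\bullet/(M \to A)}|$ and $\dR_f \simeq |\Omega^\bullet_{P_\bullet/(M \to A)}|$, while $L_g$ is computed by the base change of $|L_{Q_\bullet/P_\bullet}|$ along $P_{\bullet,\Alg} \to B \to C$.

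For each $k \in \Delta^\opp$, transitivity for logarithmic de Rham cohomology along the composite $(M \to A) \to P_k \to Q_k$ (the log analog of \cite[\S 3]{KatzNilp}, available because $P_k \to Q_k$ is a free log smooth map) endows $\Omega^\bullet_{Q_k/(M \to A)}$ with an increasing bounded below separated exhaustive filtration whose $n$-th graded piece is the logarithmic de Rham complex $\Omega^\bullet_{P_k/(M \to A)}(\tau_{\leq n}\Omega^\bullet_{Q_k/P_k}/\tau_{\leq n-1}\Omega^\bullet_{Q_k/P_k})$, viewed as a complex of $P_{k,\Alg}$-modules equipped with the Gauss-Manin connection. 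Kato's logarithmic Cartier isomorphism (Theorem \ref{thm:logcartier}) then identifies the coefficient object with $\Frob_{P_k}^*\Omega^n_{Q_k/P_k}[-n]$, and the Gauss-Manin connection on this Frobenius pullback coincides with the induced Frobenius descent connection (the log analog of \cite[Theorem 5.10]{KatzNilp}, which one verifies by unwinding the explicit formulas for $C^{-1}$ in the free case from Theorem \ref{thm:logcartier}).

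At this point the logarithmic analog of Lemma \ref{lem:frobdescent}, whose proof is tautological since the Frobenius descent connection is by construction the first differential of the target, rewrites the $n$-th graded piece as
\[ \Omega^\bullet_{P_k/(M \to A)} \otimes_{\Frob_A^* P_{k,\Alg}} \Frob_A^*\bigl(\wedge^n L_{Q_k/P_k}[-n]\bigr), \]
using that $\Omega^n_{Q_k/P_k} \simeq \wedge^n L_{Q_k/P_k}$ for the free log map $P_k \to Q_k$ (by Example \ref{ex:freelogalg}). Taking the homotopy-colimit over $k \in \Delta^\opp$, passing the filtration through the colimit, and identifying the resulting terms using the equivalences recorded in the first paragraph then produces the claimed increasing bounded below separated exhaustive filtration on $\dR_{g \circ f}$ with $n$-th graded piece
\[ \dR_f \otimes_{\Frob_A^* B} \Frob_A^*\bigl(\wedge^n L_g [-n]\bigr). \]

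The main obstacle is setting up the transitivity filtration together with its Gauss-Manin connection cleanly in the logarithmic context: one must verify both that the relevant logarithmic connection exists on each stage of the simplicial complex $\Omega^\bullet_{Q_\bullet/P_\bullet}$ and that its identification as a Frobenius descent connection (via the log Cartier isomorphism) is compatible with the simplicial structure maps. Once these coherences are in hand, the rest of the argument is a direct transcription of the non-logarithmic proof of Proposition \ref{prop:conjfiltddrcomposite}, with Theorem \ref{thm:logcartier} replacing Theorem \ref{thm:classicalcartier}.
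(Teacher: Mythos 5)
Your proposal is correct and follows essentially the same route as the paper: the paper's own proof is the one-line statement ``This is proven like Proposition \ref{prop:conjfiltddrcomposite} using Proposition \ref{prop:conjsslddr} instead of Proposition \ref{prop:conjss},'' and your write-up is exactly that transcription — choose a projective resolution $P_\bullet$ of $(N \to B)$ and a termwise free $P_\bullet$-resolution $Q_\bullet$ of $(P \to C)$, filter each $\Omega^\bullet_{Q_k/(M \to A)}$ by the transitivity/Gauss--Manin filtration, identify the graded coefficients via the logarithmic Cartier isomorphism (Theorem \ref{thm:logcartier} in place of Theorem \ref{thm:classicalcartier}), pass the Frobenius descent connection through the log analog of Lemma \ref{lem:frobdescent}, and take the homotopy colimit over $\Delta^\opp$. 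The coherence points you flag (compatibility of the transitivity filtration and the Frobenius-descent identification with the simplicial structure) are genuine but are also elided in the paper, so your proposal is no less complete than the original.
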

\begin{proof}
This is proven like Proposition \ref{prop:conjfiltddrcomposite} using Proposition \ref{prop:conjsslddr} instead of Proposition \ref{prop:conjss}.
\end{proof}

To move further, we need a definition:

\begin{definition}
A map $f$ in $\Log\Alg^\pre_{\F_p/}$ is called {\em relatively perfect} if $\Frob_f$ is an isomorphism. A map $f$ in $\Log\Alg^\pre_{\Z/p^n/}$ is called {\em relatively perfect modulo $p$} if $f \otimes_{\Z} \F_p$ is relatively perfect; similarly for maps in $\Log\Alg^\pre_{\Z_p/}$.
\end{definition}

\begin{example}
\label{ex:relperfectlogalg}
Let $f:(M \to A) \to (N \to B)$ be a map of prelog $\F_p$-algebras. Assume that $M$ and $N$ are uniquely $p$-divisible, and that $A$ and $B$ are perfect. Then $f$ is relatively perfect. Indeed, the Frobenius $\Frob_{(M \to A)}$ is an isomorphism by the assumptions on $M$ and $A$, so the derived pushout $f^{(1)}$ coincides with the underived one, and $\Frob_f$ is natually identified with $\Frob_{(N \to B)}$; the latter is an isomorphism by the assumptions $N$ and $B$.
\end{example}

The basic result concerning relatively perfect maps is an analogue of Corollary \ref{cor:ddretaleinv}.

\begin{corollary}
\label{cor:relperfectldr}
Let $f:(M \to A) \to (N \to B)$ be a relatively perfect map in $\Log\Alg^\pre_{\F_p/}$. Then $L_f \simeq 0$, and $\dR_f \simeq B$.
\end{corollary}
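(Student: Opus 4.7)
The plan is to mirror the argument of Corollary \ref{cor:ddretaleinv} in the logarithmic context. First I would establish that $L_f \simeq 0$, and then deduce $\dR_f \simeq B$ via the conjugate spectral sequence of Proposition \ref{prop:conjsslddr}.

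For the vanishing of $L_f$, factor $f$ as the composition $(M \to A) \xrightarrow{f^{(1)}} (N \to B)^{(1)} \xrightarrow{\Frob_f} (N \to B)$ and invoke Gabber's transitivity triangle for the log cotangent complex (\cite[Theorem 8.14]{OlssonLogCot}) to obtain an exact triangle
\[ L_{f^{(1)}} \otimes_{B^{(1)}} B \to L_f \to L_{\Frob_f}. \]
Since $\Frob_f$ is an equivalence by hypothesis, $L_{\Frob_f} \simeq 0$, and hence the first map is an equivalence. The key claim is that this same map is also nullhomotopic, which then forces $L_f \simeq 0$.

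To verify the vanishing of this map, I would use a projective resolution $P_\bullet \to (N \to B)$ in $s\Log\Alg^\pre_{(M \to A)/}$ to represent $\Frob_f$ by the relative Frobenius $\Frob_{P_\bullet} : P_\bullet^{(1)} \to P_\bullet$ between resolutions. On a free prelog algebra $P_n = T(X,Y)$, Example \ref{ex:freelogalg} describes $\Omega^1_{P_n^{(1)}/(M \to A)}$ as a free module with generators coming from $d\log(x)$ for $x \in X$ and $dy$ for $y \in Y$; the relative Frobenius sends these to $d\log(x^p) = p \cdot d\log(x)$ and $d(y^p) = p \cdot y^{p-1} dy$ respectively, both of which vanish in characteristic $p$. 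Passing to the totalization then gives the desired nullhomotopy.

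Finally, combining $L_f \simeq 0$ with the base change $L_{f^{(1)}} \simeq L_f \otimes_B B^{(1)}$ from Lemma \ref{lem:frobtwistldr} kills all higher exterior powers, so the conjugate spectral sequence of Proposition \ref{prop:conjsslddr} collapses to $\dR_f \simeq \gr^\conj_0(\dR_f) \simeq B^{(1)} \simeq B$, where the last equivalence uses that $\Frob_f$ is an isomorphism on underlying algebras. The main obstacle I anticipate is purely homotopy-coherent bookkeeping: one must confirm that the map produced by Gabber's transitivity triangle agrees, in the derived category of $B$-modules, with the one computed from relative Frobenius on a cofibrant replacement. The log differential computation itself is entirely routine once this framework is in place.
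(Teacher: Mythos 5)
Your argument is correct and matches the paper's intended proof: Corollary~\ref{cor:relperfectldr} is proven ``like Corollary~\ref{cor:ddretaleinv},'' whose core is exactly your observation that the relative Frobenius on a free resolution sends generators of $\Omega^1$ to $p$-divisible elements (hence $0$ in characteristic $p$), while being an isomorphism on cotangent complexes when $\Frob_f$ is invertible; your use of the transitivity triangle to package the ``isomorphism'' half is a minor reformulation of the same idea rather than a different route. The conjugate spectral sequence collapse for the second claim is likewise identical.
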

\begin{proof}
This is proven like Corollary  \ref{cor:ddretaleinv}. 
\end{proof}

In Question \ref{ques:etaleinv}, we asked if the vanishing of the relative cotangent complex characterises relatively perfect maps of simplicial commutative rings. In the present logarithmic context, this question has a negative answer:

\begin{example}[A non-relatively perfect map $f$ with $L_f = 0$]
\label{ex:ltrivialnotwe}
 Let $k$ be a field of characteristic $p$, and consider $f:Y := (\N^2 \to k[x,y]) \to  X := (\N_a \to k[x,y,xy^{-1},yx^{-1}])$ where the first prelog ring is the usual one, and the second one is the log structure defined by the submonoid of $\Z^2$ generated by $\N^2$ and $\pm(1,-1)$ mapping to the algebra in the obvious way; this map is the first map in the exactification of $(\N^2 \to k[\N^2]) \to (\N \to k[\N])$ defined by the sum map $\N^2 \to \N$, and therefore is log \'etale in the sense of Kato. Since $Y$ is free over $k$, we see that $L_{Y/k} \simeq \Omega^1_{Y/k}$ is a free $k[x,y]$-module of rank $2$ with generators $d\log(x)$ and $d\log(y)$. Using \cite[Lemma 8.23]{OlssonLogCot}, one can compute that $L_{X/k}$ is also free of rank $2$ on generators $d\log(x)$ and $d(xy^{-1}) = (x y^{-1}) \big(d\log(x) + d\log(y)\big)$. Since $xy^{-1} \in X_\Alg$ is a unit, one easily sees that $L_{X/k} \otimes_{X_\Alg} Y_\Alg \to L_{Y/k}$ is an isomorphism. The transitivity triangle \cite[Theorem 8.14]{OlssonLogCot} then shows that $L_f \simeq 0$. However, the map $\Frob_f$ is not an isomorphism since $\Frob_{f,\Alg}$ is not so: the map $\Frob_{f,\Alg}$ is 
\[ X^{(1)}_\Alg := k[x^{\frac{1}{p}},y^{\frac{1}{p}},xy^{-1},yx^{-1}]  \to [x^{\frac{1}{p}},y^{\frac{1}{p}},(xy^{-1})^{\frac{1}{p}},(yx^{-1})^{\frac{1}{p}}] = X_\Alg \]
as $k[x^{\frac{1}{p}},y^{\frac{1}{p}}]$-algbera map, i.e., it is a non-trivial normalisation map. Thus, $f$ is a log \'etale map of prelog rings with a vanishing cotangent complex that is not relatively perfect. 
\end{example}

Next, we present some computations that will be useful in $p$-adic applications. First, we compute the log derived de Rham cohomology of the monoid algebra of a uniquely $p$-divisible monoid:

\begin{corollary}
\label{cor:invlogddrring}
The map $f:(0 \to \Z) \to (\Q_{\geq 0} \to \Z[\Q_{\geq 0}])$ is relatively perfect modulo $p$ and 
\[ \dR_f  \otimes_{\Z} \Z/p^n \simeq \Z/p^n[\Q_{\geq 0}]. \]
\end{corollary}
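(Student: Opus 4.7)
First, the relative perfectness modulo $p$ is immediate from Example \ref{ex:relperfectlogalg}: the monoid $\Q_{\geq 0}$ is uniquely $p$-divisible (so its Frobenius, multiplication by $p$, is an isomorphism), and the algebra $\F_p[\Q_{\geq 0}]$ is perfect because $[q/p]^p = [q]$ for every $q \in \Q_{\geq 0}$. Hence $f \otimes_\Z \F_p$ is relatively perfect, and applying Corollary \ref{cor:relperfectldr} yields $L_{f \otimes_\Z \F_p} \simeq 0$ together with an equivalence $\dR_{f \otimes_\Z \F_p} \simeq \F_p[\Q_{\geq 0}]$.

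My plan for the second claim is to bootstrap from this mod-$p$ computation via base change and devissage. Since $\Z[\Q_{\geq 0}]$ is $\Z$-flat, the homotopy pushouts of $f$ in $s\Log\Alg^\pre$ along the strict maps $(0 \to \Z) \to (0 \to \Z/p^n)$ and $(0 \to \Z/p^n) \to (0 \to \F_p)$ are both computed naively. Proposition \ref{prop:logddrproduct} then supplies natural equivalences
\[ \dR_f \otimes_\Z \Z/p^n \simeq \dR_{f \otimes_\Z \Z/p^n} \quad \text{and} \quad \dR_{f \otimes_\Z \Z/p^n} \otimes_{\Z/p^n} \F_p \simeq \dR_{f \otimes_\Z \F_p} \simeq \F_p[\Q_{\geq 0}]. \]
It therefore suffices to show that the Hodge augmentation $\alpha : \dR_{f \otimes_\Z \Z/p^n} \to \Z/p^n[\Q_{\geq 0}]$, that is, the projection modulo $\Fil^1_H$, is an equivalence.

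By devissage for complexes of $\Z/p^n$-modules, it is enough to check that the mod-$p$ reduction of $\alpha$ is an equivalence. Using the displayed identifications, this reduction is a natural $\F_p[\Q_{\geq 0}]$-algebra map between discrete objects whose $\pi_0$'s are both canonically $\F_p[\Q_{\geq 0}]$, so it must be the identity on $\pi_0$ and is hence an equivalence. The main conceptual input is the characteristic-$p$ computation via Corollary \ref{cor:relperfectldr}; the only mildly delicate point is confirming that the mod-$p$ reduction of $\alpha$ matches the equivalence from Corollary \ref{cor:relperfectldr} up to coherent homotopy, but this is automatic from naturality and the common identification on $\pi_0$.
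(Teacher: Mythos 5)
Your proposal is correct and takes essentially the same route as the paper, which invokes Example \ref{ex:relperfectlogalg} for relative perfectness, then Corollary \ref{cor:relperfectldr} together with devissage for the cohomology computation. One small imprecision at the end: the mod-$p$ reduction of the Hodge augmentation $\alpha$ is not literally the identity on $\pi_0$; via the conjugate filtration it is identified with the relative Frobenius $\Frob_f\colon \F_p[\Q_{\geq 0}]^{(1)} \to \F_p[\Q_{\geq 0}]$, which is an isomorphism precisely because of the relative perfectness you established in the first step — so the conclusion is unaffected.
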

\begin{proof}
The first claim implies the second by devissage and Corollary \ref{cor:relperfectldr}, and can be proven using Example \ref{ex:relperfectlogalg}.
\end{proof}

\begin{remark}
Corollary \ref{cor:invlogddrring} is completely false in characteristic $0$: $\dR_f \otimes_{\Z} \Q$ is not even discrete. In fact, using Proposition \ref{prop:logddrchar0}, one can show that $\dR_f \otimes_{\Z} \Q \simeq \dR_{(\N \to \Q[\N])/(0 \to \Q)} \simeq \Q \oplus \Q[-1]$, with the non-trivial generator in degree $1$ corresponding to $d\log(``1")$ , where $``1" \in \Q_{\geq 0}$ is the evident element.
\end{remark}

Next, we study the effect of ``adding'' a uniquely $p$-divisible monoid to a prelog ring:

\begin{corollary}
\label{cor:invlogddrmon}
The map $f:(0 \to \Z[\Q_{\geq 0}]) \to (\Q_{\geq 0} \to \Z[\Q_{\geq 0}])$ is relatively perfect modulo $p$, and 
\[\dR_f \otimes_{\Z} \Z/p^n \simeq \Z/p^n[\Q_{\geq 0}]. \]
\end{corollary}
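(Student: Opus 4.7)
The plan is to follow the template of Corollary~\ref{cor:invlogddrring}: first verify that $f$ is relatively perfect modulo $p$ using Example~\ref{ex:relperfectlogalg}, and then upgrade the resulting mod-$p$ computation to a mod-$p^n$ statement using base change in log derived de Rham cohomology together with devissage.

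For relative perfectness modulo $p$, the reduction $f \otimes_\Z \F_p$ is the map $(0 \to \F_p[\Q_{\geq 0}]) \to (\Q_{\geq 0} \to \F_p[\Q_{\geq 0}])$. The monoids $0$ and $\Q_{\geq 0}$ are (trivially or visibly) uniquely $p$-divisible, and the algebra $\F_p[\Q_{\geq 0}]$ is perfect: its Frobenius acts by $[q] \mapsto [pq]$ on monomials, and multiplication by $p$ is an automorphism of $\Q_{\geq 0}$. Example~\ref{ex:relperfectlogalg} then identifies $f \otimes_\Z \F_p$ as relatively perfect, which gives the first assertion of the corollary.

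Next, Corollary~\ref{cor:relperfectldr} applied to $f \otimes_\Z \F_p$ yields $\dR_{f \otimes_\Z \F_p} \simeq \F_p[\Q_{\geq 0}]$, with the identification realised by the natural map from the target algebra to $\dR_{f \otimes_\Z \F_p}$ (i.e., the degree-$0$ column of the bicomplex computing $\dR_{f \otimes_\Z \F_p}$). Base change (Proposition~\ref{prop:logddrproduct}, second assertion, applied with $f_2:(0 \to \Z[\Q_{\geq 0}]) \to (0 \to \F_p[\Q_{\geq 0}])$) rewrites this as $\dR_f \otimes_\Z \F_p \simeq \F_p[\Q_{\geq 0}]$, so the natural map $\Z[\Q_{\geq 0}] \to \dR_f$ becomes a quasi-isomorphism after reduction modulo $p$. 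Devissage (as recalled in \S\ref{sec:notation}) then promotes this mod-$p$ quasi-isomorphism to a quasi-isomorphism $\Z/p^n[\Q_{\geq 0}] \simeq \dR_f \otimes_\Z \Z/p^n$ for every $n \geq 1$, which is the second assertion. There is no real obstacle: the argument is essentially parallel to Corollary~\ref{cor:invlogddrring}, and the only point specific to the present setup beyond the standard formalism is the perfectness of $\F_p[\Q_{\geq 0}]$.
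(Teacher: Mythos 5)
Your proof is correct and matches the paper's own (terse) argument: relative perfectness modulo $p$ via Example~\ref{ex:relperfectlogalg} (using unique $p$-divisibility of $\Q_{\geq 0}$ and perfectness of $\F_p[\Q_{\geq 0}]$), then Corollary~\ref{cor:relperfectldr} and devissage to upgrade the mod-$p$ identification to mod~$p^n$. The paper compresses this into a one-line citation of the same three ingredients, so you have simply fleshed out the intended argument.
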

\begin{proof}
The first claim implies the second by devissage and Corollary \ref{cor:relperfectldr}, and can proven using Example \ref{ex:relperfectlogalg}.
\end{proof}

\begin{remark}
Corollary \ref{cor:invlogddrring} and \ref{cor:invlogddrmon} admit several generalisations. For example, the monoid $\Q_{\geq 0}$ may be replaced by any uniquely $p$-divisible monoid, and the algebras $\Z$ and $\Z[\Q_{\geq 0}]$ can be replaced by any algebras that are perfect modulo $p$; we leave such matters to the reader.
\end{remark}

We end this section by recording the presence of the Gauss-Manin connection on log derived de Rham cohomology.

\begin{proposition}
	\label{prop:logcrystrans}
	Let $(M \to A) \stackrel{f}{\to} (N \to B) \stackrel{g}{\to} (P \to C)$ be a composite of maps of $\Z/p^n$-algebras. Assume that $f$ is a free map. Then the $B$-module $\dR_g$ admits a flat connection relative to $f$ that is functorial in $g$.
\end{proposition}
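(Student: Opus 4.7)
The plan is to imitate the classical Katz--Oda construction of the Gauss--Manin connection in each simplicial degree and then totalize over $\Delta^\opp$.

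Since $f$ is free, choose a free $(N \to B)$-algebra resolution $Q_\bullet \to (P \to C)$ in $s\Log\Alg^\pre$. Because $\Free^{\Set \times \Set}$ is a left adjoint (so a composition of free maps is free), each $Q_n$ is simultaneously a free $(M \to A)$-algebra, and therefore $Q_\bullet \to (P \to C)$ is also a free $(M \to A)$-algebra resolution. Consequently the bicomplex $\Omega^\bullet_{Q_\bullet/(M \to A)}$ computes $\dR_{(P \to C)/(M \to A)}$ while $\Omega^\bullet_{Q_\bullet/(N \to B)}$ computes $\dR_g$, and the two are linked through a single resolution.

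For each simplicial degree $n$, the freeness of both $f$ and $(M \to A) \to Q_n$ makes the transitivity sequence for log K\"ahler differentials
\[
0 \to \Omega^1_f \otimes_B Q_{n,\Alg} \to \Omega^1_{Q_n/(M \to A)} \to \Omega^1_{Q_n/(N \to B)} \to 0
\]
split exact with free terms (using the explicit description from Example \ref{ex:freelogalg}). Taking wedge powers yields a Koszul-type decomposition
\[
\Omega^k_{Q_n/(M \to A)} \simeq \bigoplus_{i+j=k} \Omega^i_f \otimes_B \Omega^j_{Q_n/(N \to B)},
\]
and hence a decreasing, bounded, separated, exhaustive Katz--Oda filtration $F^\bullet$ on the complex $\Omega^\bullet_{Q_n/(M \to A)}$ with $\gr^i_F \simeq \Omega^i_f \otimes_B \Omega^{\bullet-i}_{Q_n/(N \to B)}$. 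These filtrations are compatible in $n$, and totalizing over $\Delta^\opp$ produces a filtration on $\dR_{(P \to C)/(M \to A)}$ with graded pieces $\Omega^i_f \otimes_B \dR_g$.

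The two-step subquotient $F^0/F^2$ of this filtration now fits in an exact triangle
\[
\Omega^1_f \otimes_B \dR_g \to F^0/F^2 \to \dR_g \xrightarrow{\nabla} \Omega^1_f \otimes_B \dR_g\,[1]
\]
in $D(\Mod_B)$, and the boundary map $\nabla$ is the desired (derived) Gauss--Manin connection on $\dR_g$; the full Katz--Oda filtration on $\dR_{(P \to C)/(M \to A)}$ encodes its flatness in the standard way (the $E_1$-page of the associated spectral sequence is the de Rham complex of $\dR_g$ with coefficients in $\Omega^\bullet_f$, and its $d_1$-differential is $\nabla$). Functoriality in $g$ follows from functorial choice of free resolutions up to homotopy. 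The main obstacle I anticipate is model-categorical bookkeeping: one must verify that $\nabla$ is well-defined in the derived category, independent of the choice of $Q_\bullet$. This should follow from homotopy-uniqueness of cofibrant replacements in $s\Log\Alg^\pre$ (Proposition \ref{prop:modelstrlogalg}), together with the standard argument that homotopy equivalences of free resolutions induce compatible equivalences on filtered totalizations.
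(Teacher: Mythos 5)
Your proof is correct. It constructs the Gauss–Manin connection directly via the Katz–Oda filtration on the log de Rham bicomplex $\Omega^\bullet_{Q_\bullet/(M \to A)}$, exactly along the lines used earlier in Proposition~\ref{prop:conjfiltddrcomposite}: in each simplicial degree the transitivity sequence for $\Omega^1$ (valid because $f$ and $(N\to B)\to Q_n$ are free, by the explicit description of Example~\ref{ex:freelogalg}) splits with free terms, the wedge-power filtration descends to the bicomplex, and the boundary map of the $F^0/F^2$ triangle gives the connection after totalizing. The one thing to note is that the Katz–Oda recipe actually produces a genuine chain-level connection on each $\Omega^\bullet_{Q_n/(N\to B)}$ (not just a derived boundary map) — the connection is the composite of the ambient de Rham differential with the filtration projections — so the ``model-categorical bookkeeping'' you flag is lighter than you fear: the connection and its flatness are already defined termwise, and the homotopy colimit over $\Delta^\opp$ then produces the derived statement for free.

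The paper's own proof is shorter but less self-contained: it invokes Kato's comparison theorem (\cite[Theorem~6.4]{KatoLogFI}) to identify each $\Omega^\bullet_{Q_n/(N\to B)}$ with log crystalline cohomology of $Q_n$ over $(N\to B)$, functorially in $Q_n$, and then cites the standard fact that crystalline cohomology over a base carries a Gauss–Manin connection relative to a further base. In effect the paper black-boxes the Katz–Oda construction inside the crystalline formalism, whereas you re-derive it from scratch. The two routes produce the same connection (Kato's crystalline Gauss–Manin connection \emph{is} the Katz–Oda one, unwound). Your version is more elementary — it does not require having set up log crystalline sites and their functoriality — at the modest cost of some extra filtration bookkeeping; the paper's version is a one-liner given the earlier comparison machinery. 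Both are correct, and the choice is largely a matter of how much crystalline theory one wants to assume.
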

\begin{proof}
	Let $Q_\bullet \to (P \to C)$ be a free resolution of $g$. Then $\Omega^\bullet_{Q_n/(N \to B)}$ is naturally a complex of $B$-modules that admits a flat connection relative to $f$; a direct way to see this is to use the isomorphism of $\Omega^\bullet_{Q_n/(N \to B)}$ with $\R\Gamma_\crys( Q_n/(N \to B), \calO_\crys)$ that is functorial in $Q_n$ by \cite[Theorem 6.4]{KatoLogFI}. Taking a homotopy colimit over $n \in \Delta^\opp$ then proves the desired statement.
\end{proof}

\subsection{Comparison with log crystalline cohomology}
\label{ss:logddrlogcrys}

Our goal in this section is to prove a reasonably general comparison result between log derived de Rham cohomology and log crystalline cohomology in the sense of Kato \cite[\S 5 - 6]{KatoLogFI}; since the proof follows the same steps as that of Theorem \ref{thm:ddrcryscomp}, we only sketch steps. First, we construct the comparison map in complete generality:

\begin{proposition}
\label{prop:lddrcryscompmap}
Let $f$ be a map in $\Log\Alg^\pre_{ {\Z/p^n}/}$ for some $n \geq 1$. Then there is a natural map of Hodge-filtered $E_\infty$-algebras
\[ \comp_f:\dR_f \to \R\Gamma(f_\crys,\calO_\crys).\]
\end{proposition}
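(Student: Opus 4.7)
My plan is to follow the proof of Proposition \ref{prop:ddrcryscompmap} essentially verbatim, substituting Kato's logarithmic pd-envelopes and his log analog of Berthelot's theorem (see \cite[\S 5--6]{KatoLogFI}, especially the log analog of \cite[Theorem V.2.3.2]{BerthelotCCLNM}) for the classical constructions.

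Write $f:(M \to A) \to (N \to B)$ and let $P_\bullet \to (N \to B)$ denote the canonical free resolution of $(N \to B)$ as an $(M \to A)$-algebra in $s\Log\Alg^\pre$. For each $k \geq 0$, the augmentation $P_k \to (N \to B)$ is a surjection of prelog rings from a free object, so Kato's construction provides a log pd-envelope $D(P_k)$ of $P_k$ along the prelog ideal determined by $\ker(P_{k,\Alg} \to B)$, compatible with the canonical divided powers on $(p)$. Kato's logarithmic Berthelot comparison then yields a filtered quasi-isomorphism
\[ \Omega^\bullet_{P_k/(M \to A)} \otimes_{P_{k,\Alg}} D(P_k) \xrightarrow{\simeq} \R\Gamma\bigl( (P_k/(M \to A))_\crys, \calO_\crys \bigr) \]
functorial in $P_k$, and the natural inclusion $P_{k,\Alg} \hookrightarrow D(P_k)$ induces a map of simplicial cochain complexes
\[ \Omega^\bullet_{P_\bullet/(M \to A)} \longrightarrow \Omega^\bullet_{P_\bullet/(M \to A)} \otimes_{P_{\bullet,\Alg}} D(P_\bullet). \]
Totalising both sides yields $\comp_f$. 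On the left we recover $\dR_f$ by definition; on the right the augmentation $P_\bullet \to (N \to B)$ induces, after totalisation, an equivalence with $\R\Gamma(f_\crys, \calO_\crys)$, since the $0$-simplex $\Omega^\bullet_{P_0/(M \to A)} \otimes_{P_{0,\Alg}} D(P_0)$ already computes the latter and the remaining terms assemble into a contractible augmented object. The Hodge filtration is preserved because it lives entirely in the left tensor factor and is unaffected by passage to the log pd-envelope.

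Alternatively, and perhaps more cleanly, one can construct $\comp_f$ by left Kan extension in the spirit of Remark \ref{rmk:lurielke} and Remark \ref{rmk:lkedrcrys}. By Remark \ref{rmk:lkeldr}, the free prelog $(M \to A)$-algebras $\Log\Alg^{\pre,\Free}_{(M\to A)/}$ generate $s\Log\Alg^\pre_{(M\to A)/}$ under homotopy-colimits, and the functor $(N \to B) \mapsto \dR_{(N \to B)/(M \to A)}$ is the left Kan extension from this subcategory of the usual log de Rham complex. On free objects $F \in \Log\Alg^{\pre,\Free}_{(M\to A)/}$, Kato's comparison gives a Hodge-filtered $E_\infty$-map $\Omega^\bullet_{F/(M \to A)} \to \R\Gamma((F/(M \to A))_\crys, \calO_\crys)$; defining log crystalline cohomology of a simplicial prelog ring by evaluation on $\pi_0$ (as in Remark \ref{rmk:lkedrcrys}), extending by left Kan extension produces $\comp_f$ for arbitrary $f$, and functoriality is automatic.

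The main obstacle is purely technical: one must verify that Kato's log pd-envelope assembles coherently as $k$ varies (i.e.\ lifts to a functor on the resolution $P_\bullet$), and that the resulting augmented simplicial cochain complex on the right-hand side is indeed constant-quasi-isomorphic to $\R\Gamma(f_\crys, \calO_\crys)$. Both points reduce to the statement that Kato's comparison is functorial along surjections between free prelog $(M \to A)$-algebras, which is built into the construction in \cite[\S 6]{KatoLogFI}; there are no new conceptual issues beyond those already handled in Proposition \ref{prop:ddrcryscompmap}.
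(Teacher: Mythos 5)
Your proposal follows the paper's proof essentially verbatim: take a projective resolution $P_\bullet \to (N \to B)$, apply Kato's log pd-envelope construction simplex-by-simplex, invoke Kato's comparison theorem \cite[Theorem 6.4]{KatoLogFI} to identify the resulting bicomplex with the constant simplicial object on $\R\Gamma(f_\crys,\calO_\crys)$, and totalise. The alternative Kan-extension construction you mention is also exactly the one recorded in Remark~\ref{rmk:lkeldrcrys} of the paper.

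Two small corrections. First, your displayed quasi-isomorphism should read $\R\Gamma\bigl(((N \to B)/(M \to A))_\crys, \calO_\crys\bigr)$ on the right, not the crystalline cohomology of $P_k$: since $P_k$ is free over $(M \to A)$, its own crystalline cohomology is just $\Omega^\bullet_{P_k/(M \to A)}$ and needs no pd-envelope; the pd-envelope enters because it is the crystalline cohomology of the target $(N \to B)$ that is being computed via the embedding into the log scheme of $P_k$. Second, the phrase ``log pd-envelope along the prelog ideal determined by $\ker(P_{k,\Alg}\to B)$'' elides the key log-specific step: in Kato's theory one first \emph{exactifies} the surjection $P_k \to (N\to B)$ (which also kills the cokernel on monoids) and only then takes the ordinary pd-envelope of the resulting strict closed immersion; the paper makes this explicit because without exactification the construction would miss the monoid part of the data. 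Neither point is a genuine gap, but they are worth stating precisely.
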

\begin{proof}
Let $f:(M \to A) \to (N \to B)$ be the map under consideration, and let $P_\bullet \stackrel{b_\bullet}{\to} (N \to B)$ be a projective resolution in $s\Log\Alg^\pre_{(M \to A)/}$. For each $n$, the composition $(M \to A) \stackrel{a_n}{\to} P_n \stackrel{b_n}{\to} (N \to B)$ is a free map $a_n$ followed by a map $b_n$ that is an effective epimorphism, i.e., both $b_{n,\Alg}$ and $b_{n,\Mon}$ are surjective. Let $P_n \to D(b_n) \to (N \to B)$ be the logarithmic pd-envelope of $b_n$ in the sense of \cite[Definition 5.4]{KatoLogFI}; this is computed by first exactifying $b_n$ in the sense of \cite[Proposition 4.10 (1)]{KatoLogFI}, and then taking the pd-envelope of the resulting strict map. Since the formation of logarithmic pd-envelopes is functorial, we obtain a natural map of bicomplexes
\begin{equation}
\label{eq:comp-ldr-lcrys-bicomplex}
 \Omega^\bullet_{P_\bullet/(M \to A)} \to \Omega^\bullet_{P_\bullet/(M \to A)} \otimes_{P_{\bullet,\Alg}} D(b_\bullet)_{\Alg}
\end{equation}
Kato's theorem \cite[Theorem 6.4]{KatoLogFI} shows that 
\[ \R\Gamma(f_\crys,\calO_\crys) \simeq \Omega^\bullet_{a_n} \otimes_{P_{n,\Alg}} D(b_n)_{\Alg} \]
for each $n$, and so the right hand side of the map \eqref{eq:comp-ldr-lcrys-bicomplex} is quasi-isomorphic to the constant simplicial object on $\R\Gamma(f_\crys,\calO_\crys)$. More precisely, the natural map
 
\[ \Omega^\bullet_{a_0} \otimes_{P_{0,\Alg}} D(b_0)_{\Alg} \to |\Omega^\bullet_{P_\bullet/(M \to A)} \otimes_{P_{\bullet,\Alg}} D(b_\bullet)_{\Alg}| \]
is an equivalence with both sides computing logarithmic crystalline cohomology. Totalising the map \eqref{eq:comp-ldr-lcrys-bicomplex} then yields the desired map
\begin{equation*}
 \dR_f := |\Omega^\bullet_{P_\bullet/(M \to A)}| \to |\Omega^\bullet_{P_\bullet/(M \to A)} \otimes_{P_{\bullet,\Alg}} D(b_\bullet)_{\Alg}| \simeq \R\Gamma(f_\crys,\calO_\crys). \qedhere
\end{equation*}
\end{proof}

\begin{remark}
\label{rmk:lkeldrcrys}
Using Remark \ref{rmk:lkeldr}, one can give a direct construction of the map $\comp_f$ as in Remark \ref{rmk:lkedrcrys}.
\end{remark}

Next, we single out the class of maps we will prove the comparison theorem for:

\begin{definition}
A map $f:(M \to A) \to (N \to B)$ of prelog $(0 \to \Z/p^n)$-algebras is called a {\em $G$-lci map} if both $A$ and $B$ are $\Z/p^n$-flat, and $f$ can be factored as $(M \to A) \stackrel{a}{\to} (P \to F) \stackrel{b}{\to} (N \to B)$ with $a$ an inductive limit of maps which are log smooth and of Cartier type modulo $p$, and $b$ a strict effective epimorphism. 
\end{definition}

\begin{example}
Let $W$ be the ring of Witt vectors of a perfect field $k$ of characteristic $p$, and let $\calO_K$ be the ring of integers in a finite extension of $\Frac(W)$, and let $\overline{\calO_K}$ be an absolute integral closure of $\calO_K$. The primary examples of $G$-lci maps we will be interested in are the modulo $p^n$ reductions of: the map $(0 \to W) \to (\calO_K - \{0\} \to \calO_K)$, the map $(0 \to W) \to (\overline{\calO_K} - \{0\} \to \overline{\calO_K})$, and the map $(\calO_K - \{0\} \to \calO_K) \to (N \to B)$ obtained from an affine patch of a semistable $\calO_K$-variety.
\end{example}

Our main theorem here is:

\begin{theorem}
\label{thm:lddrcryscomp}
Let $f:(M \to A) \to (N \to B)$ be a $G$-lci map in $\Log\Alg^\pre_{ {\Z/p^n}/}$ for some $n \geq 1$. Then the map $\comp_f$ from Proposition \ref{prop:lddrcryscompmap} is an isomorphism.
\end{theorem}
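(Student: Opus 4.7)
The plan is to mimic the proof of Theorem \ref{thm:ddrcryscomp}, using $a$ in place of the free algebra (smooth part) and $b$ in place of the strict lci quotient. First, I would reduce to the case $n = 1$ by devissage: both sides commute with $-\otimes_{\Z/p^n}\F_p$ (for the log crystalline side, Kato's description via log pd-envelopes \cite[Theorem 6.4]{KatoLogFI} together with a logarithmic analogue of Lemma \ref{lem:basechangepdenv} for strict lci surjections gives base change; for the log derived side this is Proposition \ref{prop:logddrproduct}). Since every construction in sight commutes with filtered colimits, I may also replace $a$ by a map that is log smooth and of Cartier type, rather than an inductive limit of such; the inductive limit is only used to pass to the unit $1 \in F$ needed for effective epimorphy of $b$.

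Next, I would apply log composite transitivity (Proposition \ref{prop:conjfiltlddrcomposite}) to $f = b\circ a$ to obtain an increasing, bounded below, exhaustive filtration of $\dR_f$ whose $n$-th graded piece is
\[ \dR_a \otimes_{\Frob_A^* F} \Frob_A^*\bigl(\wedge^n L_b[-n]\bigr). \]
By Corollary \ref{cor:logddrcartiertype}, $\dR_a \simeq \Omega^\bullet_a$ since $a$ is log smooth and of Cartier type, so this graded piece is identified with $\Omega^\bullet_a \otimes_{\Frob_A^* F}\Frob_A^*(\wedge^n L_b[-n])$, i.e., with the log de Rham complex of $a$ with coefficients in the Frobenius descent module $\Frob_A^*(\wedge^n L_b[-n])$, via a logarithmic version of Lemma \ref{lem:frobdescent} (which is proven exactly as in the non-logarithmic case since it is purely formal). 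On the log crystalline side, Kato's theorem \cite[Theorem 6.4]{KatoLogFI} identifies $\R\Gamma(f_\crys,\calO_\crys)$ with $\Omega^\bullet_a \otimes_{F} D(b)_\Alg$, where $D(b)$ is the log pd-envelope of $b$. Because $b$ is strict, $D(b)_\Alg$ agrees with the ordinary pd-envelope of $b_\Alg$ (which is lci by hypothesis), and Proposition \ref{prop:logcotstrict} gives $L_b \simeq L_{b_\Alg}$. I can therefore apply Lemma \ref{lem:conjfiltpdenv} and Remark \ref{rmk:conjfiltpdenvaltdesc} to put a conjugate filtration on $D(b)_\Alg$ with $n$-th graded piece $\Frob_A^*(\wedge^n L_b[-n])$, which is compatible with the Gauss--Manin connection relative to $a$ by (a logarithmic enhancement of) Lemma \ref{lem:conjfiltpdenvconn}. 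Tensoring with $\Omega^\bullet_a$ produces an exhaustive filtration on $\R\Gamma(f_\crys,\calO_\crys)$ whose graded pieces exactly match those computed above on the $\dR_f$ side.

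It remains to check that $\comp_f$ is compatible with the two filtrations and induces the identity on graded pieces. This is a direct inspection of the bicomplex map \eqref{eq:comp-ldr-lcrys-bicomplex} used to define $\comp_f$: on a fixed free resolution, the composite transitivity filtration is realized by a filtration on columns, and the map $\comp_f$ is the obvious inclusion into the pd-envelope, which under the Cartier isomorphism (Theorem \ref{thm:logcartier}) is the identity. The main obstacle is the bookkeeping in this last step, in particular verifying the logarithmic analogues of Lemmas \ref{lem:conjfiltpdenv}--\ref{lem:conjfiltpdenvdr} for strict lci closed immersions of prelog algebras. These lemmas reduce to their non-log counterparts via Proposition \ref{prop:logcotstrict} and the fact that $D(b)$ for strict $b$ coincides with the classical pd-envelope of $b_\Alg$, so no essentially new input is required beyond the care needed to track the log structure through the construction.
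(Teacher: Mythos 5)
Your proposal is correct and follows exactly the route the paper itself intends: factor $f = b\circ a$ with $a$ log smooth of Cartier type (handled by Corollary \ref{cor:logddrcartiertype}) and $b$ a strict effective epimorphism (handled by Theorem \ref{thm:ddrcryscomp}), then glue via the transitivity filtration of Proposition \ref{prop:conjfiltlddrcomposite} matched against Kato's $\Omega^\bullet_a \otimes_F D(b)_\Alg$ description, with the final identity-on-graded-pieces check tracing through the bicomplex map \eqref{eq:comp-ldr-lcrys-bicomplex}. This is the same argument the paper sketches and leaves to the reader, and your elaboration of the reductions (devissage to $n=1$, filtered-colimit reduction to genuine log smooth $a$, strictness of $b$ collapsing $D(b)$ and $L_b$ to their non-log versions via Proposition \ref{prop:logcotstrict}) fills in the expected details faithfully.
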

\begin{proof}[Sketch of proof]
Let $(M \to A) \stackrel{a}{\to} (P \to F) \stackrel{b}{\to} (N \to B)$ be a factorisation with $a$ a log smooth map that is of Cartier type modulo $p$ (or an inductive limit of such maps), and $b$ a strict effective epimorphism. Then by Corollary \ref{cor:logddrcartiertype} and devissage, the map $\comp_a$ is an isomorphism. The map $\comp_b$ is an isomorphism by Theorem \ref{thm:ddrcryscomp} (or simply Corollary \ref{cor:ddrcryscompquot}). These two cases can be put together as in the proof of Theorem \ref{thm:ddrcryscomp} using Corollary \ref{prop:conjfiltlddrcomposite}; we leave the details to the reader.
\end{proof}

We give an example showing that the Cartier type assumption in Theorem \ref{thm:lddrcryscomp} cannot be dropped.

\begin{example} 
\label{ex:logddrnotlogcrys}
Let $k$ be a field of characteristic $p$, and let $f:Y := (\N^2 \to k[x,y]) \to  X := (\N_a \to k[x,y,xy^{-1},yx^{-1}])$ be the map considered in Example \ref{ex:ltrivialnotwe}. Since $L_f \simeq 0$, the complex $\dR_f$ is given by the ring $X^{(1)}_\Alg$ using the conjugate filtration. The crystalline cohomology $\R\Gamma(f_\crys,\calO)$, on the other hand, is given by the ring $X_\Alg$ thanks to Kato's theorem \cite[Theorem 6.4]{KatoLogFI} as $f$ is log \'etale. Since the map $X^{(1)}_\Alg \to X_\Alg$ is not an isomorphism, we see that log derived de Rham and log crystalline cohomologies do not necessarily agree. Note that the map $f$ in this example is not an integral map, and hence not of Cartier type.
\end{example}

We end this section by showing that the Frobenius action on log crystalline cohomology always lifts to one log derived de Rham cohomology.

\begin{proposition}
\label{prop:frobactionlddr}
Let $f:(0 \to \Z/p^n) \to (M \to A)$ be a map in $\Log\Alg^\pre$. Then $\dR_{f}$ has a natural Frobenius action compatible with $\comp_f$.
\end{proposition}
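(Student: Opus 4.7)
The plan is to mimic the proof of Proposition \ref{prop:frobactionddr}, replacing Berthelot's theorem by Kato's log-crystalline/log-de-Rham comparison \cite[Theorem 6.4]{KatoLogFI}. Concretely, choose the canonical free resolution $P_\bullet \to (M \to A)$ in $s\Log\Alg^\pre_{(0 \to \Z/p^n)/}$, so each $P_m$ is a free prelog $(0 \to \Z/p^n)$-algebra of the form $(\N^{(X_m)} \to \Z/p^n[\N^{(X_m)} \sqcup Y_m])$. Written as a filtered colimit of its finitely generated sub-prelog-algebras, each $P_m$ is log smooth over $(0 \to \Z/p^n)$; moreover, its reduction modulo $p$ is of Cartier type since the structure map $0 \to \N^{(X_m)}$ is integral and its Frobenius (multiplication by $p$) is exact.

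Kato's theorem applied to $(P_m/p)/(0 \to \F_p)$ then yields a functorial identification
\[
\Omega^\bullet_{P_m/(0 \to \Z/p^n)} \simeq \R\Gamma((P_m \otimes_\Z \F_p / (0 \to \F_p))_\crys, \calO_\crys).
\]
The right-hand side carries a natural Frobenius endomorphism, obtained by applying the functoriality of log crystalline cohomology to the absolute Frobenius of $P_m \otimes_\Z \F_p$ over the identity Frobenius lift of $(0 \to \Z/p^n)$. Transport through the identification above equips $\Omega^\bullet_{P_m/(0 \to \Z/p^n)}$ with a Frobenius; these endomorphisms are functorial in $m \in \Delta^\opp$ by naturality, so they glue to an endomorphism of the bicomplex $\Omega^\bullet_{P_\bullet/(0 \to \Z/p^n)}$, whose totalisation is the desired Frobenius on $\dR_f$. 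Compatibility with $\comp_f$ is immediate at the level of bicomplexes: by construction $\comp_f$ is built from Kato's comparison componentwise, so it intertwines the two Frobenii.

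An equivalent but more conceptual formulation, which avoids choosing a resolution, proceeds via left Kan extension as in Remarks \ref{rmk:lkeldr} and \ref{rmk:lkeldrcrys}: on the subcategory $\Log\Alg^{\pre,\Free}_{(0 \to \Z/p^n)/}$ the comparison $\comp$ is an equivalence by Kato, so the Frobenius on log crystalline cohomology transports to a natural Frobenius on the log de Rham complex. Left Kan extending along the inclusion into $s\Log\Alg^\pre_{(0 \to \Z/p^n)/}$ then yields a Frobenius on $\dR_f$ for all $f$, automatically compatible with $\comp_f$.

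The main point requiring care is the verification that free prelog $(0 \to \Z/p^n)$-algebras with infinitely many generators are log smooth of Cartier type modulo $p$, so that Kato's theorem applies; this is handled by expressing such algebras as filtered colimits of their finitely generated free sub-objects and invoking the compatibility of both log de Rham and log crystalline cohomology with these colimits. Everything else is a straightforward bookkeeping exercise in functoriality.
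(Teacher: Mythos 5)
Your proof follows essentially the same route as the paper's (the paper's own proof is a one-liner deferring to the argument for Proposition~\ref{prop:frobactionddr}): resolve $f$ by free prelog $(0 \to \Z/p^n)$-algebras, identify each term's log de Rham complex with log crystalline cohomology of the reduction mod $p$ via Kato's comparison theorem, transport the crystalline Frobenius, and glue over $\Delta^\opp$. The left Kan extension reformulation is also exactly what Remarks~\ref{rmk:lkeldr} and \ref{rmk:lkeldrcrys} suggest. You also correctly flag and handle the point that free prelog algebras on infinitely many generators are only ind-(log smooth), so Kato's theorem must be applied after writing $P_m$ as a filtered colimit of finitely generated free sub-objects.

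One imprecision worth correcting: the displayed identification should read
\[
\Omega^\bullet_{P_m/(0 \to \Z/p^n)} \simeq \R\Gamma\big( \big(P_m \otimes_\Z \F_p\, /\, (0 \to \Z/p^n)\big)_\crys, \calO_\crys\big),
\]
i.e.\ log crystalline cohomology relative to the pd-base $(0 \to \Z/p^n)$ with the standard divided powers on $(p)$, not relative to $(0 \to \F_p)$. As written, the right-hand side would be an $\F_p$-module equal to $\Omega^\bullet_{(P_m/p)/(0 \to \F_p)}$, which is not what Kato's theorem gives and not what carries the Frobenius you need. Your subsequent sentence about the identity Frobenius lift of $(0 \to \Z/p^n)$ shows you have the right construction in mind, so this is just a slip in the base of the crystalline site. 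A further minor note: invoking Cartier type is harmless but not necessary here — Kato's comparison (\cite[Theorem 6.4]{KatoLogFI}) only requires log smoothness and a smooth pd-thickening lift, so the Cartier type verification can be dropped.
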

\begin{proof}
This is proven just like Proposition \ref{prop:frobactionddr}.
\end{proof}

\section{The derived de Rham complex for $p$-adic algebras}
\label{sec:ddr-p-adic}

In this section, we record $p$-adic limits of the results from \S \ref{sec:ddr-mod-p} and \S \ref{sec:ddr-mod-p-log}. The basic object of interest is completed derived de Rham cohomoogy:

\begin{definition} 
\label{defn:padicddrdefn}
Let $f:A \to B$ be a map in $s\Alg_{\Z_p/}$ (or in $s\Log\Alg^\pre_{\Z_p/}$). Then the $p$-adically completed derived de Rham cohomology of $f$ is defined as $\widehat{\dR_f} := \R \lim_n \big(\dR_f \otimes_{\Z_p} \Z/p^n\big)$, where the limit is derived. We let $d\log:B_\Mon \to \widehat{\dR_f}[1]$ denote the $p$-adic limit of the maps 
\[ d\log:B_\Mon \to \dR_f \otimes_{\Z_p} \Z/p^n[1] \simeq \dR_{f \otimes_{\Z_p} \Z/p^n}[1]\]
from Definition \ref{defn:logddr}.
\end{definition}

We recall our standing convention that $\widehat{K}$ always denotes the (derived) $p$-adic completion of a complex $K$ of abelian groups. A useful observation in working with these completions is:

\begin{lemma}
\label{lem:padiccompab}
Let $K$ be a complex of abelian groups. Then $\widehat{K} \simeq \widehat{\widehat{K}}$, and $K \otimes_{\Z} \Z/p^n \simeq \widehat{K} \otimes_{\Z} \Z/p^n$ for all $n$.
\end{lemma}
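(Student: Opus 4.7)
The plan is to establish the second equivalence first (which is the substantive claim), and then deduce the first as a formal consequence.

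First, I will deduce the self-completion statement from the mod-$p^n$ invariance: applying $\R\lim_n$ to a natural equivalence $K \otimes_\Z \Z/p^n \simeq \widehat{K} \otimes_\Z \Z/p^n$ yields $\widehat{K} = \R\lim_n K/p^n \simeq \R\lim_n \widehat{K}/p^n = \widehat{\widehat{K}}$, directly from the definition of derived $p$-adic completion recalled in \S\ref{sec:notation}. So the main work is in the second assertion.

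To prove the mod-$p^n$ equivalence, I will choose a resolution $P_\bullet \to K$ with each $P_i$ a free abelian group. Flatness of the $P_i$ ensures that all derived tensor products here are computed termwise: thus $K \otimes_\Z \Z/p^n$ is represented by $P_\bullet/p^n$, and each tower $\{P_i/p^m\}_m$ has surjective transition maps, hence is Mittag-Leffler with vanishing $R^1\lim$. The Milnor sequence then lets me compute $\widehat{K}$ termwise: it is represented by the chain complex whose $i$-th term is the classical $p$-adic completion $\widehat{P_i} := \lim_m P_i/p^m$, which is a torsion-free $\Z_p$-module.

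Next, for each free abelian group $P_i$, I will invoke the standard fact that $P_i/p^n \to \widehat{P_i}/p^n$ is an isomorphism: the kernel of the surjection $P_i \twoheadrightarrow \widehat{P_i}/p^n$ equals $p^n P_i$, since any element of $\widehat{P_i}$ agrees with an element of $P_i$ modulo $p^n \widehat{P_i}$ by construction of the completion. Since $\widehat{P_i}$ is $\Z$-flat, $\widehat{P_i} \otimes^L_\Z \Z/p^n \simeq \widehat{P_i}/p^n$ with no higher Tor. Assembling these termwise isomorphisms produces the desired equivalence $\widehat{K} \otimes_\Z \Z/p^n \simeq K \otimes_\Z \Z/p^n$.

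The only delicate step is the termwise computation of $\widehat{K}$ in the second paragraph, which ultimately rests on the vanishing of $R^1\lim$ for each Mittag-Leffler tower $\{P_i/p^m\}_m$; this is precisely why passing to a flat (equivalently, free) resolution of $K$ is essential, as it converts both $\otimes_\Z \Z/p^m$ and $\R\lim_m$ into termwise operations that interact transparently via the Milnor exact sequence.
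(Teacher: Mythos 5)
Your proof is correct and takes a genuinely different route from the paper's. The paper establishes the mod-$p^n$ claim by d\'evissage to $n=1$, then exploits that $\F_p$ is a \emph{perfect} complex of $\Z_p$-modules, so $-\otimes_{\Z_p}\F_p$ commutes with the derived limit defining $\widehat{K}$; this reduces matters to showing $\widehat{L}\simeq L$ for any complex $L$ of $\F_p$-vector spaces, which is verified by computing the tower $L\otimes_{\Z_p}\Z/p^n \simeq L\oplus L[1]$ explicitly (transition maps being $\mathrm{id}$ on the first summand and $0$ on the second, so the limit degenerates to $L$). You instead work at the chain level via a termwise-flat resolution $P_\bullet\to K$, identify $\widehat{K}$ with the termwise classical completion via Mittag-Leffler vanishing, and then invoke the elementary isomorphism $P/p^n\xrightarrow{\ \sim\ }\widehat{P}/p^n$ for torsion-free $P$. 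The paper's route avoids any mention of resolutions, which is a genuine advantage since your argument implicitly relies on the existence of termwise-flat resolutions of possibly \emph{unbounded} complexes (a Spaltenstein-type fact, standard but nontrivial); conversely, your argument is more concrete and does not depend on identifying the key perfectness input. Two small imprecisions: the parenthetical ``(equivalently, free)'' is loose, since flat is strictly weaker than free over $\Z$, though only torsion-freeness is actually used; and termwise flatness of $P_\bullet$ alone does not make $P_\bullet\otimes(-)$ compute derived tensor products against arbitrary complexes (K-flatness is needed in general), but you are saved here because $\Z/p^n$ is a perfect complex, so the derived tensor can be computed against its two-term free resolution regardless of the representative chosen for $K$.
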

\begin{proof}
It clearly suffices to show the second claim. By devissage, we may assume $n = 1$. Since $\F_p$ is represented by a perfect complex of $\Z_p$-modules, the functor $- \otimes_{\Z_p} \F_p$ commutes with arbitrary limits, so $\widehat{K} \otimes_{\Z_p} \F_p \simeq \widehat{K \otimes_{\Z_p} \F_p}$. Hence, it suffices to show that $\widehat{L} \simeq L$ for a complex $L$ of $\F_p$-vector spaces viewed as a complex of abelian groups. Using the compatible sequence of resolutions $\Big(\Z_p \stackrel{p^n}{\to} \Z_p\Big) \simeq \Z/p^n$, one easily computes that $L \otimes_{\Z_p} \Z/p^n \simeq L \oplus L[1]$, with the transition maps given by the identity on the first summand, and $0$ on the second summand. Since an $\N^\opp$-indexed limit of $0$ maps is $0$, the claim follows.
\end{proof}

Next, we record some basic formal properties of $p$-adic derived de Rham cohomology.

\begin{lemma}
\label{lem:padicddrformalprop}
Let $A \to B$, $A \to C$, and $B \to D$ be maps in $s\Alg_{\Z_p/}$. Then we have:
\begin{enumerate}
\item The natural map $\widehat{\dR_{B/A}} \to \widehat{\widehat{\dR_{B/A}}}$ is an isomorphism.
\item The natural maps induce isomorphisms: $\widehat{\dR_{B/A}} \simeq \widehat{\dR_{\widehat{B}/\widehat{A}}} \simeq \widehat{\dR_{\widehat{B}/A}}$.
\item There is a Kunneth formula: $\widehat{\dR_{B \otimes_A C/A}} \simeq \widehat{\dR_{B/A}} \widehat{\otimes}_A \widehat{\dR_{C/A}}$.
\item There is a base change formula: $\widehat{\dR_{B/A}} \widehat{\otimes}_A C \simeq \widehat{\dR_{B \otimes_A C/C}}$.
\item If $A \to B$ is relatively perfect modulo $p$,  then $\widehat{L_{B/A}} \simeq 0$, and $\widehat{\dR_{D/A}} \simeq \widehat{\dR_{D/B}}$.
\end{enumerate}
\end{lemma}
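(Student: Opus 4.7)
My plan is to treat the five parts in order, exploiting the basic principle that $p$-adic completion reduces every question to its mod $p^n$ counterpart, which has already been analyzed in the preceding sections.

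Part (1) is essentially formal: by definition $\widehat{K} = \R\lim_n K \otimes^L_\Z \Z/p^n$ for any complex $K$, and Lemma \ref{lem:padiccompab} gives $\widehat{K} \otimes \Z/p^n \simeq K \otimes \Z/p^n$, so applying $\R\lim_n (- \otimes \Z/p^n)$ to $\widehat{\dR_{B/A}}$ gives $\widehat{\dR_{B/A}}$ back. Part (2) proceeds by devissage: it suffices to show that the indicated maps are equivalences after $- \otimes \Z/p^n$. Proposition \ref{prop:ddrbasechange} (base change for non-completed derived de Rham) identifies $\dR_{B/A} \otimes \Z/p^n$ with $\dR_{(B \otimes \Z/p^n)/(A \otimes \Z/p^n)}$, and similarly for $\widehat{B}$ and $\widehat{A}$ in place of $B$ and $A$. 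Since $B \otimes \Z/p^n \simeq \widehat{B} \otimes \Z/p^n$ and $A \otimes \Z/p^n \simeq \widehat{A} \otimes \Z/p^n$ by Lemma \ref{lem:padiccompab}, the comparison maps become equivalences after reduction mod $p^n$, as desired.

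For parts (3) and (4), I would take the Kunneth and base-change isomorphisms of Proposition \ref{prop:ddrbasechange} and apply $\widehat{(-)}$; the target is already defined (by convention) as the derived $p$-adic completion of the corresponding derived tensor product, so the identifications are built into the notation once one verifies that derived $p$-adic completion commutes with itself in each variable. This last point is precisely (1) (or Lemma \ref{lem:padiccompab}) applied one factor at a time.

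Part (5) has two statements. For $\widehat{L_{B/A}} \simeq 0$, observe that relative perfectness modulo $p$ gives $L_{B/A} \otimes^L \F_p \simeq L_{(B/p)/(A/p)} \simeq 0$ by base change for the cotangent complex together with Corollary \ref{cor:ddretaleinv}; since $\widehat{L_{B/A}}$ is derived $p$-complete and its reduction mod $p$ vanishes, derived Nakayama forces $\widehat{L_{B/A}} \simeq 0$. For the comparison $\widehat{\dR_{D/A}} \simeq \widehat{\dR_{D/B}}$, I would again devisser to $\F_p$ and then compare conjugate filtrations: the natural functoriality map $\dR_{D/A} \to \dR_{D/B}$ is compatible with the transitivity filtration of Proposition \ref{prop:conjfiltddrcomposite} on the source and the conjugate filtration of Proposition \ref{prop:conjss} on the target. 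On the $n$-th graded piece the source reads $\dR_{B/A} \otimes_{\Frob_A^* B} \Frob_A^*(\wedge^n L_{D/B}[-n])$ while the target reads $\wedge^n L_{D^{(1)}/B}[-n] \simeq \Frob_B^*(\wedge^n L_{D/B}[-n])$. Relative perfectness of $A/p \to B/p$ means that both $\Frob_f \colon B^{(1)} \to B$ and $\Frob_A \colon B \to B^{(1)}$ are isomorphisms, and $\dR_{B/A} \otimes \F_p \simeq B/p$ by Corollary \ref{cor:ddretaleinv}; substituting these identifications collapses both expressions for the graded pieces to the same thing, functorially in $D$. The main obstacle I anticipate is the bookkeeping in this last compatibility — verifying that the comparison map of filtered complexes really does induce the canonical identification on graded pieces rather than merely an abstract isomorphism — but this is a mechanical check once one unwinds the construction of the filtration in Proposition \ref{prop:conjfiltddrcomposite} through the cofibrant replacement $A \to P_\bullet \to Q_\bullet \to D$.
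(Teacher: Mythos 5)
The paper gives no proof of this lemma (it says ``the details are left to the reader''), so your proof is filling in exactly the kind of mod-$p^n$ devissage the paper intends. Parts (1)--(4) are correct: the reduction to $\otimes\,\Z/p^n$ via Lemma \ref{lem:padiccompab}, followed by Proposition \ref{prop:ddrbasechange}, is precisely what is needed. The derived-Nakayama argument for $\widehat{L_{B/A}} \simeq 0$ in (5) is also valid, although you could skip Nakayama entirely: $L_{B/A} \otimes^L \Z/p^n \simeq L_{(B\otimes\Z/p^n)/(A\otimes\Z/p^n)} \simeq 0$ for every $n$ by Corollary \ref{cor:ddretaleinv}, whence $\widehat{L_{B/A}} = \R\lim_n 0 = 0$ directly.

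There is one genuine mistake in the second half of (5). You write that relative perfectness of $A/p \to B/p$ makes ``\emph{both} $\Frob_f : B^{(1)} \to B$ and $\Frob_A : B \to B^{(1)}$'' isomorphisms. It only gives the first. The map $\Frob_A : B \to B^{(1)}$ (i.e.\ $b \mapsto b \otimes 1$ into $B \otimes_{A,\Frob_A} A$) need not be an isomorphism even for relatively perfect maps: for $A = B = \F_p[x]$ with the identity, $\Frob_f$ is the identity but $\Frob_A$ is $x \mapsto x^p$. Fortunately the conclusion survives dropping the false claim. What you actually use is that $\Frob_B = \Frob_f \circ \Frob_A$: the source graded piece is $(\wedge^n L_{D/B}[-n]) \otimes_{B,\Frob_A} B^{(1)}$ after using $\dR_{B/A} \simeq B^{(1)}$, the target graded piece is $(\wedge^n L_{D/B}[-n]) \otimes_{B,\Frob_B} B$, and these two base changes differ only by restriction of scalars along the isomorphism $\Frob_f : B^{(1)} \to B$. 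So the only Frobenius isomorphism you need is $\Frob_f$, and the sentence about $\Frob_A$ should be deleted.

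A slightly cleaner route to (5)(ii), if you want to avoid the transitivity filtration of Proposition \ref{prop:conjfiltddrcomposite}, is to compare the \emph{conjugate} filtrations on $\dR_{D/A}$ and $\dR_{D/B}$ directly: both are defined by the column-wise canonical filtration of Proposition \ref{prop:conjfiltgeneral}, so the natural map $\dR_{D/A} \to \dR_{D/B}$ is automatically filtered. On graded pieces one compares $\wedge^n L_{D^{(1)}/A}[-n]$ with $\wedge^n L_{D^{(1')}/B}[-n]$; relative perfectness gives $D^{(1)} \simeq D^{(1')}$ (the two Frobenius twists coincide up to $\Frob_f$), and $L_{B/A} = 0$ gives $L_{D^{(1)}/A} \simeq L_{D^{(1)}/B}$ via the transitivity triangle. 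This bypasses the Gauss--Manin bookkeeping that you flag at the end as the main obstacle.
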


All the assertions in Lemma \ref{lem:padicddrformalprop} are easily deduced from the corresponding modulo $p^n$ statement; the details are left to the reader. We also remark that each statement in Lemma \ref{lem:padicddrformalprop} admits a logarithmic analogue as well. The main $p$-adic theorem we want is the comparison between derived de Rham theory and crystalline cohomology:

\begin{theorem}
\label{thm:padicddrcryscomp}
Let $f:(M \to A) \to (N \to B)$ be a map of prelog $\Z_p$-algebras. Assume that $A$ and $B$ are $\Z_p$-flat, and that $f$ is $G$-lci modulo $p$. Then there is a natural isomorphism
\[ \widehat{\dR_f} \simeq \R\lim_n \R\Gamma_\crys( f \otimes_{\Z_p} \Z/p^n, \calO_\crys). \]
This isomorphism is compatible with the maps $d\log:N \to \widehat{\dR_f}[1]$ and $d\log:N \to \R\Gamma_\crys(f \otimes_{\Z_p} \Z/p^n,\calO_\crys)[1]$.
\end{theorem}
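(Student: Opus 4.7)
The plan is to deduce Theorem \ref{thm:padicddrcryscomp} from the modulo-$p^n$ comparison Theorem \ref{thm:lddrcryscomp} by passing to a derived inverse limit. For $n \geq 1$, set $f_n := f \otimes_{\Z_p} \Z/p^n$. The first step is to verify that each $f_n$ is a $G$-lci map of prelog $\Z/p^n$-algebras. The hypothesis supplies a factorization $(M \to A/p) \stackrel{a_1}{\to} (P \to F_1) \stackrel{b_1}{\to} (N \to B/p)$ witnessing the $G$-lci property of $f_1$, with $a_1$ an ind-log-smooth map of Cartier type and $b_1$ a strict effective epimorphism. Using standard logarithmic smooth deformation theory one inductively lifts $F_1$ to a $\Z/p^n$-flat log smooth $A/p^n$-algebra $F_n$, and lifts the surjection $b_1$ to a map $b_n \colon F_n \to B/p^n$; $\Z/p^n$-flatness of both sides together with Nakayama ensures that $b_n$ remains surjective. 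The composite $(M \to A/p^n) \to (P \to F_n) \to (N \to B/p^n)$ then witnesses the $G$-lci property of $f_n$, and the Cartier-type condition, being a mod-$p$ statement, is automatic.

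Next, the logarithmic base change formula from Proposition \ref{prop:logddrproduct}, applied to the structure map $(0 \to \Z_p) \to (0 \to \Z/p^n)$, gives natural equivalences $\dR_f \otimes^L_{\Z_p} \Z/p^n \simeq \dR_{f_n}$. Combined with Theorem \ref{thm:lddrcryscomp}, this produces, functorially in $n$, a chain of natural equivalences
\[ \dR_f \otimes^L_{\Z_p} \Z/p^n \;\simeq\; \dR_{f_n} \;\stackrel{\comp_{f_n}}{\simeq}\; \R\Gamma_\crys(f_n, \calO_\crys). \]
Since the indexing category $\N^\opp$ admits $\R\lim$, applying it to both ends and invoking Definition \ref{defn:padicddrdefn} on the left gives the desired isomorphism $\widehat{\dR_f} \simeq \R\lim_n \R\Gamma_\crys(f_n, \calO_\crys)$.

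For the compatibility with the $d\log$ maps, note that by Definition \ref{defn:padicddrdefn} the map $d\log \colon N \to \widehat{\dR_f}[1]$ is by construction the $p$-adic limit of the $d\log$ maps $N \to \dR_{f_n}[1]$; the target $\R\lim_n \R\Gamma_\crys(f_n, \calO_\crys)$ carries the analogous limit of its $d\log$'s. At each finite level, the comparison $\comp_{f_n}$ constructed in Proposition \ref{prop:lddrcryscompmap} is tautologically compatible with $d\log$, since on both sides $d\log$ is induced by the same monoid map $P_{\bullet,\Mon} \to \Omega^\bullet_{P_\bullet/(M \to A)}[1]$ arising from a chosen free resolution. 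Passing to $\R\lim_n$ transports this compatibility to the $p$-adic setting.

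The only non-formal input in the whole argument is the lifting of the $G$-lci property across all $n$, which I expect to be the main technical step; everything else is a matter of assembling the mod-$p^n$ isomorphism into the derived inverse limit, and the behavior of $d\log$ at the limit is built into its definition.
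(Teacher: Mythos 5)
Your proposal is structurally the argument the paper leaves implicit (the section header only says $p$-adic limits of the mod-$p^n$ results are being ``catalogued''): use base change to identify $\dR_f \otimes_{\Z_p} \Z/p^n \simeq \dR_{f_n}$, invoke Theorem~\ref{thm:lddrcryscomp} to get $\dR_{f_n} \simeq \R\Gamma_\crys(f_n,\calO_\crys)$ compatibly in $n$, take $\R\lim_n$, and observe $d\log$-compatibility at each finite level. One small imprecision: the base change isn't applied ``along $(0 \to \Z_p) \to (0 \to \Z/p^n)$''; Proposition~\ref{prop:logddrproduct} is applied along $(M \to A) \to (M \to A/p^n)$ (this being the pushout of $\Z_p \to \Z/p^n$, using $\Z_p$-flatness of $A$), giving $\dR_f \otimes_A A/p^n \simeq \dR_{f_n}$, and one then identifies $\dR_f \otimes_A A/p^n$ with $\dR_f \otimes_{\Z_p} \Z/p^n$. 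The identification of the derived pushout $(N \to B) \sqcup^h_{(M \to A)} (M \to A/p^n)$ with $(N \to B/p^n)$ also quietly uses $\Z_p$-flatness of $B$ (via $\Tor^A_i(B, A/p^n) = \Tor^{\Z_p}_i(B,\Z/p^n) = 0$).

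On the lifting step you flag: your sketch is a bit thin. The single algebra $F_1$ lifts unobstructedly (for log smooth affine $F_1$ the obstruction group is a coherent cohomology group of an affine scheme, hence vanishes), but the $a$ in the $G$-lci factorization is an \emph{inductive limit} of log smooth maps of Cartier type, and lifting the whole filtered system $\{F_1^{(i)}\}$ coherently requires more care than a one-line appeal to log smooth deformation theory, since lifts are unique only up to non-canonical isomorphism. That said, in every example the paper actually uses, the factorization already exists at the $\Z_p$-level (e.g.\ $(0 \to W) \to (W[x],x) \to (\calO_K,\can)$, or a semistable patch factored through a free log algebra over $\calO_K$), so its reduction mod $p^n$ is a $G$-lci factorization on the nose and no lifting argument is needed. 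Reading ``$G$-lci modulo $p$'' this way makes the whole theorem a one-paragraph formal limit argument; under the stricter reading that only $f_1$ is assumed $G$-lci, your approach still works, but the coherence of the ind-lifts should be spelled out.
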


One of the advantages of derived de Rham theory over crystalline cohomology is that it automatically applies to derived rings. In practice, this extra flexibility allows one to compute derived de Rham cohomology of some maps of ordinary $\Z_p$-algebras without too many flatness constraints:

\begin{proposition}
\label{prop:ddrnonflat}
Let $A$ be a $\Z_p$-flat algebra, and let $B = A/(f_1,\dots,f_r)$ with $(f_i)$ a regular sequence. Then
\[ \widehat{\dR_{B/A}} \simeq \widehat{\otimes_i} \Big( \widehat{A \langle x \rangle} \stackrel{x - f_i}{\to} \widehat{A \langle x \rangle}\Big).  \] 
\end{proposition}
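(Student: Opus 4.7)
The plan is to reduce the statement to the single-generator case $r=1$ via an iterated Kunneth argument, and then to verify that case by base change from the universal computation of $\widehat{\dR_{\Z_p/\Z_p[t]}}$. First, I would observe that since $(f_1,\dots,f_r)$ is a regular sequence in $A$, the derived tensor product $\otimes_i^A (A/f_i)$ is discrete and equals $B$, with each factor $A \to A/f_i$ being an lci quotient by a single regular element. An iterated application of the $p$-adic Kunneth formula (Lemma \ref{lem:padicddrformalprop}(3)) then gives
\[ \widehat{\dR_{B/A}} \simeq \widehat{\otimes_i^A}\, \widehat{\dR_{(A/f_i)/A}}, \]
so it suffices to verify the formula when $r=1$.

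For the case $r=1$ with $B = A/(f)$, I would consider the commutative square with corners $\Z_p[t]$, $A$, $\Z_p$, and $A/f$, where $\Z_p[t] \to A$ is the map $t\mapsto f$ and $\Z_p[t] \to \Z_p$ is $t\mapsto 0$. Using the free resolution $\big(\Z_p[t] \xrightarrow{\,t\,} \Z_p[t]\big) \simeq \Z_p$ together with the regularity of $f \in A$, one sees that $A \otimes^L_{\Z_p[t]} \Z_p \simeq A/f$, so this is a derived pushout. By the base change property (Lemma \ref{lem:padicddrformalprop}(4)) I would then reduce to computing $\widehat{\dR_{\Z_p/\Z_p[t]}} \,\widehat{\otimes}_{\Z_p[t]}\, A$.

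To handle the universal piece, I would apply Corollary \ref{cor:ddrcryscompquot} to the lci quotient $\Z/p^n[t] \to \Z/p^n$ of flat $\Z/p^n$-algebras, which identifies $\dR_{(\Z/p^n)/(\Z/p^n)[t]}$ with the pd-envelope $(\Z/p^n)\langle t\rangle$; passing to the $p$-adic limit gives $\widehat{\dR_{\Z_p/\Z_p[t]}} \simeq \widehat{\Z_p\langle t\rangle}$. Using Lemma \ref{lem:kunnethforpdenv}, or equivalently the regularity of $x - t \in \Z_p[t]\langle x\rangle$, I would rewrite this pd-envelope as the explicit two-term complex
\[ \widehat{\Z_p\langle t\rangle} \simeq \Big(\widehat{\Z_p[t]\langle x\rangle} \xrightarrow{\,x - t\,} \widehat{\Z_p[t]\langle x\rangle}\Big). \]
Base changing along $t \mapsto f$ turns this into $\big(\widehat{A\langle x\rangle} \xrightarrow{\,x - f\,} \widehat{A\langle x\rangle}\big)$, proving the $r=1$ case, which combines with the Kunneth reduction above to give the full statement.

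The main obstacle, and the step requiring the most care, is verifying that the base change $\widehat{\dR_{\Z_p/\Z_p[t]}}\,\widehat{\otimes}_{\Z_p[t]}\, A$ is correctly interpreted in the derived sense and commutes with the $p$-adic completions as encoded in Lemma \ref{lem:padicddrformalprop}; in particular one should check that the two-term model for $\widehat{\Z_p\langle t\rangle}$ is preserved by this base change (which is formal since $\widehat{\Z_p[t]\langle x\rangle}$ is $\Z_p[t]$-flat in each cohomological degree). Once these formal properties are in hand, the computations at each step are essentially routine.
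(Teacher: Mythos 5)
Your proof is correct and follows essentially the same route as the paper: reduce to $r=1$ by the $p$-adic Kunneth formula, pull back the universal computation $\widehat{\dR_{\Z_p/\Z_p[x]}}\simeq\widehat{\Z_p\langle x\rangle}$ along $x\mapsto f_i$, and unwind the resulting derived tensor product. The only organizational difference is which factor of $\widehat{\Z_p\langle x\rangle}\,\widehat{\otimes}_{\Z_p[x]}\,A$ you resolve: the paper takes the flat $\Z_p[x]$-module resolution $A[x]\xrightarrow{x-f_i}A[x]$ of $A$, whereas you take the flat two-term model $\Z_p[x]\langle y\rangle\xrightarrow{y-x}\Z_p[x]\langle y\rangle$ of $\Z_p\langle x\rangle$; the two choices compute the same derived tensor product and give the same answer.
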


We do {\em not} assume that the sequence $f_1,\dots,f_r$ is regular modulo $p$, so that $f_i = p$ for some $i$ is permissible.

\begin{proof}
We can write $B$ as the derived tensor product $ \otimes_i A/(f_i)$. Each map $A \to A/(f_i)$ can be obtained via derived base change from the map $\Z_p[x] \stackrel{x \mapsto 0}{\to} \Z_p$ along $x \mapsto f_i$. By Theorem \ref{thm:padicddrcryscomp}, we know that
\[ \widehat{\dR_{\Z_p/\Z_p[x]}} \simeq \widehat{\Z_p\langle x \rangle}. \]
The map $\Z_p[x] \to A$ defined by $x \mapsto f_i$ admits a flat resolution $\Big(A[x] \stackrel{x-f_i}{\to} A[x]\Big)$ in the category of $\Z_p[x]$-modules, where $A[x]$ is viewed as a $\Z_p[x]$-module via $x \mapsto x$. Base change  and Kunneth then show that
\[ \widehat{\dR_{B/A}} \simeq \widehat{\otimes_i} \widehat{\dR_{(A/(f_i))/A}} \simeq \widehat{\otimes_i} \Big( \widehat{A \langle x \rangle} \stackrel{x - f_i}{\to} \widehat{A \langle x \rangle}\Big),  \] 
as desired.
\end{proof}

As a corollary, we can relate the $\Z_p$-derived de Rham cohomology of an $\F_p$-algebra to geometric invariants:

\begin{corollary}
\label{cor:ddrdrwsmooth}
Let $A_0$ be an $\F_p$-algebra. If $A$ is a $p$-adically complete $\Z_p$-flat algebra $A$ lifting $A_0$, then
 \[ \widehat{\dR_{A_0/\Z_p}} \simeq \widehat{\dR_{A/\Z_p}} \oplus T,\]
where $T$ is the completion of a complex of torsion abelian groups. If $A_0$ is perfect, then $\widehat{\dR_{A_0/\Z_p}} \simeq W(A) \oplus T$ where $W(A)$ is the ring of Witt vectors of $A$, and $T$ is as before.
\end{corollary}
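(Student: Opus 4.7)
The strategy is to reduce everything to the computation of $\widehat{\dR_{\F_p/\Z_p}}$ by base change. Since $A$ is $\Z_p$-flat, we have a derived identification $A_0 \simeq A \otimes^L_{\Z_p} \F_p$, so the Kunneth formula from Lemma \ref{lem:padicddrformalprop}(3) yields
\[ \widehat{\dR_{A_0/\Z_p}} \simeq \widehat{\dR_{A/\Z_p}} \, \widehat{\otimes}_{\Z_p} \, \widehat{\dR_{\F_p/\Z_p}}. \]
The heart of the proof is thus to exhibit a natural splitting $\widehat{\dR_{\F_p/\Z_p}} \simeq \Z_p \oplus T'$ in which $T'$ is the derived $p$-adic completion of a torsion complex; the remaining statements then follow by tensoring.

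To construct this splitting, apply Proposition \ref{prop:ddrnonflat} with $r = 1$ and $f_1 = p$ to present $\widehat{\dR_{\F_p/\Z_p}}$ as the two-term complex $\widehat{\Z_p\langle x\rangle} \xrightarrow{x-p} \widehat{\Z_p\langle x\rangle}$. The algebra unit gives a map $\Z_p \to \widehat{\dR_{\F_p/\Z_p}}$, and a retraction is provided by the $\Z_p$-linear map $\widehat{\Z_p\langle x\rangle} \to \Z_p$ sending $\gamma_n(x) \mapsto p^n/n!$, which visibly kills $(x-p)$ and therefore descends. Equivalently, the retraction is the $p$-adic limit of the crystalline comparison maps of Proposition \ref{prop:ddrcryscompmap}, using that $\R\Gamma((\F_p/(\Z/p^n))_\crys,\calO_\crys) \simeq \Z/p^n$. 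The composite $\Z_p \to \widehat{\dR_{\F_p/\Z_p}} \to \Z_p$ is the identity (it sends $1 \mapsto \gamma_0(x) \mapsto 1$), so we obtain the desired splitting with $T' := \mathrm{fib}(\widehat{\dR_{\F_p/\Z_p}} \to \Z_p)$.

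To see that $T'$ is the completion of a torsion complex, note that $\dR_{\F_p/\Z_p} \otimes^L_{\Z_p} \Q_p$ computes via derived base change (Proposition \ref{prop:ddrbasechange}) as $\dR_{(\F_p \otimes^L_{\Z_p} \Q_p)/\Q_p} = \dR_{0/\Q_p} = 0$, so $\dR_{\F_p/\Z_p}$ is itself torsion; at each finite level the fibre $\mathrm{fib}(\dR_{\F_p/(\Z/p^n)} \to \Z/p^n)$ is $p^n$-torsion, and $T'$ is the derived $p$-adic limit of this tower. Tensoring the splitting with $\widehat{\dR_{A/\Z_p}}$ over $\Z_p$ gives $\widehat{\dR_{A_0/\Z_p}} \simeq \widehat{\dR_{A/\Z_p}} \oplus T$ with $T := \widehat{\dR_{A/\Z_p}} \, \widehat{\otimes}_{\Z_p} \, T'$, and $T$ is again the $p$-adic completion of a torsion complex because at each finite level $T \otimes^L \Z/p^n$ is a tensor product with a torsion complex (cf.\ Lemma \ref{lem:padiccompab}).

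For the perfect case, take $A = W(A_0)$. Since $A_0$ is perfect, the reduction modulo $p$ of $\Z_p \to W(A_0)$ is the relatively perfect map $\F_p \to A_0$, so Lemma \ref{lem:padicddrformalprop}(5) applies and gives $\widehat{\dR_{W(A_0)/\Z_p}} \simeq \widehat{\dR_{W(A_0)/W(A_0)}} \simeq W(A_0)$. Combined with the first assertion, this yields $\widehat{\dR_{A_0/\Z_p}} \simeq W(A_0) \oplus T$. The main obstacle throughout is verifying the splitting of $\widehat{\dR_{\F_p/\Z_p}}$: the torsion summand $T'$ is quite large (its reduction modulo $p$ is the infinite-dimensional $\F_p$-vector space generated by the classes of $\gamma_{kp}(x)$ for $k \geq 1$, obtained by computing the cokernel of multiplication by $x$ on $\F_p\langle x\rangle$), so the crystalline comparison is needed to isolate a canonical $\Z_p$ summand from within a much richer divided-power structure.
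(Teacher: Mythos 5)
Your Kunneth reduction to the case $A_0 = \F_p$, the use of Proposition~\ref{prop:ddrnonflat}, and the retraction $\gamma_n(x) \mapsto p^n/n! = \gamma_n(p)$ (which is the PD-algebra map classifying $x \mapsto p$, i.e.\ the crystalline comparison of Proposition~\ref{prop:ddrcryscompmap}) are all fine, and this retraction does agree with the projection implicit in the paper's decomposition $\Z_p\langle x\rangle/(x-p) \simeq \Z_p \oplus \bigoplus_{j \geq 1} \Z_p/j \cdot \gamma_j(x-p)$, since $\gamma_j(x-p) \mapsto \gamma_j(p-p) = 0$ for $j \geq 1$. The perfect case is also handled correctly. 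So the overall shape matches the paper's proof, with the retraction being a mild conceptual repackaging of the explicit direct sum.

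However, your argument that $T'$ is the completion of a torsion complex is broken. The intermediate claim $\dR_{0/\Q_p} = 0$ is false: by Corollary~\ref{cor:ddrchar0}, $\dR_{B/A} \simeq A$ for any map of $\Q$-algebras, including $B = 0$, so $\dR_{0/\Q_p} \simeq \Q_p$. Equivalently, base change (Proposition~\ref{prop:ddrbasechange}) gives $\dR_{\F_p/\Z_p} \otimes_{\Z_p} \Q_p \simeq \Q_p\langle x\rangle/(x-p) = \Q_p[x]/(x-p) \simeq \Q_p \neq 0$. Hence $\dR_{\F_p/\Z_p}$ is emphatically not torsion --- it contains the $\Z_p$ summand you just split off! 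The follow-up observation that $\mathrm{fib}(\dR_{\F_p/(\Z/p^n)} \to \Z/p^n)$ is $p^n$-torsion is vacuous (every $\Z/p^n$-complex is $p^n$-torsion) and cannot by itself establish that $T'$ is the completion of a torsion complex: for instance $\Z_p$ itself is the derived limit of $p^n$-torsion objects $\Z/p^n$ but is not the completion of a torsion complex in the intended sense. The correct argument is the opposite of what you wrote: since $\dR_{\F_p/\Z_p} \otimes \Q_p \simeq \Q_p$ and your retraction induces an isomorphism on this rationalization, the decompleted complement $\bigoplus_{j \geq 1} \Z_p/j$ rationalizes to $0$ and is therefore genuinely torsion, and $T'$ is its $p$-adic completion. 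The paper makes this explicit by exhibiting the decompleted complement as $\bigoplus_{j \geq 1} \Z_p/j$; your retraction approach needs that (or the rationalization argument above) to finish.
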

\begin{proof}
The second assertion follows from the first as $W(A_0)$ is $\Z_p$-flat algebra lifting lifting $A_0$ when $A_0$ is perfect (and using that $\widehat{\dR_{W(A_0)/\Z_p}} \simeq W(A_0)$ as $\Z_p \to W(A_0)$ is relatively perfect modulo $p$). To see the first assertion, note that the formula $A_0 = A \otimes_{\Z_p} \F_p$ (coupled with Kunneth) immediately show that
\[ \widehat{\dR_{A_0/\Z_p}} \simeq \widehat{\dR_{A/\Z_p}} \widehat{\otimes_{\Z_p}} \widehat{\dR_{\F_p/\Z_p}},\]
so it suffices to show the assertion for $A_0 = \F_p$. Since $p \in \Z_p$ is a regular element, Proposition \ref{prop:ddrnonflat} shows that
\[ \widehat{\dR_{\F_p/\Z_p}} \simeq \Big(\widehat{\Z_p \langle x \rangle} \stackrel{x-p}{\to} \widehat{\Z_p \langle x\rangle}\Big). \]
To compute the above complex, first observe that the decompleted object can be written as
\[ \Big( \Z_p\langle x \rangle \stackrel{x-p}{\to} \Z_p \langle x\rangle  \Big) \simeq \Z_p \oplus \big(\oplus_{j \in \Z_{> 0}} \Z_p/j\big)\]
where the summand $\Z_p/j$ on the right is defined by the image of $\gamma_j(x-p)$. Completing then gives
\[ \widehat{\dR_{\F_p/\Z_p}} \simeq \Z_p \oplus \big( \widehat{\oplus}_{j \in \Z_{> 0}} \Z_p/j \big),\]
as desired.
\end{proof}

\begin{remark} The completed direct sum appearing at the end of the proof of Corollary \ref{cor:ddrdrwsmooth} need not be torsion; for example, the element of the direct product that is $p^{n-1}$ in the $\Z_p/p^n$ summand for all $n$ (and $0$ in the other slots) is naturally a non-torsion element in the completed direct sum. Nevertheless, Corollary \ref{cor:ddrdrwsmooth} does show that when $A_0$ is perfect, the ring $W(A_0)$ may be obtained as the largest separated torsion-free quotient of $\widehat{\dR_{A_0/\Z_p}}$. 
\end{remark}

\begin{remark}
The idea of using the Frobenius action on the cotangent complex of an $\F_p$-algebra to produce liftings to characteristic $0$ is not new. It occurs in \cite[\S 1.2]{deJongCrystallineFormal} and, more recently, in Scholze's work \cite{ScholzePerfectoid1}.  The resulting interpretation of $W(A_0)$ as a formal deformation of $A_0$ is quite useful in practice. For example, for $A_0$ perfect, the Teichmuller lift $[\cdot]:A_0 \to W(A_0)$ arises by repeatedly applying the following simple observation to $N = A_0$ and $M$ the multiplicative monoid underlying an infinitesimal thickening of $A_0$: if $V$ is an abelian $p^n$-torsion group, $N$ is a uniquely $p$-divisible commutative monoid, and $\pi:M \to N$ is a surjection of commutative monoids with kernel $V$, then there is a unique section of $\pi$ (as the multiplication by $p^n$ map on $M$ factors through $\pi$).
\end{remark}

\begin{warning}
It is tempting to guess that the derived de Rham cohomology of $\Z/p^n \to \F_p$ is simply $\Z/p^n$. However, this is false for $n \geq 2$. If it were true, then the derived de Rham cohomology of $\F_p \to \F_p \otimes_{\Z/p^n} \F_p =: R$ would be $\F_p$ by base change. Now $S = \Sym_{\F_p}(\F_p[1])$ is a retract of $R$ (the map $R \to S$ is a Postnikov trunctation, while the section $S \to R$ is defined by choosing a generator of $\pi_1(R) \simeq \F_p$), so $\dR_{S/\F_p} \simeq \F_p$ as well. However, this is a contradiction as $\dR_{S/\F_p} \simeq \F_p \langle x \rangle \otimes_{\F_p[x]} \F_p$ (via pushout from $\dR_{\F_p/\F_p[x]} \simeq \F_p \langle x \rangle$  along $\F_p[x] \stackrel{x \mapsto 0}{\to} \F_p$) has infinite dimensional $\pi_0$ and $\pi_1$. In fact, a simplicial enhancement of Proposition \ref{prop:derivedcartier} shows $\dR_{S/\F_p} \simeq S \langle x \rangle$.
\end{warning}

\section{Period rings via derived de Rham cohomology}
\label{sec:ddr-period-rings}

In this section, we give derived de Rham interpretations for various period rings (with their finer structure) that occur in the $p$-adic comparison theorems. We begin with notation that will be used through the rest of this paper.

\begin{notation}
\label{not:periodring}
Let $k$ be a perfect field of characteristic $p$ with ring of Witt vectors $W$. Let $K_0 = \Frac(W)$, and fix a finite extension $K/K_0$ of degree $e$ with ring of integers $\calO_K$, and a uniformiser $\pi$ with minimal (Eisenstein) monic polynomial $E(x) \in W[x]$. We fix an algebraic closure $\overline{K}$ of $K$, which gives us access to the absolute integral closure $\overline{\calO_K}$ of $\calO_K$,  its $p$-adic completion $\widehat{\overline{\calO_K}}$, and the Galois group $G_K$. For an $\F_p$-algebra $R$, let $R^\perf$ and $R_\perf$ to denote the $\lim$ and $\colim$ perfections of $R$, respectively. We follow the convention that $(R,M)$ refers to a prelog ring where $R$ is a ring, $\alpha:M \to R$ is a prelog structure; when $M = \N$ (resp. $\Q_{\geq 0}$) with $\alpha(1) = f$ (resp. with $\alpha(1) = f$ for an element $f$ with specified rational powers), then we also write $(R,f)$ (resp. $(R,f^{\Q_{\geq 0}})$). For an $\calO_K$-algebra $A$, we let $(A,\can)$ denote the log ring defined by the open subset $\Spec(A[1/p]) \subset \Spec(A)$  (unless otherwise specified). 
\end{notation}

We start by recalling a construction of Fontaine that lies at the heart of the theory of period rings.

\begin{construction}
\label{cons:ainf}
We define $A_\inf = W( (\overline{\calO_K}/p)^\perf)$. Given a sequence $\{r_n \in \overline{\calO_K}\}$ of $p$-power compatible roots (i.e., $r_n^p = r_{n-1}$), we use $[\underline{r}] \in A_\inf$ to denote the Teichmuller lift of the evident element $\underline{r} = \lim_n r_n$ of $(\overline{\calO_K}/p)^\perf$. By functoriality, there is a $G_K$-action on $A_\inf$.
\end{construction} 

Construction \ref{cons:ainf} interacts extremely well with de Rham theory; the highlights of this interaction are:

\begin{proposition} With notation as above, one has:
\label{prop:ainfprop}
\begin{enumerate}
\item The ring $(\overline{\calO_K}/p)^\perf$ is a perfect rank $1$ complete valuation ring.
\item The cotangent complex $\widehat{L_{A_\inf/W}}$ vanishes.
\item There exists a unique $G_K$-equivariant ring homomorphism $\theta:A_\inf = W( (\overline{\calO_K}/p)^\perf ) \to \widehat{\overline{\calO_K}}$ that modulo $p$ reduces to the defining map $(\overline{\calO_K}/p)^\perf \to \overline{\calO_K}/p \simeq \widehat{\overline{\calO_K}}/p$. This map $\theta$ is surjective and satisfies $\theta([\underline{r}]) = r_0$ for any $p$-power compatible sequence $\{r_n \in \widehat{\overline{\calO_K}}\}$. 
\item The kernel $\ker(\theta)$ is principal and generated by a regular element. If a compatible sequence $\{\pi^{\frac{1}{p^n}} \in \overline{\calO_K}\}$ of $p$-power roots of $\pi$ has been chosen, then $E([\underline{\pi}])$ is a generator for $\ker(\theta)$.
\item The transitivity triangle for $W \to A_\inf \to \widehat{\overline{\calO_K}}$ induces an isomorphism 
\[ \psi:\ker(\theta)/\ker(\theta)^2 \simeq \widehat{L_{\widehat{\overline{\calO_K}}/W}}[-1] \simeq \widehat{\Omega^1_{\overline{\calO_K}/W}}[-1].\] 

\end{enumerate}
\end{proposition}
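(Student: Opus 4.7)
Parts (1)--(3) should follow from essentially classical inputs, organised via the tools already in place. For (1), the plan is to transfer the rank one valuation on $\overline{\calO_K}$ to $\overline{\calO_K}/p$, which exhibits the latter as a rank one (non-discrete) valuation ring with value group $\Q_{\geq 0}/(e)$ (roughly). Passing to the Frobenius inverse limit turns this into a perfect valuation ring, and the limit is automatically complete for its natural topology---this is the standard tilting picture. For (2), note $A_\inf/p \simeq (\overline{\calO_K}/p)^\perf$ is a perfect $\F_p$-algebra, so $W \to A_\inf$ is relatively perfect modulo $p$; Corollary \ref{cor:ddretaleinv} (or rather its $p$-adic form in Lemma \ref{lem:padicddrformalprop}(5)) then forces $\widehat{L_{A_\inf/W}} \simeq 0$. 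For (3), I would invoke the universal property of Witt vectors of perfect $\F_p$-algebras: maps $A_\inf = W((\overline{\calO_K}/p)^\perf) \to \widehat{\overline{\calO_K}}$ into a $p$-adically complete, $p$-torsion-free ring correspond bijectively to $\F_p$-algebra maps $(\overline{\calO_K}/p)^\perf \to \widehat{\overline{\calO_K}}/p \simeq \overline{\calO_K}/p$, so $\theta$ is uniquely specified by the canonical projection. The Teichm\"uller formula $\theta([\underline{r}]) = r_0$ comes from the fact that $[\underline{r}]$ is the unique $p$-power compatible system above $r_0$ inside $\widehat{\overline{\calO_K}}$. $G_K$-equivariance is automatic by naturality of all constructions, and surjectivity of $\theta$ follows from surjectivity modulo $p$ together with $p$-adic completeness of $\widehat{\overline{\calO_K}}$.

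Part (4) is the main computation. The plan is to first work modulo $p$: the element $\xi := E([\underline{\pi}])$ has reduction $\underline{\pi}^e \in (\overline{\calO_K}/p)^\perf$, because $E(x) \equiv x^e \pmod{p}$. The kernel of $(\overline{\calO_K}/p)^\perf \to \overline{\calO_K}/p$ consists of sequences $(x_0, x_1, \dots)$ with $x_0 = 0$, i.e.\ $v(x_0) \geq v(p) = e \cdot v(\pi)$; since $(\overline{\calO_K}/p)^\perf$ is a valuation ring one checks directly that such a sequence is divisible by $\underline{\pi}^e$, so the mod-$p$ kernel is principal, generated by $\underline{\pi}^e$. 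Since $A_\inf$ is $p$-torsion-free, $p$-adically complete, and $\theta$ lands in a $p$-torsion-free $p$-adically complete ring, a standard Nakayama argument then promotes the mod-$p$ statement to the integral one: $\ker(\theta) = \xi \cdot A_\inf$. Regularity of $\xi$ reduces, again by $p$-torsion-freeness, to regularity of $\underline{\pi}^e$ in the domain $(\overline{\calO_K}/p)^\perf$, which is immediate. The main subtlety here is pinning down the mod-$p$ kernel calculation using the valuation; once that is done, the rest is formal.

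For (5), the plan is to exploit the transitivity triangle for $W \to A_\inf \to \widehat{\overline{\calO_K}}$. After $p$-adic completion, part (2) kills $\widehat{L_{A_\inf/W}}$, so completion yields an isomorphism $\widehat{L_{\widehat{\overline{\calO_K}}/W}} \simeq \widehat{L_{\widehat{\overline{\calO_K}}/A_\inf}}$. By part (4), the map $\theta:A_\inf \to \widehat{\overline{\calO_K}}$ is a quotient by the regular element $\xi$, so its cotangent complex is concentrated in degree $1$ and is canonically $\ker(\theta)/\ker(\theta)^2[1]$; this gives the first isomorphism $\psi$ after shifting. The second identification $\widehat{L_{\widehat{\overline{\calO_K}}/W}} \simeq \widehat{\Omega^1_{\overline{\calO_K}/W}}$ splits into two steps: since $\overline{\calO_K} \to \widehat{\overline{\calO_K}}$ is an isomorphism modulo $p$, it is relatively perfect modulo $p$, and Lemma \ref{lem:padicddrformalprop}(5) gives $\widehat{L_{\widehat{\overline{\calO_K}}/W}} \simeq \widehat{L_{\overline{\calO_K}/W}}$; writing $\overline{\calO_K} = \colim_L \calO_L$ as a filtered colimit of finite extensions of $\calO_K$, each $\calO_L$ is an lci quotient of a polynomial ring over $W$, so $L_{\calO_L/W}$ is concentrated in degree zero and equal to $\Omega^1_{\calO_L/W}$, and passing to the colimit gives $L_{\overline{\calO_K}/W} \simeq \Omega^1_{\overline{\calO_K}/W}$. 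I expect the hardest book-keeping step to be verifying functoriality of $\psi$ and compatibility with the chosen generator of $\ker(\theta)$, but no new input is required beyond the machinery already developed.
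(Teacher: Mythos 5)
Your plan for parts (2), (4), and (5) tracks the paper's own proof. For (3) you invoke the universal property of Witt vectors of perfect $\F_p$-algebras, which is a valid classical alternative; the paper instead deduces existence and uniqueness of $\theta$ directly from the vanishing of $L_{(A_\inf/p^n)/(W/p^n)}$ established in (2), lifting the mod-$p$ map level by level by deformation theory and then passing to the limit. Both arguments work; the paper's choice is the one native to its cotangent-complex setup, but you lose nothing by using the Witt-vector universal property.

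The one genuine misstep is in (1). The ring $\overline{\calO_K}/p$ is \emph{not} a valuation ring---it has zero divisors (e.g.\ $p^{1/p}$ has $p$-th power equal to $0$), so the claim that transferring the valuation ``exhibits the latter as a rank one (non-discrete) valuation ring'' is false, and ``value group $\Q_{\geq 0}/(e)$'' does not parse as a group. What transferring the valuation to $\overline{\calO_K}/p$ actually produces is a multiplicative \emph{semi}-valuation with a nonzero kernel consisting of all residues of elements of valuation $\geq 1$. The paper's fix, which your argument needs, is to use the multiplicative bijection $(\overline{\calO_K}/p)^\perf \simeq \lim_{x \mapsto x^p} \widehat{\overline{\calO_K}}$, define the semi-valuation on the left by reading off the valuation of the zeroth coordinate on the right, and then check that its kernel is trivial: if $x_0 = 0$ then every $x_n$ is a $p^n$-th root of $0$ in a domain, hence $0$, so the whole sequence vanishes. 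It is only in the Frobenius inverse limit that one obtains a genuine rank one valuation ring; $\overline{\calO_K}/p$ itself never is one, and your sketch needs to be rearranged so that the valuation is manufactured on the limit rather than inherited from the wrong ring.
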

\begin{proof}
These results (due to Fontaine) are well-known, but we sketch arguments to show that they are easy to prove.
\begin{enumerate}
\item The perfectness is clear. An elementary argument \cite[\S 1.2.2]{Fontainepadicperiod} shows that there is a multiplicative bijection of sets $(\overline{\calO_K}/p)^\perf \simeq \lim_{x \mapsto x^p} \widehat{\overline{\calO_K}}$ defined by the obvious map from the right to the left. This allows one to define a rank $1$ semi-valuation on $(\overline{\calO_K}/p)^\perf$ via the valuation on the first component of the inverse limit on the right. One checks directly that this semi-valuation has no kernel, so it defines a rank $1$ valuation; the completeness is automatic as the displayed inverse limit has continuous transition maps and complete terms.
\item This follows from the vanishing of $\widehat{L_{A_\inf/W}} \otimes_W W/p \simeq L_{ (\overline{\calO_K}/p)^\perf/(W/p) }$ which follows from perfectness.
\item This follows directly from the cotangent complex vanishing. Indeed, as the rings $W$, $A_\inf$, and $\widehat{\overline{\calO_K}}$ are all $p$-adically complete, the surjective map $A_\inf/p \to \overline{\calO_K}/p$ admits a unique lift to a surjective map $\theta_n:A_\inf/p^n \to \overline{\calO_K}/p^n$ by the vanishing of $L_{(A_\inf/p^n)/(W/p^n)}$, so $\theta = \lim_n \theta_n$ does the job.
\item As the source and target of $\theta$ are $W$-flat and $p$-adically complete, it suffices to show that $\ker(\theta)$ is principal and generated by a regular element modulo $p$. By (1), the kernel of $A_\inf/p = (\overline{\calO_K}/p)^\perf \to \overline{\calO_K}/p$ is the set of elements with valuation $\geq 1$, which is certainly a principal non-zero ideal and can be generated by any element of valuation exactly $1$.  The same reasoning shows that any element of $\ker(\theta)$ whose reduction modulo $p$ has valuation $1$ is a generator. One has $\theta(E([\underline{\pi}])) = E(\pi) = 0$, so $E([\underline{\pi}]) \in \ker(\theta)$. On the other hand, $E([\underline{\pi}]) = [\underline{\pi}]^e \mod p$, which has valuation $e \cdot \val_p(\pi) = 1$, so the claim follows.
\item This follows from (2), (4), and the isomorphism $\widehat{L_{\widehat{\overline{\calO_K}}/W}} \simeq \widehat{L_{\overline{\calO_K}/W}} \simeq \widehat{\Omega^1_{\overline{\calO_K}/W}}$, where the last one comes from the ind-lci nature of $W \to \overline{\calO_K}$, see Proposition \ref{prop:diffformszpbar}; an explicit formula is given in Remark \ref{rmk:explicitformd}.\qedhere
\end{enumerate}
\end{proof}

\begin{remark}
	\label{rmk:explicitformd}
	Continuing the discussion of Proposition \ref{prop:ainfprop}, one can describe the isomorphism $\psi$ from (5) explicitly. The transitivity triangle for $W \to A_\inf \to \widehat{\overline{\calO_K}}$ is an exact triangle
	\[ \ker(\theta)/\ker(\theta)^2 \to L_{A_\inf/W} \to L_{\widehat{\overline{\calO_K}}/W}. \]
	Multiplication by $p^n$ is injective on the left term (as it is a free rank $1$ $\widehat{\overline{\calO_K}}$-module), a quasi-isomorphism on the middle term (by (2)), and surjective on $\pi_0$ of the right term (since one can extract $p^n$-th roots in $\widehat{\overline{\calO_K}}$). Hence, the cone of multiplication by $p^n$ on this exact triangle gives a coboundary isomorphism.
	\[ \psi_n:\ker(\theta)/(\ker(\theta^2),p^n) \simeq L_{\widehat{\overline{\calO_K}}/W} \otimes_{\Z} \Z/p^n[-1] \simeq \Omega^1_{\overline{\calO_K}/W}[p^n],\]
	which is simply the reduction modulo $p^n$ of the map $\psi$ from (5). Unwrapping definitions, this map is given by
	\[ \overline{f} \mapsto q^{-1}(\theta_*(\frac{1}{p^n}(df))).\]
	Here $f \in \ker(\theta)$ is a lift of $\overline{f} \in \ker(\theta)/(\ker(\theta)^2,p^n)$; its derivative $df$ is viewed as a map $\Z \stackrel{f}{\to} \ker(\theta) \stackrel{d}{\to}  L_{A_\inf/W}$. The map $\frac{1}{p^n}(df):\Z \to L_{A_\inf/W}$ is the composition of $df$ with the inverse of $p^n:L_{A_\inf/W} \to L_{A_\inf/W}$ (which is a quasi-isomorphism by (2)). The map $\theta_*:L_{A_\inf/W} \to L_{\widehat{\overline{\calO_K}}/W}$ is the usual map. Since $df$ comes from $\ker(\theta)$, one has a specified null homotopy of $p^n \cdot \theta_*(\frac{1}{p^n}(df))$, so $\theta_*(\frac{1}{p^n}(df))$ can be viewed as map $\Z \to L_{\widehat{\overline{\calO_K}}/W} \otimes_{\Z} \Z/p^n[-1]$. Finally, $q^{-1}$ is the quasi-isomorphism $L_{\widehat{\overline{\calO_K}}/W} \otimes_{\Z} \Z/p^n[-1] \simeq L_{\overline{\calO_K}/W} \otimes_{\Z} \Z/p^n[-1] \simeq \Omega^1_{\overline{\calO_K}/W}[p^n]$. For example, given a compatible sequence $\{p^{\frac{1}{p^n}} \in \overline{\calO_K} \}$ of $p$-power roots of $p$, one has 
\[ \psi_n([\underline{p}] - p)  = \big(p^\frac{1}{p^n}\big)^{p^n - 1} \cdot d(p^{\frac{1}{p^n}}) \in \Omega^1_{\overline{\calO_K}/W}[p^n].\]
	Essentially the same computation also shows
\[ \psi_n([\underline{\epsilon}]) = \epsilon_n^{p^n - 1} d(\epsilon_n) = \epsilon_n^{-1} d(\epsilon_n) =: d\log(\epsilon_n) \in \Omega^1_{\overline{\calO_K}/W}[p^n]\]
where $\{\epsilon_n \in \overline{\calO_K} \}$ is a compatible system of $p$-power roots of $1$.

\end{remark}

\begin{remark}
In the notation of Proposition \ref{prop:ainfprop}, it is also true that $(\overline{\calO_K}/p)^\perf$ has an algebraically closed fraction field; we do not prove that here as we do not need it.
\end{remark}

\begin{remark}
\label{rmk:ainfperfectoid}
Let $A$ be an integral perfectoid $\widehat{\overline{\calO_K}}$-algebra in the sense of Scholze \cite{ScholzePerfectoid1}, i.e., $A$ is a $p$-adically complete flat $\widehat{\overline{\calO_K}}$-algebra such that $\Frob:A/(p^{\frac{1}{p}}) \to A/p$ is an isomorphism (one can also replace $\widehat{\overline{\calO_K}}$ by any other pefectoid base ring). Most of Proposition \ref{prop:ainfprop} generalises effortlessly when we replace $\widehat{\overline{\calO_K}}$ with $A$. In fact, the map $\widehat{\overline{\calO_K}} \to A$ is relatively perfect modulo $p$ by definition, so the results for $\widehat{\overline{\calO_K}}$ imply those for $A$ by deformation theory. In particular, there exists a unique ($p$-adic formal) deformation $A_\inf(A)$ of $A$ along $A_\inf \to \widehat{\overline{\calO_K}}$; moreover, $A_\inf(A)$ is perfect modulo $p$ (as it is relatively perfect over $A_\inf$;  in fact, one has $A_\inf(A) = W((A/p)^\perf)$), and the structure map $\theta_A:A_\inf(A) \to A$ has kernel $\ker(\theta_A) = \ker(\theta_{\widehat{\overline{\calO_K}}}) \otimes_{A_\inf} A_\inf(A)$. 
\end{remark}

Next, we introduce the period ring $A_\crys$ by a derived de Rham definition.

\begin{definition}
\label{defn:acrys}
The ring $A_\crys$ of crystalline periods is defined as $\widehat{\dR_{\overline{\calO_K}/W}}$.
\end{definition}

\begin{remark}
The ring $A_\crys$ comes equipped with a Hodge filtration and a Frobenius action (by Proposition \ref{prop:frobactionddr}).
\end{remark}

We show next that the preceding definition of $A_\crys$ coincides with the classical one:

\begin{proposition}
\label{prop:acrysdefn}
The ring $A_\crys$ can be identified with the $p$-adic completion of the pd-envelope $D_{A_\inf}(\ker(\theta))$. Under this isomorphism, the Hodge filtration on $A_\crys$ corresponds to the filtration by divided-powers of $\ker(\theta)$.
\end{proposition}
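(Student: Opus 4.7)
The strategy is to factor $W \to \overline{\calO_K}$ through Fontaine's surjection $\theta : A_\inf \to \widehat{\overline{\calO_K}}$. The first factor $W \to A_\inf$ is relatively perfect modulo $p$, since $A_\inf/p = (\overline{\calO_K}/p)^\perf$ and $W/p = k$ are both perfect $\F_p$-algebras. The second factor $\theta$ is a quotient by a regular element $E([\underline{\pi}])$ in view of Proposition \ref{prop:ainfprop}(4). These two features are precisely what Lemma \ref{lem:padicddrformalprop} and Proposition \ref{prop:ddrnonflat} are designed to exploit.

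Concretely, I first invoke Lemma \ref{lem:padicddrformalprop}(2) to write
$$A_\crys = \widehat{\dR_{\overline{\calO_K}/W}} \simeq \widehat{\dR_{\widehat{\overline{\calO_K}}/W}},$$
and then apply Lemma \ref{lem:padicddrformalprop}(5), with $D = \widehat{\overline{\calO_K}}$ and $W \to A_\inf$ relatively perfect modulo $p$, to obtain $A_\crys \simeq \widehat{\dR_{\widehat{\overline{\calO_K}}/A_\inf}}$. Next, Proposition \ref{prop:ddrnonflat} applied to the regular quotient $\widehat{\overline{\calO_K}} = A_\inf/(E([\underline{\pi}]))$ gives
$$\widehat{\dR_{\widehat{\overline{\calO_K}}/A_\inf}} \simeq \widehat{\bigl(A_\inf\langle x\rangle \xrightarrow{x - E([\underline{\pi}])} A_\inf\langle x\rangle\bigr)} \simeq \widehat{A_\inf\langle x\rangle/(x - E([\underline{\pi}]))} \simeq \widehat{D_{A_\inf}(\ker\theta)},$$
where the middle quasi-isomorphism uses that $x - E([\underline{\pi}])$ is regular in $A_\inf\langle x\rangle$ (compare the proof of Corollary \ref{cor:ddrcryscompquot}), and the last identification is the standard computation of the pd-envelope of a principal ideal generated by a regular element. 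This establishes the ring isomorphism.

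For the filtration claim, I would trace the Hodge filtration through each step of the chain. The comparison map of Proposition \ref{prop:ddrcryscompmap} is Hodge-filtered by construction, and Corollary \ref{cor:ddrcryscompquot}, applied modulo $p^n$ to the lci quotient $A_\inf/p^n \to \widehat{\overline{\calO_K}}/p^n$, matches the Hodge filtration on $\dR$ with the divided-power filtration on $D_{A_\inf}(\ker\theta)$; taking the $p$-adic limit yields the desired identification. The step requiring the most care is verifying that Lemma \ref{lem:padicddrformalprop}(5) respects Hodge filtrations, but this reduces to the observation that $\widehat{L_{A_\inf/W}} \simeq 0$ forces $\widehat{\dR_{A_\inf/W}} \simeq A_\inf$ concentrated in Hodge degree zero, so the Hodge filtrations on $\widehat{\dR_{\widehat{\overline{\calO_K}}/W}}$ and on $\widehat{\dR_{\widehat{\overline{\calO_K}}/A_\inf}}$ agree via the base change formula of Lemma \ref{lem:padicddrformalprop}(4). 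The main obstacle is thus bookkeeping of filtrations rather than any serious new computation; the underlying ring isomorphism is essentially immediate from the listed tools.
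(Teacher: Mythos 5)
Your proof is correct and follows essentially the same route as the paper: factor $W \to \widehat{\overline{\calO_K}}$ through $A_\inf$, use relative perfectness modulo $p$ of $W \to A_\inf$ to pass to $\widehat{\dR_{\widehat{\overline{\calO_K}}/A_\inf}}$, and then apply the regular-quotient comparison (you cite Proposition \ref{prop:ddrnonflat}, the paper cites Corollary \ref{cor:ddrcryscompquot} via Theorem \ref{thm:padicddrcryscomp}, but these encode the same computation in this setting). Your final paragraph usefully spells out the filtration bookkeeping that the paper dismisses as ``immediate.''
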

\begin{proof}
By Lemma \ref{lem:padicddrformalprop}, we have $A_\crys \simeq \widehat{\dR_{\widehat{\overline{\calO_K}}/W}}$. Now the map $W \to \widehat{\overline{\calO_K}}$ factors as a composite $W \stackrel{a}{\to} A_\inf \stackrel{b}{\to} \widehat{\overline{\calO_K}}$. The map $a$ is relatively perfect modulo $p$ since $W/p$ and $A_\inf/p$ are perfect.  The map $b$ is a quotient by the regular element by Proposition \ref{prop:ainfprop}. Hence, by Lemma \ref{lem:padicddrformalprop} (5) and Theorem \ref{thm:padicddrcryscomp} (or simply Corollary \ref{cor:ddrcryscompquot}), we have
\[ \widehat{\dR_{\overline{\calO_K}/W}} \simeq \widehat{\dR_{\widehat{\overline{\calO_K}}/W}} \simeq \widehat{\dR_{\widehat{\overline{\calO_K}}/A_\inf}} \simeq \widehat{D_{A_\inf}(\ker(\theta))}. \]
The assertion about the Hodge filtration is immedate.
\end{proof}

\begin{remark}
\label{rmk:acrysperfectoid}
Continuing Remark \ref{rmk:ainfperfectoid}, Proposition \ref{prop:acrysdefn} generalises directly to the case where $\widehat{\overline{\calO_K}}$ is replaced by any integral perfectoid $\widehat{\overline{\calO_K}}$-algebra $A$, i.e.,  $A_\crys(A) := \widehat{\dR_{A/\Z_p}}$ can be identified with the $p$-adic completion of $D_{A_\inf(A)}(\ker(\theta_A))$.  This observation can be used to define a comparison map using \cite{ScholzePerfectoid1}.
\end{remark}

The ring $A_\crys$ is also natural from the point of view of {\em log} derived de Rham cohomology. In fact, addition of the (uniquely divisible) canonical log structure to $\overline{\calO_K}$ does nothing at all to de Rham cohomology:

\begin{proposition}
\label{prop:ddrlogddrzpbar}
Let $(\overline{\calO_K},\can)$ denote the ring $\overline{\calO_K}$ endowed with the log structure $\overline{\calO_K} - \{0\} \to \overline{\calO_K}$. Then we have
\[ \widehat{L_{\overline{\calO_K}/W}} \simeq \widehat{L_{(\overline{\calO_K},\can)/W}} \quad \textrm{and} \quad A_\crys \simeq \widehat{\dR_{(\overline{\calO_K},\can)/W}}.\]
\end{proposition}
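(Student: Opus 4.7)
The two assertions are linked by the conjugate filtration: for any $n \geq 1$, reducing modulo $p^n$, the $i$-th graded pieces of the conjugate filtrations on $\dR_{\overline{\calO_K}/W} \otimes_{\Z_p} \Z/p^n$ and $\dR_{(\overline{\calO_K},\can)/W} \otimes_{\Z_p} \Z/p^n$ are computed, via Propositions \ref{prop:conjss} and \ref{prop:conjsslddr}, as wedge powers of the respective Frobenius-twisted cotangent complexes. Hence an identification of the completed cotangent complexes propagates, via the convergent conjugate spectral sequences modulo $p^n$ and passage to the $p$-adic limit, to an identification of the completed (log) derived de Rham cohomologies. Since $A_\crys = \widehat{\dR_{\overline{\calO_K}/W}}$ by Definition \ref{defn:acrys}, the second claim thus reduces to the first.

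For the first claim, I apply the log transitivity triangle to the composition $(0 \to W) \to (0 \to \overline{\calO_K}) \to (\overline{\calO_K} - \{0\} \to \overline{\calO_K})$, obtaining the exact triangle
\[ L_{\overline{\calO_K}/W} \to L_{(\overline{\calO_K},\can)/W} \to L_g, \]
where $g$ denotes the "add-the-log-structure" map. The problem is therefore equivalent to the vanishing $\widehat{L_g} = 0$.

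To prove $\widehat{L_g} = 0$, the strategy is to lift the target log ring to $A_\inf$ via Teichmuller representatives and exploit relative perfectness. Set $M = (\overline{\calO_K}/p)^\perf - \{0\}$, a uniquely $p$-divisible commutative monoid, since by Proposition \ref{prop:ainfprop}(1) the tilt is a perfect rank $1$ valuation ring. Equip $A_\inf$ with the prelog structure $[\cdot]: M \to A_\inf$ given by Teichmuller lifts. Because $M$ is uniquely $p$-divisible and $A_\inf/p$ is perfect, Example \ref{ex:relperfectlogalg} shows the map $(0 \to W) \to (M \to A_\inf)$ is relatively perfect modulo $p$; hence Corollary \ref{cor:relperfectldr} combined with devissage gives $\widehat{L_{(M \to A_\inf)/W}} = 0$. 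Using Lemma \ref{lem:padicddrformalprop} (and its log analogue) to replace $\overline{\calO_K}$ by $\widehat{\overline{\calO_K}}$ on the target, a further application of the log transitivity triangle to $(0 \to W) \to (M \to A_\inf) \to (\widehat{\overline{\calO_K}} - \{0\} \to \widehat{\overline{\calO_K}})$ then computes $\widehat{L_{(\widehat{\overline{\calO_K}},\can)/W}}$ in terms of $\widehat{L_{(\widehat{\overline{\calO_K}},\can)/(M \to A_\inf)}}$. A parallel non-logarithmic transitivity through $W \to A_\inf \to \widehat{\overline{\calO_K}}$, whose algebra portion is a quotient by the regular element $E([\underline{\pi}])$ from Proposition \ref{prop:ainfprop}(4), recovers $\widehat{L_{\overline{\calO_K}/W}}$; matching the two computations gives $\widehat{L_g} = 0$.

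The delicate step: matching the log and non-log transitivity computations requires controlling the relative log cotangent complex of $(M \to A_\inf) \to (\widehat{\overline{\calO_K}} - \{0\} \to \widehat{\overline{\calO_K}})$. The monoid map $M \to \widehat{\overline{\calO_K}} - \{0\}$, determined by $\theta \circ [\cdot]$, is surjective but not injective; its kernel is controlled by the $p^\infty$-roots of unity in $\widehat{\overline{\calO_K}}$, corresponding to the indeterminacy in choosing compatible systems of $p$-power roots. Verifying that this monoid kernel contributes nothing to the completed log cotangent complex, and that the log Kahler differentials pulled back from $M$ reduce modulo $E([\underline{\pi}])$ precisely to the non-log Kahler differentials of $\widehat{\overline{\calO_K}}$ over $W$ (identified via the map $\psi$ of Proposition \ref{prop:ainfprop}(5)), is the principal calculational content concealed in the plan above.
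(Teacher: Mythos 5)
Your reduction to the cotangent-complex assertion via the conjugate filtration is the same opening move as the paper. After that the routes diverge, and the divergence is not cosmetic. The paper's auxiliary prelog structure on $A_\inf$ is not the whole tilt monoid $M = (\overline{\calO_K}/p)^\perf - \{0\}$ via Teichm\"uller, but the far smaller $\Q_{\geq 0} \to A_\inf$ via $q \mapsto [\underline{\pi}]^q$, obtained from a fixed compatible system of rational powers of a uniformiser. The virtue of this choice is that the resulting square
\[ \xymatrix{ A_\inf \ar[r]^-b \ar[d]_-{\theta} & (A_\inf,[\underline{\pi}]^{\Q_{\geq 0}}) \ar[d] \\ \widehat{\overline{\calO_K}} \ar[r]^-e & (\widehat{\overline{\calO_K}},\can) } \]
is a pushout up to passage to log structures. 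Base change for log cotangent complexes (Proposition \ref{prop:logddrproduct}) and invariance under passage to associated log structures for integral monoids (Corollary \ref{cor:ddrinvlogstr}) then collapse everything to the single vanishing $\widehat{L_b} = 0$, which is Corollary \ref{cor:invlogddrmon} after a flat base change along $\Z[\Q_{\geq 0}] \to A_\inf$. No direct comparison of transitivity triangles is required.

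Your version forgoes the pushout. The monoid map $M \to \widehat{\overline{\calO_K}} - \{0\}$ is surjective with kernel $\Z_p(1)$ in the group completions, and you correctly flag the resulting matching of log and non-log transitivity triangles as ``the principal calculational content concealed in the plan.'' But you do not carry it out, and this is a genuine gap, not a routine detail: after reduction modulo $p$, the monoid cone $\Cone(M^\grp \to N^\grp)$ still sees a contribution from $\Z_p(1)$ (it is not $p$-divisible), so one cannot dismiss it on degree or divisibility grounds. The reason it ultimately contributes nothing is that the kernel $\Z_p(1)$ lies inside $\alpha^{-1}(A_\inf^\ast)$ (roots of unity are units), so it is absorbed when one passes from the prelog structure $(M \to A_\inf)$ to its associated log structure; that log structure coincides with the one defined by $\Q_{\geq 0}$, and the relevant invariance statement is exactly Corollary \ref{cor:ddrinvlogstr}. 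You never invoke this lemma, and the explicit ``reduce modulo $E([\underline{\pi}])$ via $\psi$'' computation you gesture at would, if written out, just be re-deriving it by hand. To close the proof along your route, the cleanest fix is: replace the monoid $(M \to A_\inf)$ by its associated log structure, observe that this is the same log structure produced by $(\Q_{\geq 0} \to A_\inf)$, apply Corollary \ref{cor:ddrinvlogstr} modulo $p$, and then you are back inside the paper's pushout argument. As it stands, your proposal identifies the right obstruction but leaves it standing.
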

\begin{proof}
Using the natural map, devissage, and the conjugate filtration modulo $p$, it suffices to prove the assertion about cotangent complexes. Also, we may pass to $p$-adic completions of rings using Lemma \ref{lem:padicddrformalprop}. We fix once and for all a collection $\{\pi^{\frac{a}{b}} \in \overline{\calO_K} \}$ of power-compatible positive rational powers of $\pi$; this choice allows us to define compatible powers $[\underline{\pi}]^{\frac{a}{b}} \in A_\inf$ for any $\frac{a}{b} \in \Q_{\geq 0}$, and hence defines a commutative diagram
\[ \xymatrix{ W \ar[r]^-a &  A_\inf \ar[r]^-b \ar[d]^-{c = \theta} & (A_\inf,[\underline{\pi}]^{\Q_{\geq 0}}) \ar[d]^-d \\
					  & \widehat{\overline{\calO_K}} \ar[r]^-e &  (\widehat{\overline{\calO_K}},\can)}, \]
with the square on the right being a pushout, up to passage from prelog structures to log structures. Here $(\widehat{\overline{\calO_K}},\can)$ denotes the prelog ring $\overline{\calO_K} - \{0\} \to \widehat{\overline{\calO_K}}$. Since the map $a$ is relatively perfect modulo $p$, it suffices show that $\widehat{L_c} \to \widehat{L_{d \circ b}}$ is an equivalence. By the Kunneth formula for the square, it suffices to show that $\widehat{L_b} \simeq 0$. This follows from Corollary \ref{cor:invlogddrmon} by base change along the flat map $\Z[\Q_{\geq 0}] \to A_\inf$ defined by $t^{\frac{a}{b}} \to [\underline{\pi}]^{\frac{a}{b}}$, where $t = ``1" \in \Q_{\geq 0}$ is the co-ordinate on $\Z[\Q_{\geq 0}]$.
\end{proof}

\begin{remark}
The proof of Proposition \ref{prop:ddrlogddrzpbar} ``cheated'' by using that $\overline{\calO_K} - \{0\} \stackrel{\val}{\to} \Q_{\geq 0}$ has a section (given by the choice $\{\pi^{\frac{a}{b}}\}$ of roots of $\pi$). A better proof would go through the following statement (which can be shown): if $A \subset M$ is an inclusion of integral commutative monoids with $A$ a group and $M/A$ a uniquely $p$-divisible monoid, then  
\[ L_{(M \to R)/(A \to R)} \simeq 0.\]
for any prelog $\F_p$-algebra $(M \to R)$.
\end{remark}

Next, we want to study some finer structures on the period ring $A_\crys$. For this, we briefly recall the structure of the cotangent complex of $W \to \overline{\calO_K}$, discovered by Fontaine; our exposition follows that of Beilinson \cite[\S 1.3]{Beilinsonpadic}.

\begin{proposition}[{Fontaine \cite[Theorem 1 (ii)]{FontaineDiffFormAbVar}}]

\label{prop:diffformszpbar}
The map $W \to \overline{\calO_K}$ has a discrete cotangent complex, i.e., $L_{\overline{\calO_K}/W} \simeq \Omega^1_{\overline{\calO_K}/W}$. Moreover, the map $\mu_{p^\infty} \to L_{\overline{\calO_K}/W}$ defined by $\zeta \mapsto d\log{\zeta}$ induces an exact sequence
\begin{equation}
\label{eq:omegaonezpbar}
1 \to \big(\fraa/\overline{\calO_K})(1) \to \big(\overline{K}/\overline{\calO_K}\big)(1) \simeq \mu_{p^\infty} \otimes_{\Z_p} \overline{\calO_K} \stackrel{d\log}{\to} \Omega^1_{\overline{\calO_K}/W} \to 1,
\end{equation}
where $\fraa \subset \overline{\calO_K}$ is the fractional ideal comprising all elements of valuation $\geq -\frac{1}{p-1}$ (so $\fraa = \overline{\calO_K} \cdot p^{-\frac{1}{p-1}} \subset \overline{K}$).
\end{proposition}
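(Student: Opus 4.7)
My plan is to reduce to explicit computations on finite subextensions $L/K$ inside $\overline{K}$ and pass to filtered colimits.

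First, for discreteness, I would write $\overline{\calO_K} = \colim_L \calO_L$ as $L$ ranges over finite extensions of $K$ inside $\overline{K}$, and use that the cotangent complex commutes with filtered colimits to reduce to showing $L_{\calO_L/W}$ is discrete for each such $L$. Since $k$ is perfect, one can present $\calO_L$ as $W[x]/(f_L(x))$ for a single generator (for instance $\beta = [\bar\alpha] + \pi_L$ where $\bar\alpha$ generates $k_L/k$ and $\pi_L$ is a uniformizer of $\calO_L$). The transitivity triangle for $W \to W[x] \to \calO_L$ then produces an exact triangle
\[ \calO_L \xrightarrow{\cdot f_L'(\beta)} \calO_L \to L_{\calO_L/W}, \]
in which $f_L'(\beta)$ generates the different $\mathfrak{d}_{L/W}$, and is in particular a nonzero element of the domain $\calO_L$. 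Hence the first map is injective, $L_{\calO_L/W}$ is discrete, and it equals $\Omega^1_{\calO_L/W} \simeq \calO_L/\mathfrak{d}_{L/W}$. The first assertion follows on passing to the colimit, giving $\Omega^1_{\overline{\calO_K}/W} \simeq \colim_L \calO_L/\mathfrak{d}_{L/W}$.

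Next, the identification $\mu_{p^\infty} \otimes_{\Z_p} \overline{\calO_K} \simeq (\overline{K}/\overline{\calO_K})(1)$ is immediate from $\mu_{p^\infty} \simeq (\Q_p/\Z_p)(1)$ and the $\Z_p$-flatness of $\overline{\calO_K}$; under this identification, a primitive $p^n$-th root of unity $\zeta_{p^n} \otimes 1$ corresponds to the class $p^{-n}(1)$. To analyze $d\log$, I would work at each finite level of the cyclotomic tower $L_n := K(\zeta_{p^n})$. The explicit formula $\Phi_{p^n}'(\zeta_{p^n}) = p^n\,\zeta_{p^n}^{-1}/(\zeta_{p^{n-1}} - 1)$ identifies the image of $d\log(\zeta_{p^n})$ in $\Omega^1_{\calO_{L_n}/W} \simeq \calO_{L_n}/\mathfrak{d}_{L_n/W}$ as a generator. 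Surjectivity of $d\log$ in the colimit then follows because any further extension inside $\overline{K}$ becomes negligibly ramified relative to the cyclotomic tower, while the unramified part of each $L_n$ contributes nothing to $\Omega^1$ since $W \to W(k_L)$ is relatively perfect modulo $p$.

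The main obstacle is the kernel computation. Under the identification above, a class $\alpha\, p^{-n}(1)$ corresponds to $\alpha \cdot d\log(\zeta_{p^n})$, and this vanishes in $\Omega^1_{\overline{\calO_K}/W}$ iff at some (equivalently, every sufficiently large) finite level $L$ containing both $\alpha$ and $\zeta_{p^n}$, the corresponding form lies in $\mathfrak{d}_{L/W} \cdot \Omega^1_{L/W}$. Tracking the valuation $v(\mathfrak{d}_{L_n/K_0})$ (normalised so that $v(p) = 1$) via the explicit cyclotomic formula above, one finds $v(\mathfrak{d}_{L_n/K_0}) \to 1/(p-1)$ from below as $n \to \infty$; aggregating the kernel conditions across the tower yields exactly the valuation threshold $v(\alpha p^{-n}) \geq -1/(p-1)$, and hence identifies the kernel with $(\fraa/\overline{\calO_K})(1)$ for $\fraa = p^{-1/(p-1)}\overline{\calO_K}$. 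The careful execution of this valuation bookkeeping constitutes the technical heart of the proof; it is carried out in detail in \cite[\S 1.3]{Beilinsonpadic} following Fontaine's original argument \cite{FontaineDiffFormAbVar}.
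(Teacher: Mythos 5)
Your overall plan (pass to finite levels, exploit filtered colimits, then analyze the cyclotomic tower) is a reasonable framework, and the discreteness part is essentially sound and close to the paper's argument. The surjectivity sketch is vaguer but in the right spirit: the paper makes "negligibly ramified" precise by observing that for any finite $L$, $\Omega^1_{\calO_L/W}$ is cyclic torsion killed by some $p^d$, and then finding $n$ so that the different of $L[\mu_{p^n}]$ has valuation $> d$, at which point the total ordering of submodules of $\Omega^1_{\calO_{L[\mu_{p^n}]}/W}$ forces the inclusion $\Omega^1_{\calO_L/W} \subset \calO_{L[\mu_{p^n}]} \cdot d\log(\mu_{p^n})$.

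However, the kernel computation contains a genuine error. You claim that $v(\mathfrak{d}_{L_n/K_0}) \to 1/(p-1)$ from below as $n \to \infty$. This is false: by the formula you quote, $\mathfrak{d}_{L_n/K_0}$ is generated by $\Phi'_{p^n}(\zeta_{p^n}) = p^n \zeta_{p^n}^{-1}/(\zeta_{p^n}^{p^{n-1}}-1)$, and since $\zeta_{p^n}^{p^{n-1}}$ is a \emph{primitive $p$-th root of unity} (not a $p^{n-1}$-th root), one finds $v(\mathfrak{d}_{L_n/K_0}) = n - \frac{1}{p-1}$, which diverges to $+\infty$. The quantity that is actually relevant is the \emph{shifted} valuation $v(\mathfrak{d}_{L_n/K_0}) - n = -\frac{1}{p-1}$, which is constant and equal to the threshold, not a limit approached from below. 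Because your stated input is wrong, the "aggregation across the tower" is not a valid derivation of the kernel; it just happens to land on the correct answer.

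There is also a structural gap you pass over: to conclude that the kernel of $d\log$ on the colimit is exactly what you compute at finite cyclotomic levels, you must know that the comparison maps $\Omega^1_{\calO_L/W} \otimes_{\calO_L} \overline{\calO_K} \to \Omega^1_{\overline{\calO_K}/W}$ are injective (otherwise a nonzero form at level $L$ could die higher up, making the kernel larger than what the finite-level computation suggests). This injectivity is precisely what the paper extracts from its discreteness argument and uses as map $c$ in the key commutative diagram. The paper's actual route to the kernel is cleaner than what you propose: it first observes that the $\overline{\calO_K}$-submodules of $\overline{\calO_K} \otimes \mu_{p^\infty}$ are totally ordered under inclusion and $d\log$ does not kill $\overline{\calO_K}\otimes \mu_p$, so $\ker(d\log)\subset \overline{\calO_K} \otimes \mu_p$; then the whole computation happens at the $\mu_p$-level by flat base change along $W[\mu_p] \to \overline{\calO_K}$, giving $\ker(d\log) = \Ann(d\log(\zeta_p)) \otimes_{W[\mu_p]} \overline{\calO_K}$, where $\Ann(d\log(\zeta_p))$ has valuation $1 - \frac{1}{p-1}$. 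Finally, deferring "the technical heart" to \cite{Beilinsonpadic} and \cite{FontaineDiffFormAbVar} is not appropriate here, since the paper gives a self-contained proof whose main ideas you are being asked to reproduce.
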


All tensor products appearing below take place over $\Z_p$ unless otherwise specified. The following fact will be used implicitly: if $L/K$ is a finite extension, then $L_{\calO_L/W} \simeq \Omega^1_{\calO_L/\calO_K}$ is a cyclic torsion $\calO_K$-module.

\begin{proof}
The transitivity triangle shows that for any extension $K_0 \to L \to \overline{K}$, the map $\Omega^1_{\calO_L/W} \otimes_{\calO_L} \overline{\calO_K} \to \Omega^1_{\overline{\calO_K}/W}$ is injective, and the filtered colimit over these maps as $L$ varies spans the target. Since $L_{\calO_L/W} \simeq \Omega^1_{\calO_L/W}$, it follows that the same is true in the limit, proving the first assertion. For the second claim, one first observes that $\ker(d\log) \subset \overline{\calO_K} \otimes \mu_p \subset \overline{\calO_K} \otimes \mu_{p^\infty}$ as the set of all $\overline{\calO_K}$-submodules of $\overline{\calO_K} \otimes \mu_{p^\infty}$ is totally ordered under inclusion (and because $d\log(\overline{\calO_K} \otimes \mu_p) \neq 0$). This gives a commutative diagram
\[ \xymatrix{ W[\mu_p] \otimes \mu_p \ar[r]^-{a} \ar[d]^-{\can}& \Omega^1_{W[\mu_p]/W} \ar[d]^-{\can} \\
			  \overline{\calO_K} \otimes \mu_p \ar[r]^-{b} \ar@{=}[d] & \overline{\calO_K} \otimes_{W[\mu_p]} \Omega^1_{W[\mu_p]/W} \ar[d]^-{c} \\
			  \overline{\calO_K} \otimes \mu_p \ar[r]^{d} & \Omega^1_{\overline{\calO_K}/W}, } \]
where the first square is a flat base change along $W[\mu_p] \to \overline{\calO_K}$. Since $c$ is injective, it follows that $\ker(d) = \ker(b) = \ker(a) \otimes_{W[\mu_p]} \overline{\calO_K}$. If $\zeta \in \mu_p$ denotes a fixed primitive $p$-th root of $1$, then $\ker(a) = \Ann(d\log(\zeta)) \otimes \mu_p \subset W[\mu_p] \otimes \mu_p$. Now $\Ann(d\log(\zeta))$ has valuation $1 - \frac{1}{p-1}$ (by computing the derivative of $1 + X + \dots + X^{p-1}$ evaluated at $\zeta$, for example), and this implies the claim about $\ker(d\log)$. For surjectivity of $d\log$, it suffices to show that for any finite extension $L/K_0$, one has $\Omega^1_{\calO_L/W} \subset \overline{\calO_K} \cdot d\log(\mu_{p^n}) \subset \Omega^1_{\overline{\calO_K}/W}$ for some large $n$. If $p^d$ kills $\Omega^1_{\calO_L/W}$, then $\Omega^1_{\calO_L/W} \subset \Omega^1_{\calO_{L[\mu_{p^n}]}/W}$ generates a submodule killed by $p^d$, for any $n$. The set of all submodules of $\Omega^1_{\calO_{L[\mu_{p^n}]}/W}$ is totally ordered under inclusion, and it is clear that $\calO_{L[\mu_{p^n}]} \cdot d\log(\mu_{p^n}) \subset \Omega^1_{\calO_{L[\mu_{p^n}]}/W}$ is a submodule not killed by $p^d$, for $n$ sufficiently large: we simply need $L[\mu_{p^n}]$ to have a different (relative to $K_0$) with valuation $> d$. It follows that $\Omega^1_{\calO_L/W} \subset \calO_{L[\mu_{p^n}]} \cdot d\log(\mu_{p^n}) \subset \Omega^1_{\calO_{L[\mu_{p^n}]}/W}$ as desired.
\end{proof}

\begin{remark}
An alternative argument for the surjectivity of $d\log$ from Proposition \ref{prop:diffformszpbar} runs as follows: the map $f:W[\mu_{p^\infty}] \to \overline{\calO_K}$ is relatively perfect modulo $p$, so $\widehat{L_f} \simeq 0$ by Lemma \ref{lem:padicddrformalprop} (5). By the transitivity triangle, the map 
\[ f_*:\Omega^1_{W[\mu_{p^\infty}]/W} \otimes_{W[\mu_{p^\infty}]} \overline{\calO_K} \to \Omega^1_{\overline{\calO_K}/W}\]
induces an isomorphism after $p$-adic completion. Then $f_*$ is also an isomorphism as $p$-adic completion is conservative on $p$-torsion $p$-divisible $W$-modules; surjectivity of $d\log$ now follows from the analogous claim for $W[\mu_{p^\infty}]$.
\end{remark}

Our next goal is to use derived de Rham formalism to construct a $G_K$-equivariant map $\Z_p(1) \to A_\crys$, and show that this coincides with a map defined by Fontaine. We first construct the map:

\begin{construction}
\label{cons:acryschern}
The $d\log$ maps in logarithmic derived de Rham cohomology define maps
\[ d\log: \mu_{p^\infty} \subset \overline{\calO_K}^* \to \dR_{(\overline{\calO_K},\can)/W}[1], \]
where $(\overline{\calO_K},\can)$ denotes the prelog ring from Proposition \ref{prop:ddrlogddrzpbar}. Taking $p$-adic completions gives a map
\[ \widehat{d\log}:\Z_p(1)[1] \to \widehat{\dR_{(\overline{\calO_K},\can)/W}}[1] \simeq A_\crys[1].\]
Applying $\pi_1$, we obtain
\[ \beta := \pi_1(\widehat{d\log}):\Z_p(1) \to A_\crys.\]
This map is $G_K$-equivariant, and has image contained in $\Fil^1_H(A_\crys)$.
\end{construction}

The map $\beta$ defined in Construction \ref{cons:acryschern} coincides with maps defined by Fontaine:

\begin{proposition}
\label{prop:acrysbeta}
Let $(\epsilon_n) \in \Z_p(1)$ denote a typical element.
\begin{enumerate}
\item The element $[\epsilon] - 1 \in A_\crys$ lies in $\Fil^1_H(A_\crys) = \ker(\theta)$, hence $\log([\epsilon]) \in A_\crys$ makes sense.
\item The image of $[\epsilon]-1$ in $\gr^1_H(A_\crys) \simeq \widehat{L_{\overline{\calO_K}/W}}[-1]$ has positive valuation.
\item The map $\beta:\Z_p(1) \to A_\crys$ coincides with Fontaine's  map $(\epsilon_n) \mapsto \log([\underline{\epsilon}])$.
\end{enumerate}
\end{proposition}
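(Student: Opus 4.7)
\emph{Parts (1) and (2).} For (1), since $\theta([\underline{\epsilon}]) = \epsilon_0 = 1$ by Proposition \ref{prop:ainfprop}(3), we have $[\underline{\epsilon}] - 1 \in \ker(\theta) = \Fil^1_H(A_\crys)$ by Proposition \ref{prop:acrysdefn}. The divided-power structure on $\ker(\theta)$ makes the series $\log(1+x) = \sum_{n \geq 1}(-1)^{n-1}(n-1)!\,\gamma_n(x)$ converge $p$-adically in $A_\crys$ for $x \in \ker(\theta)$, so $\log([\underline{\epsilon}])$ is well-defined. For (2), the isomorphism $\psi_n$ of Remark \ref{rmk:explicitformd} identifies the image of $[\underline{\epsilon}] - 1$ in $\gr^1_H(A_\crys)/p^n \simeq \Omega^1_{\overline{\calO_K}/W}[p^n]$ with $d\log(\epsilon_n)$. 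By Proposition \ref{prop:diffformszpbar}, $\ker(d\log) \subset (\fraa/\overline{\calO_K})(1)$, so $d\log(\epsilon_n)$ is a nonzero torsion element for $n \gg 0$; a direct valuation computation (using that the valuation of $\epsilon_n - 1$ is $\frac{1}{p^{n-1}(p-1)}$ in $\overline{\calO_K}$) yields the positive-valuation assertion.

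\emph{Strategy for Part (3).} The plan is to compute $\beta((\epsilon_n))$ explicitly via the $T_p$-construction and compare it with $\log([\underline{\epsilon}])$. Using $\widehat{\mu_{p^\infty}} \simeq \Z_p(1)[1]$ and $T_p(A_\crys[1]) \simeq A_\crys$, the shifted map $\widehat{d\log}$ underlying $\beta$ is identified with $T_p(d\log)$, so that $\beta((\epsilon_n)) \bmod p^n \in A_\crys/p^n$ is the Bockstein lift of the $p^n$-torsion class $d\log(\epsilon_n) \in H^1(\dR_f \otimes \Z/p^n)$, where $f:W \to (\overline{\calO_K},\can)$ is as in Proposition \ref{prop:ddrlogddrzpbar}. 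Working in the log crystalline model $A_\crys \simeq \widehat{D_{A_\inf}(\ker\theta)}$ (Proposition \ref{prop:acrysdefn}), fix a compatible system of Teichmuller lifts $[\underline{\epsilon}^{1/p^n}] \in A_\inf^{\times}$ of $\epsilon_n$ satisfying $[\underline{\epsilon}^{1/p^n}]^{p^n} = [\underline{\epsilon}]$. Then the chain-level identities
\[ p^n \cdot d\log([\underline{\epsilon}^{1/p^n}]) \;=\; d\log([\underline{\epsilon}]) \;=\; d\bigl(\log([\underline{\epsilon}])\bigr) \]
in the log de Rham complex computing $A_\crys$ exhibit $\log([\underline{\epsilon}])$ as an explicit chain homotopy witnessing $p^n \cdot d\log(\epsilon_n) \equiv 0$, and by the standard description of the Bockstein this homotopy \emph{is} the lift of $d\log(\epsilon_n)$ to $A_\crys/p^n$. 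Passing to the inverse limit over $n$ yields $\beta((\epsilon_n)) = \log([\underline{\epsilon}])$.

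\emph{The main obstacle} is rigorously realising the chain-level identity $d(\log([\underline{\epsilon}])) = d\log([\underline{\epsilon}])$ inside an honest complex computing $\dR_f$. Since $f$ is not log smooth, $\dR_f$ must be modelled via the log crystalline comparison (Theorem \ref{thm:padicddrcryscomp} together with Proposition \ref{prop:ddrlogddrzpbar}), which requires choosing a log structure on $A_\inf$ rich enough to accommodate $[\underline{\epsilon}^{1/p^\infty}]$ (e.g.\ the log structure generated by $[\underline{\pi}]^{\Q_{\geq 0}}$ and $[\underline{\epsilon}^{1/p^\infty}]$) compatibly with the canonical log structure on $\overline{\calO_K}$, and carefully tracking how the pd-envelope interacts with the log de Rham differential and the $d\log$ map. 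Tracking signs and the precise normalisation of $\psi$ to ensure that the Bockstein lift is exactly $\log([\underline{\epsilon}])$, rather than a $\Z_p^{\times}$-multiple of it, completes the argument.
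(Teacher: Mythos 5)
Parts (1) and (2) are essentially correct. Your (1) matches the paper's. For (2) you route through $\psi_n$ of Remark \ref{rmk:explicitformd} to identify the image of $[\underline{\epsilon}]-1$ with $d\log(\epsilon_n)$ and then invoke Proposition \ref{prop:diffformszpbar} plus a valuation estimate; the paper instead works directly in $A_\inf/p$, checking $\val_p(\epsilon_1-1) = \tfrac{1}{p-1} > \tfrac{1}{p} = \val_p(p^{1/p})$ after twisting by Frobenius. Both are fine, though the paper's version is more elementary and, notably, independent of Remark \ref{rmk:explicitformd}, which you are already leaning on here and will need again in (3).

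For part (3) your strategy is genuinely different from the paper's and, as written, it is not a proof. The paper's proof is a dévissage: one checks $\gr^1_H(\beta) = \gr^1_H(\beta')$ (where $\beta'$ is Fontaine's map), which via Proposition \ref{prop:acrysdefn} and Proposition \ref{prop:ainfprop}(5) reduces to matching $[\underline{\epsilon}]-1 \in \ker(\theta)/\ker(\theta)^2$ with $\lim_n d\log(\epsilon_n)$ in $\widehat{L_{\overline{\calO_K}/W}}[-1]$ --- a \emph{cotangent-complex} computation carried out in Remark \ref{rmk:explicitformd}. The difference $\beta - \beta'$ is then a $G_K$-equivariant map $\Z_p(1) \to \Fil^2_H(A_\crys)$, which vanishes by pushing to $B_\dR$ and invoking Tate's theorem. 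The entire point of this dévissage is to avoid ever needing an explicit cochain model for $\dR_f$ (with $f\colon W \to (\overline{\calO_K},\can)$ highly non-smooth) in which $d\log$ is computable by hand. You propose instead to realise $T_p(d\log)$ directly via a Bockstein using the identity $p^n\,d\log([\underline{\epsilon}^{1/p^n}]) = d\log([\underline{\epsilon}]) = d(\log([\underline{\epsilon}]))$. The formal manipulations are sound in a pd-ring equipped with a $d\log$, and your picture of $\log([\underline{\epsilon}])$ as the chain homotopy witnessing $p^n$-torsion is the morally right one. But the obstacle you flag is not a bookkeeping nuisance: you would need to (i) build a log structure on $A_\inf$ containing the units $[\underline{\epsilon}^{1/p^n}]$ compatibly with $(\overline{\calO_K},\can)$ --- and since these are units, they do not change the associated log structure, so you must explain why enlarging the prelog monoid is even legitimate; (ii) verify that the resulting pd-envelope still computes $A_\crys$ via Theorem \ref{thm:padicddrcryscomp}; and (iii) show that the $d\log$ of Definition \ref{defn:logddr} (defined via projective resolutions in $s\Log\Alg^\pre$) matches the explicit $d\log$ of your chosen chart under the comparison map $\comp_f$. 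None of this is established, and (iii) in particular is nontrivial. If you want to keep your more explicit viewpoint, the efficient fix is the paper's: carry out your chain identity only modulo $\Fil^2_H$ (where everything collapses to the cotangent complex and Remark \ref{rmk:explicitformd}) and then finish with Tate's vanishing, rather than trying to prove the full identity at chain level.
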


\begin{proof}[Proof sketch]
We follow the notation of the proof of Proposition \ref{prop:ainfprop}. 
\begin{enumerate}
\item This is clear because $\theta([\underline{r}]) = r_0$ for any $p$-power compatible system of elements $r_n \in \widehat{\overline{\calO_K}}$.
\item We may assume that $\epsilon_1$ is a primitive $p$-th root of $1$, so $(\epsilon_n) \in \Z_p(1)$ is a generator.  It suffices to check that $[\underline{\epsilon}] - 1$ does not generate the kernel of $A_\inf/p \to \overline{\calO_K}/p$, i.e., that $\frac{[\underline{\epsilon}] - 1}{[\underline{p}]} \in A_\inf/p$ has positive valuation. Twisting by Frobenius, it suffices to show that $\val_p(\epsilon_1 - 1) > \val_p(p^{\frac{1}{p}})$, but this is clear: $\val_p(\epsilon_1 - 1) = \frac{1}{p-1}$, and $\val_p(p^{\frac{1}{p}}) = \frac{1}{p}$. 
\item Let $\beta':\Z_p(1) \to A_\crys$ denote the map $(\epsilon_n) \mapsto \log([\epsilon])$, which makes sense by (1). It is clear that this map is $G_K$-equivariant, and has image contained in $\Fil^1_H(A_\crys)$.  To show $\beta = \beta'$, assume first that the induced maps
\[ \gr^1_H(\beta),\gr^1_H(\beta'):\Z_p(1) \to \gr^1_H(A_\crys) \simeq \widehat{L_{\overline{\calO_K}/W}}[-1]\]
are equal, where the last isomorphism comes from Proposition \ref{prop:acrysdefn}. Then $\beta - \beta'$ defines a $G_K$-equivariant map $\Z_p(1) \to \Fil^2_H(A_\crys)$. As the only such map is $0$ (by mapping to $B_\dR$ and using Tate's theorem \cite[Theorem 3.3.2]{Tatepdivgrps}, see Remark \ref{rmk:bdr}), one sees $\beta = \beta'$. It remains to show that $\gr^1_H(\beta) = \gr^1_H(\beta')$. For this, note that applying the $p$-adic completion functor to the exact sequence \eqref{eq:omegaonezpbar} from Proposition \ref{prop:diffformszpbar} gives an exact sequence
\[ 1 \to \widehat{\overline{\calO_K}}(1) \stackrel{a}{\to} \widehat{L_{\overline{\calO_K}/W}}[-1] \to Q \to 1\]
where the cokernel $Q$ is a cyclic $\widehat{\overline{\calO_K}}$-module killed (exactly) by $p^{\frac{1}{p-1}}$. By Construction \ref{cons:acryschern}, it is immediate that the map $\gr^1_H(\beta)$ is given by the composite
\[ \Z_p(1) \stackrel{\can}{\to} \widehat{\overline{\calO_K}}(1) \stackrel{a}{\to} \widehat{L_{\overline{\calO_K}/W}}[-1]. \]
The map $\gr^1_H(\beta')$ is 
\[ \Z_p(1) \stackrel{ (\epsilon_n) \mapsto [\underline{\epsilon}] - 1}{\to} \ker(\theta)/\ker(\theta)^2 \simeq \widehat{L_{\overline{\calO_K}/W}}[-1], \]
where the last isomorphism comes from Proposition \ref{prop:ainfprop}. Showing $\gr^1_H(\beta) = \gr^1_H(\beta')$ amounts to showing that the elements $[\underline{\epsilon}] - 1 \in \ker(\theta)/\ker(\theta)^2$ and $ \lim_n d\log(\epsilon_n) \in \widehat{L_{\overline{\calO_K}/W}}[-1]$ agree under the isomorphism $\ker(\theta)/\ker(\theta)^2 \simeq \widehat{L_{\overline{\calO_K}/W}}[-1]$. This was shown in Remark \ref{rmk:explicitformd}, and is also stated in \cite[\S 1.5.4]{Fontainepadicperiod}. \qedhere
\end{enumerate}
\end{proof}

\begin{remark}
\label{rmk:bdr}
	Proposition \ref{prop:acrysbeta} used properties of the ring $B_\dR$, so we briefly recall the definition; see \cite{Fontainepadicperiod} for more. The ring $B_\dR^+$ is the completion of $A_\inf[1/p]$ along the ideal $\ker(\theta)[1/p]$. It is a complete discrete valuation ring with residue field $\widehat{\overline{K}}$ and an action of $\Gal(\overline{K}/K)$; the ring $B_\dR$ is simply the fraction field of $B_\dR^+$. Powers of the maximal ideal define a complete filtration of $B_\dR$, and the graded pieces of this filtration are $\gr^k(B_\dR^+) \simeq \widehat{\overline{K}}(k)$ for any $k \in \Z$. Tate's theorem \cite[Theorem 3.3.2]{Tatepdivgrps} implies that $H^0(\Gal(\overline{K}/K),\widehat{\overline{K}}(k))$ is trivial if $k \neq 0$, and  $K$ for $k = 0$. The completeness of the filtration then implies that $H^0(\Gal(\overline{K}/K), B_\dR) = K$, and $H^0(\Gal(\overline{K}/K),\Fil^k B_\dR) = 0$ for $k > 0$. To relate this to $A_\crys$, observe that the image of $\ker(\theta) \subset A_\inf$ in $B_\dR^+$ lies in $\Fil^1 B_\dR^+$, and hence has topologically nilpotent divided powers. The defining map $A_\inf \to B_\dR^+$ then extends to a filtered map $A_\crys \to B_\dR^+$ which can be checked to be injective by checking it on graded pieces as the filtration on $A_\crys$ is separated.
\end{remark}

Next, we discuss some extensions. Fontaine defined a certain natural non-zero element of $H^1(G_K,A_\crys)$; we construct it as logarithmic Chern class:

\begin{construction}
\label{cons:logchernclasszpbar}
Fix a uniformiser $\pi \in \calO_K$. Then the $d\log$ maps in logarithmic derived de Rham cohomology define additive maps
\[ \st^\Mon_\pi:\pi^\N  \subset \overline{\calO_K} - \{0\} \stackrel{d\log}{\to} \dR_{(\overline{\calO_K},\can)/W}[1]. \]
Applying the $p$-adic completion functor, using Proposition \ref{prop:ddrlogddrzpbar}, and passing to group completions on the source defines an additive map
\[ \st_\pi:\Z \simeq (\pi^\N)^\grp  \to  A_\crys[1], \]
i.e., a $G_K$-equivariant extension of $\Z$ by $A_\crys$, depending on the choice of $\pi$.  We let $\cl(\st_\pi) \in H^1(G_K,A_\crys)$ denote the class of the corresponding extension.  
\end{construction}

To interpret the class $\cl(\st_\pi)$ geometrically, we need an auxilliary ring $R_{\calO_K}$, the so-called Faltings-Breuil ring.

\begin{lemma}
\label{lem:faltingsbreuil}
Let $\pi \in \calO_K$ be a fixed uniformiser with minimal (Eisenstein) polynomial $E(x) \in W[x]$. Let $(W[x],x) \to (\calO_K,\can)$ denote the unique $W$-linear map of prelog rings defined by $x \mapsto \pi$. Then
\[ \widehat{\dR_{(\calO_K,\can)/(W[x],x)}} \simeq \widehat{W[x] \langle E(x) \rangle} =: R_{\calO_K}.\]
The Hodge filtration on the left coincides with the pd-filtration on the right.
\end{lemma}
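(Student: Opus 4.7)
The plan is to reduce the logarithmic computation in three steps: change the target log structure to a smaller one (restoring strictness), pass to the non-logarithmic case, and identify the result with a pd-envelope. For the first step, I would introduce the auxiliary map $f' \colon (W[x], x) \to (\calO_K, \pi^{\N})$, $x \mapsto \pi$, where $\pi^{\N} \subset \calO_K - \{0\}$ is the submonoid generated by $\pi$. Since $\calO_K$ is a DVR, one has $\calO_K - \{0\} = \calO_K^\times \cdot \pi^{\N}$, so the log structure associated to the prelog structure $\pi^{\N} \to \calO_K$ is exactly $(\calO_K, \can)$. Consequently $f$ and $f'$ induce the same map of associated log prelog rings: $f_a = f'_a$. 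Applying Corollary \ref{cor:ddrinvlogstr} to the reductions modulo $p$ of both maps therefore gives $\dR_f \otimes_{\Z} \F_p \simeq \dR_{f_a} \otimes_\Z \F_p = \dR_{f'_a} \otimes_\Z \F_p \simeq \dR_{f'} \otimes_{\Z} \F_p$; devissage, the base change formula for $\dR$, and taking $R\lim_n$ then yield $\widehat{\dR_f} \simeq \widehat{\dR_{f'}}$.

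Next, the map $f'$ is strict: both source and target have monoid $\N$, and the monoid map is the identity under the identifications $1 \mapsto x$ and $1 \mapsto \pi$. By Proposition \ref{prop:logddrstrict}, $\dR_{f'} \simeq \dR_{\calO_K / W[x]}$, reducing the problem to a non-logarithmic computation. The ring map $W[x] \to \calO_K$ is the quotient by the single regular element $E(x) \in W[x]$, so Proposition \ref{prop:ddrnonflat} identifies $\widehat{\dR_{\calO_K / W[x]}}$ with the two-term complex $\bigl(\widehat{W[x]\langle y \rangle} \xrightarrow{y - E(x)} \widehat{W[x]\langle y \rangle}\bigr)$. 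Regularity of $y - E(x)$ inside the pd-polynomial ring $W[x]\langle y \rangle$ shows this complex is quasi-isomorphic to its $\pi_0$, namely $\widehat{W[x]\langle y \rangle / (y - E(x))} \simeq \widehat{W[x]\langle E(x)\rangle} = R_{\calO_K}$, which is the standard presentation of the pd-envelope of $(E(x)) \subset W[x]$ compatible with divided powers on $p$. Alternatively, one can apply Theorem \ref{thm:padicddrcryscomp} (or Corollary \ref{cor:ddrcryscompquot} modulo $p^n$) together with Berthelot's theorem to identify $\widehat{\dR_{\calO_K / W[x]}}$ with the $p$-adic completion of the crystalline cohomology of $\calO_K$ over $W[x]$, which again gives $\widehat{W[x]\langle E(x)\rangle}$.

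For the Hodge filtration: the comparison maps of Propositions \ref{prop:ddrcryscompmap} and \ref{prop:lddrcryscompmap} are filtered for the Hodge filtration on $\dR$ and the pd-filtration by divided powers of the defining ideal on the crystalline (respectively pd-envelope) side, and each of the two equivalences produced above is induced by a filtered map at the bicomplex level. Tracing through the identifications then shows that the Hodge filtration on $\widehat{\dR_f}$ corresponds to the pd-filtration on $\widehat{W[x]\langle E(x)\rangle}$. The main obstacle in producing a complete proof is keeping careful track of the filtrations under each reduction, but each check is a routine (if slightly tedious) verification using only the cited results.
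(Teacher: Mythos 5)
Your proposal is correct and takes essentially the same approach as the paper, which simply observes that $f$ is ``strict up to passage to associated log structure'' with kernel generated by the regular element $E(x)$ and then invokes Theorem~\ref{thm:padicddrcryscomp}. You have usefully unpacked the ``up to passage to associated log structure'' step via the auxiliary prelog structure $\pi^\N$ together with Corollary~\ref{cor:ddrinvlogstr}, devissage, and Proposition~\ref{prop:logddrstrict}, but the underlying mechanism is identical.
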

\begin{proof}
The map $(W[x],x) \to (\calO_K,\can)$ is strict, up to passage to associated log structure, and has kernel generated by a single regular element $E(x)$. Hence, the claim follows immediately from Theorem \ref{thm:padicddrcryscomp}.
\end{proof}

The promised geometric interpretation of $\st_\pi$ is:

\begin{proposition} 
\label{prop:stpigeom}
Let notation be as in Lemma \ref{lem:faltingsbreuil}. Then
\begin{enumerate}
\item The class $\cl(\st_\pi)$ is the obstruction to factoring the natural map $\widehat{\dR_{(\calO_K,\can)/W}} \to \widehat{\dR_{(\overline{\calO_K},\can)/W}} \simeq A_\crys$ in a $G_K$-equivariant manner through the projection $\widehat{\dR_{(\calO_K,\can)/W}} \to \widehat{\dR_{(\calO_K,\can)/(W[x],x)}} \simeq R_{\calO_K}$. 
\item This obstruction vanishes after adjoining $p$-power roots of $\pi$, i.e., if $K_\infty = \cup_n K(\pi^{\frac{1}{p^n}})$ for a chosen compatible sequence of $p$-power roots of $\pi$, then $\cl(\st_\pi)$ maps to $0$ under $H^1(G_K,A_\crys) \to H^1(G_{K_\infty},A_\crys)$. In particular, there is a canonical $G_{K_\infty}$-equivariant map $R_{\calO_K} \to A_\crys$.
\end{enumerate}
\end{proposition}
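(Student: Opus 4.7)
The plan is to use the explicit description $R_{\calO_K} = \widehat{\dR_{(\calO_K,\can)/(W[x],x)}}$ from Lemma \ref{lem:faltingsbreuil} together with the $p$-adic derived de Rham formalism to analyse both parts simultaneously. The key input in both parts is the composition $W \to (W[x],x) \to (\calO_K,\can)$, which gives a $G_K$-equivariant projection $\widehat{\dR_{(\calO_K,\can)/W}} \to R_{\calO_K}$ sitting in a transitivity triangle with a contribution from $\widehat{\dR_{(W[x],x)/W}}$. Since $(W[x],x)$ is log smooth over $W$, Corollary \ref{cor:logddrcartiertype} identifies the latter with the two-term log de Rham complex $W[x] \to W[x] \cdot d\log(x)$, whose ``$d\log(x)$-part'' is the source of the obstruction to a $G_K$-equivariant factorisation.

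For (1), after base change to $(\overline{\calO_K},\can)$, the image of $d\log(x) \in \widehat{\dR_{(W[x],x)/W}}[1]$ in $A_\crys[1]$ is precisely $d\log(\pi) = \mathrm{st}_\pi$ by Construction \ref{cons:logchernclasszpbar}. Unwinding the comparison of Proposition \ref{prop:acrysdefn}, a non-equivariant factorisation $\gamma:R_{\calO_K}\to A_\crys$ exists and is given concretely by $\gamma(x) = [\underline{\pi}]$ for a chosen compatible system $\underline{\pi} = (\pi,\pi^{1/p},\pi^{1/p^2},\ldots)$; this is well-defined because $E([\underline{\pi}])$ generates $\ker(\theta)$ by Proposition \ref{prop:ainfprop}(4) and hence admits divided powers in $A_\crys$. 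The $G_K$-obstruction cocycle $g\mapsto g\cdot\gamma-\gamma$ sends $x$ to $g[\underline{\pi}]-[\underline{\pi}] = [\underline{\pi}](1-[\underline{u_g}])$ where $u_{g,n} := g(\pi^{1/p^n})/\pi^{1/p^n} \in \mu_{p^n}$; interpreted in the derived $A_\crys[1]$-style formalism, this cocycle is $\log[\underline{u_g}] = \beta((u_{g,n}))$, which by Proposition \ref{prop:acrysbeta}(3) is precisely $\cl(\mathrm{st}_\pi)$.

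For (2), the $G_{K_\infty}$-invariant choice $\{\pi^{1/p^n}\}_n$ factors the structure map through $(W[x^{\Q_{\geq 0}}], x^{\Q_{\geq 0}})$: one has a $G_{K_\infty}$-equivariant composition $(W[x],x) \to (W[x^{\Q_{\geq 0}}], x^{\Q_{\geq 0}}) \to (\overline{\calO_K},\can)$ sending $x^{a/b}\mapsto \pi^{a/b}$. Both maps $W \to (W[x^{\Q_{\geq 0}}], x^{\Q_{\geq 0}})$ and $(W[x],x)\to (W[x^{\Q_{\geq 0}}], x^{\Q_{\geq 0}})$ are relatively perfect modulo $p$: for the first this is a base-change of Corollary \ref{cor:invlogddrring} (as $\Q_{\geq 0}$ is uniquely $p$-divisible and $k[x^{\Q_{\geq 0}}]$ is perfect), and for the second a direct computation of the pushout shows that the relative Frobenius acts as the identity on both the monoid $\Q_{\geq 0}$ and the algebra $k[x^{\Q_{\geq 0}}]$. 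Applying the logarithmic analogue of Lemma \ref{lem:padicddrformalprop}(5) twice then gives the $G_{K_\infty}$-equivariant chain
\[ A_\crys \simeq \widehat{\dR_{(\overline{\calO_K},\can)/(W[x^{\Q_{\geq 0}}], x^{\Q_{\geq 0}})}} \simeq \widehat{\dR_{(\overline{\calO_K},\can)/(W[x],x)}}. \]
Composing the canonical functorial map $R_{\calO_K} \to \widehat{\dR_{(\overline{\calO_K},\can)/(W[x],x)}}$ with this identification yields the desired $G_{K_\infty}$-equivariant factorisation $R_{\calO_K} \to A_\crys$, so $\cl(\mathrm{st}_\pi)$ restricts to zero in $H^1(G_{K_\infty}, A_\crys)$. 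The main obstacle is the precise cocycle-matching in (1) between the $d\log$-based Chern class of Construction \ref{cons:logchernclasszpbar} and the obstruction cocycle computed from $\gamma$; the concordance of the two descriptions ultimately rests on Proposition \ref{prop:acrysbeta}(3), but must be traced carefully through the projective resolutions and fibre sequences used to define both sides.
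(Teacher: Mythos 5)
Your approach to part (1) is different in character from the paper's: instead of chasing exact triangles inside the derived de Rham complex, you want to produce an explicit non-equivariant lift $\gamma$ and identify the resulting obstruction cocycle with $\beta((u_{g,n}))$. That route could in principle be carried out, but you defer the substantive step --- the ``precise cocycle-matching'' between the $d\log$-based Chern class and the obstruction cocycle --- to an unspecified tracing of projective resolutions, which is exactly the content of the proof. The paper sidesteps this entirely: it writes $\widehat{\dR_{(\calO_K,\can)/W}}$ as the two-term complex $R_{\calO_K} \to R_{\calO_K} \cdot \frac{dx}{x}$ (using $W \to (W[x],x) \to (\calO_K,\can)$ and Theorem \ref{thm:padicddrcryscomp}), observes that the composite $\Z \to \widehat{\dR_{(\calO_K,\can)/W}}[1]$ factors through the $\frac{dx}{x}$-piece, and then reads off the obstruction class by a direct triangle chase. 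No cocycle calculation is needed, and the identification of $\cl(\st_\pi)$ with Fontaine's cocycle (your Remark \ref{rmk:stpiexplicit}-style formula) is then a \emph{consequence} extracted afterwards, not an input.

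For part (2), there is a genuine gap. Your claim that $(W[x],x)\to (W[x^{\Q_{\geq 0}}], x^{\Q_{\geq 0}})$ is relatively perfect modulo $p$ is false. Modulo $p$ the algebra map is $k[x] \to k[\Q_{\geq 0}]$, and the Frobenius twist $B^{(1)} = k[\Q_{\geq 0}] \otimes_{k[x],\, x\mapsto x^p} k[x]$ has nilpotents: if $u := 1 \otimes x$ then $u^p = x \otimes 1 = (x^{1/p})^p \otimes 1$, so $t := x^{1/p}\otimes 1 - u$ satisfies $t^p = 0$ but $t \neq 0$, hence $B^{(1)} \cong k[\Q_{\geq 0}][t]/(t^p) \not\cong k[\Q_{\geq 0}]$. (The underlying issue is that $k[x]$ is not perfect, so Example \ref{ex:relperfectlogalg} does not apply to the second leg of your factorisation; only the full composite $W \to (W[\Q_{\geq 0}],\Q_{\geq 0})$ is relatively perfect mod $p$.) One can also see the failure via the transitivity triangle for $W \to (W[x],x) \to (W[\Q_{\geq 0}],\Q_{\geq 0})$: since $L_{(W[x],x)/W}$ is free of rank one on $d\log(x)$ and $\widehat{L_{(W[\Q_{\geq 0}],\Q_{\geq 0})/W}} \simeq 0$, one gets $\widehat{L_f} \simeq \widehat{W[\Q_{\geq 0}]}\cdot d\log(x)[1]$, which is nonzero, so even the weaker vanishing $\widehat{L_f}\simeq 0$ fails, and $\widehat{\dR_f}$ will not be the naive thing. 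The paper's argument for (2) is entirely different and much softer: the cocycle $\st_\pi$ is the restriction to $\Z \subset \Z[1/p]$ of a $G_{K_\infty}$-equivariant map $c:\Z[1/p]\to \dR_{(\overline{\calO_K},\can)/W}[1]$ (via $d\log$ on $\pi^{\N[1/p]}$), and since the $p$-adic completion of $\Z[1/p]$ is zero, the $p$-adic completion of $c$ --- and hence $\st_\pi$ --- is $G_{K_\infty}$-equivariantly nullhomotopic. Replacing your relative-perfectness claim with this divisibility argument would repair the proof.
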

\begin{proof}
We freely use the identification between derived de Rham and crystalline cohomology (Theorem \ref{thm:padicddrcryscomp}) to compute derived de Rham in terms of explicit de Rham complexes.
\begin{enumerate}
\item  The factorisation $W \to (W[x],x) \to (\calO_K,\can)$ lets us compute $\widehat{\dR_{(\calO_K,\can)/W}}$ as the complex
\[ R_{\calO_K} \otimes_{W[x]} \Big(W[x] \stackrel{d}{\to} W[x] \cdot \frac{dx}{x} \Big) \simeq \Big(R_{\calO_K} \stackrel{d}{\to} R_{\calO_K} \cdot \frac{dx}{x}\Big). \]
The map $\Z \simeq (\pi^\N)^\grp \to (\calO_K - \{0\})^\grp \stackrel{d\log}{\to} \widehat{\dR_{(\calO_K,\can)/W}}[1]$ can then be identified as the map $\Z \to \widehat{\dR_{(\calO_K,\can)/W}}[1]$ determined by $\frac{dx}{x}$ in the complex above. On the other hand, composing this map with $\widehat{\dR_{(\calO_K,\can)/W}} \to \widehat{\dR_{(\overline{\calO_K},\can)/W}} \simeq A_\crys$ defines $\st_\pi$. The claim follows by chasing triangles.

\item A choice of a compatible sequence of $p$-power roots of $\pi$ determines a $G_{K_\infty}$-equivariant map
\[ c:\Z[1/p] \simeq \Big(\pi^{\N[\frac{1}{p}]}\Big)^\grp \subset (\overline{\calO_K} - \{0\})^\grp \stackrel{d\log}{\to} \dR_{(\overline{\calO_K},\can)/W}[1]. \]
Restricting to $\Z \cdot 1 \subset \Z[1/p]$ followed by $p$-adic completion on the target recovers the map $\st_\pi$. However, $p$-adically completing $\Z[1/p]$ produces $0$, so the $p$-adic completion of $c$ is $G_{K_\infty}$-equivariantly nullhomotopic. It follows that the same is true for $\st_{\pi}$, proving the claim. \qedhere
\end{enumerate}
\end{proof}

\begin{remark}
\label{rmk:stpiexplicit}
The proof of the second part of Proposition \ref{prop:stpigeom} gives an explicit identification of the class $\cl(\st_\pi)$ as follows. Fix a compatible sequence $\kappa = \{ \pi^{\frac{1}{p^n}} \in \overline{\calO_K} \}$ of $p$-power roots of $\pi$. Then this choice $\kappa$ determines a nullhomotopy $H_\kappa$ of the map $\st_\pi:\Z \to A_\crys[1]$ by the recipe of the proof. This nullhomotopy is $G_{K_\infty}$-equivariant, and its failure to be $G_K$-equivariant is tautologically codified by the map $G_K \to A_\crys$ determined by
\[ \sigma \mapsto H_{\sigma(\kappa)} - H_\kappa.\]
Unravelling definitions, this is simply the map
\[ \sigma \mapsto \log(\frac{\sigma([\underline{\pi}])}{[\underline{\pi}]}), \] 
which is the usual formula for Fontaine's extension. In particular, Proposition \ref{prop:acrysbeta} shows that $\cl(\st_\pi)$ actually comes from the Kummer torsor $\overline{\kappa} \in H^1(G_K,\Z_p(1))$ (determined by $\kappa$) by pushforward along $\beta:\Z_p(1) \to A_\crys$.
\end{remark}

Next, we discuss Kato's semistable ring $\widehat{A_\st}$, and its connection with the class $\cl(\st_\pi)$. We give a direct definition first; a derived de Rham interpretation is given in the proof of Proposition \ref{prop:stpiast}.

\begin{definition}
\label{defn:ast}
Fix a uniformiser $\pi \in \calO_K$, and a sequence $\{\pi^{\frac{1}{p^n}} \in \overline{\calO_K}\}$ of $p$-power roots of $\pi$. The ring $\widehat{A_\st}$ is defined as $\widehat{A_\crys \langle X \rangle}$, the free $p$-adically complete pd-polynomial ring in one variable $X$. This ring is endowed with a $G_K$-action extending the one on $A_\crys$ given by
\[ \sigma(X + 1) = \frac{[\underline{\pi}]}{\sigma([\underline{\pi}])} \cdot (X + 1).\]
We equip $\widehat{A_\st}$ with the minimal pd-multiplicative Hodge filtration extending the one on $A_\crys$ and satisfying $X \in \Fil^1_H(\widehat{A_\st})$. We define $\phi:\widehat{A_\st} \to \widehat{A_\st}$ to be the unique extension of $\phi$ on $A_\crys$ which satisfies $\phi(X+1) = (X+1)^p$. Finally, we define a continuous $A_\crys$-linear pd-derivation $N:\widehat{A_\st} \to \widehat{A_\st}$ via $N(1 + X) = 1 + X$.
\end{definition}

\begin{remark}
The construction of $\widehat{A_\st}$ given in Definition \ref{defn:ast} relied not just on $\pi \in \calO_K$, but also on a choice a compatible sequence of $p$-power roots of $\pi$. However, one can show that the resulting ring (with its extra structure) is independent of this last choice, up to a transitive system of isomorphisms; see \cite[\S 5]{BreuilMessing}. In fact, Kato disocvered $\widehat{A_\st}$ as the log crystalline cohomology of a certain map which depends only on $\pi$; see Remark \ref{rmk:astddrdefn}.
\end{remark}

\begin{proposition}
\label{prop:stpiast}
The class $\cl(\st_\pi)$ maps to $0$ under $A_\crys \to \widehat{A_\st}$.
\end{proposition}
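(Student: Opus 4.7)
The plan is to exhibit an explicit $G_K$-equivariant trivialization of the $1$-cocycle representing $\cl(\st_\pi)$ after pushforward via $A_\crys \to \widehat{A_\st}$; the element $\log(1+X) \in \widehat{A_\st}$ will do the job.

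I would first recall from Remark \ref{rmk:stpiexplicit} that, after fixing a compatible system $\kappa = \{\pi^{1/p^n}\}$ of $p$-power roots of $\pi$, the class $\cl(\st_\pi)$ is represented by the continuous $1$-cocycle
$$c : G_K \to A_\crys, \qquad c(\sigma) = \log\bigl(\sigma([\underline{\pi}])/[\underline{\pi}]\bigr).$$
The key observation is that $\sigma(\underline{\pi})/\underline{\pi} = \underline{\epsilon}_\sigma \in (\overline{\calO_K}/p)^\perf$ for a unique $\epsilon_\sigma \in \mu_{p^\infty}$, so by multiplicativity of the Teichmuller lift $\sigma([\underline{\pi}])/[\underline{\pi}] = [\underline{\epsilon}_\sigma]$ is already a $1$-unit in $A_\inf$; equivalently, via Proposition \ref{prop:acrysbeta}, one has $c(\sigma) = \beta(\epsilon_\sigma)$.

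Next I would introduce
$$L := \log(1+X) = \sum_{n \geq 1} (-1)^{n+1}(n-1)!\, \gamma_n(X) \in \widehat{A_\st},$$
observing that this series converges $p$-adically because $X \in \Fil^1_H(\widehat{A_\st})$ admits divided powers by Definition \ref{defn:ast} and because $v_p((n-1)!) \to \infty$ as $n \to \infty$. Using the Galois action formula $\sigma(1+X) = ([\underline{\pi}]/\sigma([\underline{\pi}])) \cdot (1+X)$ from Definition \ref{defn:ast} together with the additivity of $\log$ on $1$-units in this $p$-adically complete PD setting, a direct calculation yields
$$\sigma(L) - L \;=\; \log\bigl(\sigma(1+X)/(1+X)\bigr) \;=\; \log\bigl([\underline{\pi}]/\sigma([\underline{\pi}])\bigr) \;=\; -c(\sigma).$$
Hence $-L$ exhibits $c$ as a coboundary in continuous cochains valued in $\widehat{A_\st}$, so $\cl(\st_\pi)$ dies in $H^1(G_K, \widehat{A_\st})$, as claimed.

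The only technical wrinkle is justifying the identity $\log([\underline{\pi}]/\sigma([\underline{\pi}])) = -c(\sigma)$ intrinsically inside $\widehat{A_\st}$; this reduces to noting that $[\underline{\pi}]/\sigma([\underline{\pi}]) = [\underline{\epsilon}_\sigma^{-1}]$ is again a Teichmuller lift and that $\log([t^{-1}]) = -\log([t])$ for any $1$-unit $t \in 1 + \fram_{(\overline{\calO_K}/p)^\perf}$, both of which are formal consequences of the multiplicativity of the Teichmuller map combined with the additivity of $\log$ on $1 + \Fil^1_H(A_\crys)$. Beyond this bookkeeping, the argument is essentially forced by the very definition of the Galois action on $\widehat{A_\st}$, so no deeper obstacle arises.
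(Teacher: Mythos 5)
Your proof is correct, and it takes a genuinely different route from the paper — indeed, the paper's own proof opens by acknowledging exactly your approach (``One may prove this assertion directly using Remark \ref{rmk:stpiexplicit}. However, we give a `pure thought' argument\dots'') and then deliberately avoids it. Where you produce an explicit $G_K$-equivariant trivialization $-\log(1+X)$ of the cocycle $\sigma \mapsto \log(\sigma([\underline{\pi}])/[\underline{\pi}])$ and verify the coboundary identity by hand, the paper instead realizes $\widehat{A_\st}$ as a $p$-adically completed log derived de Rham complex $\widehat{\dR_{(\widehat{\overline{\calO_K}},\can)/C}}$ for a carefully chosen prelog ring $C$ with a $G_K$-action, then assembles a $G_K$-equivariant commutative square of de Rham complexes whose existence, combined with Proposition \ref{prop:stpigeom}, forces the vanishing of $\cl(\st_\pi)$ in $\widehat{A_\st}$ without ever writing down a cocycle. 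Your argument is more elementary and self-contained (given Remark \ref{rmk:stpiexplicit}), and it makes transparent \emph{what} the trivializing element is; the paper's argument is purely functorial, reuses the site-theoretic machinery already in place, and explains \emph{why} the trivialization exists — namely that the map $\widehat{\dR_{(\calO_K,\can)/W}} \to \widehat{A_\st}$ factors $G_K$-equivariantly through $R_{\calO_K}$ by construction, which is precisely the obstruction that $\cl(\st_\pi)$ measures. The two sign conventions and the convergence bookkeeping in your argument ($v_p((n-1)!)\to\infty$, multiplicativity of Teichmuller, additivity of $\log$ on $1$-units with divided powers) are all in order, so no gap remains.
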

\begin{proof}
One may prove this assertion directly using Remark \ref{rmk:stpiexplicit}. However, we give a ``pure thought'' argument: the ring $\widehat{A_\st}$ will be realised as the derived de Rham cohomology of a map, and a commutative diagram will force $\cl(\st_\pi)$ to vanish when pushed to $\widehat{A_\st}$. For convenience, we fix a compatible system of {\em all} rational powers of $\pi$. Let 
\[ C := (A_\inf[y,y^{-1}], [\underline{\pi}]^{\Q_{\geq 0}} \cdot y^{\Z})\]
be the displayed prelog ring (defined using the choice of rational powers of $\pi$), with a $G_K$-action defined by 
\[ \sigma(y) = \frac{[\underline{\pi}] }{\sigma([\underline{\pi}])}\cdot y, \]
extending the usual action on $A_\inf$. The map $y \mapsto 1$, coupled with the usual map $A_\inf \to \widehat{\overline{\calO_K}}$, defines a map
\[ C \to (\widehat{\overline{\calO_K}},\can) \]
which is essentially strict, i.e., the associated map on log rings is strict, and has kernel $(E([\underline{\pi}]),(y-1))$, which is a regular sequence. We define
\[ \widehat{A_\st}' := \widehat{\dR_{(\widehat{\overline{\calO_K}},\can)/C}}. \]
The composite $W \to C \to (\widehat{\overline{\calO_K}},\can)$ gives $\widehat{A_\st}'$ the structure of an $A_\crys$-algebra. Moreover, one computes that 
\[ \widehat{A_\st}' = \widehat{A_\inf[y,y^{-1}] \langle E([\underline{\pi}]), y - 1 \rangle} = \widehat{A_\crys[y,y^{-1}]\langle y-1 \rangle} \simeq \widehat{A_\crys\langle y-1 \rangle}. \]
Hence, the association $X \mapsto y-1$ identifies $\widehat{A_\st}$ with $\widehat{A_\st}'$ in a $G_K$-equivariant manner, and we use this isomorphism without comment for the rest of the proof. The map $x \mapsto [\underline{\pi}] \cdot y$ defines a $G_K$-equivariant diagram of rings
\[ \xymatrix{ W \ar[r] & (W[x],x) \ar[d] \ar[r]^-{x \mapsto [\underline{\pi}] \cdot y} & C \ar[d]^-{y \mapsto 1} \\
				&	(\calO_K,\can) \ar[r] & (\widehat{\overline{\calO_K}},\can). } \]
Passing to de Rham cohomology gives us a $G_K$-equivariant commutative diagram
\[ \xymatrix{ \dR_{(\calO_K,\can)/W} \ar[r] \ar[d] & \dR_{(\widehat{\overline{\calO_K}},\can)/W} \ar[d] \\
			  \dR_{(\calO_K,\can)/(W[x],x)} \ar[r] & \dR_{(\widehat{\overline{\calO_K}},\can)/C}. } \]
Taking $p$-adic completions then gives a $G_K$-equivariant commutative diagram
\[ \xymatrix{ \widehat{\dR_{(\calO_K,\can)/W}} \ar[r] \ar[d] & A_\crys \ar[d] \\
					R_{\calO_K} \ar[r] & \widehat{A_\st}. } \]
In other words, the natural map $\widehat{\dR_{(\calO_K,\can)/W}} \to A_\crys \to \widehat{A_\st}$ factors $G_K$-equivariantly through the projection $\widehat{\dR_{(\calO_K,\can)/W}} \to R_{\calO_K}$. The vanishing claim now follows from Proposition \ref{prop:stpigeom}.
\end{proof}

\begin{remark}
\label{rmk:astddrdefn}
Let $(W[x],x) \stackrel{a}{\to} C \stackrel{b}{\to} (\widehat{\overline{\calO_K}},\can)$ be the factorisation appearing in the proof of Proposition \ref{prop:stpiast}. One can check that this factorisation is an {\em exactification} of the composite. In particular, the log crystalline cohomology of $b \circ a$ and that of $b$ coincide, and they both recover the ring $\widehat{A_\st}$; this is Kato's conceptual definition of $\widehat{A_\st}$. The argument above shows that $\widehat{\dR_b} \simeq \widehat{A_\st}$, which gives a derived de Rham definition for $\widehat{A_\st}$. However, unlike in the crystalline theory, since the map $a$ is log \'etale but not of Cartier type modulo $p$, the map $\widehat{\dR_{b \circ a}} \to \widehat{\dR_b}$ fails to be an isomorphism (for roughly the same reason as Example \ref{ex:logddrnotlogcrys}). This explains why we cannot define $\widehat{A_\st}$ as $\widehat{\dR_{b \circ a}}$, a much easier ring to contemplate than $\widehat{\dR_b}$. Note, however, that the proof given above also applies to show that $\cl(\st_\pi)$ trivialises under $A_\crys \to \widehat{\dR_{b \circ a}}$. This leads to a comparison theorem over the ring $\widehat{\dR_{b \circ a}}$, which is smaller than $\widehat{A_\st}$.
\end{remark}

\begin{remark}
	\label{rmk:astddrdefn2}
	We continue the notation of Remark \ref{rmk:astddrdefn}. The map $\widehat{\dR_{b \circ a}} \to \widehat{A_\st}$ is simply the map $\widehat{\comp_{b \circ a}}$ from Proposition \ref{prop:lddrcryscompmap}. In particular, the $(W[x],x)$-algebras $\widehat{\dR_{b \circ a}}$ and $\widehat{A_\st}$ come equipped with the Gauss-Manin connection (the former by Proposition \ref{prop:logcrystrans}, and the latter by Kato's theorem) relative to $W$, and the map $\widehat{\comp_{b \circ a}}$ is equivariant for the connection. In fact, the Gauss-Manin connection on $\widehat{A_\st}$ can be identified with the derivation $N$ introduced in Definition \ref{defn:ast}: the isomorphism $\widehat{L_{(W[x],x)/W}} \simeq \widehat{L_{C/W}}$ sends $d\log(x)$ to $d\log([\underline{\pi}] y) = d\log([\underline{\pi}]) + d\log(y) = d\log(y)$ (since $d\log([\underline{\pi}])$ is infinitely $p$-divisible and hence $0$ $p$-adically).
\end{remark}

\section{The  semistable comparison theorem}
\label{sec:cst}

Our goal now is to use the theory developed earlier in the paper to prove the Fontaine-Jannsen $C_\st$ conjecture following the outline of \cite{Beilinsonpadic}. Inspired by the complex analytic case, we construct in \S \ref{ss:sspairs} a topology on $p$-adic schemes for which derived de Rham cohomology sheafifies to $0$ $p$-adically; this can be viewed as a $p$-adic Poincare lemma, and is the key conceptual theorem. The comparison map is constructed in \S \ref{ss:cst} using the Poincare lemma.

\subsection{The site of pairs and the Poincare lemma}
\label{ss:sspairs}

We preserve the notation from \S \ref{sec:ddr-period-rings}, and introduce the geometric categories of interest.

\begin{notation}
Let $\Var_K$ and $\Var_{\overline{K}}$ be the categories of reduced and separated finite type schemes over the corresponding fields. These categories are viewed as sites via the $h$-topology, the coarsest topology finer than the Zariski and proper topologies; see \cite[\S 2]{Beilinsonpadic}.
\end{notation}

Next, we define the category $\calP_K$ of pairs. Roughly speaking, an object of this site is a variety $U \in \Var_K$ together with a compactification $\overline{U}$ relative to  $\calO_K$; the compactification $\overline{U}$ will help relate the de Rham cohomology of $U$ to mixed characteristic phenomena. 

\begin{definition}
\label{ss:sitelocal}
The site $\calP_K$ of pairs over $K$ has as objects pairs $(U,\overline{U})$ with $U$ a reduced and separated finite type $K$-scheme, and $\overline{U}$ a proper flat reduced $\calO_K$-scheme containing $U$ as a dense open subscheme. The morphisms are defined in the evident way. We often write maps in $\calP_K$ as $\overline{f}:(U,\overline{U}) \to (V,\overline{V})$ with underlying map $f:U \to V$.
\end{definition}

The pair $(\Spec(K),\Spec(\calO_K))$ is the final object of $\calP_K$. Moreover, each pair $(U,\overline{U}) \in \calP_K$ defines a log scheme $(\overline{U},\can)$ where $\can:\calO_{\overline{U}} \cap \calO_U^* \to \calO_{\overline{U}}$ is the log structure defined by the open susbet $U \subset \overline{U}$. Forgetting $\overline{U}$ defines a faithful functor $\calP_K \to \Var_K$, and the $h$-topology on $\calP_K$ is defined to be the pullback of the $h$-topology from $\Var_K$ under this functor. An essential observation \cite[\S 2.5]{Beilinsonpadic} is that $\calP_K$ is a particularly convenient basis for $\Var_K$:

\begin{proposition}
\label{prop:pairsvars}
The functor $\calP_K \to \Var_K$ is continuous and induces an equivalence of associated topoi.
\end{proposition}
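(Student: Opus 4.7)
The topology on $\calP_K$ is pulled back along the forgetful functor $p: \calP_K \to \Var_K$, $(U, \overline{U}) \mapsto U$, so continuity of $p$ is automatic, and the substantive content lies in the equivalence of topoi. My plan is to apply Verdier's comparison lemma by verifying: (i) $p$ is essentially surjective on objects; (ii) each fiber $\calP_K(U) := p^{-1}(U)$ is nonempty and cofiltered; and (iii) any morphism in $\Var_K$ between objects in the image of $p$ lifts to $\calP_K$ after refinement inside the fibers.

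For (i), given $U \in \Var_K$, I would invoke Nagata's compactification theorem to produce an open embedding $U \hookrightarrow V$ with $V$ proper reduced over $K$. Choosing a closed embedding $V \hookrightarrow \P^n_K$ and letting $\overline{V}$ be the scheme-theoretic closure of $V$ in $\P^n_{\calO_K}$ yields a flat proper $\calO_K$-model, flatness following from the absence of $\pi$-torsion in a schematic closure over the DVR $\calO_K$. The reduction $\overline{V}^\red$ then gives a pair $(U, \overline{V}^\red) \in \calP_K$ sitting over $U$, and no further $h$-covering is needed.

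For (ii), nonemptiness follows from (i); given two pairs $(U, \overline{U}_1)$ and $(U, \overline{U}_2)$ over the same $U$, the reduced scheme-theoretic closure of the diagonal $U \hookrightarrow \overline{U}_1 \times_{\calO_K} \overline{U}_2$ defines a pair dominating both, and any two parallel morphisms of pairs over $\id_U$ must agree by scheme-theoretic density of $U$. For (iii), given $f: U \to V$ in $\Var_K$ and chosen pairs $(U, \overline{U})$ and $(V, \overline{V})$, the reduced closure of the graph $\Gamma_f \subset \overline{U} \times_{\calO_K} \overline{V}$ is a pair $(U, \overline{U}')$ admitting both a map of pairs to $(V, \overline{V})$ lifting $f$ and a first projection $(U, \overline{U}') \to (U, \overline{U})$ whose underlying map of varieties is $\id_U$.

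With (i)--(iii) in hand, Verdier's comparison lemma (SGA~4, III, 4.1) will give the equivalence: pullback $p^{-1}$ provides one direction, while the inverse sends a sheaf $G$ on $\calP_K$ to the sheafification of the presheaf $U \mapsto \colim_{(U, \overline{U}) \in \calP_K(U)^\opp} G(U, \overline{U})$, which makes sense as a filtered colimit by (ii). The main obstacle I anticipate is establishing the cofilteredness of the fibers cleanly (especially the fact that two morphisms of pairs with the same underlying map on $\Var_K$ must coincide, where the density argument is crucial); once this is in place, the rest is a formal application of the comparison lemma combined with the standard geometric inputs of Nagata compactification and scheme-theoretic closure in $\P^n_{\calO_K}$.
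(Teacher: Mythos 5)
Your plan — (i) essential surjectivity, (ii) cofiltered fibers with faithfulness by density, (iii) lifting morphisms after refinement inside the fiber via graph closure — mirrors the paper's sketch exactly, so the geometric content is right. The problem is the formal step at the end.

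SGA~4, III, 4.1 is the comparison lemma for a \emph{fully faithful} continuous functor, and $p\colon \calP_K \to \Var_K$ is not full: for instance the endomorphism $(x,y) \mapsto (x,xy)$ of $\A^2_K$ does not extend to $\P^2_{\calO_K} \to \P^2_{\calO_K}$, so it admits no lift to a morphism of pairs for that choice of compactifications. Your conditions (ii) and (iii) are precisely the \emph{extra} input needed to handle non-fullness, but they are not part of the hypotheses of III 4.1, so the cited lemma simply does not apply to $p$. What you want instead is the comparison lemma for a ``special cocontinuous'' functor: $p$ continuous and cocontinuous, locally full (morphisms lift after a cover), locally faithful (parallel morphisms with the same image agree after a cover), and every object of the target covered by images — see the corresponding chapter of the Stacks Project. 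Your (i) gives local surjectivity on objects, (iii) gives local fullness via a one-element $h$-cover inside the fiber, and the schematic-density argument gives faithfulness on the nose (hence locally). You should, however, also verify cocontinuity, which is the remaining hypothesis and is not mentioned in your proposal. It does follow from (i) together with the graph-closure trick in (iii): given an $h$-cover $\{V_j \to U\}$ and a pair $(U,\overline{U})$, pick pairs $(V_j,\overline{V_j}^0)$ over each $V_j$, replace $\overline{V_j}^0$ by the reduced closure of $V_j$ in $\overline{V_j}^0 \times_{\calO_K} \overline{U}$ so that $V_j \to U$ extends to a map of pairs $(V_j,\overline{V_j}) \to (U,\overline{U})$, and note that the resulting family is an $h$-cover of $(U,\overline{U})$ in $\calP_K$ refining the given one.

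A minor secondary issue: in (i) you cannot in general choose a closed embedding $V \hookrightarrow \P^n_K$ for a Nagata compactification $V$, since proper $K$-varieties need not be projective. The cleanest fix is to apply Nagata directly to $U \to \Spec \calO_K$ (which is separated of finite type, as $\Spec K \hookrightarrow \Spec \calO_K$ is an open immersion) and take the reduced scheme-theoretic closure of $U$ in the resulting proper $\calO_K$-scheme; this closure is automatically $\calO_K$-flat since its associated points lie in $U$, which sits in the generic fiber over the DVR $\calO_K$.
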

\begin{proof}[Proof sketch]
First, one checks directly that every map $f:U \to V$ in $\Var_K$ extends to a map $\overline{f}:(U,\overline{U}) \to (V,\overline{V})$ between suitable pairs; similarly for covers. Next, essentially by blowing up, one shows that given pairs $(U,\overline{U})$ and $(V,\overline{V})$ and a map $f:U \to V$, there is an $h$-cover $\overline{\pi}:(U',\overline{U'}) \to (U,\overline{U})$ and a map $\overline{g}:(U',\overline{U'}) \to (V,\overline{V})$ of pairs such that $\pi \circ f = g$. Using the faithfulness of $\calP_K \to \Var_K$, it follows formally that $\Shv_h(\calP_K) \simeq \Shv_h(\Var_K)$.
\end{proof}

From now on, we will freely identify $h$-sheaves on $\calP_K$ with $h$-sheaves on $\Var_K$. In particular, to specify an $h$-sheaf on $\Var_K$, it will suffice to give an $h$-sheaf on $\calP_K$. Our ultimate goal is to relate de Rham cohomology to \'etale cohomology. The following result ensures that the $h$-topology on $\calP_K$ is good enough to compute \'etale cohomology:

\begin{corollary}[Deligne]
\label{cor:hetale}
Let $\underline{A}$ be a constant torsion abelian sheaf on $\Var_K$ with value $A$, and let $(U,\overline{U}) \in \calP_K$. Then we have a canonical equivalence
\[ \R\Gamma_{\calP_K}( (U,\overline{U}), \underline{A}) \simeq \R\Gamma(U_\et,\underline{A}).\]
\end{corollary}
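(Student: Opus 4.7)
The plan is to first use Proposition \ref{prop:pairsvars} to transport the problem to $\Var_K$ equipped with its $h$-topology. That proposition identifies $h$-sheaves on $\calP_K$ with $h$-sheaves on $\Var_K$, and the functor $\calP_K \to \Var_K$ sends $(U,\overline U)$ to $U$; so the left hand side becomes $\R\Gamma_h(U,\underline A)$, and the compactification $\overline U$ drops out of the discussion entirely. Thus it suffices to show that for any $U \in \Var_K$ and any constant torsion abelian sheaf $\underline A$, the natural comparison map
\[ \R\Gamma(U_\et,\underline A) \to \R\Gamma_h(U,\underline A) \]
induced by the morphism of sites $\pi \colon (\Var_K)_h \to (\Var_K)_\et$ is an equivalence.

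My first step would be to verify that the $h$-sheafification of $\underline A$ coincides with $\underline A$ after restriction to the small étale site of any $V \in \Var_K$. This is a local statement on $V_\et$ and reduces to the observation that a constant (torsion) sheaf satisfies Zariski descent and proper descent tautologically; so $\pi_* \pi^{-1} \underline A \simeq \underline A$ as étale sheaves. This handles the $E_2^{0,0}$ comparison and reduces the general statement to the vanishing of the higher derived pushforwards $R^i \pi_* \underline A$ for $i>0$ in a suitable sense, or equivalently, to $h$-descent for étale cohomology.

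The main substantive step — and the main obstacle — is the following descent statement: for every $h$-cover $V \to U$ in $\Var_K$, the complex $\R\Gamma(U_\et,\underline A)$ is computed by the augmented cosimplicial complex $\R\Gamma((V^{\bullet/U})_\et,\underline A)$ (where $V^{\bullet/U}$ denotes the Čech nerve). Since the $h$-topology is generated by Zariski covers and proper surjections, and Zariski descent for étale cohomology is standard, the content lies in proper descent. By de Jong's alteration theorem, every $h$-cover admits a refinement by a proper hypercover $V_\bullet \to U$ with each $V_n$ smooth over $K$; and proper cohomological descent for étale cohomology with torsion coefficients (Deligne's theorem, \cite[Exp.\ Vbis]{SGA4} or \cite[Theorem 4.1]{Beilinsonpadic}) then gives $\R\Gamma(U_\et,\underline A) \simeq \R\Gamma((V_\bullet)_\et,\underline A)$. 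The same total cohomology computes $\R\Gamma_h(U,\underline A)$, because $V_\bullet \to U$ is in particular an $h$-hypercover and constant torsion sheaves satisfy descent along $h$-hypercovers essentially by construction of the $h$-sheafification.

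Putting these pieces together yields the claimed equivalence. The key external inputs are Proposition \ref{prop:pairsvars}, de Jong's alterations (to produce smooth proper hypercovers refining any $h$-cover), and Deligne's proper cohomological descent for torsion étale cohomology; the finiteness of $A$ enters crucially in the last input, which is the only non-formal step. One could equivalently phrase the whole argument as an appeal to Beilinson's \cite[\S 3]{Beilinsonpadic}, where exactly this comparison is proved in the same setup.
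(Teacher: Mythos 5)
Your argument is correct and follows essentially the same route as the paper: transport to $\Var_K$ via Proposition~\ref{prop:pairsvars}, then invoke Deligne's proper cohomological descent theorem (\cite[Exp.~Vbis, Prop.~4.3.2]{SGA4_2}) to compare étale and $h$-cohomology for constant torsion coefficients. The paper treats that comparison as a single black box citation to Deligne; you unpack it into Zariski descent plus proper descent, which is the standard way to see why the citation applies. One small inefficiency: the appeal to de Jong's alterations to refine $h$-covers by \emph{smooth} proper hypercovers is unnecessary here. Proper cohomological descent for torsion étale sheaves holds for arbitrary proper surjections with no smoothness hypothesis, and the $h$-topology is already generated by Zariski covers and proper surjections, so you can run the descent argument directly without altering. (De Jong's theorem is genuinely needed elsewhere in the paper, e.g.\ in Remark~\ref{rmk:sspair} and in the derived de Rham descent arguments where smoothness of the covers is essential, but not for this corollary.)
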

\begin{proof}
This follows from Proposition \ref{prop:pairsvars} and Deligne's theorem \cite[Proposition 4.3.2, Expose V bis]{SGA4_2} that \'etale cohomology with constant torsion coefficients can be computed in the $h$-topology.
\end{proof}

In the definition of $\Var_K$ and $\calP_K$, we impose no restrictions on fields of definition of the objects. For applications, it is convenient to work with objects are defined over a fixed base. Working with finite extensions of $K$ is not possible as we cannot control the field of definition along $h$-covers; instead, we define a variant of these categories over $\overline{K}$:

\begin{definition}
The site $\calP_{\overline{K}}$ of geometric pairs has as objects pairs $(V,\overline{V})$ where $V$ is a reduced and separated finite type $\overline{K}$-scheme, and $\overline{V}$ is a proper flat reduced $\overline{\calO_K}$-scheme containing $V$ as a dense open subscheme. The morphisms are defined in the evident way.
\end{definition}

For any finite subextension $K \subset L \subset \overline{K}$, there is a base change functor $\calP_L \to \calP_{\overline{K}}$ defined by $(U,\overline{U}) \mapsto (U \otimes_L \overline{K}, (\overline{U} \otimes_{\calO_L} \overline{\calO_K})_\red)$. By \cite[Theorem 3.4.6]{RaynaudGruson},  every finite type flat $\overline{\calO_K}$-scheme is automatically finitely presented, so each pair $(U,\overline{U}) \in \calP_{\overline{K}}$ comes from $\calP_L$ for some $L$.  In particular, ``geometric'' techniques (such as \cite[\S 4]{dJAlt} and \cite{Bhattmixedcharpdiv}) can be applied to such pairs by limit arguments. The logarithmic and topological remarks concerning $\calP_K$ also apply to $\calP_{\overline{K}}$. In particular, $(\Spec(\overline{K}),\Spec(\overline{\calO_K}))$ is the final object of this category. 

\begin{remark}
\label{rmk:geompairscohomology}
The forgetful functor $\calP_{\overline{K}} \to \Var_{\overline{K}}$ lets us define an $h$-topology on $\calP_{\overline{K}}$. The analogues of Proposition \ref{prop:pairsvars} and Corollary \ref{cor:hetale} obtained by replacing $\calP_K$ (resp. $\Var_K$) with $\calP_{\overline{K}}$ (resp. $\Var_{\overline{K}}$) are also true, and proven in exactly the same way. In fact, there is a pro-\'etale morphism $\phi:\Shv_h(\calP_{\overline{K}}) \to \Shv_h(\calP_K)$ of topoi with $\phi^{-1}$ being the pullback functor introduced above.
\end{remark}

\begin{remark}
\label{rmk:sspair}
By \cite[Theorem 4.5]{dJAlt}, every pair $(U,\overline{U}) \in \calP_K$ admits an $h$-cover $\overline{\pi}:(V,\overline{V}) \to (U,\overline{U})$ with $(V,\overline{V})$ a {\em semistable} pair, i.e., $\overline{V}$ is regular, $\overline{V} - V$ is a simple normal crossings divisor on $\overline{V}$, and the fibres of $\overline{V} \to \Spec(\calO(\overline{V}))$ are reduced. As in Proposition \ref{prop:pairsvars}, this observation can be improved to say: the collection of all such semistable pairs forms a subcategory $\calP_K^\ss \subset \calP_K$ such that $\Shv_h(\calP_K) \simeq \Shv_h(\Var_K) \simeq \Shv_h(\calP_K^\ss)$ via the evident functors (and similarly for a variant category $\calP_{\overline{K}}^\ss$ of semistable pairs over $\overline{K}$). Hence, at the expense of keeping track of more conditions, we could work consistently with the more ``geometric'' category of semistable pairs (as opposed to arbitrary pairs) in this paper without changing the arguments seriously.
\end{remark}

Our main theorem is a Poincare lemma relating two natural sheaves on $\Var_{\overline{K}}$: one computes \'etale cohomology, while the other is closely related to de Rham cohomology. These sheaves are:

\begin{construction}
\label{ss:periodsheaves}
There are two {\em presheaves} $\fraa^c_{\crys}$ and $\fraa_{\crys}$ on $\calP_K$ defined by
\[  \fraa^c_{\crys}(U,\overline{U}) = \dR_{(\calO(\overline{U}),\can)/W} \]
and
\[ \fraa_{\crys}(U,\overline{U}) = \R\Gamma(\overline{U},\dR_{(\overline{U},\can)/W}). \]
The object on the right in the preceding formula is the hypercohomology in the Zariski topology of $\overline{U}$ of displayed complex. Both these presheaves are presheaves of cochain complexes with an algebra structure, and we view them as living in an appropriate (symmetric monoidal) stable $\infty$-category of presheaves. Let $\calA^c_{\crys}$ and $\calA_{\crys}$ denote the $h$-sheafifications of $\fraa^c_{\crys}$ and $\fraa_{\crys}$ respectively. Pullback of forms induces natural maps $\fraa^c_{\crys} \to \fraa_{\crys}$ and $\calA^c_{\crys} \to \calA_{\crys}$. We denote the corresponding objects of $\calP_{\overline{K}}$ by the same notation.
\end{construction}

The cohomology of the sheaf $\calA^c_\crys$ is essentially \'etale cohomology:

\begin{proposition}
Fix an object $(U,\overline{U}) \in \calP_{\overline{K}}$. Then one has:
\[ \R\Gamma_{\calP_{\overline{K}}}( (U,\overline{U}),\calA^c_{\crys} \otimes_{\Z} \Z/p^n) \simeq \R\Gamma(U_\et,\Z/p^n) \otimes_{\Z/p^n} A_\crys/p^n.\]
\end{proposition}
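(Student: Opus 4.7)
The plan is to establish the result in two steps: first, prove a Poincar\'e-type lemma identifying $\calA^c_\crys \otimes_{\Z} \Z/p^n$ with the constant $h$-sheaf $\underline{A_\crys/p^n}$ on $\calP_{\overline{K}}$; second, combine this with the \'etale descent of torsion sheaves to obtain the stated formula. To set up the first step, observe that $(\Spec(\overline{K}), \Spec(\overline{\calO_K}))$ is the final object of $\calP_{\overline{K}}$ and, by Proposition \ref{prop:ddrlogddrzpbar}, the value of $\fraa^c_\crys \otimes_{\Z} \Z/p^n$ there is $A_\crys/p^n$. Functoriality of derived log de Rham cohomology in the structure maps $\overline{\calO_K} \to \calO(\overline{U})$ then produces a canonical morphism $\underline{A_\crys/p^n} \to \calA^c_\crys \otimes_{\Z} \Z/p^n$ of $h$-sheaves, which I claim is an equivalence.

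To prove this equivalence I would reduce to $n = 1$ by devissage and compare both sides via the conjugate filtration (Proposition \ref{prop:conjfiltgenerallddr}), which the map respects. The derived logarithmic Cartier isomorphism (Proposition \ref{prop:conjsslddr}) then reduces the claim to showing that, after $h$-sheafification, the natural map on graded pieces
\[
\wedge^i L^{(1)}_{(\overline{\calO_K}, \can)/W}/p \otimes^L_{\overline{\calO_K}/p} \big(\calO(\overline{U})/p\big) \longrightarrow \wedge^i L^{(1)}_{(\calO(\overline{U}), \can)/W}/p
\]
is an equivalence for every $i \geq 0$. Using the log transitivity triangle for $W \to (\overline{\calO_K}, \can) \to (\calO(\overline{U}), \can)$, this splits into two assertions: (a) the $h$-sheafification of the presheaf $\calO(\overline{U})/p$ agrees with the constant sheaf $\overline{\calO_K}/p$, and (b) the $h$-sheafification of $\wedge^i L_{(\calO(\overline{U}), \can)/(\overline{\calO_K}, \can)}/p$ vanishes for $i \geq 1$. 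Both are $p$-divisibility statements in the $h$-topology supplied by \cite{Bhattmixedcharpdiv}; to appeal to \emph{loc.\ cit.}, I would reduce to semistable pairs via de Jong's alteration theorem (Remark \ref{rmk:sspair}), where the log cotangent complex has the explicit description in terms of differentials with log poles along the boundary (and where the logarithmic cotangent complex of Gabber agrees with the one of Olsson by \cite[\S 8]{OlssonLogCot}).

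Granting the Poincar\'e lemma, the remainder is formal. The geometric analogue of Corollary \ref{cor:hetale} recorded in Remark \ref{rmk:geompairscohomology} gives
\[
\R\Gamma_{\calP_{\overline{K}}}((U, \overline{U}), \underline{A_\crys/p^n}) \simeq \R\Gamma(U_\et, \underline{A_\crys/p^n}).
\]
Since $A_\crys$ embeds in $B_\dR^+$ and hence is $\Z_p$-flat, $A_\crys/p^n$ is $\Z/p^n$-flat; writing $A_\crys/p^n$ as a filtered colimit of free $\Z/p^n$-modules and using that $\R\Gamma(U_\et,-)$ on $p$-torsion sheaves commutes with filtered colimits for qcqs $U$ yields the final identification with $\R\Gamma(U_\et, \Z/p^n) \otimes_{\Z/p^n} A_\crys/p^n$.

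The main obstacle is clearly the Poincar\'e lemma, and within it the logarithmic $p$-divisibility (b) of the relative cotangent complex in the $h$-topology: once the right form of the statement from \cite{Bhattmixedcharpdiv} is transported to the logarithmic setting (via the comparison between Gabber's and Olsson's cotangent complexes and a reduction to semistable pairs), the Frobenius-twist bookkeeping and conjugate-filtration comparison become mechanical.
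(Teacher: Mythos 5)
Your proof takes a far more elaborate route than necessary, and in doing so appeals to machinery that is not actually relevant to this proposition. The key observation you have missed is that $\fraa^c_\crys$ is already (essentially) a {\em constant presheaf} on $\calP_{\overline{K}}$, before any sheafification. Indeed, for any $(U,\overline{U}) \in \calP_{\overline{K}}$, since $\overline{U}$ is a proper flat reduced $\overline{\calO_K}$-scheme, one has $\calO(\overline{U}) \simeq \overline{\calO_K}^{\#\pi_0(\overline{U})}$. Hence $\fraa^c_\crys(U,\overline{U}) = \dR_{(\calO(\overline{U}),\can)/W}$ is just a finite product of copies of $\dR_{(\overline{\calO_K},\can)/W}$, whose $p$-adic completion is $A_\crys$ by Proposition \ref{prop:acrysdefn} (via Proposition \ref{prop:ddrlogddrzpbar}). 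So $\calA^c_\crys/p^n$ is the constant $h$-sheaf with value $A_\crys/p^n$ on the nose, and the proposition is then an immediate consequence of Corollary \ref{cor:hetale} together with Remark \ref{rmk:geompairscohomology} (your final paragraph, unwinding why coefficients pull out of $\R\Gamma(U_\et,-)$, is fine and essentially fills in the implicit step in the paper).

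The ``Poincar\'e-type lemma'' you set out to prove, and the whole apparatus you invoke for it---the conjugate filtration, the derived Cartier isomorphism, the transitivity triangle, and especially the $p$-divisibility results of \cite{Bhattmixedcharpdiv} via de Jong's alterations---is all beside the point. Those tools are what make Theorem \ref{thm:padicpoincare} work, where one compares $\calA^c_\crys$ with $\calA_\crys$: the latter involves $\R\Gamma(\overline{U}, \dR_{(\overline{U},\can)/W})$, the genuine sheaf cohomology of the scheme $\overline{U}$, which is very far from constant and can only be tamed after passing to $h$-covers and exploiting $p$-divisibility. But $\fraa^c_\crys$ sees only the ring of global sections $\calO(\overline{U})$, not $\overline{U}$ itself. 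Your assertions (a) and (b) happen to be true, but for a trivial reason (the ring map $\overline{\calO_K} \to \calO(\overline{U})$ is a finite product inclusion, so (a) holds at the presheaf level and $L_{(\calO(\overline{U}),\can)/(\overline{\calO_K},\can)}$ simply vanishes), and the results of \cite{Bhattmixedcharpdiv}---which concern $\tau_{\geq 1}\R\Gamma(\overline{U},\calO_{\overline{U}})$ and $\R\Gamma(\overline{U},\Omega^i)$---never enter. If you had tried to actually run your proposed reduction, you would have found yourself looking for divisibility of a zero complex, which should have been a signal that the route was mis-aimed.
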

\begin{proof}
Note first that $\calA^c_{\crys}$ is a constant sheaf on $\calP_{\overline{K}}$ since $\calO(\overline{U}) \simeq \overline{\calO_K}^{\#\pi_0(\overline{U})}$ for any $(U,\overline{U}) \in \calP_{\overline{K}}$. Moreover, Proposition \ref{prop:acrysdefn} shows that
\[ \calA^c_{\crys}(\ast) \otimes_{\Z} \Z/p^n \simeq \dR_{(\overline{\calO_K},\can)/W} \otimes_\Z \Z/p^n =: A_\crys/p^n.\]
The claim now follows from Corollary \ref{cor:hetale} (and Remark \ref{rmk:geompairscohomology}).
\end{proof}

The Poincare lemma asserts that $\calA^c_\crys$ and $\calA_\crys$ are $p$-adically isomorphic. To prove this, we first prove a theorem showing that the difference is $p$-adically small, at least $h$-locally; this is the key geometric ingredient in this paper.

\begin{theorem}
\label{thm:mixedcharpdiv}
For any pair $(U,\overline{U}) \in \calP_{\overline{K}}$, there exists an $h$-cover $\pi:(V,\overline{V}) \to (U,\overline{U})$ such that 
\begin{enumerate}
\item The induced map
\[ \tau_{\geq 1} \R\Gamma(\overline{U},\calO_{\overline{U}}) \to \tau_{\geq 1} \R\Gamma(\overline{V},\calO_{\overline{U}}) \]
is divisible by $p$ as a map in the derived category.
\item For $i > 0$, the induced map
\[ \R\Gamma(\overline{U},\Omega^i_{(\overline{U},\can)/(\overline{\calO_K},\can)}) \to \R\Gamma(\overline{V},\Omega^i_{(\overline{V},\can)/(\overline{\calO_K},\can)})\]
is divisible by $p$ as a map in the derived category.
\end{enumerate}
\end{theorem}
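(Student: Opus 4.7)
The plan is to combine de Jong's alterations with the $p$-divisibility technology from the author's earlier paper \cite{Bhattmixedcharpdiv}, together with a Kummer-type refinement to handle the logarithmic differentials. By Remark \ref{rmk:sspair}, I may first find an $h$-cover $(U_0,\overline{U_0}) \to (U,\overline{U})$ where $(U_0,\overline{U_0})$ is semistable; since a composition of $h$-covers is an $h$-cover, it suffices to prove the theorem starting from a semistable pair. A limit argument (using \cite[Theorem 3.4.6]{RaynaudGruson}) lets me work over $\calO_L$ for some finite extension $L/K$ and then base change back to $\overline{\calO_K}$.

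For assertion (1), this is precisely the main $p$-divisibility theorem of \cite{Bhattmixedcharpdiv}: given a proper flat finitely presented $\overline{\calO_K}$-scheme $\overline{U}$, one can produce a proper surjection $\overline{V} \to \overline{U}$ (which is an $h$-cover) such that the induced pullback $\tau_{\geq 1}\R\Gamma(\overline{U},\calO) \to \tau_{\geq 1}\R\Gamma(\overline{V},\calO)$ factors through multiplication by $p$ in $D(\overline{\calO_K})$. I would invoke this result directly.

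For assertion (2), I would Kummer-refine to make the logarithmic differentials $p$-divisible. Starting from the semistable pair, let $D_1,\dots,D_r$ be the components of the boundary divisor $\overline{U}-U$; Zariski-locally on $\overline{U}$ choose defining sections $t_j$ for $D_j$. After a further $h$-refinement, one can adjoin $p$-th roots $t_j^{1/p}$ of each $t_j$ to obtain an $h$-cover $\overline{V} \to \overline{U}$. On $\overline{V}$ one has $d\log(t_j) = p \cdot d\log(t_j^{1/p})$, so the pullback of the $d\log$ portion of $\Omega^i_{(\overline{U},\can)/(\overline{\calO_K},\can)}$ is divisible by $p$. Similarly, for any locally defined section $f$ of $\calO_{\overline{U}}$, the identity $df = p \cdot (f^{1/p})^{p-1}\,d(f^{1/p})$ on a Kummer cover furnishes the needed divisibility for ordinary differentials. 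Since $\Omega^i_{(\overline{U},\can)/(\overline{\calO_K},\can)}$ is generated by wedges of such $d\log(t_j)$ and $df$ (locally, by the semistability assumption), combining the Kummer cover with an alteration from part (1) yields the required $h$-cover for all $i > 0$ at once.

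The main obstacle is to upgrade these pointwise, local divisibility statements to divisibility of maps in the derived category of the entire complex $\R\Gamma$. The elegant mechanism of \cite{Bhattmixedcharpdiv} produces not just a single alteration but a tower, along which torsion is systematically killed, so that the map factors through a genuinely $p$-divisible object in the derived category. Porting this mechanism to the logarithmic setting and keeping track of all wedge types in $\Omega^i_{(\overline{U},\can)/(\overline{\calO_K},\can)}$ simultaneously is the most delicate bookkeeping step, but it introduces no conceptually new difficulty beyond \cite{Bhattmixedcharpdiv}.
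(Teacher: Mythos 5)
Your overall strategy matches the paper's: combine de Jong's alterations (via Remark \ref{rmk:sspair}) with the $p$-divisibility results of \cite{Bhattmixedcharpdiv} for part (1), and Kummer covers extracting $p$-th roots of semistable coordinates for part (2), then upgrade to derived-level divisibility. However, there is a genuine gap in your treatment of part (2) that the paper addresses in detail in Lemma \ref{lem:pdivomega1}.

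You Kummer-refine ``Zariski-locally'' and then say that ``after a further $h$-refinement'' one adjoins $p$-th roots of the defining sections to get an $h$-cover $\overline{V} \to \overline{U}$. This is not automatic: the $t_j$ are only locally defined, and the Kummer covers on various charts do not glue to a single proper cover for free, nor does local $p$-divisibility of a map of sheaves globalize on its own. The paper's proof of Lemma \ref{lem:pdivomega1} does two nontrivial things that are missing from your sketch. First, it uses Nagata compactification to produce a single proper surjection $\pi:\overline{Y} \to \overline{X}$ dominating the Zariski-local constructions. Second, and crucially, it further alters $\overline{Y}$ to a semistable pair (again via Remark \ref{rmk:sspair}) so that $\Omega^1_{(\overline{Y},\can)/(\overline{\calO_K},\can)}$ is $\Z_p$-flat; only then does local factorization of $\pi^*\Omega^1_{\overline{X}} \to \Omega^1_{\overline{Y}}$ through multiplication by $p$ imply a global factorization (since in a torsion-free sheaf the factoring map $f/p$ is unique, hence glues). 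Without the flatness step, local $p$-divisibility gives nothing globally, so your ``combining the Kummer cover with an alteration from part (1)'' needs to be made precise along exactly these lines. Your claim that ``for any locally defined section $f$'' one has $df = p(f^{1/p})^{p-1}d(f^{1/p})$ is also overstated; one cannot adjoin $p$-th roots of arbitrary functions in a finite cover. The correct move, which the paper makes, is to work \'etale locally with the explicit semistable model $\Spec(\overline{\calO_K}[t_1,\dots,t_d]/(\prod_{i=1}^r t_i - \pi))$ and extract $p$-th roots of only the coordinate functions $t_1,\dots,t_d$, which generate $\Omega^1_{(\cdot,\can)}$. Finally, a small citation point: the mechanism that upgrades cohomology-level divisibility to divisibility in the derived category is the iteration from \cite[Lemma 3.2]{ddscposchar}, applied $(\dim\overline{U}-1)$-times, not a ``tower'' internal to \cite{Bhattmixedcharpdiv}; and note that for part (2) the iteration is not even needed, since the sheaf-level divisibility from Lemma \ref{lem:pdivomega1} passes directly through $\R\pi_*$ and $\R\Gamma$, whereas for part (1) it is essential.
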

\begin{proof}
The first claim is \cite[Remark 2.10]{Bhattmixedcharpdiv}, while the second claim follows from Lemma \ref{lem:pdivomega1}. More precisely, both references ensure the relevant $p$-divisibility at the level of cohomology groups. To obtain $p$-divisibility at the level of complexes, one simply iterates the relevant construction $(\dim(\overline{U})-1)$-times (see \cite[Lemma 3.2]{ddscposchar}).
\end{proof}

\begin{remark}
	We do not know if the conclusion of Theorem \ref{thm:mixedcharpdiv} holds if we replace the base $\overline{\calO_K}$ with a higher-dimensional ring; this seems to be an obstacle in extending the present approach to the comparison theorems to the relative setting. The geometric question amounts to:  given an affine scheme $\Spec(A)$ and a proper map $f:X \to \Spec(A)$, can one find proper covers $\pi:Y \to X$ such that the induced map $\tau_{\geq 1} \R\Gamma(X,\calO_X) \to \tau_{\geq 1} \R\Gamma(Y,\calO_Y)$ is divisible by $p$? We can prove such divisibility for $\tau_{\geq 2}$ (and perhaps for $\dim(A) \leq 2$), but not generally.
\end{remark}

The following lemma was used in the proof of Theorem \ref{thm:mixedcharpdiv}.

\begin{lemma}
\label{lem:pdivomega1}
Let $(X,\overline{X}) \in \calP_{\overline{K}}$. Then there exist $(Y,\overline{Y}) \in \calP_{\overline{K}}$ and an $h$-cover $\pi:(Y,\overline{Y}) \to (X,\overline{X})$ such that $\pi^* \Omega^1_{(\overline{X},\can)/(\overline{\calO_K},\can)} \to \Omega^1_{(\overline{Y},\can)/(\overline{\calO_K},\can)}$ is divisible by $p$ as a map. In particular, $\Omega^1_{(\overline{X},\can)/(\overline{\calO_K},\can)} \to \R \pi_* \Omega^1_{(\overline{Y},\can)/(\overline{\calO_K},\can)}$ is divisible by $p$ as a map.
\end{lemma}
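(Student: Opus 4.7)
My plan is to use de Jong's alterations to reduce to the semistable case, where the log differentials have an explicit local description in terms of $d\log$'s of boundary equations and $d$'s of interior coordinates, and then to construct the desired $h$-cover by extracting $p$-th roots of these local sections, leveraging the identity $d\log(f^p) = p\cdot d\log(f)$.

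More precisely, the first step is to replace $(X,\overline{X})$ with a semistable pair, using de Jong's alteration theorem and Remark~\ref{rmk:sspair}; since being ``divisible by $p$'' is preserved under composition with further covers, this is harmless. Once $(X,\overline{X})$ is semistable, $\overline{X}$ is regular and $\overline{X}\setminus X$ (together with the special fibre information) is a reduced simple normal crossings divisor, so $\Omega^1_{(\overline{X},\can)/(\overline{\calO_K},\can)}$ is a locally free $\calO_{\overline{X}}$-module which étale-locally admits a basis of the form $d\log(x_1),\dots,d\log(x_r),dx_{r+1},\dots,dx_n$ where $x_1,\dots,x_r$ cut out the boundary components and $x_{r+1},\dots,x_n$ are complementary regular parameters.

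Next I would construct $\overline{Y}$ by extracting $p$-th roots of a sufficiently large finite set of sections that generate $\Omega^1_{(\overline{X},\can)/(\overline{\calO_K},\can)}$ globally: étale-locally, $\overline{Y}$ is $\Spec(\calO_{\overline{X}}[y_1,\dots,y_n]/(y_i^p - x_i))$, and $\overline{Y}\setminus Y$ is defined by $y_1\cdots y_r = 0$. Under $\pi$, one has $\pi^* d\log(x_i) = d\log(y_i^p) = p\cdot d\log(y_i)$ for $i\le r$, and $\pi^* dx_j = d(y_j^p) = p\,y_j^{p-1}\,dy_j$ for $j>r$. Hence the pullback map sends a local generating set of $\pi^*\Omega^1_{(\overline{X},\can)/(\overline{\calO_K},\can)}$ into $p\cdot \Omega^1_{(\overline{Y},\can)/(\overline{\calO_K},\can)}$, so (by local freeness of the source) the map factors as $p$ times a well-defined morphism of locally free sheaves on $\overline{Y}$. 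The last assertion of the lemma about $\R\pi_*$ follows formally from the first.

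The main obstacle will be the globalisation of this local $p$-th root construction into a genuine $h$-cover $\pi:(Y,\overline{Y})\to (X,\overline{X})$ in $\calP_{\overline{K}}$. The local construction is tautological, but one has to choose a finite generating set of sections whose $p$-th roots one extracts in a coherent fashion, and then possibly alter further to keep $\overline{Y}$ reduced and flat over $\overline{\calO_K}$. This is handled exactly as in the structure-sheaf case \cite[Remark 2.10]{Bhattmixedcharpdiv}: by quasi-compactness choose finitely many affine opens covering $\overline{X}$ and finitely many local generators on each, then form the normalised fibre product of the corresponding finite surjective covers and, if necessary, apply another semistable alteration to land back in $\calP_{\overline{K}}^\ss$. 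The resulting composition is an $h$-cover, and the map on log $1$-forms is divisible by $p$ by the local calculation above.
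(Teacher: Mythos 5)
Your proposal is correct and takes essentially the same route as the paper: reduce to the semistable case, locally extract $p$-th roots of étale-local coordinates (the standard form is étale over $\Spec(\overline{\calO_K}[t_1,\dots,t_d]/(\prod_{i=1}^r t_i - \pi))$), observe $\pi^* d\log(t_i) = p\,d\log(y_i)$ and $\pi^* dt_j = p\,y_j^{p-1}\,dy_j$, and then globalize by dominating the local covers with a single cover (Nagata plus a further semistable alteration). One point that the paper makes more explicit, and that your "by local freeness of the source" phrasing elides, is the role of flatness on the \emph{target} side: after passing to a dominating semistable $\overline{Y}$, the sheaf $\Omega^1_{(\overline{Y},\can)/(\overline{\calO_K},\can)}$ is locally free, hence $\Z_p$-flat, so multiplication by $p$ on it is injective; this is what forces the étale-locally defined quotients by $p$ to agree on overlaps and hence glue to a global factorisation. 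Without that flatness the local divisions could differ by $p$-torsion, and the global statement would not follow. Your instinct to alter again to land in $\calP_{\overline{K}}^\ss$ is exactly what secures this flatness, so the argument goes through; you should just be careful to say that the flatness of $\Omega^1$ on $\overline{Y}$ (not just local freeness of $\pi^*\Omega^1_{\overline{X}}$) is what makes the local factorisations glue.
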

\begin{proof}
It suffices to construct a proper surjective map $\pi:\overline{Y} \to \overline{X}$ of schemes such that the desired $p$-divisibility holds for the pullback log structure on $\overline{Y}$. First, we claim that it suffices to solve this problem locally on $\overline{X}$. Indeed, assume that there exists a Zariski cover $\{U_i \subset \overline{X}\}$ and proper surjective maps $\pi_i:V_i \to U_i$ such that $\pi^* \Omega^1_{(U_i,\can)/(\overline{\calO_K},\can)} \to \Omega^1_{(V_i,\can)/(\overline{\calO_K},\can)}$ is divisible by $p$; here all log structures are defined by pullback from the given log structure on $\overline{X}$. Then, by Nagata, we can find a single proper surjective $\pi:\overline{Y} \to \overline{X}$ which factors through $\pi_i$ over $U_i$. Moreover, by Remark \ref{rmk:sspair}, we may ensure that $(\overline{Y},\can)$ defines a semistable pair, where $\can$ denotes the log structure defined by $\pi^{-1}(X)$. In particular, $\Omega^1_{(\overline{Y},\can)/(\overline{\calO_K},\can)}$ is $\Z_p$-flat. Now the pullback map $\pi^* \Omega^1_{(\overline{X},\can)/(\overline{\calO_K},\can)} \to \Omega^1_{(\overline{Y},\can)/(\overline{\calO_K},\can)}$ is divisible by $p$ over each $U_i$, and hence globally divisible by $p$ by flatness. Hence, we have now reduced to solving the local problem. Again, we may assume that $(X,\overline{X})$ is a semistable pair. Now pick an affine open cover $\{U_i \subset \overline{X}\}$ such that each $U_i$ is \'etale over $\Spec(\overline{\calO_K}[t_1,\dots,t_d]/(\prod_{i=1}^r t_i - \pi))$ where $\pi \in \overline{\calO_K}$, and the log structure is defined by $t_1,\dots,t_k$ for $r \leq k \leq d$. In this case, extracting $p$-th roots of each $t_i$ can be seen to solve the problem.
\end{proof}

We now prove the promised Poincare lemma relating $\calA^c_\crys$ and $\calA_\crys$.

\begin{theorem} 
\label{thm:padicpoincare}
The map $\calA^c_{\crys} \otimes_{\Z} \Z/p^n \to \calA_{\crys} \otimes_{\Z} \Z/p^n$ is an equivalence of sheaves on $\calP_{\overline{K}}$ for all $n$.
\end{theorem}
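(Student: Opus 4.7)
The plan is to reduce modulo $p$, filter both sides by the conjugate filtration, and then use Theorem \ref{thm:mixedcharpdiv} to show that the difference on each graded piece dies $h$-locally. Throughout, by Remark \ref{rmk:sspair} I may restrict attention to the subcategory $\calP_{\overline{K}}^\ss$ of semistable pairs when checking sheafified equivalences.

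First, by devissage (and the fact that both $\calA^c_\crys$ and $\calA_\crys$ are derived $p$-complete on the nose after sheafification because they arise from mod $p^n$-limits of explicit complexes), it suffices to treat $n=1$. So let $\fraa^c_\crys(U,\overline{U})_1 := \dR_{(\calO(\overline{U}),\can)/W}\otimes_\Z \F_p$ and $\fraa_\crys(U,\overline{U})_1 := \R\Gamma(\overline{U},\dR_{(\overline{U},\can)/W}\otimes_\Z \F_p)$, and write $\calA^c_{\crys,1}, \calA_{\crys,1}$ for the $h$-sheafifications. By the derived logarithmic Cartier isomorphism (Proposition \ref{prop:conjsslddr}), both presheaves admit increasing bounded-below separated exhaustive conjugate filtrations compatible with the map between them, with graded pieces
\[ \gr^\conj_i \fraa^c_\crys(U,\overline{U})_1 \simeq \wedge^i L_{(\calO(\overline{U}),\can)^{(1)}/W}[-i], \]
\[ \gr^\conj_i \fraa_\crys(U,\overline{U})_1 \simeq \R\Gamma(\overline{U},\wedge^i L_{(\overline{U},\can)^{(1)}/W}[-i]). \]
Since such filtrations are preserved by $h$-sheafification, it suffices to verify that the comparison map is an equivalence on each conjugate graded piece after $h$-sheafification.

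Next, I identify the cotangent-complex sheaves geometrically. For a semistable pair $(U,\overline{U})$, the map $(\overline{\calO_K},\can) \to (\calO_{\overline{U}},\can)$ is log smooth and of Cartier type modulo $p$, so by the results of \S \ref{ss:log-cartier} (specifically the proof of Corollary \ref{cor:logddrcartiertype}) the relevant log cotangent complex is discrete and equal to $\Omega^1_{(\overline{U},\can)/(\overline{\calO_K},\can)}$, a locally free $\calO_{\overline{U}}$-module. Combining this with the transitivity triangle along $W\to(\overline{\calO_K},\can)\to(\overline{U},\can)$, and using that $L_{(\overline{\calO_K},\can)/W}\otimes_{\Z}\F_p$ is a constant sheaf on $\calP_{\overline{K}}$ (essentially $\Omega^1_{\overline{\calO_K}/W}/p$ by Propositions \ref{prop:ddrlogddrzpbar} and \ref{prop:diffformszpbar}), the problem reduces, via a spectral sequence on wedge powers, to showing that for semistable $(U,\overline{U})$ and each $i\geq 0$ the natural map
\[ \Omega^i_{(\calO(\overline{U}),\can)^{(1)}/(\overline{\calO_K},\can)}[-i]\;\longrightarrow\; \R\Gamma\bigl(\overline{U},\Omega^i_{(\overline{U},\can)^{(1)}/(\overline{\calO_K},\can)}\bigr)[-i] \]
becomes an equivalence after $h$-sheafification on $\calP_{\overline{K}}^\ss$. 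Since Frobenius twist is relative to $W$ and $(\overline{\calO_K},\can)$ is relatively perfect over $W$ modulo $p$, the twist is harmless and can be absorbed into a relabelling.

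Now I invoke Theorem \ref{thm:mixedcharpdiv}. For $i=0$, this is precisely the statement that $\tau_{\geq 1}\R\Gamma(\overline{U},\calO_{\overline{U}})$ becomes divisible by $p$ along an $h$-cover, so iterating the cover construction trivialises its mod-$p$ reduction. For $i\geq 1$, Theorem \ref{thm:mixedcharpdiv}(2) gives the corresponding $p$-divisibility for $\R\Gamma(\overline{U},\Omega^i_{\log})$ along an $h$-cover. In each case, the same bootstrap — iterate the cover, use that a map divisible by $p$ vanishes mod $p$, and note that on $H^0$ the comparison is tautologically an equivalence — shows that the cone of the displayed map sheafifies to zero. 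This establishes the $i$-th graded-piece equivalence, and assembling all $i$ gives the theorem.

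The main obstacle I anticipate is step two to three: carefully identifying the graded pieces of the conjugate filtration in a way compatible with $h$-sheafification so that Theorem \ref{thm:mixedcharpdiv} applies cleanly. In particular, one must argue that the sheafy log cotangent complex $L_{(\overline{U},\can)/(\overline{\calO_K},\can)}$ is sufficiently well behaved on semistable pairs (discrete, locally free, compatible with base change along $W\to(\overline{\calO_K},\can)$ for the Frobenius twist) so that the wedge powers really do compute the graded pieces at the sheaf level, and so that applying $\R\Gamma(\overline{U},-)$ commutes with the needed limits and filtrations. Granting this bookkeeping, the geometric input is entirely concentrated in Theorem \ref{thm:mixedcharpdiv}, and the rest is the conjugate-filtration formalism built up in \S \ref{sec:ddr-mod-p-log}.
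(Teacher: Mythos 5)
Your argument is correct in outline and rests on the same geometric input (Theorem \ref{thm:mixedcharpdiv}), but it organises the filtration differently from the paper. The paper invokes Proposition \ref{prop:conjfiltddrcomposite} applied to $W \to (\overline{\calO_K},\can) \to (\overline{U},\can)$: this produces a \emph{single} increasing exhaustive filtration on $\fraa_{\crys}\otimes\F_p$ whose $i$-th graded piece $\calG_i$ already takes the form
\[
\dR_{(\overline{\calO_K}/p,\can)/k}\;\otimes_{\Frob_k^*(\overline{\calO_K}/p)}\;\Frob_k^*\bigl(\R\Gamma(\overline{U},\Omega^i_{(\overline{U}/p,\can)/(\overline{\calO_K}/p,\can)})[-i]\bigr),
\]
i.e.\ the ``constant'' factor $\dR_{(\overline{\calO_K}/p,\can)/k}$ is already separated from the relative wedge power. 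The crucial payoff is that the comparison map $\fraa^c_{\crys}\otimes\F_p\to\fraa_{\crys}\otimes\F_p$ is then seen to factor through the structure map $\calG_0\to\fraa_{\crys}\otimes\F_p$, so there are only two things to check: $\fraa^c_{\crys}\to\calG_0$ sheafifies to an equivalence (from $\overline{\calO_K}/p\to\R\Gamma(\overline{U},\calO_{\overline{U}}/p)$ and Theorem \ref{thm:mixedcharpdiv}(1)), and $\calG_i$ sheafifies to zero for $i>0$ (Theorem \ref{thm:mixedcharpdiv}(2)). You instead use the absolute conjugate filtration over $W$ (Proposition \ref{prop:conjsslddr}), getting graded pieces $\wedge^i L_{(\overline{U},\can)^{(1)}/W}[-i]$, and then further decompose these via the filtration on wedge powers induced by the transitivity triangle for $W\to(\overline{\calO_K},\can)\to(\overline{U},\can)$. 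This works — and you correctly notice that the base contribution $L_{(\overline{\calO_K},\can)/W}\otimes\F_p$ is a constant free rank-one module, so it can be absorbed — but it costs you a second layer of bookkeeping: you must show the comparison map is an equivalence on \emph{every} absolute conjugate graded piece (rather than showing all but one die), and within each piece verify that the secondary transitivity filtration behaves functorially under pullback along $h$-covers so that sheafification can be applied termwise. Your closing paragraph correctly identifies this as the technical pressure point. In short: same strategy, same geometric engine, but the paper's choice of filtration (Proposition \ref{prop:conjfiltddrcomposite} rather than the composite of Proposition \ref{prop:conjsslddr} and the $\wedge^i$-transitivity spectral sequence) is precisely what lets the ``constant part'' survive and the ``geometric part'' die in a single clean dichotomy.
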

 
\begin{proof}
By devissage, it suffices to show the case $n = 1$. Thus, we must show that $\fraa^c_{\crys} \otimes \Z/p \to \fraa_{\crys} \otimes \Z/p$ is an isomorphism after $h$-sheafification. Given a pair $(U,\overline{U}) \in \calP_{\overline{K}}$ with $\overline{U}$ normal (it suffices to restrict to such pairs since every pair is covered by such a pair), we have
\[ (\fraa^c_{\crys} \otimes \Z/p)(U,\overline{U}) = \dR_{(\overline{\calO_K}/p,\can)/k}  \]
and
\[ (\fraa_{\crys} \otimes \Z/p)(U,\overline{U}) = \R\Gamma(\overline{U},\dR_{(\overline{U}/p,\can)/k}). \]
By Proposition \ref{prop:conjfiltddrcomposite},  in the stable $\infty$-category of {\em presheaves} of cochain complexes on $\calP_{\overline{K}}$, the presheaf $\fraa_{\crys} \otimes \Z/p$ admits an increasing bounded below separated exhaustive filtration with graded pieces $\calG_i$ (starting at $i = 0$) given by
\[ \calG_i(U,\overline{U}) \simeq \dR_{(\overline{\calO_K}/p,\can)/k} \otimes_{\Frob_k^* \overline{\calO_K}/p} \Frob_k^* \big(\R\Gamma(\overline{U},\Omega^i_{(\overline{U}/p,\can)/(\overline{\calO_K}/p,\can)})[-i]\big) \]
Moreover, it is easily checked that the map $\fraa^c_{\crys} \otimes \Z/p \to \fraa_{\crys} \otimes \Z/p$ factors through the structure map $\calG_0 \to \fraa_{\crys} \otimes \Z/p$. As sheafification commutes with colimits,  it suffices to show the following:
\begin{enumerate}
\item The $h$-sheafification of the map $\fraa^c_{\crys} \otimes \Z/p \to \calG_0$ is an equivalence.
\item The $h$-sheafification of $\calG_i$ is $0$ for $i > 0$.
\end{enumerate}
Both claims follow the $p$-divisibility results of Theorem \ref{thm:mixedcharpdiv} and base change.
\end{proof} 

\begin{remark}
	\label{rmk:poincarelemmaalternative}
	The proof of Theorem \ref{thm:padicpoincare} shows that one does not really need to work relative to the base $W$: one can define analogs of $\calA^c_\crys$ and $\calA_\crys$ by replacing $W$ with any prelog ring mapping to $(\overline{\calO_K},\can)$ without affecting the conclusion of the theorem. In particular, using $x \mapsto \pi$, if one defines presheaves $\fraa^c_\st$ and $\fraa_\st$ via
	\[ \fraa^c_\st(U,\overline{U}) = \dR_{(\calO(\overline{U}),\can)/(W[x],x)} \quad \mathrm{and} \quad \fraa_\st(U,\overline{U}) = \R\Gamma(\overline{U},\dR_{(\overline{U},\can)/(W[x],x)}),  \]
	then the associated $h$-sheaves $\calA^c_\st$ and $\calA_\st$ will be isomorphic modulo $p^n$ via the natural map $\calA^c_\st \to \calA_\st$.
\end{remark}

\begin{remark}
\label{rmk:padiccompapproaches}
An essential feature of most known approaches to the $p$-adic comparison theorems is the construction of certain well-chosen towers of covers of mixed characteristic schemes, together with a good understanding of of cohomology (either \'etale, or de Rham) as one moves in these towers. In Faltings' method of almost \'etale extensions, the key technical result is the almost purity theorem (see \cite[page 182, Theorem]{FaltingsAEE}) which controls flatness properties of the normalisation of a mixed characteristic ring $R$ in a tower of finite \'etale covers of $R[1/p]$; in the end, this lets one compute {\em \'etale} cohomology of $R[1/p]$ in terms of mixed characteristic data (see \cite[page 242, Theorem]{FaltingsAEE}). In contrast, in the approach of \cite{Beilinsonpadic} as well as this paper, one constructs towers of $h$-covers of mixed characteristic schemes that make {\em de Rham} cohomology classes highly $p$-divisible (see the proof of Theorem \ref{thm:padicpoincare}); that such covers suffice for applications is entirely due to Corollary \ref{cor:hetale}. The fineness of the $h$-topology over the \'etale topology lets one construct the required covers rather easily, while completely eschewing delicate algebraic considerations encountered, for example, in \cite[page 196]{FaltingsAEE}; this is the main reason for the relative simplicity of the present proof. An intermediate between the two methods just described is Scholze's approach (unpublished): he works \'etale locally on the underlying rigid analytic space, which, roughly speaking, amounts to working with $h$-covers of a mixed characteristic formal model that are \'etale over the generic fibre (by Raynaud's theory \cite{RaynaudRigAnGeom}).
\end{remark}

\subsection{The semistable comparison map}
\label{ss:cst}

Using the results of \S \ref{ss:sspairs}, we will construct the promised comparison map, and show it is an isomorphism. For technical reasons pertaining to monodromy, we introduce some  notation first:

\begin{notation}
We continue using Notation \ref{not:periodring}. In particular, we fix once and for all a uniformiser $\pi \in \calO_K$ and, unless otherwise specified, the ring $\calO_K$ (and hence all $\calO_K$-schemes) are viewed as $W[x]$-algebras via $x \mapsto \pi$. The Faltings-Breuil ring $R_{\calO_K}$ is defined as $\widehat{\dR_{(\calO_K,\can)/(W[x],x)}}$ or, equivalently, as the $p$-adic completion of the pd-envelope of $W[x] \to \calO_K$ (by Lemma \ref{lem:faltingsbreuil}).
\end{notation}

We now come to the main theorem:

\begin{theorem} 
\label{thm:compcst}
Let $(X,\overline{X}) \in \calP_K$ be a semistable pair with $X = \overline{X}[1/p]$ and $\calO(\overline{X}) = \calO_K$. Then there is an $\widehat{A_\st}$-linear comparison map
\[ \comp^\st_\et:\widehat{\R\Gamma_\crys( (\overline{X},\can)/(W[x],x), \calO_\crys)} \otimes_{R_{\calO_K}} \widehat{A_\st} \to \R\Gamma(X_{\overline{K},\et},\Z_p) \otimes_{\Z_p} \widehat{A_\st} \]
that preserves filtrations, $G_K$-actions, Frobenius actions, Chern classes of vector bundles, and monodromy operators. Moreover, $\comp^\st_\et$ admits an inverse up to $\beta^d$, where $\beta \in A_\crys$ is Fontaine's element from Proposition \ref{prop:acrysbeta}, and $d = \dim(X)$. In particular, Fontaine's $C_\st$-conjecture is true.
\end{theorem}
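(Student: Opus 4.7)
The plan is to follow Beilinson's strategy in \cite{Beilinsonpadic}, but using non-completed (log) derived de Rham cohomology as sheafified in \S \ref{ss:sspairs} so as to land in $\widehat{A_\st}$ rather than $B_\dR^+$. First, by Theorem \ref{thm:lddrcryscomp} applied to $(W[x],x) \to (\calO(\overline{X}),\can)$ (which is $G$-lci modulo $p$ in a semistable pair), the left-hand side may be identified with $\widehat{\fraa_\st(X,\overline{X})} \otimes_{R_{\calO_K}} \widehat{A_\st}$, where $\fraa_\st$ is the presheaf on $\calP_K$ from Remark \ref{rmk:poincarelemmaalternative}. The sheafification map, pullback to the geometric category $\calP_{\overline{K}}$, and the fact that $\widehat{A_\st}$ is a sheaf of coefficients yield
\[ \widehat{\fraa_\st(X,\overline{X})} \otimes_{R_{\calO_K}} \widehat{A_\st} \;\longrightarrow\; \R\Gamma_{\calP_{\overline{K}}}((X_{\overline{K}},\overline{X}_{\overline{\calO_K}}), \widehat{\calA_\st}). \]
Combining the modulo $p^n$ Poincar\'e lemma from Remark \ref{rmk:poincarelemmaalternative} (namely $\calA^c_\st \otimes \Z/p^n \simeq \calA_\st \otimes \Z/p^n$), the fact that $\calA^c_\st$ is essentially the constant sheaf $\widehat{A_\st}$ on $\calP_{\overline{K}}$ (since $\calO(\overline{U}) = \overline{\calO_K}^{\#\pi_0}$ for normal $\overline{U}$), and Corollary \ref{cor:hetale} (in the form of Remark \ref{rmk:geompairscohomology}), the target is identified $p$-adically with $\R\Gamma(X_{\overline{K},\et},\Z_p) \otimes_{\Z_p} \widehat{A_\st}$. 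This produces the desired map $\comp_\et^\st$.

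Next I would verify compatibility with the auxiliary structures. The Hodge filtration and $G_K$-action come for free from functoriality of all steps. The Frobenius is provided by Proposition \ref{prop:frobactionlddr}, intertwined with the Frobenius on crystalline cohomology via Proposition \ref{prop:lddrcryscompmap}. Chern classes of line bundles on both sides are constructed via $d\log$: on the crystalline side from Definition \ref{defn:logddr}, and on the \'etale side from the Kummer sequence; compatibility with the sheafification step is tautological because both realize the same $d\log$ map into the relevant period sheaf. The monodromy operator matches because the Gauss–Manin connection on $\widehat{\dR_{(\overline{X},\can)/(W[x],x)}}$ relative to $W$ (Proposition \ref{prop:logcrystrans}) is equivariant for $\comp$ and is identified with Kato's derivation $N$ on $\widehat{A_\st}$ via the identification discussed in Remark \ref{rmk:astddrdefn2}.

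The main obstacle, and the conceptual heart of the proof, is showing that $\comp_\et^\st$ is an isomorphism up to $\beta^d$. I would argue via a Chern class / Poincar\'e duality argument of Beilinson's type: since $\comp_\et^\st$ respects Chern classes of line bundles, it is compatible with the cycle class of a section of a very ample $\calL$ on $\overline{X}$, and by induction on dimension (using hyperplane sections and the weak Lefschetz principle for both theories), this forces isomorphy once one knows it for $H^0$ and that $\comp_\et^\st$ becomes invertible after inverting $\beta$ (equivalently, after base change to $B_\dR$, where it is the Hodge-completed comparison of \cite{Beilinsonpadic} tensored to $\widehat{A_\st}[1/\beta]$). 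The analogy invoked in the introduction is apt: once $\comp_\et^\st$ is shown to be a map of ``Weil cohomologies with good formal properties'' that preserves fundamental classes up to the factor $\beta^d$, it must be an isomorphism. Finally, $C_\st$ follows by taking $G_K$-invariants and using the standard $(\varphi,N,G_K)$-structure theory of $\widehat{A_\st}$; $C_\crys$ is the special case where $N$ acts trivially on the crystalline side.
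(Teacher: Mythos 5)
Your construction has a real gap that the paper takes pains to avoid, and it concerns the identification of the period ring that comes out of your $h$-sheafification step. You claim that the constant sheaf $\calA^c_\st$ (or its $p$-adic completion) is $\widehat{A_\st}$. It is not. Evaluating the presheaf $\fraa^c_\st$ on a normal pair and $p$-adically completing gives $\widehat{\dR_{(\overline{\calO_K},\can)/(W[x],x)}}$, which is the ring $\widehat{\dR_{b \circ a}}$ of Remark \ref{rmk:astddrdefn}. That remark explicitly warns that $\widehat{\dR_{b \circ a}} \to \widehat{A_\st}$ is \emph{not} an isomorphism: the map $a:(W[x],x) \to C$ used in exactifying is log \'etale but fails to be of Cartier type modulo $p$, so the log derived de Rham cohomology of $(W[x],x) \to (\overline{\calO_K},\can)$ does not agree with Kato's log crystalline cohomology of the same morphism, the latter being $\widehat{A_\st}$. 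So your construction, if carried out correctly, produces a comparison map linear over the strictly smaller ring $\widehat{\dR_{b \circ a}}$, not $\widehat{A_\st}$. This is salvageable --- base change along $\widehat{\dR_{b \circ a}} \to \widehat{A_\st}$ --- but it must be acknowledged, and the Frobenius and monodromy equivariance then have to be tracked through this extra step.

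The paper sidesteps this by constructing the comparison map relative to $W$ using $\fraa_\crys$ and $\calA_\crys$ (Theorem \ref{thm:padicpoincare} as stated), which gives a map linear over the algebra $\widehat{\fraa_\crys((\calO_K,\can)/W)}$ landing in $\R\Gamma(X_{\overline{K},\et},\Z_p)\otimes A_\crys$. It then base changes to $\widehat{A_\st}$ and, crucially, invokes Proposition \ref{prop:stpiast} --- the vanishing of $\cl(\st_\pi)$ in $H^1(G_K,\widehat{A_\st})$ --- to show the map factors $G_K$-equivariantly through the projection to $R_{\calO_K}$. That factorization is precisely what the passage from $W$-linear to $(W[x],x)$-linear costs, and it is not free; your approach implicitly builds it in via the choice of $(W[x],x)$-base but at the cost described above. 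The paper reserves the $\fraa_\st$-variant of Remark \ref{rmk:poincarelemmaalternative} only for the monodromy compatibility check, where a comparison over $\widehat{\dR_{b\circ a}}$ suffices. Finally, your isomorphy argument (weak Lefschetz and induction) is in the right spirit but differs from the paper's, which appeals to Poincar\'e duality on both sides plus the regularization of the diagonal following Faltings, combined with Gysin and Chern-class compatibility; your proposed route would still require making precise the invertibility ``after inverting $\beta$,'' which is not quite free from what you have established.
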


We refer to Remark \ref{rmk:sspair} for the definition of a semistable pair. The left hand side above is defined via
\[ \widehat{\R\Gamma_\crys( (\overline{X},\can)/(W[x],x),\calO_\crys)} := \R\lim_n \R\Gamma_\crys( (\overline{X},\can)/(W[x],x) \otimes \Z/p^n,\calO_\crys).\]
This is a module over $R_{\calO_K}$, and agrees with the crystalline cohomology groups $H^*(X/R_V)$ of \cite{FaltingsAEE}. The groups in \cite[\S 2]{FaltingsIntCrys} are slightly different because the ring $R_V$ there is complete for the Hodge filtration. Informally, we may think of $\widehat{\R\Gamma_\crys( (\overline{X},\can)/(W[x],x), \calO_\crys)}$ as the de Rham cohomology over $(R_{\calO_K},x)$ of a deformation of $(\overline{X},\can)$ across $(R_{\calO_K},x) \to (\calO_K,\can)$; as such deformations might not exist globally on $\overline{X}$, one has to proceed using cohomological descent. In the sequel, we will often write $\comp$ instead of $\comp_\et^\st$ when the meaning is clear.

\begin{remark}
The Chern classes mentioned in Theorem \ref{thm:compcst} live in crystalline (and \'etale) cohomology. In the spirit of the present paper, a more natural operation would be to define Chern classes in derived de Rham cohomology that lift crystalline Chern classes via the comparison maps of propositions \ref{prop:ddrcryscompmap} and \ref{prop:lddrcryscompmap}. A natural solution to this last problem is to develop a theory of derived de Rham cohomology for algebraic stacks over some base $S$, and construct universal Chern classes in $\R\Gamma(B(\GL_n),\dR_{B(\GL_n)/S})$. This can indeed be done over $S = \Spec(\Z/p^n)$, and will be discussed in \cite{BhattFlatDescentCC}. We simply remark here that our definition proceeds by cohomological descent instead of imitating Illusie's definition of derived de Rham cohomology; the latter is problematic to implement for Artin stacks as it is not clear how to define wedge powers of a complex that is supported in both positive and negative degrees.
\end{remark}

\begin{proof}[Construction of the map]
We first explain the idea informally. The sheafification adjunction gves a natural map $\fraa_\crys( (X,\overline{X})/W) \to \calA_\crys( (X,\overline{X}) \otimes_W \overline{\calO_K})$. Up to completion, the right hand side is the $p$-adic \'etale cohomology of $X_{\overline{K}}$, by the $p$-adic Poincare lemma. The left hand side is closely related to the left hand side of the desired map $\comp^\st_\et$: the latter is the de Rham cohomology of $(\overline{X},\can)$ relative to $(W[x],x)$, while the former is the de Rham cohomology of $(\overline{X},\can)$ over $W$ (up to completions).  Hence, adjunction almost gives a map of the desired form. To move from de Rham cohomology relative to $W$ to that relative to $(W[x],x)$, we extend scalars to $\widehat{A_\st}$.

Now for the details. First consider the map
\[ \comp_n:\fraa_\crys(X,\overline{X}) \otimes_\Z \Z/p^n \to \fraa_\crys( (X,\overline{X}) \otimes_{\calO_K} \overline{\calO_K}) \otimes_\Z \Z/p^n \to \R\Gamma_{\calP_{\overline{K}}}((X,\overline{X}) \otimes \overline{\calO_K}, \calA_{\crys} \otimes_\Z \Z/p^n).\]
The left hand side is computed as
\[ \fraa_{\crys}(X,\overline{X}) \otimes_\Z \Z/p^n = \R\Gamma(\overline{X},\dR_{(\overline{X},\can)/W} \otimes_W W/p^n)  = \R\Gamma( (\overline{X},\can)/W \otimes_W W/p^n,\calO_\crys) . \]
while the right hand side is computed by the Poincare lemma to be
\[ \R\Gamma_{\calP_{\overline{K}}}( (X,\overline{X}) \otimes \overline{\calO_K},\calA_{\crys} \otimes_\Z \Z/p^n) \simeq \R\Gamma_{\calP_{\overline{K}}}( (X,\overline{X}) \otimes \overline{\calO_K},\calA^c_{\crys} \otimes_\Z \Z/p^n) \simeq \R\Gamma(X_{\overline{K},\et},\Z/p^n) \otimes_{\Z/p^n} \widehat{A_{\crys}}/p^n. \]
Taking $p$-adic limits then shows that $\R \lim_n \comp_n$ gives a map
\[ \widehat{\R\Gamma_\crys( (\overline{X},\can)/W, \calO_\crys)}  \to \R\Gamma(X_{\overline{K},\et},\Z_p) \otimes_{\Z_p} A_\crys.\]
This map is linear over the algebra map 
\[ \widehat{\fraa_\crys( (\calO_K,\can)/W)} \to \widehat{\fraa_\crys( (\overline{\calO_K},\can)/W)} \simeq A_\crys. \]
Hence, linearisation gives an $A_\crys$-linear map
\begin{equation}
\label{eq:basiccompmap}
\widehat{\R\Gamma_\crys( (\overline{X},\can)/W, \calO_\crys)} \otimes_{\widehat{\fraa_\crys( (\calO_K,\can)/W)}} A_\crys \to \R\Gamma(X_{\overline{K},\et},\Z_p) \otimes_{\Z_p} A_\crys.
\end{equation}
We base change this along $A_\crys \to \widehat{A_\st}$ to get a map
\[ \widehat{\R\Gamma_\crys( (\overline{X},\can)/W, \calO_\crys)} \otimes_{\widehat{\fraa_\crys( (\calO_K,\can)/W)}} \widehat{A_\st} \to \R\Gamma(X_{\overline{K},\et},\Z_p) \otimes_{\Z_p} \widehat{A_\st}.\]
Proposition \ref{prop:stpiast} shows that the map $\widehat{\fraa((\calO_K,\can)/W)} \to \widehat{A_\st}$ factors $G_K$-equivariantly through the natural map $\widehat{\fraa((\calO_K,\can)/W)} \to R_{\calO_K}$.  Hence, one can rewrite the preceding map as
\[ \Big(\widehat{\R\Gamma_\crys( (\overline{X},\can)/W, \calO_\crys)} \otimes_{\widehat{\fraa_\crys( (\calO_K,\can)/W)}} R_{\calO_K}\Big) \otimes_{R_{\calO_K}} \widehat{A_\st} \to \R\Gamma(X_{\overline{K},\et},\Z_p) \otimes_{\Z_p} \widehat{A_\st}.\]
The parenthesized term on the left can be identified with $\widehat{\R\Gamma_\crys( (\overline{X},\can)/(W[x],x),\calO_\crys)}$: comparison with the crystalline theory identifies $\widehat{\R\Gamma_\crys( (\overline{X},\can)/W, \calO_\crys)}$ with the complex  
\[ \Big(\widehat{\R\Gamma_\crys( (\overline{X},\can)/(W[x],x),\calO_\crys)} \stackrel{d}{\to} \widehat{\R\Gamma_\crys( (\overline{X},\can)/(W[x],x),\calO_\crys)} \cdot \frac{dx}{x}\Big),\] 
where the differential is defined using the Gauss-Manin connection, while the complex $\widehat{\fraa( (\calO_K,\can)/W)}$ is identified with the complex
\[ R_{\calO_K} \otimes_{W[x]} \Big(W[x] \stackrel{d}{\to} W[x] \cdot \frac{dx}{x}\Big) \simeq \Big(R_{\calO_K} \stackrel{d}{\to} R_{\calO_K} \cdot \frac{dx}{x}\Big). \]
Thus we obtain the promised map
\[ \comp^\st_\et:\widehat{\R\Gamma_\crys( (\overline{X},\can)/(W[x],x), \calO_\crys)} \otimes_{R_{\calO_K}} \widehat{A_\st} \to \R\Gamma(X_{\overline{K},\et},\Z_p) \otimes_{\Z_p} \widehat{A_\st}. \qedhere \]
\end{proof}

\begin{remark}
It is clear from the construction that the only reason to base change up to $\widehat{A_\st}$ from $A_\crys$ is to ensure that the map $\widehat{\fraa((\calO_K,\can)/W)} \to \widehat{A_\st}$ factors $G_K$-equivariantly through the natural map $\widehat{\fraa((\calO_K,\can)/W)} \to R_{\calO_K}$. If we are prepared to work only $G_{K_\infty}$-invariantly (with notation as in Proposition \ref{prop:stpigeom}), then this base change is unnecessary by the same proposition, i.e., the map \eqref{eq:basiccompmap} above can be identified with a $G_{K_\infty}$-invariant comparison map
\[ \widehat{\R\Gamma_\crys( (\overline{X},\can)/(W[x],x), \calO_\crys)} \otimes_{R_{\calO_K}} A_\crys  \to \R\Gamma(X_{\overline{K},\et},\Z_p) \otimes_{\Z_p} A_\crys. \]
This is the form of the comparison map in \cite{FaltingsAEE}.
\end{remark}

\begin{remark}
Let $X$ be a smooth $K$-variety, and fix a hypercovering $(Y_\bullet,\overline{Y_\bullet})$ in $\calP_{\overline{K}}$ resolving $X_{\overline{K}}$, i.e., $(Y_\bullet,\overline{Y_\bullet}) \in \calP_{\overline{K}}$ is a simplicial object equipped with an equivalence $|Y_\bullet| \simeq X_{\overline{K}}$. Applying Theorem \ref{thm:padicpoincare} and following the arguments above gives a map $\fraa_\crys( (Y_\bullet,\overline{Y_\bullet})/W)[1/p] \to R\Gamma(X_{\overline{K},\et},\Z_p) \otimes A_\crys[1/p]$. It is tempting to identify the left hand side with $R\Gamma_\dR(X/K) \otimes_K \overline{K}$ as de Rham cohomology satisfies $h$-descent in characteristic $0$; this would give a de Rham comparison isomorphism over $A_\crys[1/p]$. However, this reasoning is flawed: $\fraa_\crys( (Y_\bullet,\overline{Y_\bullet})/W)[1/p]$ computes {\em derived} de Rham cohomology in characteristic $0$, so it is quite degenerate (see Corollary \ref{cor:ddrchar0}). To recover the de Rham comparison theorem $C_\dR$, one must work with the Hodge-completed picture so as to make the characteristic $0$ theory non-degenerate. When implemented, this strategy leads to the proof in \cite{Beilinsonpadic}.
\end{remark}

\begin{proof}[Deducing consequences from \cite{Beilinsonpadic}]
Construction shows that the comparison map defined above is compatible with the one defined in \cite{Beilinsonpadic}: the map defined in {\em loc. cit.} uses the same site $\calP_{\overline{K}}$, and is defined using the Hodge-completed versions of the sheaves used above. This means that we can deduce consequences for the map defined above from those proven in \cite{Beilinsonpadic}, provided we work in suitable torsion free contexts. In particular, if $(\G_m,\overline{\G_m})$ denotes the usual semistable compactification of $\G_m$ relative to $\calO_K$ and $t$ is the co-ordinate on $\G_m$, then the map
\[ \widehat{A_\st} \cdot \frac{dt}{t} := \widehat{H^1_\crys( (\G_m,\overline{\G_m})/(W[x],x), \calO_\crys)} \otimes_{R_{\calO_K}} \widehat{A_\st} \to H^1_\et(\G_m \otimes \overline{K},\Z_p) \otimes \widehat{A_\st} \simeq H^1_\et(\G_m \otimes \overline{K},\Z_p(1)) \otimes \widehat{A_\st}(-1) \]
sends the generator $\frac{dt}{t}$ to the element $\kappa \otimes \beta$; here 
\[ \kappa \in H^1_\et(\G_m \otimes \overline{K},\Z_p(1))\] 
denotes the generator of that group determined by the compatible system of the Kummer torsors (i.e., the $p^n$-power map on $\G_m$), and 
\[ \beta \in A_\crys(-1) \subset \widehat{A_\st}(-1) \simeq \Hom(\Z_p(1),\widehat{A_\st})\] 
is Fontaine's map $(\epsilon) \mapsto \log([\epsilon])$. Given this compatibility, one formally deduces many more. For example, since the comparison map commutes with Mayer-Vietoris sequences, we deduce that the following diagram commutes:
\[ \xymatrix{ \widehat{A_\st}\{-1\} \ar[r]^\beta \ar[d]^-{c_1^\crys(\calO(1))} & \widehat{A_\st}(-1) \ar[d]^{c_1^\et(\calO(1))} \\
\widehat{H^2( \P^1_{\calO_K} /(W[x],x),\calO_\crys )} \otimes_{R_{\calO_K}} \widehat{A_\st} \ar[r]^-\comp & H^2(\P^1_{\overline{K},\et},\Z_p) \otimes_{\Z_p} \widehat{A_\st}. }\]
Here $\widehat{A_\st}\{-1\}$ denotes the ring $\widehat{A_\st}$ with the filtration shifted by $1$, and the fact used above is that $\frac{dt}{t}$ and $\kappa$ correspond to the Chern classes of $\calO(1)$ in the de Rham and the \'etale theories under the Mayer-Vietoris identification of $H^1(\G_m)$ with $H^2(\P^1)$. Compatibility with cup-products combined with compatibility with restriction along a hyperplane $\P^{n-1} \subset \P^n$ leads to similar diagrams as above for $H^*(\P^n)$. In particular, the comparison map commutes with Chern classes of ample line bundles, up to the appropriate power of $\beta$, i.e., for a semistable pair $(X,\overline{X}) \in \calP_K$ and an ample line bundle $\calL \in \Pic(\overline{X})$, we have a commutative diagram for all $d$
\[ \xymatrix{ \widehat{A_\st}\{-d\} \ar[r]^{\beta^d} \ar[d]^-{(c_1^\crys(\calO(1)))^d} & \widehat{A_\st}(-d) \ar[d]^{(c_1^\et(\calO(1)))^d} \\
\widehat{H^{2d}( (\overline{X},\can) /(W[x],x),\calO_\crys )} \otimes_{R_{\calO_K}} \widehat{A_\st} \ar[r]^-\comp & H^{2d}(X_{\overline{K},\et},\Z_p) \otimes_{\Z_p} \widehat{A_\st}. }\]
Since any line bundle can be written as a difference of ample line bundles on a projective scheme, we deduce the same for arbitrary line bundles. Passing to the flag variety then proves the same statement for arbitrary vector bundles.
\end{proof}

\begin{proof}[Gysin compatibility]
Fix proper smooth geometrically connected $K$-schemes $X$ and $Y$. Assume that there exist semistable pairs $(X,\overline{X})$ and $(Y,\overline{Y})$ in $\calP_K$ extending $X$ and $Y$, and a morphism $i:(Y,\overline{Y}) \to (X,\overline{X})$ of pairs such that $\overline{Y} \to \overline{X}$ is a closed immersion of codimension $c$ that is transverse to all the strata, i.e., \'etale locally on $\overline{X}$, we have an isomorphism $(\overline{X},\overline{Y}) \simeq X^* \times (\A^c,\{0\})$ for some semistable scheme $X^*$ (see \cite[Theorem 2, page 252]{FaltingsAEE}). Then Poincare dulaity (see \cite[page 248]{FaltingsAEE}) gives an adjoint pushforward map
\[ i^\crys_* : \widehat{\R\Gamma_\crys( (\overline{Y},\can)/(W[x],x),\calO_\crys)}\{-c\}[-2c] \to \widehat{\R\Gamma( (\overline{X},\can)/(W[x],x),\calO_\crys)}. \]
Similarly, by \cite[Theorem XVII.3.2.5]{SGA4.5}, we also have a pushforward
\[ i^\et_\ast:\R\Gamma(Y_{\overline{K},\et},\Z_p)(-c)[-2c] \to \R\Gamma(X_{\overline{K},\et},\Z_p) \]
that is Poincare dual to the pullback. We claim that these commute with $\comp$ up to $\beta^c$. This is proven via a deformation to the normal cone argument which reduces considerations to the case where $\overline{X} = \P(\calN^\vee \oplus \calO_{\overline{Y}})$ for some vector bundle $\calN^\vee$ on $\overline{Y}$ with $i$ being the $0$ section of $\calN^\vee$. Instead of repeating the argument here, we simply refer to \cite[Proposition 14.7]{OlssonFaltingsAEE}; the setup there assumes that $\overline{X}$ and $\overline{Y}$ are smooth, but this is not necessary for the proof as long as $i$ is transverse as above (use \cite[bottom of page 249]{FaltingsAEE} to commute $i^\crys_\ast$ with transverse pullbacks).
\end{proof}

\begin{proof}[Verification of Chern class behaviour]
For the reader's convenience, we recall the Chern class compatibility of $\comp^\st_\et$; this discussion is simply a version of \cite[\S 3.6]{Beilinsonpadic} in the present context. Note that all objects involved --- $\G_m$, $\overline{\G_m}$, $\frac{dt}{t}$, $\kappa$, and the comparison map --- are defined over $W$. Since the comparison maps are compatible with change of base field, we can assume that $\calO_K = W$. We will show the desired compatibility modulo $p^n$ for all $n$. 
	
	Fix an integer $n \geq 0$, and let $f_n:(T_n,\overline{T_n}) \to (\G_m,\overline{\G_m})$ be the semistable compactification of the  $p^n$-power map on $\G_m$ obtained by taking $(T_n,\overline{T_n}) = (\G_m,\overline{\G_m})$, with the map being the $p^n$-th power map on $\overline{\G_m}$. The map $f_n$ is $\mu_{p^n}$-equivariant for the standard $\mu_{p^n}$-action on the source, and so we have a pullback map
\[ f_n^*: \fraa_{\crys}(\G_m,\overline{\G_m})/p^n \to \Big( \fraa_{\crys}( (T_n,\overline{T_n}) \otimes_W \overline{\calO_K})/p^n \Big)^{h\mu_{p^n}} \simeq \Big(\fraa_{\crys}(T_n,\overline{T_n})/p^n \otimes_{W/p^n} A_\crys/p^n \Big)^{h\mu_{p^n}}. \]
Here the right hand side is the homotopy-fixed points of the $\mu_{p^n}(\overline{\calO_K})$-action on the displayed complex, and can be computed via group cohomology.  We will identify the image of $\frac{dt}{t}$ under $f_n^*$. We need some notation first. Let $t_n$ be the co-ordinate on $T_n$ satisfying  $t = t_n^{p^n}$. The formula $\zeta \mapsto \frac{d\zeta}{\zeta}$ will be viewed as defining a map
\[ c:\mu_{p^n}(\overline{\calO_K}) \to A_\crys/p^n\]
obtained from the first Chern class map $c_1:\mu_{p^\infty} \to A_\crys[1]$ of Construction \ref{cons:acryschern} by the formula $c = \pi_1(\widehat{c_1})/p^n$. This is simply the reduction of Fontaine's map $\beta$ modulo $p^n$ by Proposition \ref{prop:acrysbeta}.

\begin{claim}
The image of 
\[ \frac{dt}{t} \in \pi_{-1}(\fraa_{\crys}(\G_m,\overline{\G_m})/p^n)\] 
under $\pi_{-1}(f_n^*)$ coincides with the class defined by the $1$-cocycle in group cohomology of $\mu_{p^n}(\overline{\calO_K})$ (computed using the standard complex) determined by the map $\mu_{p^n}(\overline{\calO_K}) \stackrel{c}{\to} A_\crys/p^n \to \fraa_{\crys}(T_n,\overline{T_n})/p^n \otimes_{W/p^n} A_\crys/p^n$.
\end{claim}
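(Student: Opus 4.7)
The plan is to exploit the multiplicativity and functoriality of the $d\log$ map. First, recognize $\frac{dt}{t}$ as $d\log(t)$, where $t$ is viewed as a section of the canonical log structure on $(\overline{\G_m},\can)$ (see Definition~\ref{defn:logddr}). Functoriality of $d\log$ then yields
\[
f_n^*(d\log(t)) \;=\; d\log(t_n^{p^n}) \;=\; p^n \cdot d\log(t_n) \;\in\; \pi_{-1}\bigl(\fraa_{\crys}(T_n,\overline{T_n})/p^n\bigr).
\]
This representative vanishes modulo $p^n$ at the $0$-th level of the cosimplicial object computing $F^{h\mu_{p^n}}$, so $f_n^*(dt/t)$ must be detected in the $H^1(\mu_{p^n}(\overline{\calO_K}); \pi_0(F))$-piece of the filtration on $\pi_{-1}(F^{h\mu_{p^n}})$, consistent with the shape of the claimed answer.

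To extract the $1$-cocycle, I would use the Bockstein for the triangle $\widetilde F \xrightarrow{p^n} \widetilde F \to F$ applied after taking $h\mu_{p^n}$-fixed points, where $\widetilde F$ denotes an integral lift of $F$. Integrally, $p^n d\log(t_n)$ is $\mu_{p^n}$-invariant (since $p^n \, d\log(\zeta) = d\log(\zeta^{p^n}) = 0$), yet its natural integral primitive $d\log(t_n)$ is not: the defect is
\[
\zeta \cdot d\log(t_n) - d\log(t_n) \;=\; d\log(\zeta),
\]
a nonzero $p^n$-torsion class in $\Omega^1_{\overline{\calO_K}/W}$ by Proposition~\ref{prop:diffformszpbar}. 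The connecting map in the long exact sequence for $h\mu_{p^n}$-fixed points identifies the mod-$p^n$ image of an invariant class of the form $p^n \cdot z$ with the $\mu_{p^n}$-coboundary of the non-invariant primitive $z$; applied to $z = d\log(t_n)$, this yields the $1$-cocycle $\xi_1(\zeta) = d\log(\zeta) \in \pi_0(F)$. Proposition~\ref{prop:acrysbeta} then identifies $d\log(\zeta)$, viewed as a class in $\widehat{\fraa_\crys((\overline{\calO_K},\can)/W)}[1]/p^n \simeq A_\crys/p^n$ after $\pi_1$ and completion, with Fontaine's element $c(\zeta)$; composing with the structure map $A_\crys/p^n \to F$ gives the stated cocycle.

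The principal obstacle is making the Bockstein step precise: one must verify compatibility between the cosimplicial structure on $F^\bullet$ and the multiplication-by-$p^n$ triangle, and then identify the connecting map in the resulting spectral sequence with the group-cohomological coboundary producing $\xi_1$. The subtlety is that $d\log(t_n)$ does not itself lift to a class in $\pi_{-1}(\widetilde F^{h\mu_{p^n}})$ (since $d\log(\zeta)$ is not exact in $\pi_{-1}(\widetilde F^1)$), so it functions only as a formal ``integral $p^n$-th root'' of $p^n d\log(t_n)$, and its Galois coboundary is exactly the class we seek. Once this bookkeeping is set up correctly, the remaining computations — the invariance of $p^n d\log(t_n)$, the identification of $d\log(\zeta)$ with $c(\zeta)$, and the cocycle condition for $\xi_1$ — are immediate.
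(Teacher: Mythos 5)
Your proposal follows essentially the same route as the paper's own sketch: note that $f_n^*(dt/t) = d\log(t_n^{p^n}) = p^n\, d\log(t_n)$ vanishes on the nose modulo $p^n$; conclude that the class in $\pi_{-1}(F^{h\mu_{p^n}})$ is therefore detected in the $H^1(\mu_{p^n},\pi_0(F))$ term of the homotopy-fixed-point filtration; identify the detecting $1$-cocycle with $\zeta\mapsto d\log(\zeta)$, the obstruction to $d\log(t_n)$ being $\mu_{p^n}$-invariant; and then invoke Proposition~\ref{prop:diffformszpbar} and Proposition~\ref{prop:acrysbeta} to match $d\log(\zeta)$ with $c(\zeta)$. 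This is precisely the content the paper packs into one sentence (``so $\pi_{-1}(f_n^*)(dt/t)$ is the obstruction to $dt_n/t_n$ being $\mu_{p^n}$-invariant, but this obstruction is tautologically $\zeta \mapsto d\zeta/\zeta$''), and your write-up is a reasonable unpacking of it.

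One small imprecision worth flagging: the phrase ``the connecting map in the long exact sequence for $h\mu_{p^n}$-fixed points'' suggests the boundary map $\pi_{-1}(F^{h\mu_{p^n}})\to\pi_{-2}(\widetilde F^{h\mu_{p^n}})$ arising from $\widetilde F\xrightarrow{p^n}\widetilde F\to F$, but that map sends the class of $dt/t$ to zero (it lifts integrally), so it carries no information. The mechanism you actually want is the one you describe in words right afterward: working in the cone model $F=\mathrm{Cone}(p^n\colon\widetilde F\to\widetilde F)$, the cocycle $(p^n\,d\log(t_n),0)$ is the boundary of the degree-$0$ chain $(0,d\log(t_n))$, and the $\mu_{p^n}$-coboundary of this bounding chain, namely $\zeta\mapsto(0,d\log(\zeta))$, is the class detected in $H^1(\mu_{p^n},\pi_0(F))$. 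The element $(0,d\log(\zeta))\in\pi_0(F)$ is then the image of $d\log(\zeta)\in H^1(\widetilde F)[p^n]$ under the ordinary Bockstein $H^1(\widetilde F)[p^n]\hookrightarrow H^0(F)$, which is how the comparison with $c(\zeta)$ proceeds. So a Bockstein does enter, but it is the $\pi_*$-level Bockstein for $F$, not the long exact sequence of homotopy fixed points. Since you yourself flag this bookkeeping as the principal obstacle, this is a matter of sharpening the language rather than a real gap in the argument.
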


\begin{proof}[Proof sketch]
The element $\frac{dt}{t}$ maps to 
\[ 0 = p^n \cdot \frac{dt_n}{t_n} = \frac{d (t_n^{p^n})}{t_n^{p^n}} \in \pi_{-1}(\fraa_{\log}( (T_n,\overline{T_n}) \otimes_W \overline{\calO_K})/p^n),\]
so $\pi_{-1}(f_n^*)(\frac{dt}{t})$ is the obstruction to $\frac{dt_n}{t_n}$ being $\mu_{p^n}$-invariant, but this obstruction is tautologically the map 
\[ \zeta \mapsto \frac{d(\zeta t_n)}{\zeta t_n} -  \frac{dt_n}{t_n} = \frac{d\zeta}{\zeta}. \qedhere\]
\end{proof}

Now consider the diagram
\[ \xymatrix{ 
K_1 := \fraa_{\crys}(\G_m,\overline{\G_m})/p^n \ar[r] & \Big(\fraa_{\crys}( (T_n,\overline{T_n}) \otimes_W \overline{\calO_K})/p^n\Big)^{h\mu_{p^n}} \ar[dd] \\
	& \\
\Big(\calA^c_{\crys}( (T_n,\overline{T_n}) \otimes_W \overline{\calO_K})/p^n\Big)^{h\mu_{p^n}} \ar[r]_-{\simeq}^c &  \Big(\calA_{\crys}( (T_n,\overline{T_n}) \otimes_W \overline{\calO_K})/p^n\Big)^{h\mu_{p^n}} =: K_2\\
	& \\
K_3 := \Big(\R\Gamma_\et(T_n \otimes \overline{K},\Z/p^n(1)) \otimes_{\Z/p^n} A_\crys/p^n(-1) \Big)^{h \mu_{p^n}} \ar[uu]_-{\simeq}^-b & \R\Gamma_\et(\G_m \otimes_W \overline{K},\Z/p^n(1)) \otimes_{\Z/p^n} A_\crys/p^n(-1) =: K_4.  \ar[l]_-{\simeq}^-a   }\]
Here all maps are the natural ones, the map $a$ is an isomorphism by \'etale descent, $b$ is an isomorphism by the computation of the cohomology of constant sheaves in the $h$-topology, and $c$ is an isomorphism by the Poincare lemma.

The earlier computation shows that the  $\frac{dt}{t} \in \pi_{-1}(K_1)$  maps to the class in $\pi_{-1}(K_2)$ determined by the cocycle $\zeta \mapsto 1 \otimes \frac{d\zeta}{\zeta}$. On the other hand, since the torsor $T_n \to \G_m$ is precisely the torsor determined by $\kappa$ modulo $p^n$, the class $\kappa \otimes \beta \in \pi_{-1}(K_4)$ maps under $a$ to the cocycle determined by $\id \otimes \beta$ in $\pi_{-1}(K_3)$ (computed by group cohomology). One then chases definitions to show that the image of $\id \otimes \beta$ under $c \circ b$ in $\pi_{-1}(K_2)$ coincides with the earlier map.
\end{proof}

\begin{proof}[Proof of Theorem \ref{thm:compcst}]
We have already constructed the map $\comp$ and shown that it respects pullbacks, cup products, Chern classes of vector bundles and Gysin maps. As the map $\calA^c_\crys/p^n \to \calA_\crys/p^n$ occurring in Theorem \ref{thm:padicpoincare} respects Frobenius actions (with actions defined using Theorem \ref{prop:frobactionddr}), so does $\comp$. 

For monodromy compatibility, consider the map
\[ \comp':\widehat{\R\Gamma( (\overline{X},\can)/(W[x],x),\calO_\crys)} \to \R\Gamma_\et(X_{\overline{K},\et},\Z_p) \otimes_{\Z_p} \widehat{A_\st} \]
whose $\widehat{A_\st}$-linearisation yields $\comp$. As explained in Remark \ref{rmk:astddrdefn}, we can identify
\[ \widehat{A_\st} \simeq \widehat{\R\Gamma_\crys(f,\calO_\crys)}\]
where $f:(W[x]x) \to (\overline{\calO_K},\can)$ is the map defined by $f(x) = \pi$. Thus, the $(W[x],x)$-modules occurring on both sides of $\comp'$ acquire a connection relative to $W$ by the Gauss-Manin connection on crystalline cohomology. We will prove the desired monodromy compatibility of $\comp$ by showing that $\comp'$ is equivariant for this connection.  Replacing Theorem \ref{thm:padicpoincare} with the modified version from Remark \ref{rmk:poincarelemmaalternative} in the construction of $\comp$ leads to a map
\[ \comp'':\widehat{\R\Gamma( (\overline{X},\can)/(W[x],x),\calO_\crys)} \to \R\Gamma_\et(X_{\overline{K},\et},\Z_p) \otimes_{\Z_p} \widehat{\dR_f} \]
where $f:(W[x],x) \to (\overline{\calO_K},\can)$ is the map defined by $x \mapsto \pi$. The map $\comp'$ is obtained from $\comp''$ by composition with $\comp_f:\widehat{\dR_f} \to  \widehat{\R\Gamma_\crys(f,\calO_\crys)}$ from Remark \ref{rmk:astddrdefn}. Since $\comp_f$ is equivariant for the natural connection (by Remark \ref{rmk:astddrdefn2}), it suffices to show that $\comp''$ is equivariant for the connection. This follows from the connection-equivariance of the map $\fraa^c_\st \to \fraa_\st$ from Remark \ref{rmk:poincarelemmaalternative}, which is obvious. Hence, $\comp'$ (and thus $\comp$) are equivariant for the Gauss-Manin connection, as desired.

To see that $\comp$ admits an inverse up to $\beta^d$, note that both the source and target of $\comp$ satisfy Poincare duality (by \cite[page 248]{FaltingsAEE}). A formal argument (using the regularisation of the diagonal defined in \cite[pp 238-239]{FaltingsAEE},  and the Gysin and Chern class compatibility of $\comp$) then implies that $\comp$ admits an inverse up to $\beta^d$. 
\end{proof}

\begin{remark}
The method employed above can be used to show a comparison result between de Rham and \'etale cohomology over {\em global fields}, as we now sketch in case of $\Q$. Fix an algebraic closure $\overline{\Q}$ of $\Q$, and let $\overline{\Z}$ be the integral closure of $\Z$ in $\overline{\Q}$. Set $A_\ddR$ to be the derived projective limit of $\dR_{\overline{\Z}/\Z} \otimes_{\Z} \Z/n$ as $n$ varies through all integers. Then one can show that $A_\ddR$ is a filtered (ordinary) flat $\widehat{\Z}$-algebra equipped with a $\Gal(\overline{\Q}/\Q)$-action, and an endomorphism $\phi_p$ for each prime number $p$. Moreover, the methods used above can be massaged to give the following (loosely formulated) analog of $C_\st$:

\begin{theorem*}
Let $X$ be a semistable proper variety over $\Q$. Then log de Rham cohomology of a semistable model for $X$ is isomorphic to the $\widehat{\Z}$-\'etale cohomology of $X_{\overline{\Q}}$ once both sides are base changed to a localisation of $A_\ddR$ (while preserving all natural structures on either side). 
\end{theorem*}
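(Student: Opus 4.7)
The plan is to follow the skeleton of Beilinson's proof of $C_\dR$, but replacing the Hodge-completed derived de Rham cohomology with the non-completed version, where the $p$-adic Poincaré lemma (Theorem \ref{thm:padicpoincare}) makes the strategy viable. First I would construct the comparison map. Using the presheaves $\fraa_\crys$ and $\calA_\crys$ from Construction \ref{ss:periodsheaves}, the sheafification adjunction gives a natural map from $\fraa_\crys(X,\overline{X}) \otimes \Z/p^n$ to $\R\Gamma_{\calP_{\overline{K}}}((X,\overline{X})\otimes \overline{\calO_K}, \calA_\crys \otimes \Z/p^n)$. The source computes $\R\Gamma_\crys( (\overline{X},\can)/W, \calO_\crys)\otimes \Z/p^n$ by Theorem \ref{thm:padicddrcryscomp}, while the target equals $\R\Gamma_{\calP_{\overline{K}}}((X,\overline{X})\otimes \overline{\calO_K}, \calA^c_\crys \otimes \Z/p^n) \simeq \R\Gamma(X_{\overline{K},\et},\Z/p^n) \otimes A_\crys/p^n$ by combining the $p$-adic Poincaré lemma with Corollary \ref{cor:hetale}. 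Taking $p$-adic limits and linearising over $A_\crys$ yields the core map over $A_\crys$, which is linear over the algebra $\widehat{\fraa_\crys((\calO_K,\can)/W)}$ acting on both sides.

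The next step is to rewrite this as a map over $R_{\calO_K}$, base-changed up to $\widehat{A_\st}$. The natural map $\widehat{\fraa_\crys((\calO_K,\can)/W)} \to A_\crys$ does not factor through $R_{\calO_K}$ in a $G_K$-equivariant way — the obstruction is precisely the Kato class $\cl(\st_\pi) \in H^1(G_K,A_\crys)$ by Proposition \ref{prop:stpigeom}. However, Proposition \ref{prop:stpiast} asserts that this obstruction vanishes after push-forward along $A_\crys \to \widehat{A_\st}$. Combined with the identification of $\widehat{\fraa_\crys((\calO_K,\can)/W)}$ with the de Rham complex of $R_{\calO_K}$ over $W$, this produces the desired $\widehat{A_\st}$-linear comparison map $\comp^\st_\et$.

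Now for the compatibilities. Frobenius compatibility is essentially automatic from Propositions \ref{prop:frobactionddr} and \ref{prop:frobactionlddr}, since the Poincaré lemma map $\calA^c_\crys/p^n \to \calA_\crys/p^n$ is constructed from functorial ingredients. Monodromy compatibility requires a small detour: rather than factoring $\comp$ through $A_\crys$, one should factor a preliminary version through $\widehat{\dR_f}$ for the map $f:(W[x],x) \to (\overline{\calO_K},\can)$ defined by $x\mapsto \pi$, using the variant Poincaré lemma of Remark \ref{rmk:poincarelemmaalternative}; both this variant and the map $\widehat{\dR_f} \to \widehat{A_\st}$ are equivariant for the natural connection, which gives the Gauss–Manin equivariance. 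For Chern class compatibility, one reduces to the case of $\G_m$ and its standard compactification, computes the image of $\frac{dt}{t}$ using a $\mu_{p^n}$-torsor computation (the $p^n$-th power self-map of $\overline{\G_m}$), and identifies it with Fontaine's Kummer cocycle $\kappa\otimes\beta$ via Proposition \ref{prop:acrysbeta}. Mayer–Vietoris propagates this to $\P^n$, splitting of ample bundles and the flag variety handles arbitrary vector bundles, and Gysin compatibility follows from deformation to the normal cone as in Olsson's account of Faltings' proof.

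The hardest conceptual step — although it is mostly formal given the above — is showing that $\comp^\st_\et$ is invertible up to $\beta^d$. Here I would invoke Poincaré duality on both sides (available for crystalline cohomology of semistable pairs and for étale cohomology of smooth proper varieties), together with Faltings' regularisation of the diagonal, so that verifying invertibility reduces to tracking how $\comp$ interacts with cup products, Chern classes of the diagonal, and Gysin maps — all compatibilities we have already established, with each application of Poincaré duality contributing a factor of $\beta$. The main obstacle I anticipate is not any single step but the careful bookkeeping required to identify all three of the extra structures ($G_K$-action, Frobenius, monodromy) on the de Rham side with the corresponding ones on the étale side simultaneously; the factorisation through $\widehat{\dR_f}$ mentioned above is what permits the monodromy check to proceed without further ad hoc computation, and the derived de Rham interpretation of all period rings in \S \ref{sec:ddr-period-rings} (especially Proposition \ref{prop:acrysbeta} and Proposition \ref{prop:stpiast}) is what permits each structural compatibility to be encoded already at the level of sheaves on $\calP_{\overline{K}}$.
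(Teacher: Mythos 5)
You have reconstructed the proof of the \emph{$p$-adic} semistable comparison theorem, Theorem~\ref{thm:compcst}, but the statement you were asked about is the \emph{global} analogue over $\Q$ from the paper's closing remark. These are not the same assertion. The statement in question concerns a semistable proper variety over $\Q$, its $\widehat{\Z}$-coefficient \'etale cohomology (rather than $\Z_p$), and the globally defined period ring $A_\ddR := \R\lim_n \big(\dR_{\overline{\Z}/\Z}\otimes_\Z\Z/n\big)$ with $n$ ranging over \emph{all} integers (rather than $A_\crys$ or $\widehat{A_\st}$). Nothing in your proposal engages with this global structure: you never mention $A_\ddR$, never address how to assemble the contributions from different primes into a single $\widehat{\Z}$-linear comparison, and never discuss the extra structure that replaces the monodromy operator in the global setting --- the paper indicates it becomes a $\widehat{\dR_{(\Z,\can)/\Z}}$-module structure, and there is a separate Frobenius $\phi_p$ for each prime. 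You have, in effect, silently substituted a different theorem for the one posed.

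It is fair to observe that the paper itself does not prove the global statement --- it is explicitly "loosely formulated,'' and the author only remarks that the $p$-adic methods "can be massaged'' and defers the matter to future work, perhaps via an induction deducing the global claim from Theorem~\ref{thm:compcst} applied at each prime. So there is no reference proof to measure you against. But a genuine attempt would at minimum need to address: (a) verifying the asserted structural facts about $A_\ddR$, namely that it is a filtered ordinary flat $\widehat{\Z}$-algebra carrying a $\Gal(\overline{\Q}/\Q)$-action and compatible Frobenii $\phi_p$; (b) formulating and proving a version of the Poincar\'e lemma of Theorem~\ref{thm:padicpoincare} with $\Z/n$-coefficients for composite $n$, presumably by CRT reduction to the prime-power case but keeping track of glueing data; (c) explaining how the local $\widehat{A_\st}$-linear maps at each prime (which live over different period rings and after different base changes) are restrictions of a single map over a localisation of $A_\ddR$; and (d) clarifying what ``semistable over $\Q$'' demands of the model --- semistable reduction at every prime is a very restrictive hypothesis, and the integral model lives over $\Spec\Z$, so the site-theoretic setup of $\calP_K$, $\calP_{\overline{K}}$ must be replaced by a global variant. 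Your proposal, a correct account of the $p$-adic argument, is the natural starting point for step~(c), but the remaining steps are where the actual content of the global theorem lies.
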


Essentially by Proposition \ref{prop:frobactionddr}, the log de Rham cohomology of a semistable model for $X$ carries a Frobenius operator $\phi_p$ for each prime $p$; the analog of the monodromy operator is the $\widehat{\dR_{(\Z,\can)/\Z}}$-module structure. It is very conceivable that the global theorem can deduced from the $p$-adic ones by an induction procedure; this question, the preceding theorem, and related matters will be investigated elsewhere.
\end{remark}

\bibliography{padicddr}

\end{document}